\newcommand*{\rom}[1]{\expandafter\@slowromancap\romannumeral #1@}
\newcommand{\BC}{{\mathbb {C}}}
\newcommand{\BN}{{\mathbb {N}}}
\newcommand{\BR}{{\mathbb {R}}}
\newcommand{\CF}{{\mathcal {F}}}
\newcommand{\CH}{{\mathcal {H}}}
\newcommand{\CO}{{\mathcal {O}}}
\newcommand{\RC}{{\mathrm {C}}}
\newcommand{\Hom}{{\mathrm{Hom}}}
\newcommand{\Spec}{{\mathrm{Spec}}}
\newcommand{\wt}{\widetilde}
\newcommand{\wh}{\widehat}
\newcommand{\q}{\mathfrak q}
\newcommand{\m}{\mathfrak m}
\newcommand{\C}{\mathbb{C}}
\newcommand{\R}{\mathbb R}
\newcommand{\abs}[1]{\lvert#1\rvert}
\newcommand{\cf}{\textit{cf}.~}
\newcommand{\be}{\begin {equation}}
\newcommand{\ee}{\end {equation}}
\newcommand{\bee}{\begin {equation*}}
\newcommand{\eee}{\end {equation*}}
\renewcommand{\mid}{\,:\,}
\theoremstyle{Theorem}
\theoremstyle{Theorem}
\theoremstyle{Theorem}
\theoremstyle{Theorem}
\theoremstyle{Plain}
\theoremstyle{remark}
\theoremstyle{remark}
\theoremstyle{Definition}
\newtheorem{dfn}{Definition}[section]
\newtheorem{cord}[dfn]{Corollary}
\newtheorem{prpd}[dfn]{Proposition}
\newtheorem{thmd}[dfn]{Theorem}
\newtheorem{lemd}[dfn]{Lemma}
\newtheorem{remarkd}[dfn]{Remark}
\newtheorem{exampled}[dfn]{Example}
\numberwithin{equation}{section}
\begin{document}

	\title[Formal manifolds]{Formal manifolds: foundations}
	
	\author[F. Chen]{Fulin Chen}
	\address{School of Mathematical Sciences, Xiamen University,
		Xiamen, 361005, China} \email{chenf@xmu.edu.cn}

	\author[B. Sun]{Binyong Sun}
	\address{Institute for Advanced Study in Mathematics \& New Cornerstone Science Laboratory, Zhejiang University,  Hangzhou, 310058, China}
	\email{sunbinyong@zju.edu.cn}
	
	\author[C. Wang]{Chuyun Wang}
	\address{Institute for Theoretical
		Sciences/ Institute of Natural Sciences, Westlake University/ Westlake Institute for Advanced Study,
		Hangzhou, 310030, China}
	\email{wangchuyun@westlake.edu.cn}
	
	\subjclass[2020]{58A05, 58A12} \keywords{smooth manifold, locally ringed space, topological algebra}
	
	\begin{abstract}
		This is the first paper in a series that studies 
		smooth relative Lie algebra homologies and cohomologies
		based on the theory of formal manifolds and formal Lie groups.
		In this paper, we lay the foundations for this study by introducing the notion of formal manifolds in the context of differential geometry, inspired by the notion of formal schemes in algebraic geometry.
		We develop the basic theory for formal manifolds, and establish a fully faithful
		contravariant functor from the category of formal manifolds to the category of
		topological $\C$-algebras. We also prove the existence of finite products in the category of formal manifolds by studying vector-valued formal functions.
	\end{abstract}
	
	\maketitle
	
	\tableofcontents
	
	\section{Introduction and the main results}
	
	In the setting of algebraic geometry,
	formal schemes are intensively studied in the literature.
	In this article, we study similar objects called formal manifolds in the setting of differential geometry, motivated by their applications to the representation theory of Lie groups.
	
	\subsection{The motivation}
	The theory of smooth cohomologies and smooth homologies for Lie group representations are respectively introduced in  \cite{HM} and \cite{BW}. 
	On the other hand, 
	there is a vast literature for the 
	algebraic theory of relative Lie algebra (co)homologies, which is concerned with the (co)homologies of representations
	of certain Lie pairs $(\q,L)$. Here $\q$ is a finite-dimensional complex Lie
	algebra and $L$ is a compact  Lie group,  with additional structures that satisfy specific compatibility conditions (see \cite[(1.64)]{KV} for more details). 
	Motivated by the recent study in representation theory and automorphic forms, there is a growing interest in establishing a smooth
	(co)homology theory for representations of
	general Lie pairs $(\q,L)$, with $L$ not necessarily compact. We refer to this theory as smooth relative Lie algebra (co)homology theory, which unifies the smooth (co)homology theory (as introduced in  \cite{HM} and \cite{BW}) and the algebraic theory of relative Lie algebra (co)homologies.

	We expect that the theory of  smooth relative (co)homologies will be useful in the study of
	representation theory of
	real reductive groups.
	For example, by generalizing the work of Wong (see \cite{Wo1,Wo2}), the theory should yield interesting smooth representations of real reductive groups through
	cohomological induction.
	This is the analytic analog of Zuckerman's functor that
	produces interesting Harish-Chandra modules (see \cite{KV}).
	However, in Zuckerman's construction, no topology is considered, whereas the topologies that we put on the representations should play an important role in some problems in representation theory.
	
	Starting with this paper, we will develop a new theory of smooth relative (co)homologies in several papers.
	In this paper, we introduce and study formal manifolds,
	which are analogs of formal schemes in the setting of differential geometry.
	The definition of formal manifolds naturally comes out of the two objects in Lie pairs.
	Recall that the formal Lie theory theorem asserts an equivalence of categories between
	the category of finite-dimensional complex Lie algebras and the category
	of complex formal group laws (see \cite[(14.2.3)]{Ha} for example). Underlying a formal group law is a formal power series algebra.
	On the other hand, a smooth manifold (and in particular a Lie group) is naturally a locally ringed space with the sheaf of
	$\C$-valued smooth functions as its structure sheaf.
	Roughly speaking,  formal manifolds are locally ringed spaces over $\Spec(\C)$ that unify the notions of  formal power series algebras and smooth manifolds
	(see Definition \ref{def:formalmanifold} for details).
	
	This paper primarily focuses on the foundational framework of formal manifolds. 
	In another paper of this series, we will study formal Lie groups, which are defined as group objects in the category of formal manifolds.
	We will prove a generalization of the formal Lie theory theorem, which asserts an equivalence of categories between the category of formal Lie groups
	and that of  Lie pairs. 
	Using the language of formal manifolds and formal Lie groups, we will finally establish the smooth relative
	(co)homology theory.
	
	We will define formal manifolds and related notions in what follows, and provide an outline of the main results of this paper.
	
	\subsection{$\BC$-locally ringed spaces}
	
	As mentioned before, we will work in the framework of locally ringed spaces.
	
	\begin{dfn}
		A $\BC$-locally ringed space is a topological space $M$, together with a sheaf  $\CO$ of commutative unital $\BC$-algebras on it such that all the stalks are local rings. Equivalently, a  $\BC$-locally ringed space is a locally ringed space over the locally ringed space $\Spec(\BC)$.
	\end{dfn}

	All $\BC$-locally ringed spaces form a category as usual. The morphisms in this category are morphisms of locally ringed spaces over $\Spec(\BC)$. More precisely, a morphism from a $\BC$-locally ringed space $(M,\CO)$ to another  $\BC$-locally ringed space $(M',\CO')$ is a pair $\varphi=({\overline\varphi}, \varphi^*)$, where ${\overline\varphi}: M\rightarrow M'$ is a continuous map and
	\be\label{phin}
	\varphi^*: {\overline\varphi}^{-1}\CO'\rightarrow \CO
	\ee
	is a $\BC$-algebra sheaf homomorphism that induces local homomorphisms on the stalks.
	For  open subsets $U'$ of $M'$ and $U$ of $M$ such that $\overline\varphi(U)\subset U'$,
	we write
	\[\varphi^*_{U',U}:\CO'(U')\rightarrow \CO(U)\] for the homomorphism of $\C$-algebras  induced by \eqref{phin}.
	If there is no confusion, we will denote   $\varphi^*_{U',U}$ by  $\varphi^*_{U'}$ or  $\varphi^*$ for simplicity.

	For every $\BC$-locally ringed space $(M, \CO)$, define a subsheaf $\m_\CO$ of $\CO$ by
	\be\label{eq:defmo}
	\m_\CO(U):=\{f\in \CO(U)\mid f_a\in \m_a  \textrm{ for all $a\in U$}\},
	\ee
	where $U$ is an open subset of $M$, $f_a$ is the germ of $f$ at $a$,  and  $\m_a$ is the maximal ideal of the stalk $\CO_a$.
	Note that $\m_\CO$ is an ideal of $\CO$. Form the quotient sheaf
	\be\label{underco}
	\underline \CO:=\CO/\m_\CO.
	\ee
	Then $(M, \underline \CO)$ is also a $\BC$-locally ringed space.

	\begin{dfn}\label{defred}
		The $\BC$-locally ringed space $(M, \underline \CO)$ defined above is called the reduction of $(M, \CO)$.
		The $\BC$-locally ringed space $(M, \CO)$ is said to be reduced if $\underline \CO=\CO$, or equivalently, $\m_\CO$ is the zero sheaf.
	\end{dfn}
	
	By abuse of notation,  we will often not distinguish a $\BC$-locally ringed space  $(M, \CO)$ with its underlying topological space $M$, and call $\CO$ the structure sheaf of $M$.
	We write $\underline M:=M$ as a topological space, to be viewed as a reduced $\BC$-locally ringed space with the structure sheaf $\underline \CO$.
	
	All reduced $\BC$-locally ringed spaces form a full subcategory of the category of
	$\BC$-locally ringed spaces, and thus they also form a category.
	By using \eqref{underco}, we have an obvious morphism
	\be\label{eq:reductionmorphism}
	\underline M\rightarrow M
	\ee
	of $\BC$-locally ringed spaces. For every morphism $\varphi:M_1\rightarrow M_2$ of $\BC$-locally ringed spaces, there is a unique morphism 
	\be\label{eq:reductionvarphi}
	\underline \varphi: \underline{M_1}\rightarrow \underline{M_2}\ee  such that the diagram
	\be \label{underline }
	\begin{CD}
		M_1 @>  \varphi >> M_2\\
		@AAA          @AAA\\
		\underline{M_1}@> \underline \varphi >>  \underline{M_2} \\
	\end{CD}
	\ee
	commutes. We call $\underline \varphi$ the reduction of $\varphi$.
	The reduction assignments  $M\mapsto \underline M$ and $\varphi\mapsto \underline \varphi$ form a functor from the category of  $\BC$-locally ringed spaces to the category of reduced  $\BC$-locally ringed spaces.
	
	For a $\BC$-locally ringed space $M$ with structure sheaf $\CO$, the formal stalk at a point $a\in M$ is defined to be the local $\BC$-algebra
	\be\label{eq:fs}
	\widehat \CO_a:=\varprojlim_{k\in \BN} \CO_a/\m_a^k\qquad (\BN:=\{0,1,2,\dots\}).
	\ee 

	\subsection{Formal manifolds}
	We consider a smooth manifold as a $\BC$-locally ringed space with the sheaf of complex-valued smooth functions as the structure sheaf.
	
	Throughout this paper, we assume that all smooth manifolds are Hausdorff and paracompact. They may or may not be equi-dimensional,
	and may or may not have countably many connected components. We also view the empty set as a smooth manifold. For a smooth manifold $N$, we use $\mathrm C^\infty(N)$ to denote the $\BC$-algebra of complex-valued smooth functions on $N$.
	
	\begin{exampled}\label{trivialf}
		Let  $k\in \BN$, and let $N$ be a smooth manifold. For every open subset $U\subset N$, write
		\[
		\CO_N^{(k)}(U):=\RC^\infty(U)[[y_1, y_2, \dots, y_k]] \quad (\textrm{the algebra of formal power series}).
		\]
		With the obvious restriction maps, $(N, \CO_N^{(k)})$ is a $\BC$-locally ringed space. For simplicity, we also write $N^{(k)}$ for this $\BC$-locally ringed space.
	\end{exampled}
	
	The $\BC$-locally ringed space $N^{(k)}$ defined as above provides an example of formal manifolds. In general, we make the following definition.

	\begin{dfn}\label{def:formalmanifold}
		A formal manifold  is a $\BC$-locally ringed space $(M, \CO)$ such that
		\begin{itemize}
			\item the topological space $M$ is paracompact and Hausdorff; and
			\item for every $a\in M$, there is an open  neighborhood $U$ of $a$ in $M$ and $n,k\in \BN$ such that $(U, \CO|_U)$ is isomorphic to $(\R^n)^{(k)}$ as a $\BC$-locally ringed space.
		\end{itemize}
	\end{dfn}
	
	Let $(M, \CO)$ be a formal manifold.
	For every $a\in M$, the  uniquely determined natural numbers $n$ and $k$ in Definition \ref{def:formalmanifold} are respectively
	called the dimension and the degree of $M$ at $a$, denoted by $\dim_a M$ and $\deg_a M$.
	We say that $(M,\CO)$ has equi-dimension $n$ if $\dim_a M=n$ for all $a\in M$,
	and has equi-degree $k$ if  $\deg_a M=k$ for all $a\in M$.
	An element in $\CO(M)$ is called a formal function on $M$.
	
	All formal manifolds form a full subcategory of the category of $\BC$-locally ringed spaces, and thus they also form a category.
	
	\begin{exampled}\label{trivialf2}
		A smooth manifold is naturally a formal manifold of equi-degree $0$.
		On the other hand, the reduction of a formal manifold is naturally a smooth manifold.
		Moreover, for every  formal manifold, the following conditions are  equivalent to each other:
		\begin{itemize}
			\item
			it has equi-degree $0$;
			\item
			it is reduced in the sense of Definition \ref{defred}; 
			\item
			it is a smooth manifold.
		\end{itemize}
	\end{exampled}

	\begin{dfn}\label{df:infini}	A formal manifold is said to be infinitesimal if its underlying topological space has precisely one element.
	\end{dfn}
	
	\begin{exampled}\label{trivialf3}
		A formal power series algebra $A:=\C[[y_1, y_2, \dots, y_k]] $ ($k\in \BN$) induces naturally  an infinitesimal formal manifold $(M,\CO)$ such that  $\CO(M)= A$ as  $\C$-algebras.
		Conversely, up to isomorphism, every infinitesimal formal manifold has such a form. 
	\end{exampled}
	

	\subsection{Structure of the paper} Here we give an outline of the main results in this paper. 
	Let $(M,\CO)$ be a formal manifold. 
	In Section \ref{sec:pre}, we establish some basic properties of $(M,\CO)$, including the existence of partitions of unity on $M$,
	the structure of formal stalks $\wh\CO_a$ ($a\in M$), and the softness of the sheaf $\CO$.
	
	In Section \ref{sec:Diffop}, we introduce the notion of differential operators between two sheaves of $\CO$-modules. 
	When $M=N^{(k)}$ for some open submanifold $N$ of $\BR^n$ and $n,k\in \BN$, for every sheaf $\CF$ of $\CO$-modules that satisfies  two natural conditions, we provide in Proposition \ref{dofm0} an explicit construction of all
	differential operators from $\CO$ to $\CF$.
	
	In Section \ref{sec:smoothtop}, by using certain compactly supported differential operators, we define a canonical topology on the space $\CF(M)$ for every sheaf $\CF$ of $\CO$-modules.  We refer to this topology as the smooth topology.
	With the smooth topology, $\CO(M)$ becomes a topological $\BC$-algebra. In other words, the addition map, the multiplication map, and the scalar multiplication map, are all jointly continuous. 
	As a topological vector space, it is a product of nuclear Fr\'echet spaces.
	Specifically, the formal stalk $\widehat \CO_a$ is a topological $\BC$-algebra for every $a\in M$ (see Example \ref{trivialf3} and Proposition \ref{lem:formalstalkiso}).

	We prove the following theorem in Section \ref{sec:formalspec}.
	
	\begin{thmd}\label{thmmain1}
		For every morphism \[\varphi=(\overline\varphi, \varphi^*): (M, \CO)\rightarrow (M', \CO')\] of formal manifolds, the homomorphism \[\varphi^*: \CO'(M')\rightarrow \CO(M)\] between topological $\C$-algebras is continuous. Moreover,
		\be \label{eq:introfunctor}
		(M, \CO)\mapsto \CO(M), \quad (\varphi: (M, \CO)\rightarrow (M', \CO'))\mapsto (\varphi^*: \CO'(M')\rightarrow \CO(M))
		\ee
		is a fully faithful contravariant functor from the category of formal manifolds to the category of topological $\BC$-algebras.
		
	\end{thmd}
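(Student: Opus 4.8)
The plan is to verify three things: that $\varphi^*$ is continuous (this is what makes \eqref{eq:introfunctor} a functor into topological $\BC$-algebras, and it is reused below), that $\varphi\mapsto\varphi^*$ is faithful, and that it is full; compatibility with composition and identities is immediate from the definition of composition of morphisms of locally ringed spaces.

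\textbf{Continuity.} This is local on $M$: covering $M$ by charts $U_i\cong(\R^{n_i})^{(k_i)}$ small enough that $\overline\varphi(U_i)$ lies in a chart $U_i'$ of $M'$, the map $\CO(M)\hookrightarrow\prod_i\CO(U_i)$ is a topological embedding and the restriction maps are continuous (Section \ref{sec:smoothtop}), so it suffices to treat a morphism between charts, i.e.\ a homomorphism $\varphi^*\colon\RC^\infty(V)[[z_1,\dots,z_l]]\to\RC^\infty(U)[[y_1,\dots,y_k]]$ with $U\subseteq\R^n,\ V\subseteq\R^m$ open; write $t_1,\dots,t_m$ for the coordinates of $\R^m$. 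First one pins down $\varphi^*$: locality forces $\varphi^*(z_j)\in\m_{\CO}(U)=(y_1,\dots,y_k)$ and $\varphi^*(t_i)\equiv\overline\varphi^*(t_i)\bmod(y)$, and then $\varphi^*$ is necessarily given by the Taylor--substitution formula
\[
\varphi^*\Bigl(\textstyle\sum_\beta g_\beta z^\beta\Bigr)=\sum_\beta\Bigl(\sum_\gamma\tfrac1{\gamma!}\,\overline\varphi^*(\partial^\gamma g_\beta)\,\bigl(\varphi^*(t)-\overline\varphi^*(t)\bigr)^\gamma\Bigr)\,\varphi^*(z)^\beta .
\]
To prove this, consider $h(g):=\varphi^*(g)-g\bigl(\varphi^*(t)\bigr)$ for $g\in\RC^\infty(V)$, where $g(\varphi^*(t))$ is given by the inner sum: it obeys a Leibniz-type identity, vanishes on polynomials in the $t_i$, and --- writing $g$ near a point of $V$ via Taylor's theorem with remainder to arbitrary order, and using that $\varphi^*(t_i)-\overline\varphi^*(t_i)$ and $\overline\varphi^*(t_i)-t_i(\overline\varphi(a))$ have germs in $\m_a$ --- one finds that every $y^\delta$-coefficient of $h(g)$ is a smooth function on $U$ all of whose germs are flat, hence zero; so $h\equiv0$. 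Granting the formula, continuity is clear: the $y^\delta$-coefficient of $\varphi^*(\sum g_\beta z^\beta)$ is a finite $\RC^\infty(U)$-linear combination, with fixed coefficients, of the functions $\overline\varphi^*(\partial^\gamma g_\beta)$, and $g\mapsto\overline\varphi^*(\partial^\gamma g)$ is continuous on $\RC^\infty$ (differentiation and pullback along the smooth map $\overline\varphi$ are continuous), so $\varphi^*$ is continuous into each nuclear Fr\'echet factor of $\CO(U)$.

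\textbf{Full faithfulness.} I would establish both statements simultaneously by reconstructing $\varphi$ from $\Phi:=\varphi^*\colon\CO'(M')\to\CO(M)$. For $a\in M$ the composite $\mathrm{ev}_a\circ\Phi$, with $\mathrm{ev}_a:=$ reduction followed by evaluation at $a$, is a \emph{continuous} $\BC$-character of $\CO'(M')$; the key observation is that the continuous $\BC$-characters of $\CO'(M')$ are exactly the $\mathrm{ev}_b$, $b\in M'$, whence $\mathrm{ev}_a\circ\Phi=\mathrm{ev}_{\overline\varphi(a)}$ recovers the underlying map $\overline\varphi$ (and defines it for an a priori abstract continuous $\Phi$). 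Consequently $\underline{\Phi(f)}=\underline f\circ\overline\varphi$ for all $f$; since $\CO'(M')\to\RC^\infty(\underline{M'})$ is surjective ($\m_{\CO'}$ is a fine sheaf, so $H^1(M',\m_{\CO'})=0$), the function $\underline g\circ\overline\varphi$ is smooth for every $\underline g\in\RC^\infty(\underline{M'})$, so $\overline\varphi$ is smooth, in particular continuous. Next, a morphism of sheaves $\overline\varphi^{-1}\CO'\to\CO$ is determined by its stalk maps $\CO'_{\overline\varphi(a)}\to\CO_a$, and because $\CO'(M')\to\CO'_{\overline\varphi(a)}$ is surjective (every germ of a formal function is the germ of a global one, by a cutoff), the stalk map is \emph{forced} to be $f_{\overline\varphi(a)}\mapsto\Phi(f)_a$; this already gives faithfulness. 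For fullness one checks that, for an arbitrary continuous $\Phi$, this recipe is well defined and yields a morphism. Well-definedness is the heart of the matter and follows from a \emph{vanishing lemma}: if $f\in\CO'(M')$ vanishes on an open $V\subseteq M'$, then $\Phi(f)$ vanishes on $\overline\varphi^{-1}(V)$. Indeed, for $a\in\overline\varphi^{-1}(V)$ choose $\rho\in\RC^\infty(M')$ with $\rho\equiv1$ on $M'\setminus V$ and $\rho\equiv0$ near $\overline\varphi(a)$; then $\rho f=f$, so $\Phi(f)=\Phi(\rho)^N\Phi(f)$ for all $N$, and since $\underline{\Phi(\rho)}=\rho\circ\overline\varphi$ vanishes near $a$ the germ $\Phi(\rho)_a$ lies in the ideal of $\CO_a$ generated by the formal coordinates, so $\Phi(f)_a\in\bigcap_N(\text{formal coordinates})^N\CO_a=0$. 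Granting the lemma, $\varphi^*_a\colon f_{\overline\varphi(a)}\mapsto\Phi(f)_a$ is well defined, the family $\{\varphi^*_a\}_a$ patches to a sheaf homomorphism (since $\varphi^*_a(s_a)$ is locally the germ of $\Phi(\tilde s)$ for a global extension $\tilde s$ of $s$), it induces local homomorphisms on stalks (because $\mathrm{ev}_a\circ\Phi=\mathrm{ev}_{\overline\varphi(a)}$), and by construction its effect on global sections is $\Phi$; hence $\Phi=\varphi^*$ for a unique morphism $\varphi$.

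\textbf{Expected obstacle.} The two load-bearing inputs are: (i) the identification of continuous $\BC$-characters of a formal-manifold function algebra with its points --- which reduces, after noting that a continuous character annihilates topologically nilpotent elements (hence all of $\m_{\CO}(M)$, which is topologically nilpotent) and that a continuous character of a product factors through a single factor, to the classical but non-elementary fact that every $\R$-algebra homomorphism $\RC^\infty(N,\R)\to\R$ on a second-countable smooth manifold $N$ is a point evaluation; and (ii) the ``global flatness'' argument fixing the Taylor--substitution formula on charts. The remaining ingredients --- the cutoff and reduction-surjectivity facts, the vanishing lemma, the patching of stalk maps, and the final continuity estimate --- should be comparatively routine once the machinery of the earlier sections is available.
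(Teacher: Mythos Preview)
Your overall strategy is sound and parallels the paper's, but the execution differs in two places and one of them contains a genuine error.

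For continuity, the paper takes a different route than you do: rather than deriving the full Taylor--substitution formula when the target carries formal variables $z_1,\dots,z_l$, it composes with the Borel surjection $\psi^*\colon\RC^\infty(\R^{m+l})\twoheadrightarrow\RC^\infty(\R^m)[[z]]$, notes that $\psi^*$ is open by the open mapping theorem, and thereby reduces to the case $l=0$ already handled by the explicit formula. Your direct approach is fine but incomplete as written: your $h$-argument only pins down $\varphi^*$ on $\RC^\infty(V)$, and you still owe the extension to power series $\sum_\beta g_\beta z^\beta$. This is easy to supply with the same idea---write $F-F_N\in(z)^N$ as $\sum_{|\alpha|=N}z^\alpha h_\alpha$, apply $\varphi^*$, and use $\varphi^*(z_j)\in(y)$ to push the discrepancy into $\bigcap_N(y)^N\CO(U)=0$---but it is not addressed.

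More seriously, your vanishing lemma contains a factual error: the claim $\bigcap_N(y_1,\dots,y_k)^N\CO_a=0$ is \emph{false} in the stalk. On $(\R)^{(1)}$, take $g=\sum_{j\geq0}g_j\,y^j$ with $g_j\in\RC^\infty(\R)$ vanishing on $(-\tfrac1{j+1},\tfrac1{j+1})$ but not on any fixed neighborhood of $0$; then $[g]_0\in(y)^N\CO_0$ for every $N$ (restrict to $(-\tfrac1N,\tfrac1N)$), yet $[g]_0\neq0$. The repair is immediate in your situation: since $\underline{\Phi(\rho)}=\rho\circ\overline\varphi$ vanishes on a \emph{fixed} neighborhood $W$ of $a$, one has $\Phi(\rho)|_W\in(y)\CO(W)$, so $\Phi(f)|_W=(\Phi(\rho)|_W)^N\Phi(f)|_W\in\bigcap_N(y)^N\CO(W)$, and this intersection \emph{is} zero for a fixed open set. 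The paper avoids this issue altogether by building the inverse functor $\mathrm{Specf}$ from localizations $A_\chi$ of the global algebra (Sections~5.3--5.4); the induced maps $A'_{\overline\psi(\chi)}\to A_\chi$ are then automatically well-defined, and full faithfulness drops out of the quasi-inverse statement rather than from a stalkwise reconstruction.
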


	Throughout this paper, by an LCS, we mean a locally convex topological vector space over $\BC$ which may or may not be Hausdorff. For two LCS $E$ and $F$, we denote by $E\widetilde\otimes_\pi F$ and $E\widehat\otimes_\pi F$ the quasi-completed and completed projective tensor product of $E$ and $F$, respectively.
	
	We know that finite products exist in the category of smooth manifolds. In Section \ref{sec:prod}, we extend this result to the category of formal manifolds.

	\begin{thmd}\label{thmmain2}
		Finite products exist in the category of formal manifolds. Moreover, for all formal manifolds $M_1$, $M_2$, $M_3$ such that
		\[
		M_3=M_1\times M_2,
		\]
		there are  identifications
		\[
		\CO_{3}(M_3)=\CO_{1}(M_1)\widetilde \otimes_\pi \CO_{2}(M_2)=\CO_{1}(M_1)\widehat \otimes_\pi \CO_{2}(M_2)\]
		and
		\[ \widehat \CO_{3,(a_1,a_2)}=\widehat \CO_{1,a_1}\widetilde \otimes_\pi \widehat \CO_{2,a_2}=\widehat \CO_{1,a_1}\widehat \otimes_\pi \widehat \CO_{2,a_2}\quad ((a_1,a_2)\in M_3=M_1\times M_2)
		\]
		of topological $\C$-algebras, and an identification
		\bee
		\underline{M_3}=\underline{M_{1}} \times\underline{M_{2}}
		\eee
		of smooth manifolds. Here $\CO_i$  ($i=1,2,3$) is the structure sheaf of $M_i$.  
	\end{thmd}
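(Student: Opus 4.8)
The strategy is to construct the product locally using the model spaces $(\R^n)^{(k)}$, verify the universal property, and then identify the global sections and formal stalks via topological tensor products. Since products in the category of $\BC$-locally ringed spaces are delicate, I would not try to use a general categorical construction; instead I would build the product by hand.

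\emph{Step 1: Local model.} First I would treat the infinitesimal-fiber model. For the model spaces, I expect an identification
\[
(\R^{n_1})^{(k_1)} \times (\R^{n_2})^{(k_2)} = (\R^{n_1+n_2})^{(k_1+k_2)}
\]
as $\BC$-locally ringed spaces, where the structure sheaf on an open box $U_1 \times U_2$ is $\RC^\infty(U_1\times U_2)[[y_1,\dots,y_{k_1},z_1,\dots,z_{k_2}]]$. Using the smooth case (finite products of smooth manifolds exist and $\RC^\infty(U_1\times U_2)=\RC^\infty(U_1)\widehat\otimes_\pi \RC^\infty(U_2)$, a classical fact for nuclear Fr\'echet spaces), together with the behavior of formal power series under completed tensor products, one gets
\[
\RC^\infty(U_1\times U_2)[[y,z]] = \RC^\infty(U_1)[[y]]\,\widehat\otimes_\pi\, \RC^\infty(U_2)[[z]].
\]
The projections $\pr_i$ are the obvious maps, with $\pr_i^*$ the inclusion of the $i$-th set of variables.

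\emph{Step 2: Gluing and the universal property.} I would then show that these local products glue: given two formal manifolds $M_1, M_2$, cover each by charts isomorphic to model spaces, form the product charts from Step 1, and check the gluing data is consistent (this reduces to the smooth case on reductions plus compatibility of the formal power series variables, which is automatic since transition maps are isomorphisms of $\BC$-locally ringed spaces). To verify the universal property, given a formal manifold $T$ with morphisms $\psi_i\colon T\to M_i$, one must produce a unique $\psi\colon T\to M_3$ with $\pr_i\circ\psi=\psi_i$. The continuous map $\overline\psi$ is forced as $(\overline\psi_1,\overline\psi_2)$; for the sheaf map one works locally, where it amounts to: given $\BC$-algebra homomorphisms $\RC^\infty(U_i)[[y^{(i)}]] \to \CO_T(V)$ inducing local maps on stalks, there is a unique homomorphism out of the completed tensor product. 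Here Theorem \ref{thmmain1} (full faithfulness, hence uniqueness of morphisms once $\varphi^*$ on global sections is pinned down) and the continuity statement of that theorem do most of the work; the completed-tensor-product universal property supplies existence.

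\emph{Step 3: Global sections and stalks.} Once $M_3$ is constructed, the identification $\CO_3(M_3)=\CO_1(M_1)\widehat\otimes_\pi\CO_2(M_2)$ for general (non-box) $M_i$ follows by a sheaf/partition-of-unity argument: write global formal functions as compatible families over product charts, and use that $\RC^\infty(-)$-type sections and their formal power series extensions behave well under $\widehat\otimes_\pi$ because everything in sight is a countable product of nuclear Fr\'echet spaces (so quasi-completion equals completion, giving the two displayed equalities). The statement $\underline{M_3}=\underline{M_1}\times\underline{M_2}$ follows by applying the reduction functor chart-by-chart and invoking that reduction kills exactly the formal power series variables (Example \ref{trivialf2}), reducing to the known smooth product. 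The formal stalk identification $\widehat\CO_{3,(a_1,a_2)}=\widehat\CO_{1,a_1}\widehat\otimes_\pi\widehat\CO_{2,a_2}$ is the infinitesimal case: on a product chart the stalk is computed from $\RC^\infty(U_1\times U_2)[[y,z]]$ by completing at the maximal ideal of the point, which factors as the completed tensor product of the two factor completions, again using Example \ref{trivialf3} and Proposition \ref{lem:formalstalkiso}.

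\emph{Main obstacle.} The genuinely hard part is Step 2: proving the universal property with the \emph{topological} structure built in. One must check that the ring map produced out of a completed tensor product is the unique morphism of $\BC$-locally ringed spaces, including that it induces local homomorphisms on \emph{all} stalks (not just formal stalks) and is continuous for the smooth topology. Controlling stalks at points where the two factors' behaviors interact — and ensuring the glued sheaf map $\varphi^*$ is well-defined independent of the choice of product charts — is where the analytic input (nuclearity, $\RC^\infty(U_1\times U_2)=\RC^\infty(U_1)\widehat\otimes_\pi\RC^\infty(U_2)$, exactness properties of $\widehat\otimes_\pi$ on Fr\'echet spaces) must be combined carefully with the sheaf-theoretic bookkeeping.
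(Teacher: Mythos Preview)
Your outline is correct and would work, but the paper takes a cleaner route that dissolves precisely the obstacle you flag in Step~2.

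The key difference is the order of operations and the role of Theorem~\ref{thmmain1}. You propose to glue local products and verify the universal property chart-by-chart, then extract the global-sections formula afterward. The paper reverses this: it first proves, independently, that for any quasi-complete $E$ the assignment $U\mapsto\CO(U)\widetilde\otimes E$ is a sheaf with $\CO_E(M)=\CO(M)\widehat\otimes E$ when $E$ is complete (Theorem~\ref{thmvvf}, via a strict-density/partition-of-unity argument). With this in hand, the paper defines $\CO_3(U_3)=\varprojlim_{U_1\times U_2\subset U_3}\CO_1(U_1)\widetilde\otimes\CO_2(U_2)$, checks it is a sheaf by a two-step bootstrap (one factor a chart, then general), and immediately reads off $\CO_3(M_3)=\CO_1(M_1)\widehat\otimes\CO_2(M_2)$. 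The universal property then falls out in one line from the full faithfulness of $(M,\CO)\mapsto\CO(M)$ (Theorem~\ref{thmqe}):
\[
\mathrm{Mor}(T,M_3)=\Hom(\CO_1(M_1)\widehat\otimes\CO_2(M_2),\CO(T))=\Hom(\CO_1(M_1),\CO(T))\times\Hom(\CO_2(M_2),\CO(T)).
\]
So the ``genuinely hard part'' you identify---checking locality of stalk maps, gluing independence, continuity---is bypassed entirely: one never needs to construct the morphism $T\to M_3$ by hand. Your approach pays the price of doing all the analytic bookkeeping inside the universal-property verification; the paper front-loads it into the vector-valued sheaf theorem and then cashes in the equivalence of categories. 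The reduction and formal-stalk statements are handled the same way in both approaches.
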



	\section{Preliminaries on formal manifolds}\label{sec:pre}
	
	In this section, we introduce some basic notions and establish some basic properties for formal manifolds.

	\subsection{Notations and conventions} Here we list some notations and conventions which will be used later.
	
	In the rest of this paper, we let $(M,\CO)$ denote a formal manifold. 
	We say that the formal manifold $(M,\CO)$ is secondly countable if so is its underlying topological space. 
	We denote by $\pi_0(M)$ the set of all connected components in $M$.
	
	We say that an open subset $U$ of $M$ is a chart if  there exist $n,k\in \BN$ and an open
	submanifold $N$ of $\R^n$ such that $(U,\CO|_U)\cong (N,\CO_N^{(k)})$ as $\C$-locally ringed spaces. 
	An open cover of $M$ consisting of charts will be called an atlas of $M$.
	
	Let $a\in M$.
	Recall that $\dim_a M$ and $\deg_a M$ denote the dimension and degree of $M$ at $a$,  respectively.
	For every sheaf  $\CF$ of $\CO$-modules, we let $\CF_a$ denote the stalk of $\CF$ at $a$.
	We let $\m_a$ denote the maximal ideal of the stalk $\CO_a$, and define the Dirac
	distribution supported at $a$ to be the
	linear functional
	\be
	\label{eq:defdeltax}
	\delta_a:\ \mathcal{O}_a\longrightarrow \mathcal{O}_a/\m_a=\C.\ee
	
	As in the Introduction, for every open subset $U$ of $M$,
	we call an element of $\CO(U)$ a formal function on $U$.
	For a formal function $f$ on $U$ and a point $a$ in $U$,
	we define the value  $f(a)\in \BC$ of $f$ at $a$ to be
	the image of $f$ under the following composition of natural maps:
	\[\mathcal{O}(U)\longrightarrow \mathcal{O}_a\stackrel{\delta_a}{\longrightarrow} \C.\]
	We also define the reduction
	\be\label{underf}
	\underline{f}\in \underline{\CO}(U)
	\ee
	of $f$ to be the image of $f$ under the reduction map 
	$\CO(U)\longrightarrow \underline{\CO}(U)$ (see \eqref{eq:reductionmorphism}).
	It is clear that $f(a)=\underline{f}(a)$ for all $a\in U$.
	
	\begin{exampled}\label{ex:formalfunction} If $M=N^{(k)}$ for some smooth manifold $N$ and $k\in \BN$, then the reduction of
		\[
		f=\sum_{(j_1,j_2,\dots,j_k)\in \BN^k}f_{(j_1,j_2,\dots,j_k)}\, y_1^{j_1}y_2^{j_2}\cdots y_k^{j_k}\in \RC^\infty(U)[[y_1,y_2,\dots,y_k]]
		\]
		is $f_{(0,0,\dots,0)}$, and the value of $f$ at $a$ is $f_{(0,0,\dots,0)}(a)$.
	\end{exampled}

	Suppose now that $U$ is an open subset of $\R^n$.
	We denote the standard coordinate functions on $U$ as $x_1, x_2, \dots, x_n$, and the first-order partial derivatives with respect to these variables as
	\[\partial_{x_1},\partial_{x_2},\dots,\partial_{x_n}.\]
	For every $n$-tuple $I=(i_1,i_2,\dots,i_n)\in \BN^n$, we use  the following usual multi-index notations:
	\be\label{eq:multi} \begin{array}{rl}
		I!=i_1!i_2! \cdots  i_n!, &  |I|=i_1+i_2+\cdots+i_n, \\
		x^I=x_1^{i_1}x_2^{i_2}\cdots x_n^{i_n},  &  \partial_x^I=\partial_{x_1}^{i_1}\partial_{x_2}^{i_2}\cdots \partial_{x_n}^{i_n}.
	\end{array}
	\ee
	We also view the $|I|$-th order partial derivative $\partial_x^I$ as an operator on the formal power series algebra $\RC^\infty(U)[[y_1,y_2,\dots,y_k]]$ in
	an obvious way.
	Let
	\[\partial_{y_1},\partial_{y_2},\dots,\partial_{y_k}\] denote the first-order  partial  derivatives on  $\RC^\infty(U)[[y_1,y_2,\dots,y_k]]$
	with respect to the formal variables $y_1,y_2,\dots,y_k$.
	For every $k$-tuple $J\in \BN^k$, the factorial $J!$, the length $|J|$, the monomial $y^J$, and the $|J|$-th order partial  derivative $\partial_y^J$, are defined as in \eqref{eq:multi}.

	\subsection{Partition of unity}
	In this subsection, we prove that partitions of unity exist for formal manifolds.

	\begin{lemd}\label{lem:inverse} Let $f$ be a formal function on $M$. Then the following conditions are
		equivalent to each other:
		\begin{itemize}
			\item $f$ is invertible;
			\item the reduction $\underline{f}$ of $f$ is invertible;
			\item the value of $f$ at $a$ is nonzero for every $a\in M$.
		\end{itemize}
	\end{lemd}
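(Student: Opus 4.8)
The statement is local in nature in the following sense: invertibility of $f \in \CO(M)$ can be detected on the stalks, since $f$ is invertible in $\CO(M)$ if and only if $f_a \in \CO_a$ is invertible (i.e.\ $f_a \notin \m_a$) for every $a \in M$, which in turn can be checked after passing to a chart. So first I would reduce to the infinitesimal-fiber picture. For each $a$, choose a chart $(U, \CO|_U) \cong (N, \CO_N^{(k)})$ with $N$ an open submanifold of $\R^n$, and write $f|_U = \sum_{J \in \BN^k} f_J\, y^J$ with $f_J \in \RC^\infty(N)$. By Example \ref{ex:formalfunction}, $\underline{f|_U} = f_{(0,\dots,0)}$ and $f(a) = f_{(0,\dots,0)}(a)$. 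The germ $f_a$ lies in the maximal ideal $\m_a$ of $\CO_a$ exactly when its image in $\CO_a / \m_a = \C$ vanishes, i.e.\ exactly when $f(a) = 0$; equivalently, since $f(a) = \underline{f}(a)$, when the smooth function $\underline{f}$ vanishes at $a$. This already shows the equivalence of the second and third bullets and reduces the first bullet to showing: $f$ is invertible in $\CO(M)$ iff $f_a \notin \m_a$ for all $a \in M$.

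\textbf{Key steps.} (1) \emph{Stalkwise invertibility from the condition.} Assuming $f(a) \neq 0$ for all $a$, I show $f_a$ is a unit in $\CO_a$ for every $a$. Working in a chart as above, $f_a$ corresponds to a germ of $\sum_J f_J y^J$ where $f_{(0,\dots,0)}$ is a nonvanishing germ at $a$, hence invertible in $\RC^\infty$-germs; then the standard geometric-series argument for formal power series over a ring in which the constant term is a unit shows $\sum_J f_J y^J$ is invertible in the germ of $\RC^\infty[[y_1,\dots,y_k]]$, i.e.\ $f_a$ is a unit in $\CO_a$. (2) \emph{Gluing the local inverses.} Since $f_a$ is a unit for every $a$, there is a section $g^{(a)}$ of $\CO$ on some neighborhood of $a$ with $f g^{(a)} = 1$ there; on overlaps the local inverses agree (an inverse of $f$ is unique where it exists), so by the sheaf axiom they glue to a global $g \in \CO(M)$ with $fg = 1$. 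Hence $f$ is invertible. (3) \emph{The converse} is trivial: if $fg = 1$ in $\CO(M)$ then $f_a g_a = 1$ in $\CO_a$ so $f_a \notin \m_a$, whence $f(a) \neq 0$ for all $a$; and likewise $\underline{f}\,\underline{g} = 1$ shows $\underline{f}$ is invertible.

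\textbf{Main obstacle.} The only genuinely substantive point is step (1): that a formal power series $\sum_J f_J y^J$ over the local ring of smooth germs is invertible as soon as its constant term $f_{(0,\dots,0)}$ is a unit. This is the familiar fact that $R[[y_1,\dots,y_k]]$ has the property that a series is a unit iff its constant term is a unit in $R$; the construction of the inverse is by the usual recursion on total degree in the $y_j$ (equivalently, writing $f = f_{(0,\dots,0)}(1 - h)$ with $h$ in the ideal $(y_1,\dots,y_k)$ and inverting via $\sum_{m \geq 0} h^m$, which converges $(y_1,\dots,y_k)$-adically). One should double-check this is intrinsic — i.e.\ that it does not depend on the chart — but since invertibility in $\CO_a$ is a chart-independent statement and the maximal ideal $\m_a$ is intrinsic, no real difficulty arises. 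Everything else (passage between global sections and stalks, the gluing in step (2), uniqueness of inverses) is formal and uses only that $(M,\CO)$ is a $\BC$-locally ringed space with the local structure of Definition \ref{def:formalmanifold}.
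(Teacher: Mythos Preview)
Your proof is correct and follows essentially the same approach as the paper, which simply says the assertion follows from Example~\ref{ex:formalfunction} and the sheaf property of $\CO$ and $\underline{\CO}$. You have spelled out in detail exactly what that one-line proof compresses: the local chart description of Example~\ref{ex:formalfunction}, the formal-power-series inversion when the constant term is a unit, and the gluing of local inverses via the sheaf axiom.
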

	\begin{proof} The assertion follows from  Example \ref{ex:formalfunction} and the sheaf property of $\CO$ and $\underline {\CO}$.
	\end{proof}
	Let $\mathcal{U}=\{U_\gamma\}_{\gamma\in \Gamma}$ be an open cover
	of $M$. A family $\{f_\gamma\in \CO(M)\}_{\gamma\in \Gamma}$  of formal functions is said to be a
	partition of unity on $M$  subordinate to $\mathcal{U}$ if the following  conditions hold:
	\begin{itemize}
		\item $\mathrm{supp}\,f_\gamma$ is contained in $U_\gamma$ for all $\gamma\in \Gamma$;
		\item the family $\{\mathrm{supp}\,f_\gamma\}_{\gamma\in \Gamma}$ is locally finite; and
		\item $\sum_{\gamma\in \Gamma } f_{\gamma}=1$, that is, for every relatively compact open subset $U$ of $M$,
		\[
		\sum_{\gamma\in \Gamma; (\mathrm{supp}\,f_\gamma)\cap U\neq \emptyset } (f_{\gamma})|_U=1.
		\]
	\end{itemize}
	Here and henceforth, ``$\mathrm{supp}$" indicates the support of a section of a sheaf.

	When no confusion is possible, 
	we write $\overline U$ for the closure of a subset $U$ in a topological space. 
	Similar to the smooth manifold case, we have the following result.
	\begin{prpd}\label{lemr01}
		For every open cover
		of $M$, there is a  partition of unity on $M$ subordinate to it.
	\end{prpd}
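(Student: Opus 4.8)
The plan is to reduce the statement about formal manifolds to the corresponding statement for ordinary smooth manifolds, where partitions of unity are classical. The key point is Lemma \ref{lem:inverse}: invertibility of a formal function is detected on its reduction, so we can lift constructions from $\underline M$.

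First I would record the structure of $M$ as a topological space. Since $M$ is paracompact and Hausdorff and admits an atlas (each chart being an open submanifold of some $\R^n$ decorated with formal variables), the reduction $\underline M$ is a genuine smooth manifold with the same underlying topological space, and the reduction map $\CO(U)\to\underline\CO(U)$ is surjective for every chart $U$ (on a chart $N^{(k)}$ it is just $f\mapsto f_{(0,\dots,0)}$ in the notation of Example \ref{ex:formalfunction}, which is visibly onto $\RC^\infty(N)=\underline\CO(N)$). A short gluing argument using the sheaf property then shows $\CO(M)\to\underline\CO(M)$ is surjective as well, so every smooth function on $\underline M$ lifts to a formal function on $M$.

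Next I would invoke the classical existence of smooth partitions of unity: given the open cover $\mathcal U=\{U_\gamma\}$ of $M=\underline M$, choose a smooth partition of unity $\{g_\gamma\in\RC^\infty(\underline M)\}$ subordinate to $\mathcal U$, with $\{\mathrm{supp}\,g_\gamma\}$ locally finite and $\sum_\gamma g_\gamma=1$. For each $\gamma$ pick a lift $f_\gamma\in\CO(M)$ of $g_\gamma$; shrinking via a bump function (itself lifted from $\underline M$, or by simply multiplying by a lift of a cutoff supported near $\mathrm{supp}\,g_\gamma$) we may arrange $\mathrm{supp}\,f_\gamma\subset U_\gamma$, since the support of a formal function is contained in the closed set where its reduction is nonzero. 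The family $\{\mathrm{supp}\,f_\gamma\}$ is then locally finite because $\mathrm{supp}\,f_\gamma\subset\mathrm{supp}\,g_\gamma$ (the latter after the shrinking step, or one just controls supports directly). Finally $f:=\sum_\gamma f_\gamma$ is a well-defined formal function on $M$ by local finiteness, and its reduction is $\sum_\gamma g_\gamma=1$; by Lemma \ref{lem:inverse} (third condition: the value at every point is $1\ne 0$, or simply $\underline f=1$ is invertible) $f$ is invertible in $\CO(M)$. Replacing each $f_\gamma$ by $f^{-1}f_\gamma$ (which does not enlarge supports, as $f^{-1}$ is a global unit) yields a family summing to $1$ with the required support conditions, i.e.\ a partition of unity subordinate to $\mathcal U$.

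The main obstacle, and the only genuinely non-formal step, is the surjectivity of the reduction map on global sections together with the support control when lifting — one must be slightly careful that multiplying a lift $f_\gamma$ by a lifted cutoff does not destroy the normalization, which is why the normalization by $f^{-1}$ is done at the very end rather than term by term. Everything else is a direct transfer of the smooth-manifold argument, using that $M$ and $\underline M$ have the same underlying topological space and that Lemma \ref{lem:inverse} lets us recognize the global unit $f$.
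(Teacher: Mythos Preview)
Your overall strategy---lift a smooth partition of unity from $\underline M$ and then normalize using Lemma~\ref{lem:inverse}---matches the paper's, but two steps of your execution are not correct as written. First, the claim that ``the support of a formal function is contained in the closed set where its reduction is nonzero'' is false: on $(\R^1)^{(1)}$ the formal function $f=\phi(x)\,y$ has $\underline f=0$ yet $\mathrm{supp}\,f=\mathrm{supp}\,\phi$. The inclusion goes the other way ($\mathrm{supp}\,\underline f\subset\mathrm{supp}\,f$), so it gives no upper bound on $\mathrm{supp}\,f_\gamma$; the same objection applies to your cutoff trick, since an arbitrary lift of a compactly supported smooth cutoff need not itself be compactly supported. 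Second, surjectivity of $\CO(M)\to\underline\CO(M)$ does not follow from the sheaf property alone---local lifts on charts need not agree on overlaps, so gluing is obstructed; in the paper this surjectivity is Proposition~\ref{prop:reductionsur}, established only later via softness.

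Both gaps are repaired by a single device: refine the given cover to a locally finite atlas, and on each chart $V\cong N^{(k)}$ use the \emph{canonical} section $\RC^\infty(N)\hookrightarrow\RC^\infty(N)[[y_1,\dots,y_k]]$ of the reduction map (the constant-series embedding), which preserves supports exactly. Then each smooth bump $g_\gamma$ supported in a single chart lifts to a formal function with the \emph{same} support, extends by zero to $M$, and the rest of your argument (sum has reduction $1$, hence invertible, then normalize) goes through. This is in effect what the paper does: it refines to a locally finite relatively compact cover with a shrinking $\overline{V_\gamma}\subset U_\gamma$, produces formal bump functions $f'_\gamma$ supported in $U_\gamma$ with positive values on $\overline{V_\gamma}$ (implicitly via the chart embedding just described), and then normalizes by the invertible sum $\sum_\gamma f'_\gamma$.
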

	
	\begin{proof}
		The proof is similar to that of the smooth manifold case (see \cite[Theorem 10.1]{Br} for example). We sketch a proof for 
		completeness.
		Let $\mathcal{U}=\{U_\gamma\}_{\gamma\in \Gamma}$ be an open cover
		of $M$. Replacing $\mathcal{U}$ by a   refinement of it if necessary, we assume without loss of generality  that $\mathcal{U}$ is locally finite and
		$U_\gamma$ is relatively compact for all $\gamma\in \Gamma$.
		
		Take an open cover $\{V_\gamma\}_{\gamma\in \Gamma}$ of $M$ such that the closure  $ \overline {V_{\gamma}}$  of $V_\gamma$ is contained in $U_\gamma$ for all $\gamma\in \Gamma$.
		Then  for each $\gamma\in \Gamma$, there is a formal function $f'_\gamma\in \CO(M)$ such that
		\begin{itemize}
			\item the support of $f'_\gamma$ is contained in $U_\gamma$; and
			\item $f'_\gamma(a)$ is a non-negative real number for all $a\in M$, and is a positive real number for all $a$ in the compact set $\overline {V_\gamma}$.
		\end{itemize}
		It is obvious that  \[f_\Gamma:=\sum_{\gamma\in \Gamma} f'_{\gamma}\] is a well-defined formal function on $M$ such that
		$f_\Gamma(a)>0$ for all $a\in M$.
		Lemma   \ref{lem:inverse} implies that the formal function $f_{\Gamma}$ is invertible.
		The lemma then follows as  $\{f_\gamma\}_{\gamma\in \Gamma}$ is a partition of unity on $M$ subordinate to $\mathcal{U}$, where $f_\gamma:=f_\gamma'\cdot f_\Gamma^{-1}$.
	\end{proof}
	
	For every sheaf $\CF$ of abelian groups on a topological space $X$, write ${\CF}|_S$ for its pull-back through the inclusion map $S\rightarrow X$, where $S$ is a subset of $X$. For every section $f\in \CF(X)$, write \[f|_S\in  {\CF}|_S(S)\] for the image of $f$ under the natural map $\CF(X)\rightarrow {\CF}|_S(S)$.
	We will use the following result frequently without further explanation.
	
	\begin{cord}\label{cor:bumpfun}
		Let $V$, $U$ be two open subsets of $M$ such that $\overline{V}\subset U$. Then there are   formal functions $f,g\in \CO(M)$ such that
		\begin{eqnarray*}f|_{\overline{V}}&=1,\quad\text{and}\quad f|_{M\setminus U}&=0 ,\\
			g|_{\overline{V}}&=0,\quad\text{and}\quad g|_{M\setminus U}&=1.\end{eqnarray*}
	\end{cord}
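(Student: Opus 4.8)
\textbf{Proof proposal for Corollary \ref{cor:bumpfun}.}

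The plan is to deduce this directly from Proposition \ref{lemr01} by choosing a suitable two-element open cover. First I would set $U_1 := U$ and $U_2 := M \setminus \overline{V}$. Since $\overline{V} \subset U$, these two open sets cover $M$. Applying Proposition \ref{lemr01} to the open cover $\{U_1, U_2\}$, I obtain formal functions $f_1, f_2 \in \CO(M)$ with $\mathrm{supp}\, f_1 \subset U_1 = U$, $\mathrm{supp}\, f_2 \subset U_2 = M \setminus \overline{V}$, the supports locally finite, and $f_1 + f_2 = 1$. I then set $f := f_1$ and $g := f_2$.

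Next I would verify the four stated equalities. Since $\mathrm{supp}\, f_2 \subset M \setminus \overline{V}$, the germ of $f_2$ vanishes at every point of $\overline{V}$; because the reduction map and the passage to germs are compatible, this gives $f_2|_{\overline{V}} = 0$ as a section of $\CO|_{\overline{V}}(\overline{V})$, hence $g|_{\overline{V}} = 0$. From $f_1 + f_2 = 1$ (restricted to the relatively compact pieces, as in the definition of partition of unity, which suffices to conclude the identity of sections over $\overline{V}$ and over $M \setminus U$) we get $f_1|_{\overline{V}} = 1$, i.e. $f|_{\overline{V}} = 1$. Symmetrically, $\mathrm{supp}\, f_1 \subset U$ forces $f_1$ to vanish on $M \setminus U$, so $f|_{M \setminus U} = 0$ and then $g|_{M \setminus U} = f_2|_{M \setminus U} = (1 - f_1)|_{M \setminus U} = 1$.

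There is one small technical point that needs care rather than ingenuity: the sum condition in the definition of a partition of unity is phrased in terms of relatively compact open subsets, so to read off identities of sections over the non-open sets $\overline{V}$ and $M \setminus U$ I would cover each point of these sets by a relatively compact open neighborhood, apply the local finiteness and the normalization there, and then glue using the sheaf property of $\CO$ (and of $\CO|_{\overline{V}}$, $\CO|_{M\setminus U}$). This is the only genuine obstacle, and it is entirely routine given the partition of unity machinery already established; everything else is formal bookkeeping with germs and supports.
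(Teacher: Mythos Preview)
Your proposal is correct and is essentially identical to the paper's proof: the paper simply says ``take $\{f,g\}$ to be a partition of unity on $M$ subordinate to the open cover $\{U, M\setminus\overline{V}\}$,'' which is exactly your choice $U_1=U$, $U_2=M\setminus\overline{V}$ with $f=f_1$, $g=f_2$. The extra care you take in unpacking the restriction to $\overline{V}$ and $M\setminus U$ via relatively compact neighborhoods is fine but not something the paper spells out.
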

	\begin{proof}
		The assertion follows by taking $\{f,g\}$ to be a partition of unity on $M$ subordinate to the open cover $\{U,M\setminus \overline{V}\}$.
	\end{proof}
	
	\subsection{Formal stalks}\label{subsec:formalstalk}
	Let $a\in M$.
	Recall from the Introduction that the formal stalk of $\CO$ at $a$ is
	\[\wh{\CO}_a:=\varprojlim_{r\in \BN} \CO_a/\m_a^r.\]
	For a formal function $f$ defined on an open neighborhood of $a$,
	write $f_a$ for the germ of $f$ in $\CO_a$. 
	\begin{lemd}\label{lem:mrbasis} Assume that $M=(\R^n)^{(k)}$ for some $n,k\in \BN$. Then
		for every $a\in M$ and $r\in \BN$, the set
		\be\label{eq:basis}
		\{((x-a)^Iy^J)_a+\m_a^r\mid I\in \BN^n, J\in \BN^k\ \text{with}\ |I|+|J|<r\}
		\ee
		forms a basis of $\CO_a/\m_a^r$, where for $I=(i_1,i_2,\dots,i_n)$ and $J=(j_1,j_2,\dots,j_k)$,
		\[
		(x-a)^Iy^J:=(x_1-x_1(a))^{i_1}(x_2-x_2(a))^{i_2}\cdots(x_n-x_n(a))^{i_n}y_1^{j_1}y_2^{j_2}\cdots y_k^{j_k}.
		\]
	\end{lemd}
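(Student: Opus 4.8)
The plan is to reduce the statement to the classical fact about Taylor expansions of smooth functions. Fix $a\in M=(\R^n)^{(k)}$, so that $\CO_a$ is the stalk at $a$ of the sheaf $U\mapsto \RC^\infty(U)[[y_1,\dots,y_k]]$, which is the ring $\RC^\infty_a[[y_1,\dots,y_k]]$ of formal power series over the local ring $\RC^\infty_a$ of germs of smooth functions at $a\in\R^n$. First I would identify the maximal ideal $\m_a$ explicitly: a germ $f=\sum_J f_J y^J$ lies in $\m_a$ if and only if its value at $a$ vanishes, i.e.\ $f_{(0,\dots,0)}(a)=0$; equivalently $\m_a=\m_a^{\RC^\infty}+(y_1,\dots,y_k)$, where $\m_a^{\RC^\infty}$ is the maximal ideal of $\RC^\infty_a$, generated by $x_1-x_1(a),\dots,x_n-x_n(a)$. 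The key elementary input is Hadamard's lemma (the smooth Taylor theorem with remainder): for any smooth germ $g$ at $a\in\R^n$ and any $r$, one can write $g=\sum_{|I|<r}c_I(x-a)^I + (\text{element of }(\m_a^{\RC^\infty})^r)$ with constants $c_I=\partial_x^I g(a)/I!$; in other words the classes $\{((x-a)^I)_a+(\m_a^{\RC^\infty})^r : |I|<r\}$ form a basis of $\RC^\infty_a/(\m_a^{\RC^\infty})^r$.

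Next I would prove the spanning statement. Any germ in $\CO_a$ is $f=\sum_{J\in\BN^k} f_J\, y^J$ with $f_J\in\RC^\infty_a$. Working modulo $\m_a^r$: since $y^J\in\m_a^{|J|}$, all terms with $|J|\ge r$ already lie in $\m_a^r$, so $f\equiv \sum_{|J|<r} f_J y^J \pmod{\m_a^r}$. For each such $J$, apply Hadamard's lemma to $f_J$ up to order $r-|J|$, writing $f_J\equiv \sum_{|I|<r-|J|}c_{I,J}(x-a)^I$ modulo $(\m_a^{\RC^\infty})^{\,r-|J|}\subset \m_a^{\,r-|J|}$; multiplying by $y^J\in\m_a^{|J|}$ shows the error term lands in $\m_a^r$. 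Hence $f\equiv\sum_{|I|+|J|<r}c_{I,J}\,(x-a)^I y^J\pmod{\m_a^r}$, which proves that the set \eqref{eq:basis} spans $\CO_a/\m_a^r$ over $\BC$.

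For linear independence, suppose $\sum_{|I|+|J|<r}c_{I,J}(x-a)^I y^J\in\m_a^r$. I would extract coefficients using the partial-derivative operators $\partial_x^I$ and $\partial_y^J$ evaluated at $a$ (i.e.\ composed with $\delta_a$): the point is that $\delta_a\circ\partial_x^{I'}\partial_y^{J'}$ kills $\m_a^r$ whenever $|I'|+|J'|<r$ (each differentiation lowers the order of vanishing by at most one, and the value at $a$ of anything in $\m_a^{\ge 1}$ is zero), while $(\partial_x^{I'}\partial_y^{J'}(x-a)^Iy^J)(a)=I!\,J!\,\delta_{I,I'}\delta_{J,J'}$. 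Applying $\delta_a\circ\partial_x^{I'}\partial_y^{J'}$ to the assumed relation therefore forces $c_{I',J'}=0$ for every $I',J'$ with $|I'|+|J'|<r$. Combined with the spanning statement and a dimension/cardinality count (the number of such monomials equals $\dim_{\BC}\CO_a/\m_a^r$, which then also confirms $\m_a^r$ is exactly the span of the $(x-a)^Iy^J$ with $|I|+|J|\ge r$), this gives that \eqref{eq:basis} is a basis.

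I expect the only real subtlety — the ``hard part'' such as it is — to be the careful bookkeeping that $\m_a^r = (\m_a^{\RC^\infty} + (y_1,\dots,y_k))^r$ decomposes correctly, i.e.\ that an element of $\m_a^r$ is a sum of terms $(\m_a^{\RC^\infty})^{s}\cdot(y)^{t}$ with $s+t\ge r$, so that Hadamard's lemma applied separately in the $x$-directions (to the right order, depending on $|J|$) produces remainders genuinely in $\m_a^r$; and dually that each first-order operator $\partial_{x_i}$ or $\partial_{y_j}$ sends $\m_a^{r}$ into $\m_a^{r-1}$. Both facts are routine once $\m_a$ is described by generators, so no lengthy computation is needed; everything else is standard multilinear algebra over $\BC$.
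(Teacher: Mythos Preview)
Your proposal is correct and follows essentially the same approach as the paper: Taylor expansion (Hadamard's lemma) of each coefficient $f_J$ to the appropriate order gives spanning, and applying the operators $\partial_x^I\partial_y^J$ (evaluated at $a$) gives linear independence. The paper's proof is slightly more terse and does not spell out the decomposition $\m_a=\m_a^{\RC^\infty}+(y_1,\dots,y_k)$, but the argument is the same.
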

	\begin{proof}
		Let $U$ be an open neighborhood of $a$ in $M$, and let
		\[f=\sum_{J\in \BN^k} f_J y^J\in \RC^\infty(U)[[y_1,y_2,\dots,y_k]].\]
		For every $J\in \BN^k$ with $|J|<r$, write
		\[f_J=\sum_{I\in \BN^n; |I|< r-|J|}b_{I,J}\cdot (x-a)^I+ \sum_{I\in \BN^n; |I|=r-|J|}g_{I,J}\cdot (x-a)^I\]
		for the $(r-|J|-1)$-th order Taylor expansion of $f_J$ around the point $a$, where  $b_{I,J}\in \C$ and $g_{I,J}\in \RC^\infty(U)$.
		It is straightforward to check that
		\[f_a=\sum_{I\in \BN^n, J\in \BN^k; |I|+|J|< r} b_{I,J} \cdot ((x-a)^Iy^J)_a \mod\ \m_a^r.\]
		This implies that the set \eqref{eq:basis} spans $\CO_a/\m_a^r$.
		On the other hand,    by applying the
		operators
		\[\partial_x^I\partial_y^J\quad (I\in \BN^n, J\in \BN^k\ \text{with}\ |I|+|J|<r)\] to the elements in \eqref{eq:basis}, we see that these elements are  linearly   independent.
		This completes the proof.
	\end{proof}

	For every $a\in M$ and $r\in \BN$, Lemma \ref{lem:mrbasis} implies that the quotient space $\CO_a/\m_a^r$ is  finite-dimensional. We equip the space $\CO_a/\m_a^r$ with the Euclidean topology, which is the unique topology that makes it a Hausdorff LCS. 
	Now, we endow the formal stalk \[\wh{\CO}_a=\varprojlim_{r\in \BN} \CO_a/\m_a^r\]
	with the projective limit topology. Then it becomes a topological
	$\C$-algebra. 
	On the other hand, we view $\C[[x_1,x_2,\dots,x_n,y_1,y_2,\dots,y_k]]$ ($n,k\in \BN$) as a topological
	$\C$-algebra with the usual term-wise convergence topology.

	\begin{prpd}\label{lem:formalstalkiso}
		For every $a\in M$, we have that
		\be \label{eq:formalstalkiso}
		\wh{\CO}_a \cong \C[[x_1,x_2,\dots,x_n,y_1,y_2,\dots,y_k]]
		\ee
		as topological $\C$-algebras, where $n:=\dim_a M$ and $k:=\deg_a M$. Furthermore, the natural map
		\be\label{eq:stalktoformalstalk}
		\CO_a/\cap_{r\in \BN}\m_a^r\rightarrow \wh{\CO}_a
		\ee
		is a $\BC$-algebra isomorphism.
	\end{prpd}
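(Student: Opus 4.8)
The plan is to reduce everything to the local model $M = (\R^n)^{(k)}$, which is legitimate since the formal stalk, the ideals $\m_a^r$, and all constructions in sight depend only on a neighborhood of $a$, and by Definition \ref{def:formalmanifold} such a neighborhood is isomorphic to an open submanifold $N \subset \R^n$ with structure sheaf $\CO_N^{(k)}$; shrinking further we may take $N$ to be an open ball, hence diffeomorphic to $\R^n$. So assume $M = (\R^n)^{(k)}$ and, after translating, $a = 0$, so that $x_i(a) = 0$ and the monomials in Lemma \ref{lem:mrbasis} are simply the $x^I y^J$.

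First I would write $P := \C[[x_1,\dots,x_n,y_1,\dots,y_k]]$ and define the candidate isomorphism $\Phi\colon \wh\CO_a \to P$ directly on finite levels. By Lemma \ref{lem:mrbasis}, $\{(x^I y^J)_a + \m_a^r : |I|+|J| < r\}$ is a basis of $\CO_a/\m_a^r$, so there is a unique linear isomorphism $\CO_a/\m_a^r \xrightarrow{\sim} P/\m_P^r$ sending $(x^I y^J)_a + \m_a^r$ to $x^I y^J + \m_P^r$, where $\m_P$ is the maximal ideal of $P$; one checks it is a $\C$-algebra isomorphism because the Taylor-expansion computation in the proof of Lemma \ref{lem:mrbasis} shows the multiplication table of these monomial classes in $\CO_a/\m_a^r$ matches the evident one in $P/\m_P^r$ (products of monomials are monomials, truncated at total degree $r$). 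These isomorphisms are compatible with the transition maps $\CO_a/\m_a^{r+1} \to \CO_a/\m_a^r$ and $P/\m_P^{r+1} \to P/\m_P^r$, so passing to the inverse limit gives the algebra isomorphism $\Phi\colon \wh\CO_a \xrightarrow{\sim} \varprojlim P/\m_P^r = P$. Since each level map is an isomorphism of finite-dimensional Hausdorff LCS (automatically a homeomorphism), and both sides carry the projective limit topology, $\Phi$ is a topological isomorphism; this gives \eqref{eq:formalstalkiso}.

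For \eqref{eq:stalktoformalstalk}, the natural map $\CO_a \to \wh\CO_a$ has kernel exactly $\bigcap_{r} \m_a^r$ by definition of the inverse limit, so it factors through an injection $\CO_a/\bigcap_r \m_a^r \hookrightarrow \wh\CO_a$; the only content is surjectivity. Transporting through $\Phi$, it suffices to show that the image of $\CO_a$ in $P$ is all of $P$, i.e.\ that every formal power series in $x$ and $y$ is realized by some germ. Given $\sum_{I,J} c_{I,J} x^I y^J \in P$, first use Borel's lemma on $\R^n$ to produce a smooth function $g$ near $0$ with $\partial_x^I g(0)/I! = $ the appropriate coefficient for each $J$; more precisely, for each $J \in \BN^k$ choose by Borel's lemma a smooth function $g_J$ defined near $0$ whose Taylor coefficients at $0$ are the $c_{I,J}$, and then set $f := \sum_J g_J\, y^J \in \RC^\infty(U)[[y_1,\dots,y_k]] = \CO(U)$ for a small neighborhood $U$ of $0$. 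By the formula in the proof of Lemma \ref{lem:mrbasis}, $f_a \bmod \m_a^r = \sum_{|I|+|J|<r} c_{I,J}(x^I y^J)_a \bmod \m_a^r$ for every $r$, so $\Phi(\text{image of } f_a) = \sum_{I,J} c_{I,J} x^I y^J$, proving surjectivity. Hence $\CO_a/\bigcap_r \m_a^r \to \wh\CO_a$ is a $\C$-algebra isomorphism.

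The main obstacle is the verification that the level-$r$ maps are \emph{algebra} maps and, relatedly, that the monomial basis of Lemma \ref{lem:mrbasis} behaves multiplicatively in the expected way: one must make sure that when $g_{I,J} \in \RC^\infty(U)$ are genuine smooth functions (the ``remainder'' terms in the Taylor expansion), the corresponding germs really do lie in $\m_a^r$, so that the reduction mod $\m_a^r$ is the naive truncation of the monomial expansion. This is exactly the straightforward-but-needs-care computation already carried out inside the proof of Lemma \ref{lem:mrbasis}, so it should be invoked rather than redone; the remaining work — compatibility with transition maps, passage to the limit, and the Borel's lemma surjectivity argument — is routine.
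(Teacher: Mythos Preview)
Your proposal is correct and follows essentially the same route as the paper: reduce to the local model, use Lemma~\ref{lem:mrbasis} to identify $\CO_a/\m_a^r$ with $P/\m_P^r$, pass to the inverse limit for the topological isomorphism, and invoke Borel's lemma for surjectivity of $\CO_a \to \wh\CO_a$. The only cosmetic difference is that the paper first writes down the Taylor expansion $\CO_a \to \C[[\mathbf{x},\mathbf{y}]]$ as an explicit $\C$-algebra homomorphism \emph{before} passing to finite levels, so the algebra-map property is immediate and the ``main obstacle'' you flag disappears; your version recovers the same fact by checking the monomial multiplication table in $\CO_a/\m_a^r$, which also works but is slightly less direct.
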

	
	\begin{proof}
		Without loss of generality, we assume that $M=(\R^n)^{(k)}$ and  $a=0\in M$. Taylor expansion around the point $a$ yields  the following $\C$-algebra homomorphism
		\begin{equation}\begin{split}\label{eq:Taylorexpansion}
				\CO_a&\rightarrow \C[[{\mathbf{x},\mathbf{y}}]]:=\C[[x_1,x_2,\dots,x_n,y_1,y_2,\dots,y_k]],\\
				f_a&\mapsto \sum_{J\in \BN^k}\left(\sum_{I\in \BN^n}\frac{1}{I!} \partial_x^{I}(f_{J})(a) \cdot x^I\right)y^J
		\end{split}\end{equation}
		where $f=\sum_{J\in \BN^k} f_J y^J$ is a formal function on an open neighborhood of $a$
		and \[\sum_{I\in \BN^n}\frac{1}{I!} \partial_x^{I}(f_{J})(a)\cdot x^I\] is the Taylor series of $f_J$ at $a$.
		Note that Borel's lemma implies that the map \eqref{eq:Taylorexpansion} is surjective.
		
		Write $\m$ for the maximal ideal of $ \C[[{\mathbf{x},\mathbf{y}}]]$. By  Lemma \ref{lem:mrbasis}, we know that  the  algebra homomorphism
		\[
		\CO_a/\m_a^r \longrightarrow \C[[{\mathbf{x},\mathbf{y}}]]/\m^r\quad (r\in \BN)
		\]
		induced by \eqref{eq:Taylorexpansion} is an isomorphism.
		This produces a topological algebra isomorphism
		\be\label{eq:formalstalkisoring}
		\wh{\CO}_a=\varprojlim_{r\in \BN}\CO_a/\m_a^r \longrightarrow \C[[{\mathbf{x},\mathbf{y}}]]
		= \varprojlim_{r\in \BN}\C[[{\mathbf{x},\mathbf{y}}]]/\m^r,
		\ee
		which proves the first assertion in this proposition.
		For the second one, one only needs to note that
		the surjective map \eqref{eq:Taylorexpansion} is the composition of the maps
		\[\CO_a\longrightarrow \wh\CO_a\xrightarrow{\eqref{eq:formalstalkisoring}} \C[[{\mathbf{x},\mathbf{y}}]].\]
	\end{proof}

	We also record the following straightforward result as a lemma for later use.
	
	\begin{lemd}\label{dformalf0}
		The canonical map
		\be\label{maptofs}
		\CO(M)\rightarrow \prod_{a\in M} \widehat \CO_a
		\ee
		is injective.
	\end{lemd}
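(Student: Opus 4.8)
The plan is to reduce the statement to a local computation using the formal stalk isomorphism already established in Proposition \ref{lem:formalstalkiso}. Suppose $f\in\CO(M)$ maps to $0$ in $\prod_{a\in M}\widehat\CO_a$; we must show $f=0$. Since $\CO$ is a sheaf, it suffices to show that the germ $f_a$ vanishes in $\CO_a$ for every $a\in M$, and since vanishing is a local condition we may work on a chart. So without loss of generality we take $M=(\R^n)^{(k)}$ and $a\in M$ arbitrary, and we know $f_a$ maps to $0$ in $\widehat\CO_a$.

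The key point is that on a chart the map $\CO(M)\to\widehat\CO_a$ is, up to the isomorphism \eqref{eq:formalstalkisoring}, exactly the Taylor expansion map \eqref{eq:Taylorexpansion}: writing $f=\sum_{J\in\BN^k}f_Jy^J$ with $f_J\in\RC^\infty(M)$, the image of $f_a$ in $\C[[\mathbf{x},\mathbf{y}]]$ is $\sum_J\bigl(\sum_I\frac{1}{I!}\partial_x^I(f_J)(a)\,x^I\bigr)y^J$. Hence the hypothesis that this image is $0$ says precisely that all partial derivatives $\partial_x^I(f_J)(a)$ vanish, for every $I\in\BN^n$ and every $J\in\BN^k$. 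If this holds for \emph{every} $a\in M$, then each smooth function $f_J$ has all its partial derivatives identically zero on $M$; since $M=\R^n$ is connected (or, in general, by applying the argument on each connected component), each $f_J$ is constant, and that constant is $f_J(a)=0$. Therefore $f_J=0$ for all $J$, so $f=0$ as a formal power series with $\RC^\infty$ coefficients, proving injectivity on the chart; covering $M$ by charts and using the sheaf property of $\CO$ finishes the general case.

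I do not expect any genuine obstacle here — the result is labelled "straightforward" in the text, and the work is entirely in unwinding the identification of \eqref{maptofs} with Taylor expansion. The one point requiring a word of care is that the $\C$-algebra homomorphism \eqref{eq:Taylorexpansion} has kernel $\cap_{r}\m_a^r$ rather than $0$, so vanishing in $\widehat\CO_a$ is weaker than vanishing in $\CO_a$; but this is exactly why the hypothesis must be imposed at \emph{all} points $a$ simultaneously, and the connectedness argument above converts the pointwise-derivative-vanishing into honest vanishing of the coefficient functions $f_J$. Alternatively, one can phrase the same argument more slickly: for fixed $a$, the composite $\CO(M)\to\CO_a\to\widehat\CO_a\cong\C[[\mathbf{x},\mathbf{y}]]$ kills $f$, and a formal function whose full Taylor jet vanishes at every point of a connected smooth manifold is zero; the general (possibly disconnected, non-equidimensional) case follows componentwise.
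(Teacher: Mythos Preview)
Your argument is correct, and the paper itself records this lemma as ``straightforward'' without supplying a proof, so there is nothing to compare against. One minor simplification: you invoke connectedness to deduce that each $f_J$ is constant and hence zero, but this is unnecessary --- since the multi-index $I=0$ is included, the vanishing of $\partial_x^I(f_J)(a)$ for all $a$ already gives $f_J(a)=0$ for all $a$ directly, so $f_J\equiv 0$ on the chart without any connectedness hypothesis.
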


	Let $\varphi=(\overline{\varphi}, \varphi^*): (M,\CO)\rightarrow (M', \CO')$ be a morphism of formal manifolds.
	For every  $a\in M$,  we let
	\[\varphi^*_a:\ \CO'_{\overline{\varphi}(a)}\rightarrow \CO_a\] denote the induced local homomorphism of the stalks.
	Then there is  a unique local homomorphism
	\be\label{eq:homonformalstalks}
	\varphi^*_a:\ \widehat {\CO'}_{\overline{\varphi}(a)}\rightarrow \widehat \CO_a
	\ee
	between the formal stalks such that the diagram
	\be \label{eq:comofwhO} \begin{CD}
		\CO'_{\overline\varphi(a)} @>{\varphi^*_a}>>
		\CO_a \\@VVV @VVV\\
		\wh{\CO'}_{\overline\varphi(a)} @>{\varphi^*_a}>>
		\wh\CO_a           \\
	\end{CD}\ee
	commutes. Furthermore,
	we have the following result.
	
	\begin{lemd}\label{lem:homonformalstalkscon}
		Under the projective limit topologies, the map \eqref{eq:homonformalstalks} is continuous.
	\end{lemd}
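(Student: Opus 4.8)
The map in \eqref{eq:homonformalstalks} is, by construction, the projective limit of the maps on the finite quotients, so the natural strategy is to reduce continuity to the level of the truncations $\CO_a/\m_a^r$. First I would observe that, since $\varphi_a^*\colon \CO'_{\overline\varphi(a)}\to\CO_a$ is a \emph{local} homomorphism, it carries $\m_{\overline\varphi(a)}$ into $\m_a$, hence $\m_{\overline\varphi(a)}^r$ into $\m_a^r$ for every $r\in\BN$. Therefore $\varphi_a^*$ descends to $\C$-algebra homomorphisms
\[
\varphi_a^{*,r}\colon\ \CO'_{\overline\varphi(a)}/\m_{\overline\varphi(a)}^r\ \longrightarrow\ \CO_a/\m_a^r,
\]
compatible with the projection maps for varying $r$, and the map \eqref{eq:homonformalstalks} is precisely $\varprojlim_r \varphi_a^{*,r}$.

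Next, by Lemma \ref{lem:mrbasis} (applied in a chart around $a$ and around $\overline\varphi(a)$), both $\CO'_{\overline\varphi(a)}/\m_{\overline\varphi(a)}^r$ and $\CO_a/\m_a^r$ are finite-dimensional, and we have equipped them with the Euclidean topology, which is the unique Hausdorff LCS topology on a finite-dimensional space. A linear map between finite-dimensional Hausdorff topological vector spaces is automatically continuous, so each $\varphi_a^{*,r}$ is continuous. Then continuity of the projective limit $\varprojlim_r \varphi_a^{*,r}$ is a formal consequence: a map into a projective limit $\wh\CO_a=\varprojlim_r \CO_a/\m_a^r$ is continuous iff its composition with each structure projection $\wh\CO_a\to\CO_a/\m_a^r$ is continuous, and that composition is $\varphi_a^{*,r}$ precomposed with the projection $\wh{\CO'}_{\overline\varphi(a)}\to\CO'_{\overline\varphi(a)}/\m_{\overline\varphi(a)}^r$, which is continuous by definition of the projective limit topology on the source. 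This gives the result.

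There is essentially no serious obstacle here; the only point requiring a little care is the very first step — verifying that a local homomorphism respects the $\m$-adic filtrations, i.e. that $\varphi_a^*(\m_{\overline\varphi(a)})\subset\m_a$ — but this is immediate from the definition of ``local homomorphism'' (the preimage of the maximal ideal is the maximal ideal), and then $\varphi_a^*(\m_{\overline\varphi(a)}^r)\subset\m_a^r$ follows since $\varphi_a^*$ is multiplicative. One should also make sure the commutativity of \eqref{eq:comofwhO} is used to identify \eqref{eq:homonformalstalks} with $\varprojlim_r\varphi_a^{*,r}$, so that the finite-dimensionality argument of Lemma \ref{lem:mrbasis} genuinely applies to the quotients in question. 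Everything else is routine category-theoretic manipulation of projective limits.
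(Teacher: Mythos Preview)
Your argument is correct and is essentially the same as the paper's, just spelled out in more detail: the paper's one-line proof invokes exactly the fact that linear maps between finite-dimensional Hausdorff LCS are automatically continuous, and your unpacking via $\varprojlim_r \varphi_a^{*,r}$ is the standard way to make that invocation precise in the projective-limit setting.
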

	\begin{proof} This follows from the fact that every linear map between finite-dimensional Hausdorff LCS is automatically continuous.
	\end{proof}

	\begin{lemd}\label{homfst}
		Let
		\[
		\varphi=(\overline\varphi, \varphi^*),\, \psi=(\overline\psi, \psi^*):\ (M,\CO)\rightarrow (M', \CO')
		\]
		be two morphisms of formal manifolds. If $\overline\varphi=\overline\psi$, and for all $a\in M$,
		the maps
		\[
		\varphi^*_a,\, \psi^*_a:\ \widehat{\CO'}_b\rightarrow \widehat \CO_a\qquad (b:=\overline\varphi(a)=\overline\psi(a))
		\]
		are equal, then $\varphi=\psi$.
	\end{lemd}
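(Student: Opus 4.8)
The plan is to reduce the statement to the injectivity of the canonical map $\CO(M)\rightarrow \prod_{a\in M}\widehat{\CO}_a$ established in Lemma~\ref{dformalf0}, applied to the difference of the two pullback homomorphisms. First I would observe that since $\overline{\varphi}=\overline{\psi}$ (call this common continuous map $\overline{\varphi}$), both $\varphi$ and $\psi$ give rise to $\C$-algebra sheaf homomorphisms $\varphi^*,\psi^*:\overline{\varphi}^{-1}\CO'\rightarrow\CO$, and it suffices to show these agree. Because a morphism of sheaves on a space is determined by its effect on stalks, it is enough to prove that the stalk maps $\varphi^*_a,\psi^*_a:\CO'_{b}\rightarrow\CO_a$ coincide for every $a\in M$, where $b=\overline{\varphi}(a)$.

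Next I would fix $a\in M$ and compare $\varphi^*_a$ with $\psi^*_a$ at the level of the honest stalk $\CO_a$. For any germ $g\in\CO'_b$, the element $\varphi^*_a(g)-\psi^*_a(g)$ lies in $\CO_a$; I want to show it is zero. By Lemma~\ref{dformalf0} it is enough to check that its image in $\widehat{\CO}_a$ vanishes. But the composition $\CO'_b\xrightarrow{\varphi^*_a}\CO_a\rightarrow\widehat{\CO}_a$ equals, by the commutativity of diagram~\eqref{eq:comofwhO}, the composition $\CO'_b\rightarrow\widehat{\CO'}_b\xrightarrow{\varphi^*_a}\widehat{\CO}_a$, and similarly for $\psi$; since by hypothesis the two maps $\widehat{\CO'}_b\rightarrow\widehat{\CO}_a$ are equal, these two compositions $\CO'_b\rightarrow\widehat{\CO}_a$ agree. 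Hence $\varphi^*_a(g)$ and $\psi^*_a(g)$ have the same image in $\widehat{\CO}_a$, so they are equal in $\CO_a$. This gives $\varphi^*_a=\psi^*_a$ for all $a$, hence $\varphi^*=\psi^*$ as sheaf morphisms $\overline{\varphi}^{-1}\CO'\rightarrow\CO$, and therefore $\varphi=\psi$.

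The one point requiring a little care — and the place I would expect the only real subtlety — is the passage from ``$\varphi^*_a$ and $\psi^*_a$ agree on germs'' to ``$\varphi^*$ and $\psi^*$ agree as morphisms of the sheaves $\overline{\varphi}^{-1}\CO'$ and $\CO$'': one must recall that a morphism of sheaves of abelian groups (or $\C$-algebras) over a fixed topological space is uniquely determined by the family of induced maps on stalks, which is a standard fact since every section is determined by its germs and the stalk maps are induced functorially by the sheaf map. Granting that, the argument is essentially just Lemma~\ref{dformalf0} plus the commuting square~\eqref{eq:comofwhO}. Note also that we do not even need the local-homomorphism property or continuity here; the hypothesis that the formal-stalk maps coincide is already the whole content, and the finite-dimensionality/topology considerations of Lemmas~\ref{lem:homonformalstalkscon} play no role in this particular uniqueness statement.
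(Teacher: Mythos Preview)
Your overall strategy---reduce to Lemma~\ref{dformalf0} via the commuting square~\eqref{eq:comofwhO}---is exactly the paper's, but there is a genuine gap in how you apply Lemma~\ref{dformalf0}. You write: ``the element $\varphi^*_a(g)-\psi^*_a(g)$ lies in $\CO_a$; \dots\ By Lemma~\ref{dformalf0} it is enough to check that its image in $\widehat{\CO}_a$ vanishes.'' But Lemma~\ref{dformalf0} asserts injectivity of $\CO(M)\to\prod_{a\in M}\widehat{\CO}_a$, not of the single map $\CO_a\to\widehat{\CO}_a$. The latter map is \emph{not} injective when $\dim_a M>0$: any flat smooth function (nonzero germ with vanishing Taylor series, e.g.\ $e^{-1/x^2}$ at $0$) lies in its kernel. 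So from the agreement of the images in $\widehat{\CO}_a$ alone you cannot conclude that $\varphi^*_a(g)=\psi^*_a(g)$ in $\CO_a$.

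The fix is to work with sections rather than germs and use the hypothesis at all points simultaneously. Given an open $U'\subset M'$ and $f\in\CO'(U')$, set $U=\overline{\varphi}^{-1}(U')$ and $h=\varphi^*(f)-\psi^*(f)\in\CO(U)$. For \emph{every} $a\in U$, the diagram~\eqref{eq:comofwhO} and the hypothesis give that the image of $h$ in $\widehat{\CO}_a$ vanishes; now Lemma~\ref{dformalf0} applied to the formal manifold $(U,\CO|_U)$ yields $h=0$. This shows $\varphi^*=\psi^*$ on sections over every open set, hence as sheaf morphisms. (Your paragraph on stalk-determination of sheaf morphisms is then unnecessary.) This is what the paper's one-line ``follows from Lemma~\ref{dformalf0}'' is implicitly doing.
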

	\begin{proof}
		This follows from Lemma \ref{dformalf0}.
		
	\end{proof}

	For every sheaf $\CF$ of $\CO$-modules  and $a\in M$, we define the formal stalk of $\CF$ at $a$ by 
	\[\widehat{\CF}_a:=\varprojlim_{r\in \BN} \CF_a/\m_a^r\CF_a.\]

	\begin{exampled}\label{ex:ol} 
		Suppose that $N$ is a smooth manifold and $k,l\in \BN$. Then there is a homomorphism $\CO_N^{(k)}\rightarrow \CO_N^{(l)}$ of sheaves of algebras given by
		\[
		\begin{array}{rcl}
			\CO_{N}^{(k)}(U)&\rightarrow &\CO_{N}^{(l)}(U), \smallskip \\
			\sum\limits_{J\in \BN^k} f_J y^J&\mapsto&\left\{ \begin{array}{ll}
				\sum\limits_{J\in \BN^k=\BN^k\times\{0\}^{l-k}\subset \BN^l}f_J y^{J},&\textrm{if $k\leq l$;} \smallskip \\
				\sum\limits_{J\in \BN^l=\BN^l\times\{0\}^{k-l}\subset \BN^k}f_Jy^{J},&\textrm{if $k>l$},
			\end{array}
			\right. 
		\end{array}
		\]
		where $U$ is an open subset of $N$ and $f_J\in \RC^\infty(U)$. By using this homomorphism, every sheaf of
		$\CO_N^{(l)}$-modules is also a sheaf of  $\CO_N^{(k)}$-modules. 
		Suppose that $\CF= \CO_{N}^{(l)}$, to be viewed as a sheaf of  $\CO_{N}^{(k)}$-modules as above. Then, 
		similar to \eqref{eq:formalstalkiso}, for every $a\in N$, we have that
		\be\label{eq:formalstalkOnl}
		\widehat{\CF}_a\cong\BC[[x_1, x_2, \dots, x_n, y_1, y_2, \dots, y_l]],\quad \textrm{where } \ n:=\dim_a N.\ee
	\end{exampled}


	\subsection{Softness}
	We discuss here the softness of structure sheaves of formal manifolds. Recall that a sheaf $\CF$  of abelian groups on a topological space $X$ is said to be soft if
	for every closed subset $Z$ of $X$, the canonical map $\CF(X)\rightarrow (\CF|_Z)(Z)$ is surjective.
	
	\begin{prpd}\label{softo}
		The sheaf  $\CO$  is soft.
	\end{prpd}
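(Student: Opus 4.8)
The plan is to reduce the statement to a local computation and then use the partition of unity established in Proposition~\ref{lemr01}, exactly as in the classical proof of softness of the sheaf of smooth functions on a smooth manifold. First I would recall the standard criterion: on a paracompact Hausdorff space, a sheaf $\CF$ is soft if and only if for every point $a$ and every open neighborhood, sections extend from closed sets; and more usefully, it suffices to show that for any closed set $Z$ and any open set $U \supseteq Z$, a section of $\CF$ over (a neighborhood of) $Z$ can be multiplied by a formal function that is $1$ near $Z$ and supported in $U$, then extended by zero. So the heart of the matter is the bump-function statement, which is precisely Corollary~\ref{cor:bumpfun}: given $V \subset U$ open with $\overline V \subset U$, there is $f \in \CO(M)$ with $f|_{\overline V} = 1$ and $f|_{M \setminus U} = 0$.

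The key steps, in order, would be: (1) Let $Z \subseteq M$ be closed and let $s \in (\CO|_Z)(Z)$. By definition of the pullback sheaf, $s$ is represented by an open set $W \supseteq Z$ and a section $\tilde s \in \CO(W)$ (after possibly shrinking and using that $M$ is paracompact, hence normal, so $Z$ has a neighborhood whose closure we can control); more precisely, cover $Z$ by opens on which $s$ lifts, and patch using the sheaf property together with a partition of unity to get $\tilde s \in \CO(W)$ for some open $W \supseteq Z$ with $\tilde s|_Z = s$. (2) Choose, using normality of $M$, an open set $V$ with $Z \subseteq V \subseteq \overline V \subseteq W$. (3) Apply Corollary~\ref{cor:bumpfun} to get $f \in \CO(M)$ with $f|_{\overline V} = 1$ and $f|_{M \setminus W} = 0$. (4) Define $t \in \CO(M)$ by $t := f \cdot \tilde s$ on $W$ and $t := 0$ on $M \setminus \overline V$; these agree on the overlap $W \setminus \overline V$ since $f$ vanishes on a neighborhood of $M \setminus W$ — wait, more carefully: $f$ is supported in $W$, so $f\tilde s$ extended by zero is a well-defined global section, and on $Z \subseteq \overline V$ we have $t|_Z = f|_Z \cdot s = s$. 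Hence $\CO(M) \to (\CO|_Z)(Z)$ is surjective.

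The one genuinely new ingredient compared to the smooth case is the passage from a section of $\CO|_Z$ over $Z$ to an honest section $\tilde s \in \CO(W)$ over an open neighborhood $W$ of $Z$; this is the step where I would be most careful. In the smooth case one argues: the germ of $s$ at each point of $Z$ extends to some open set, these extensions agree on overlaps after shrinking (again using the sheaf axiom and paracompactness), so one gets a section over a neighborhood. The same argument works verbatim here because $\CO$ is a sheaf and $M$ is paracompact Hausdorff, so $Z$ admits a locally finite cover by opens $\{W_\gamma\}$ together with sections $s_\gamma \in \CO(W_\gamma)$ restricting to $s$; using a partition of unity subordinate to $\{W_\gamma, M \setminus Z\}$ (Proposition~\ref{lemr01}) one glues the $s_\gamma$ into a single section over $W := \bigcup_\gamma W_\gamma$, an open neighborhood of $Z$. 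I do not expect any serious obstacle: every tool needed — the sheaf axioms for $\CO$, paracompactness and hence normality of $M$, partitions of unity, and the bump functions of Corollary~\ref{cor:bumpfun} — is already in place, so the proof is essentially a transcription of the classical argument with ``smooth function'' replaced by ``formal function.''
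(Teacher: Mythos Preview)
Your proof is correct, but the paper takes a different and more streamlined route. Rather than constructing the extension by hand, the paper invokes two results from Bredon: first, softness is a local property on paracompact Hausdorff spaces (\cite[Chapter II, Lemma 9.14]{Br2}), so it suffices to show that each $\CO_{\R^n}^{(k)}$ is soft; second, since $\CO_{\R^n}^{(k)}$ is a sheaf of modules over the classically soft sheaf $\CO_{\R^n}^{(0)}$ of smooth functions (via Example~\ref{ex:ol}), softness follows from \cite[Chapter II, Theorem 9.16]{Br2}, which says that any sheaf of modules over a soft sheaf of rings is soft. Your approach is more elementary and self-contained, avoiding the external citations, while the paper's approach pays off immediately afterward: the same module-over-soft-ring theorem instantly gives Corollary~\ref{softo2} (all sheaves of $\CO$-modules are soft), which your direct method does not deliver for free.

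One minor inefficiency in your argument: the two-step extension (first to an open $W \supseteq Z$, then via a bump function to all of $M$) is redundant. When you form $\sum_\gamma f_\gamma s_\gamma$ with a partition of unity subordinate to $\{W_\gamma\} \cup \{M \setminus Z\}$, each summand $f_\gamma s_\gamma$ already extends by zero to a \emph{global} section, so the sum is already in $\CO(M)$ and restricts to $s$ on $Z$; steps (3)--(5) are not needed.
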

	\begin{proof}
		By \cite[Chapter II, Lemma 9.14]{Br2}, it suffices to show that every $a\in M$ admits
		an open neighborhood $U$ such that the sheaf $\CO|_U$ is soft.
		Thus it is enough to show that the sheaf $\CO_{\R^n}^{(k)}$ ($n,k\in \BN$) is soft.
		We know that the  sheaf $\CO_{\R^n}^{(0)}$ of  smooth functions on $\R^n$ is soft (see \cite[Chapter II, Example 9.4]{Br2}).
		Recall from Example \ref{ex:ol} that $\CO_{\R^n}^{(k)}$ is a sheaf of  $\CO_{\R^n}^{(0)}$-modules.
		Thus \cite[Chapter II, Theorem 9.16]{Br2} implies that the sheaf  $\CO_{\R^n}^{(k)}$ is soft,
		as required.
	\end{proof}
	
	\begin{cord}\label{softo1}
		For every $a\in M$, the canonical maps 
		\[\CO(M)\rightarrow \CO_a\quad \text{and} \quad \CO(M)\rightarrow \wh{\CO}_a\] are both surjective.
	\end{cord}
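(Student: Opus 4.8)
The plan is to deduce both surjectivity statements from the softness of $\CO$ established in Proposition \ref{softo}, together with the description of $\CO_a$ and $\wh\CO_a$ as (inverse limits of) quotients by powers of $\m_a$. First I would recall that for any sheaf $\CF$ on a paracompact Hausdorff space $X$, softness is equivalent to flabbiness relative to the filtration by closed sets, and in particular a single point $\{a\}$ is closed in the Hausdorff space $M$. Applying the definition of softness with $Z=\{a\}$, the canonical map $\CO(M)\to(\CO|_{\{a\}})(\{a\})$ is surjective. The next step is to identify $(\CO|_{\{a\}})(\{a\})$ with the stalk $\CO_a$: for the inclusion $\iota:\{a\}\hookrightarrow M$, the pullback sheaf $\iota^{-1}\CO$ on the one-point space has global sections equal to the stalk $(\iota^{-1}\CO)_a=\CO_a$, and the composite $\CO(M)\to(\CO|_{\{a\}})(\{a\})=\CO_a$ is precisely the canonical germ map. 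This gives surjectivity of $\CO(M)\to\CO_a$.

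For the second map, I would factor $\CO(M)\to\wh\CO_a$ through $\CO(M)\to\CO_a\to\wh\CO_a$, where the second arrow is the natural map to the completion. By Proposition \ref{lem:formalstalkiso}, the map $\CO_a\to\wh\CO_a$ is surjective (indeed $\wh\CO_a\cong\CO_a/\cap_r\m_a^r$ via the isomorphism \eqref{eq:stalktoformalstalk}, equivalently the Taylor expansion map \eqref{eq:Taylorexpansion} is surjective by Borel's lemma). A composition of two surjections is surjective, so $\CO(M)\to\wh\CO_a$ is surjective as well.

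Alternatively — and this is essentially the same argument packaged differently — one can argue directly at finite level: for each $r\in\BN$, the quotient sheaf $\CO/\m_\CO^r$ restricted to a chart is a sheaf of modules over the soft sheaf $\CO_{\R^n}^{(0)}$ of smooth functions, hence soft, so $\CO(M)\to(\CO/\m_\CO^r)_a=\CO_a/\m_a^r$ is surjective; one could also note that this surjectivity is already visible from Borel's lemma (it only requires prescribing finitely many Taylor coefficients in the smooth variables and a polynomial in the formal variables). Compatibility over varying $r$ is automatic since everything comes from the single element of $\CO(M)$, and taking the inverse limit yields surjectivity onto $\wh\CO_a$. I do not expect a serious obstacle here: the only point that needs a little care is the identification of $(\CO|_{\{a\}})(\{a\})$ with the stalk $\CO_a$ and of $(\CO/\m_\CO^r)|_{\{a\}}$ with $\CO_a/\m_a^r$, which is routine once one unwinds the definition of the pullback sheaf; everything else is a formal consequence of Proposition \ref{softo} and Proposition \ref{lem:formalstalkiso}.
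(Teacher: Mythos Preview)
Your main argument is correct and is exactly the paper's approach: the paper's proof simply cites Proposition \ref{softo} (softness of $\CO$, applied to the closed point $\{a\}$ to get $\CO(M)\twoheadrightarrow\CO_a$) and Proposition \ref{lem:formalstalkiso} (which gives $\CO_a\twoheadrightarrow\wh\CO_a$), and you have spelled this out with the appropriate identification $(\CO|_{\{a\}})(\{a\})=\CO_a$.

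One caution about your alternative paragraph: surjectivity of $\CO(M)\to\CO_a/\m_a^r$ for each $r$ does \emph{not} by itself yield surjectivity onto $\wh\CO_a=\varprojlim_r\CO_a/\m_a^r$, since inverse limits are not right exact; the sentence ``compatibility over varying $r$ is automatic since everything comes from the single element of $\CO(M)$'' is circular, as it presupposes the lift you are trying to construct. To make that route work you would need a Mittag-Leffler argument (the transition maps $\CO_a/\m_a^{r+1}\to\CO_a/\m_a^r$ are surjective between finite-dimensional spaces, and the kernels of $\CO(M)\to\CO_a/\m_a^r$ satisfy Mittag-Leffler), or---more simply---just use your first argument.
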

	\begin{proof}
		This  follows from Propositions \ref{softo} and \ref{lem:formalstalkiso}.
	\end{proof}
	\begin{cord}\label{softo2}
		All sheaves of $\CO$-modules are soft.
	\end{cord}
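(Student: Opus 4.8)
The plan is to deduce this from Proposition \ref{softo} (the structure sheaf $\CO$ is soft) together with the standard fact that any sheaf of modules over a soft sheaf of rings is itself soft. More precisely, I would invoke \cite[Chapter II, Theorem 9.16]{Br2}, which states that if $\CA$ is a soft sheaf of rings on a paracompact Hausdorff space $X$, then every sheaf of $\CA$-modules is soft. Since $(M,\CO)$ is a formal manifold, its underlying space $M$ is paracompact and Hausdorff by Definition \ref{def:formalmanifold}, and $\CO$ is soft by Proposition \ref{softo}. Hence the cited theorem applies verbatim and gives the claim.

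The only point requiring a word of care is that \cite[Chapter II, Theorem 9.16]{Br2} is stated for sheaves of rings, while $\CO$ is a sheaf of commutative unital $\BC$-algebras; but a sheaf of $\BC$-algebras is in particular a sheaf of rings, and an $\CO$-module sheaf is in particular a sheaf of modules over the underlying sheaf of rings, so no adjustment is needed. Alternatively, one can argue directly in the spirit of the proof of Proposition \ref{softo}: given a sheaf $\CF$ of $\CO$-modules and a closed subset $Z\subset M$, a section $s\in (\CF|_Z)(Z)$ extends to a section over some open neighborhood $W$ of $Z$; choosing by Corollary \ref{cor:bumpfun} a formal function $f\in \CO(M)$ with $f|_Z=1$ and $f|_{M\setminus W}=0$, the product $f\cdot s$ (extended by zero) is a global section of $\CF$ restricting to $s$ on $Z$. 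I do not anticipate any genuine obstacle here; the statement is an immediate corollary, and the main content has already been carried out in Proposition \ref{softo}.
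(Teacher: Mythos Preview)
Your proposal is correct and takes essentially the same approach as the paper: the paper's proof is the single sentence ``This follows from Proposition \ref{softo} and \cite[Chapter II, Theorem 9.16]{Br2},'' which is exactly the reference you invoke. Your additional remarks (that $M$ is paracompact Hausdorff, that a sheaf of $\BC$-algebras is in particular a sheaf of rings, and the alternative bump-function argument) are all correct elaborations, but the paper dispenses with them.
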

	\begin{proof}
		This follows from Proposition \ref{softo} and  \cite[Chapter II, Theorem 9.16]{Br2}.
	\end{proof}

	Recall the reduction $\underline \CO$ of $\CO$ from \eqref{underco}.
	
	\begin{prpd}\label{prop:reductionsur}
		The canonical map
		\be\label{suroo}
		\CO(M)\rightarrow \underline \CO(M)
		\ee
		is surjective.
	\end{prpd}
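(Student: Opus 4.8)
The plan is to exhibit \eqref{suroo} as the global-sections map of a short exact sequence of sheaves with soft kernel. By construction $\underline{\CO}=\CO/\m_\CO$, so there is a short exact sequence of sheaves of abelian groups on $M$,
\[
0\longrightarrow\m_\CO\longrightarrow\CO\longrightarrow\underline{\CO}\longrightarrow 0,
\]
whose induced map on global sections is exactly \eqref{suroo}. The ideal $\m_\CO$ is a sheaf of $\CO$-modules, hence soft by Corollary \ref{softo2}; since $M$ is paracompact and Hausdorff, the family of all closed subsets is paracompactifying, so the global-sections functor is exact on this sequence (cf.\ \cite[Chapter II, Theorem 9.11]{Br2}; equivalently, $H^1(M,\m_\CO)=0$). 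In particular $\CO(M)\to\underline{\CO}(M)$ is surjective, which is the claim. I expect this to be essentially the entire argument.

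For completeness I would also sketch a proof not using sheaf cohomology. Given $g\in\underline{\CO}(M)$, fix an atlas $\{U_\gamma\}_{\gamma\in\Gamma}$ of $M$. On a chart $U_\gamma\cong(N_\gamma,\CO_{N_\gamma}^{(k_\gamma)})$ the reduction map sends $\sum_J f_Jy^J$ to $f_{(0,\dots,0)}$ by Example \ref{ex:formalfunction}, hence is (split) surjective, so I can pick $\tilde g_\gamma\in\CO(U_\gamma)$ with $\underline{\tilde g_\gamma}=g|_{U_\gamma}$. Choosing a partition of unity $\{\rho_\gamma\in\CO(M)\}$ subordinate to $\{U_\gamma\}$ via Proposition \ref{lemr01}, each product $\rho_\gamma\tilde g_\gamma$ extends by zero to a formal function on $M$ since $\mathrm{supp}\,\rho_\gamma\subset U_\gamma$, the locally finite sum $f:=\sum_\gamma\rho_\gamma\tilde g_\gamma$ lies in $\CO(M)$, and applying the reduction map — a $\BC$-algebra homomorphism commuting with restrictions and with locally finite sums — gives $\underline{f}=\bigl(\sum_\gamma\underline{\rho_\gamma}\bigr)g=g$, using that $\{\underline{\rho_\gamma}\}$ is a partition of unity on the smooth manifold $\underline{M}$.

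I do not anticipate a genuine obstacle: the statement follows formally from the softness results of this subsection in the first approach, and from the existence of partitions of unity together with the trivial local surjectivity $\RC^\infty(N)[[y_1,\dots,y_k]]\twoheadrightarrow\RC^\infty(N)$ in the second. The only points I would take care to verify are the exactness of global sections on a short exact sequence with soft kernel over the paracompact space $M$ (first approach), and the compatibility of reduction with extension by zero and with locally finite summation that makes $\underline{f}=g$ hold globally (second approach).
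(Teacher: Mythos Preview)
Your first approach is exactly the paper's proof: the short exact sequence $0\to\m_\CO\to\CO\to\underline{\CO}\to 0$ with $\m_\CO$ soft by Corollary~\ref{softo2}, then appeal to Bredon (the paper cites \cite[Chapter II, Theorem 9.9]{Br2}, you cite 9.11, but the content is the same). Your second, partition-of-unity argument is a correct alternative that the paper does not include.
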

	\begin{proof}
		Consider the exact sequence
		\[
		0\rightarrow \m_\CO\rightarrow \CO\rightarrow \underline \CO\rightarrow 0.
		\]
		By Corollary \ref{softo2}, the sheaf $\m_\CO$ is soft. Then the proposition follows from \cite[Chapter II, Theorem 9.9]{Br2}.
		
	\end{proof}

	\section{Differential operators on formal manifolds}\label{sec:Diffop}
	In this section, we study differential operators between two sheaves of $\CO$-modules.
	
	\subsection{Differential operators of modules}
	We first review the notion of differential operators of modules of a commutative unital $\BC$-algebra.

	Let $A$ be a commutative unital $\BC$-algebra, and let $F_1$, $F_2$ be two $A$-modules.
	For  $D\in \Hom_{\BC}(F_1,F_2)$ and  $a\in A$, define an element $[D,a]\in \Hom_{\BC}(F_1,F_2)$ by
	\[
	[D,a](u)=D(au)-aD(u)\quad \textrm{ for all }  u\in F_1.
	\]
	Let $r\in \BN$  throughout this section. 
	\begin{dfn}
		A map $D\in  \Hom_{\BC}(F_1,F_2)$ is called a  differential
		operator of order $\leq r$ if
		\[
		[\cdots[[D,a_0],a_1],\cdots, a_r] =0\ \  \textrm{  for all $a_0,a_1, \dots,a_r\in A$}.
		\]
	\end{dfn}

	Write $\mathrm{Diff}_r(F_1,F_2)$ for the space of all differential operators of order $\leq r$ in  $\Hom_{\BC}(F_1,F_2)$.
	Note that $\mathrm{Diff}_0(F_1,F_2)=\Hom_A(F_1,F_2)$, and
	\[
	\mathrm{Diff}_{r_1}(F_2,F_3)\circ \mathrm{Diff}_{r_2}(F_1,F_2)\subset \mathrm{Diff}_{r_1+r_2}(F_1,F_3)\quad \textrm{for all }r_1, r_2\in \BN,
	\]
	where $F_3$ is another $A$-module.
	\begin{dfn}
		An element $D\in \Hom_{\BC}(A,F_2)$ is called a derivation  if
		\[
		D(a_1 a_2)=a_2 D(a_1)+a_1  D(a_2)\ \  \textrm{  for all $a_1,a_2\in A$}.\]
	\end{dfn}

	Write $\mathrm{Der}(A,F_2)$ for the space of all derivations in $\Hom_{\BC}(A,F_2)$. Obviously, $\mathrm{Der}(A,F_2)\subset \mathrm{Diff}_1(A,F_2)$.
	We identify   $\mathrm{Diff}_0(A,F_2)=\Hom_{A}(A,F_2)$ with $F_2$  via the isomorphism
	\[
	\Hom_{A}(A,F_2)\rightarrow F_2, \quad D\mapsto D(1).
	\]
	One checks that $D-D(1)$ is a derivation for every $D\in \mathrm{Diff}_1(A, F_2)$, which implies 
	\be\label{diff1}
	\mathrm{Diff}_1(A,F_2)=\mathrm{Der}(A,F_2)\oplus F_2.
	\ee
	
	\begin{lemd}\label{dlocal0}
		For every  $D\in \mathrm{Diff}_r(F_1,F_2)$ and every 
		ideal $I$ of $A$, we have that
		\be\label{b}
		D(I^{i+r}F_1)\subset I^iF_2\qquad \textrm{for all }i\in \BN.
		\ee
	\end{lemd}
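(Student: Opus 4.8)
The plan is to prove the inclusion $D(I^{i+r}F_1)\subset I^i F_2$ by induction on $r$, the order of the differential operator $D$. For the base case $r=0$, we have $D\in \Hom_A(F_1,F_2)$, so $D$ is $A$-linear and hence $D(I^i F_1)=I^i D(F_1)\subset I^i F_2$ for every $i\in\BN$; in particular the claim holds trivially.

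For the inductive step, suppose the statement holds for all differential operators of order $\leq r-1$ (between any pair of $A$-modules) and let $D\in \mathrm{Diff}_r(F_1,F_2)$. By definition, for every $a\in A$ the commutator $[D,a]$ lies in $\mathrm{Diff}_{r-1}(F_1,F_2)$. I would argue that it suffices to show $D(b_1b_2\cdots b_{i+r}\,u)\in I^i F_2$ whenever $b_1,\dots,b_{i+r}\in I$ and $u\in F_1$, since such elements span $I^{i+r}F_1$ over $A$ — more precisely, $I^{i+r}F_1$ is the $A$-span (even the additive span) of elements of the form $(b_1\cdots b_{i+r})u$. Writing $a:=b_{i+r}$ and $v:=(b_1\cdots b_{i+r-1})u\in I^{i+r-1}F_1$, we use the identity
\[
 D(av)=[D,a](v)+aD(v).
\]
For the first term, $[D,a]\in \mathrm{Diff}_{r-1}(F_1,F_2)$ and $v\in I^{(i-1)+r}F_1$ (since $i+r-1=(i-1)+r$), so by the inductive hypothesis $[D,a](v)\in I^{i-1}F_2$; hence $a[D,a](v)$ would be in $I^i F_2$ — but wait, we need $[D,a](v)$ itself, not $a$ times it. Let me instead keep $a$ attached: rewrite $D(av)=[D,a](v)+aD(v)$ directly. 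Here $[D,a](v)\in I^{i-1}F_2$ by induction (order drops to $r-1$, and $v\in I^{(i-1)+r}F_1$), which is only in $I^{i-1}F_2$, not $I^i F_2$. So instead I should group differently: take $v':=(b_1\cdots b_{i+r})u$ and peel off one factor, say $D\big(b_{i+r}\cdot(b_1\cdots b_{i+r-1}u)\big)=[D,b_{i+r}]\big(b_1\cdots b_{i+r-1}u\big)+b_{i+r}D\big(b_1\cdots b_{i+r-1}u\big)$. The term $b_{i+r}D(b_1\cdots b_{i+r-1}u)$: since $b_1\cdots b_{i+r-1}u\in I^{(i-1)+r}F_1$, induction on $i$ (with $r$ fixed) gives $D(b_1\cdots b_{i+r-1}u)\in I^{i-1}F_2$, so multiplying by $b_{i+r}\in I$ lands in $I^i F_2$; and $[D,b_{i+r}]\big(b_1\cdots b_{i+r-1}u\big)\in I^i F_2$ directly by the outer induction on $r$ since $[D,b_{i+r}]$ has order $\leq r-1$ and $b_1\cdots b_{i+r-1}u\in I^{i+(r-1)}F_1$. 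Thus a double induction — outer on $r$, inner on $i$ — closes the argument, with base cases $r=0$ (handled above) and $i=0$ (where $I^0=A$ and the claim $D(I^r F_1)\subset F_2$ is vacuous).

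The main subtlety, and the only place requiring care, is bookkeeping the two inductions so they are genuinely well-founded: the outer induction is on $r$, and within a fixed $r$ one inducts on $i$, invoking the outer hypothesis (order $r-1$, all $i$) for the commutator term and the inner hypothesis (same $r$, smaller $i$) for the leftover term. One should also note at the outset that it is enough to check the inclusion on additive generators of $I^{i+r}F_1$, i.e.\ on elements $(b_1\cdots b_{i+r})u$ with $b_j\in I$, $u\in F_1$, because $D$ is additive and $I^{i+r}F_1$ is by definition the subgroup generated by all $A$-multiples of products of $i+r$ elements of $I$ acting on $F_1$ — and any $A$-multiple $a\cdot(b_1\cdots b_{i+r})u$ can be absorbed by replacing $u$ with $au$. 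No deeper obstacle arises; the lemma is essentially a formal consequence of the commutator definition of differential operators.
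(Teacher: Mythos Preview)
Your argument is correct and essentially identical to the paper's: both peel off one factor from a product $f_1\cdots f_{i+r}u$ via the identity $D(f_1 v)=[D,f_1](v)+f_1 D(v)$, and then invoke induction on each term. The only cosmetic difference is that the paper packages the two nested inductions (on $r$ and on $i$) as a single induction on $i+r$, which is slightly cleaner bookkeeping but amounts to the same reduction.
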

	
	\begin{proof}We will prove the lemma by induction on $i+r$. The lemma is obvious if $i+r=0$.
		Now assume that $i+r\geq 1$ and the lemma holds when $i+r$ is smaller.
		If $i=0$ or $r=0$, then \eqref{b} clearly holds. So we assume that $i>0$ and $r>0$. Let $f_1, f_2, \dots, f_{i+r}\in I$.
		From the fact $[D, f_1]\in \mathrm{Diff}_{r-1}(F_1,F_2)$, it follows that
		\begin{eqnarray*}
			D(f_1 f_2 \cdots f_{i+r}F_1)&\subset& D (f_1I^{i+r-1}F_1)\\
			&\subset& [D,f_1](I^{i+r-1}F_1)+ f_1D(I^{i+r-1}F_1)\\
			&\subset& I^iF_2+f_1 I^{i-1}F_2=I^iF_2.
		\end{eqnarray*}
		This proves the lemma.
	\end{proof}
	
	Assume now that $A=\BC[[y_1, y_2, \dots, y_k]]$ for some $k\in \BN$.
	For every $J\in \BN^k$ with $|J|\le r$,  we have that
	\[F_2\circ \partial_y^J\subset  \mathrm{Diff}_r(A,F_2),\]
	where the identification $F_2=\mathrm{Diff}_0(A,F_2)$ is used. Conversely, we have the following result (\cf \cite[Theorem 1.1.8]{Bj}).
	
	\begin{lemd}\label{diff}
		Assume that $A=\BC[[y_1, y_2, \dots, y_k]]$ for some $k\in \BN$, and $F_2$ is  an $A$-module such that
		$\cap_{i\in \BN}\m^i F_2=0$, where $\m$ denotes the maximal ideal in $A$.
		Then
		\[
		\mathrm{Diff}_r(A,F_2)=\bigoplus_{J\in \BN^k; \abs{J}\leq r} F_2\circ \partial_y^J.
		\]
	\end{lemd}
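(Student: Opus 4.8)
The containment $\bigoplus_{J;\,|J|\le r} F_2\circ\partial_y^J \subset \mathrm{Diff}_r(A,F_2)$ is already noted, so the task is the reverse inclusion together with the directness of the sum. The plan is to argue by induction on $r$, using the decomposition \eqref{diff1} as the base case $r\le 1$ and the bracket operation $[D,y_j]$, which lowers order by one, to drive the inductive step. First I would record the key computation: for $J\in\BN^k$ and $1\le j\le k$, one has $[F_2\circ\partial_y^J,\, y_j] = j_{j}\cdot F_2\circ\partial_y^{J-e_j}$ where $e_j$ is the $j$-th standard basis vector (and the right side is $0$ if $j_j=0$). So if $D\in\mathrm{Diff}_r(A,F_2)$, then each $[D,y_j]\in\mathrm{Diff}_{r-1}(A,F_2)$, and by the inductive hypothesis $[D,y_j]=\sum_{|J|\le r-1} c^{(j)}_J\circ\partial_y^J$ for uniquely determined $c^{(j)}_J\in F_2$.

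The crux is to reconstruct $D$ from the data $\{[D,y_j]\}_j$ together with the single element $D(1)\in F_2$. Concretely, I expect to define a candidate operator $\widetilde D:=\sum_{|J|\le r} a_J\circ\partial_y^J$ whose coefficients $a_J$ are forced: $a_0 = D(1)$, and the higher $a_J$ are determined recursively from the coefficients $c^{(j)}_{J'}$ via the identity above (one must check the compatibility, i.e.\ that the value of $a_J$ coming from $[D,y_j]$ does not depend on the choice of $j$ with $j_j>0$ — this is a routine symmetry/consistency check on mixed "partial antiderivatives"). Then $E:=D-\widetilde D$ satisfies $E(1)=0$ and $[E,y_j]=0$ for all $j$. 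The second condition says $E$ is $A$-linear in the sense that $E(y_j u)=y_j E(u)$ for all $u$, hence $E$ is $\BC[y_1,\dots,y_k]$-linear on all of $A$; since $E$ also has finite order $\le r$, Lemma \ref{dlocal0} applied with $I=\m$ gives $E(\m^{N}A)\subset \m^{N-r}F_2$ for all $N\ge r$. Combined with $A$-linearity (genuine $A=\BC[[y]]$-linearity, obtained by a continuity/completeness argument: any element of $A$ is approximated modulo $\m^N$ by a polynomial, and $E$ respects this up to $\m^{N-r}F_2$), we get $E(f) = f\cdot E(1) + (\text{element of }\bigcap_N \m^N F_2)$ for every $f\in A$. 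The hypothesis $\bigcap_i \m^i F_2 = 0$ then forces $E(f)=f\,E(1)=0$, so $D=\widetilde D$ lies in the claimed sum.

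For directness of the sum: if $\sum_{|J|\le r} a_J\circ\partial_y^J = 0$ as an operator, evaluate on monomials $y^{I}$ for $|I|\le r$; since $\partial_y^J(y^I) = \frac{I!}{(I-J)!}\,y^{I-J}$ (zero unless $J\le I$ componentwise), a downward induction on $|I|$ — starting from $|I|=0$ which gives $a_0=0$ — shows successively that each $a_J=0$. This is the same linear-independence mechanism used at the end of the proof of Lemma \ref{lem:mrbasis}.

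\textbf{Main obstacle.} The genuinely delicate point is not the bracket bookkeeping but passing from $\BC[y]$-linearity to $\BC[[y]]$-linearity of the error term $E$, i.e.\ the continuity argument. One must use that $E$ has finite order $\le r$ so that, by Lemma \ref{dlocal0}, $E$ is ``continuous'' for the $\m$-adic filtration up to a fixed shift by $r$, and that the target is separated ($\bigcap_i \m^i F_2 = 0$) — exactly the hypothesis in the statement. Everything else (the base case via \eqref{diff1}, the inductive reduction via $[\,\cdot\,,y_j]$, the consistency of the recovered coefficients, and the linear independence) is routine.
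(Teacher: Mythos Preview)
Your approach is correct and shares the same skeleton as the paper's proof: induction on $r$, the bracket $[D,y_j]$ to lower order, and Lemma~\ref{dlocal0} together with the separatedness hypothesis to pass from polynomials to all of $A$. The organization differs, however. Rather than reconstructing coefficients $a_J$ from the family $\{[D,y_j]\}_j$ and verifying the mixed--partial compatibility you flag, the paper proves directly that the restriction map
\[
\Psi_r:\ \mathrm{Diff}_r(A,F_2)\longrightarrow \Hom_\BC(\BC[\mathbf y]_{\le r},F_2),\qquad D\mapsto D|_{\BC[\mathbf y]_{\le r}}
\]
is injective, and then observes that $\Psi_r$ restricted to $\bigoplus_{|J|\le r} F_2\circ\partial_y^J$ is already a bijection onto the target (this is exactly your directness argument). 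Injectivity of $\Psi_r$ is proved just as you handle the error term $E$: if $D$ kills $\BC[\mathbf y]_{\le r}$ then each $[D,y_j]$ kills $\BC[\mathbf y]_{\le r-1}$, hence vanishes by induction, so $D$ is $\BC[y]$-linear with $D(1)=0$ and thus kills all polynomials; then $D(f)=D(f-\text{truncation})\in\m^{N-r}F_2$ for every $N$, forcing $D(f)=0$. The paper's route sidesteps the consistency check entirely (which in your version would be handled via $[[D,y_j],y_l]=[[D,y_l],y_j]$ and uniqueness at order $r-2$); your version is more constructive, theirs a line shorter.
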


	\begin{proof} Denote by $\C[\mathbf{y}]_{\le r}\subset \C[y_1,y_2,\dots,y_k]$ the subspace of polynomials of degree $\le r$ in $A$. We claim that
		\begin{equation}\label{psirinj}
			\text{the restriction map}\ \Psi_r:\ \mathrm{Diff}_r(A,F_2)\rightarrow \Hom_\BC(\C[\mathbf{y}]_{\le r}, F_2)\ \text{is injective}.\end{equation}
		Note that the lemma is implied by this claim, as
		\[\Psi_r\left(\bigoplus_{J\in \BN^k; \abs{J}\leq r} F_2\circ \partial_y^J\right)=\mathrm{Hom}_\C(\C[\mathbf{y}]_{\le r},F_2).\]

		We now prove the claim \eqref{psirinj} by induction on $r$.
		The claim obviously holds when $r=0$, and so we assume that $r>0$.
		Pick an element $D\in \mathrm{Diff}_r(A,F_2)$ such that $D(f)=0$ for all $f\in \C[\mathbf{y}]_{\le r}$.
		Then for every $i=1,2,\dots,k$, we have  that
		\[[D,y_i]\in \mathrm{Diff}_{r-1}(A,F_2)\quad\text{and}\quad [D,y_i](g)=0\ \text{for all}\ g\in \C[\mathbf{y}]_{\le r-1}.\]
		These together with the induction hypothesis imply that $[D,y_i]=0$ on $A$ for all $i$, and then one checks that 
		\begin{equation}\label{Dvanishpoly}
			D(h)=0\quad \text{for all}\ h\in \BC[y_1, y_2,\dots, y_k].
		\end{equation}
		Let $f=\sum_{J\in \BN^k}c_J y^J$ be an arbitrary formal power series in $A$.
		By  \eqref{Dvanishpoly} and Lemma \ref{dlocal0},  for every $j\in \BN$, we have that
		\[D(f)=D(f)-D\left(\sum_{|J|< j+r}c_J y^J\right)=D\left(\sum_{|J|\ge j+r}c_J y^J\right)\in D(\m^{j+r})\subset \m^j F_2.\]
		Thus, the condition  $\cap_{j\in \BN}\m^j F_2=0$ in the lemma forces   that $D(f)=0$.
		This proves  \eqref{psirinj} and hence the lemma.
	\end{proof}
	
	\subsection{Differential operators on formal manifolds}\label{sec:diff}
	Let $\CF_1$ and $\CF_2$ be two sheaves of $\mathcal{O}$-modules. In this subsection, we introduce the
	notion of differential operators between $\CF_1$ and $\CF_2$.
	
	Recall that $\C_M$ denotes the sheaf  of  locally constant $\BC$-valued functions on $M$.
	Write
	$\Hom_{\BC_M}(\CF_1,\CF_2)$ for the space of all $\BC_M$-homomorphisms between $\CF_1$ and $\CF_2$.
	Then the assignment
	\[U\mapsto\Hom_{\BC_U}(\CF_1|_U,\CF_2|_U)\quad (\text{$U$ is an open subset of $M$})\]
	forms a sheaf of $\BC$-vector spaces over $M$, to be denoted by  $\CH\textrm{om}_{\BC_M}(\CF_1,\CF_2)$.

	Recall that an element $D$ of $\Hom_{\BC_M}(\CF_1,\CF_2)$ is defined to be a family 
	\[
	\{
	(D_U: \CF_1(U)\rightarrow \CF_2(U))\}_{U\textrm{ is an open subset of }M}
	\]
	of linear maps that commute with the restriction maps.  
	\begin{dfn}
		An element $D\in\Hom_{\BC_M}(\CF_1,\CF_2) $ is called  a differential operator of order $\leq r$ if
		\[
		D_U\in \mathrm{Diff}_r(\CF_1(U),\CF_2(U))
		\]
		for all open subsets $U$ of $M$, where $\CF_1(U)$ and $\CF_2(U)$ are viewed as $\CO(U)$-modules, and $\CO(U)$ is viewed as a $\C$-algebra.    
	\end{dfn}

	Write $\mathrm{Diff}_r(\CF_1,\CF_2)\subset \Hom_{\BC_M}(\CF_1,\CF_2)$ for the space of differential operators of order $\leq r$.
	Then we have that
	\[
	\mathrm{Diff}_0(\CF_1,\CF_2)=\mathrm{Hom}_{\CO}(\CF_1,\CF_2),
	\]
	and
	\[
	\mathrm{Diff}_{r_1}(\CF_2,\CF_3)\circ \mathrm{Diff}_{r_2}(\CF_1,\CF_2)\subset \mathrm{Diff}_{r_1+r_2}(\CF_1,\CF_2)\qquad \textrm{for all }\, r_1,r_2\in \BN,
	\]
	where $\CF_3$ is another sheaf of $\CO$-modules.
	
	\begin{dfn}\label{def:findiff}
		An element $D\in \Hom_{\BC_M}(\CF_1,\CF_2)$ is called a finite order differential operator if it belongs to $\mathrm{Diff}_r(\CF_1,\CF_2)$ for some $r\geq 0$.
		It is called a differential operator if for every $a\in M$, there is an open neighborhood $U$ of $a$ in $M$ such that $D|_U\in \Hom_{\BC_U}(\CF_1|_U,\CF_2|_U)$ is a finite order differential operator.
	\end{dfn}
	Respectively write $\mathrm{Diff}_{\mathrm{fin}}(\CF_1,\CF_2)$ and  $\mathrm{Diff}(\CF_1,\CF_2)$ for the space of finite order differential operators and the space of differential operators in $\Hom_{\BC_M}(\CF_1,\CF_2)$.

	\begin{dfn}\label{def:deriv}
		An element  $D\in \Hom_{\BC_M}(\CO,\CF_2)$ is called a derivation if
		\[
		(D_U: \CO(U)\rightarrow \CF_2(U))\in \mathrm{Der}(\CO(U),\CF_2(U))
		\]
		for every open subset $U$ of $M$. 
	\end{dfn}
	
	The space of all derivations in $\Hom_{\BC_M}(\CO,\CF_2)$ is denoted by $\mathrm{Der}(\CO,\CF_2)$.

	Write $\mathcal D\mathrm{iff}(\CF_1,\CF_2)$ for the subsheaf of $\CH\textrm{om}_{\BC_M}(\CF_1,\CF_2)$ given by
	\[U\mapsto \mathrm{Diff}(\CF_1|_U,\CF_2|_U)\quad (U\ \text{is an open subset of}\ M).
	\]
	We also define  the subpresheaf $\mathcal D\mathrm{iff}_{\mathrm{fin}}(\CF_1,\CF_2)$ of $\CH\textrm{om}_{\BC_M}(\CF_1,\CF_2)$,
	and the subsheaf $\mathcal D\textrm{er}(\CO,\CF_2)$ of $\CH\textrm{om}_{\BC_M}(\CO,\CF_2)$ similarly.
	
	For every  $f\in \CO(M)$ and $D\in \mathrm{Diff}(\CF_1,\CF_2)$, we define two differential operators $f\circ D$ and $D\circ f$ in $\mathrm{Diff}(\CF_1,\CF_2)$  by setting
	\be\label{eq:defofDf} (f\circ D)_U:\ u\mapsto f|_U(D_U(u))\quad\text{and}\quad (D\circ f)_U:\ u\mapsto D_U (f|_Uu ),
	\ee 
	where $u\in \CF_1(U)$ and $U$ is an open subset of $M$.
	Then we have two natural  $\CO(M)$-module structures on $\mathrm{Diff}(\CF_1,\CF_2)$  with
	\begin{eqnarray}
		\label{eq:leftactiononD}  &&\CO(M)\times  \mathrm{Diff}(\CF_1,\CF_2)\rightarrow  \mathrm{Diff}(\CF_1,\CF_2),\quad (f,D)\mapsto f\circ D,\\
		\label{eq:rightactiononD} &&\CO(M)\times  \mathrm{Diff}(\CF_1,\CF_2)\rightarrow  \mathrm{Diff}(\CF_1,\CF_2),\quad (f,D)\mapsto D\circ f.
	\end{eqnarray}
	Each of the  two actions mentioned above makes
	$\mathcal D\mathrm{iff}(\CF_1,\CF_2)$ a sheaf of $\CO$-modules. 
	Unless otherwise mentioned,  $\mathcal D\mathrm{iff}(\CF_1,\CF_2)$ is
	viewed as a sheaf of $\CO$-modules with the action \eqref{eq:leftactiononD}. 
	Under this action, $\mathcal D\textrm{er}(\CO,\CF_2)$ is a sheaf of  $\CO$-modules as well (as a subsheaf of $\mathcal D\mathrm{iff}(\CO,\CF_2)$).

	Let $\CF$ be a sheaf of $\CO$-modules. 
	For every open subset $U$ of $M$, write $\CF_\mathrm{c}(U)$ for the space of compactly supported sections in $\CF(U)$. 
	Obviously, $\CF_\mathrm{c}(U)$ is an $\CO(U)$-submodule of $\CF(U)$.
	The softness of $\CF$ (see Corollary \ref{softo2}) implies that
	the assignment
	\be\label{eq:compactofF} U\mapsto \CF_\mathrm{c}(U)\quad (\text{$U$ is an open subset of $M$}),\ee
	together with the extension by zero maps 
	\be\label{eq:ext} \mathrm{ext}_{U,V}:\ \CF_\mathrm{c}(V)\rightarrow \CF_\mathrm{c}(U)\quad (V\subset U\ \text{are open subsets of $M$}),\ee 
	forms  a cosheaf of $\CO$-modules on $M$ (see \cite[Chapter V, Proposition 1.6]{Br2}). We denote the resulting cosheaf by $\CF_\mathrm{c}$.
	When $\CF=\mathcal{D}\mathrm{iff}(\CF_1,\CF_2)$, we also write 
	\be\label{eq:defofdiffc}
	\mathcal{D}\mathrm{iff}_\mathrm{c}(\CF_1,\CF_2):=\CF_\mathrm{c}\quad \text{and}\quad 
	\mathrm{Diff}_\mathrm{c}(\CF_1,\CF_2):=\CF_\mathrm{c}(M).
	\ee

	Let $U$ be an open subset of $M$.
	For every formal function $f\in \CO(M)$  supported in $U$ and  every $u\in \CF(U)$, we define 
	\be \label{eq:deffu}
	f u\in \CF(M)
	\ee
	by requiring that
	\[
	(fu)|_U=f|_U\, u \quad \textrm{and}\quad (fu)|_{M\setminus \mathrm{supp}\,f}=0.
	\]

	We have a canonical  map
	\be\label{eq:diffresmap} \textrm{Diff}_r(\CF_1,\CF_2)\rightarrow \textrm{Diff}_r(\CF_1(M),\CF_2(M)), \quad D\mapsto D_M.  \ee
	The following result shows that a differential operator $D$ of finite order is determined by its action on global sections.
	Using this, we will often not distinguish between a finite order differential operator $D$  and the map $D_M$.

	\begin{prpd}\label{prop:DOiso}
		The linear map \eqref{eq:diffresmap} is bijective for every $r\in \BN$. In particular, the canonical map
		\[\mathrm{Hom}_{\CO}(\CF_1,\CF_2)\rightarrow \mathrm{Hom}_{\CO(M)}(\CF_1(M),\CF_2(M))\]
		is bijective. 
	\end{prpd}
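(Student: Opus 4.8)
The plan is to prove bijectivity of \eqref{eq:diffresmap} by constructing an explicit inverse, exploiting softness of all sheaves of $\CO$-modules (Corollary \ref{softo2}) together with the local description of differential operators coming from Lemmas \ref{diff} and \ref{lem:formalstalkiso}. First I would address \textbf{injectivity}: suppose $D\in \mathrm{Diff}_r(\CF_1,\CF_2)$ has $D_M = 0$. Given an open $U\subset M$ and a section $u\in\CF_1(U)$, and any point $a\in U$, I want to show $(D_U(u))_a = 0$ in the stalk. Choose a relatively compact open $V$ with $a\in V$ and $\overline V\subset U$, and a bump function $f\in\CO(M)$ with $f|_{\overline V} = 1$ and $\mathrm{supp}\,f\subset U$ (Corollary \ref{cor:bumpfun}). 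Then $fu\in\CF_1(M)$ in the sense of \eqref{eq:deffu}, so $D_M(fu) = 0$; since $D$ commutes with restrictions and $(fu)|_V = u|_V$, we get $D_V(u|_V) = D_V((fu)|_V) = (D_M(fu))|_V = 0$, whence $(D_U(u))_a = 0$. As $a$ is arbitrary and $\CF_2$ is a sheaf, $D_U(u) = 0$; thus $D = 0$.

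Next, \textbf{surjectivity}: given $\widetilde D\in\mathrm{Diff}_r(\CF_1(M),\CF_2(M))$, I must build $D\in\mathrm{Diff}_r(\CF_1,\CF_2)$ with $D_M = \widetilde D$. The key point is \emph{locality} of differential operators of order $\le r$: if $s\in\CF_1(M)$ vanishes on an open set $W$, then $\widetilde D(s)$ vanishes on $W$. This follows from Lemma \ref{dlocal0}: for $a\in W$, pick a bump function $g\in\CO(M)$ with $g\equiv 1$ near $a$ and $\mathrm{supp}\,g\subset W$, so $g s = 0$, hence $(1-g)s = s$ with $(1-g)$ vanishing at $a$ to all orders in a neighborhood; applying $\widetilde D$ and using $\widetilde D(\m_a^{r+1}\CF_1(M))\subset \m_a\CF_2(M)$ repeatedly (more precisely, working with the ideal $I$ of global formal functions vanishing near $a$) shows $(\widetilde D(s))_a = 0$. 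Locality lets me define $D_U(u)$ for arbitrary open $U$ and $u\in\CF_1(U)$: by softness (Corollary \ref{softo2}) each stalk surjection $\CF_1(M)\to\CF_{1,a}$ is onto, and more usefully, for any $a\in U$ choose a bump function $f$ supported in $U$ with $f\equiv 1$ near $a$; then $fu\in\CF_1(M)$, and I set $(D_U(u))_a := (\widetilde D(fu))_a$. Locality guarantees this germ is independent of the choice of $f$, and that these germs glue to a well-defined section $D_U(u)\in\CF_2(U)$ (again using that $\CF_2$ is a sheaf). One then checks $D_U$ is $\C$-linear, lies in $\mathrm{Diff}_r(\CF_1(U),\CF_2(U))$ (the order-$\le r$ condition is local and inherited from $\widetilde D$ via the bump-function reduction), and commutes with restrictions; and $D_M = \widetilde D$ by taking $f$ with $f\equiv 1$ on a large relatively compact set and passing to the limit, or directly from the gluing construction. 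This gives the desired $D$.

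The final assertion about $\mathrm{Hom}_\CO$ is just the case $r = 0$, since $\mathrm{Diff}_0(\CF_1,\CF_2) = \mathrm{Hom}_\CO(\CF_1,\CF_2)$ and $\mathrm{Diff}_0(\CF_1(M),\CF_2(M)) = \mathrm{Hom}_{\CO(M)}(\CF_1(M),\CF_2(M))$, so it follows immediately from the bijectivity of \eqref{eq:diffresmap}.

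I expect the \textbf{main obstacle} to be the locality statement for $\widetilde D$ and, relatedly, checking that the germ-wise definition $(D_U(u))_a = (\widetilde D(fu))_a$ is independent of the auxiliary bump function and genuinely patches to a global section over $U$. The subtlety is that $M$ is not just a smooth manifold — the structure sheaf has nilpotent-like "formal directions", so "vanishing near $a$" must be handled through the ideal of formal functions rather than set-theoretic support, and one must invoke Lemma \ref{dlocal0} with the correct ideal $I = \{h\in\CO(M) : h \text{ vanishes on a neighborhood of } a\}$ and the fact that $\cap_i \m_a^i$ behaves well (Proposition \ref{lem:formalstalkiso}). Once locality is in hand, the rest is the standard sheaf-theoretic bookkeeping familiar from the smooth-manifold case, made possible by the existence of partitions of unity (Proposition \ref{lemr01}) and softness.
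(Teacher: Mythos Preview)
Your proposal is correct and follows essentially the same architecture as the paper's proof: bump functions for injectivity, a locality lemma for $\widetilde D$, and then a bump-function gluing construction for surjectivity, with the $r=0$ case giving the $\mathrm{Hom}_\CO$ statement. The one substantive difference is in how locality is established. The paper proves that $P(u')|_U=0$ whenever $u'|_U=0$ by induction on $r$: writing $u'=gu'$ for a suitable $g$ vanishing near $a$, one has $P(u')|_{U_a}=[P,g](u')|_{U_a}$ and $[P,g]$ has order $\le r-1$. You instead invoke Lemma~\ref{dlocal0} with the ideal $I$ of global formal functions vanishing on a neighborhood of $a$; concretely, since $(1-g)s=s$ one gets $s=(1-g)^{r+1}s\in I^{r+1}\CF_1(M)$, and Lemma~\ref{dlocal0} yields $\widetilde D(s)\in I\,\CF_2(M)$, hence $\widetilde D(s)$ vanishes near $a$. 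Both arguments are short and standard; the paper's commutator induction is perhaps more self-contained, while yours reuses an already-proved lemma. Two small remarks: the references in your ``main obstacle'' paragraph to formal stalks and Proposition~\ref{lem:formalstalkiso} are not actually needed---the locality argument lives entirely at the level of global sections and ideals of $\CO(M)$---and softness (Corollary~\ref{softo2}) is likewise not used, since the extension $fu\in\CF_1(M)$ of \eqref{eq:deffu} requires only the bump function, not surjectivity onto stalks. Finally, your sketch for checking $D_U\in\mathrm{Diff}_r$ is correct but terse; the paper spells this out as the inductive identity \eqref{eq:P_U=Pf^a}, which is exactly the ``bump-function reduction'' you allude to.
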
	
	\begin{proof} Let  $U$ be an open subset of $M$.
		We choose a family $\{(U_a,U'_a,f^a)\}_{a\in U}$ of triples, where
		\begin{itemize}
			\item $U_a$, $U'_a$ are open  neighborhoods  of $a$ such that $\overline{U_a}\subset U'_a$ and $\overline{U'_a}\subset{U}$;
			\item $f^a\in \CO(M)$ is a formal function   such that
			$f^a|_{\overline{U_a}}=1$ and $f^a|_{M\setminus U'_a}=0$.
		\end{itemize}

		We first prove the injectivity of \eqref{eq:diffresmap}.
		Let $D\in \textrm{Diff}_r(\CF_1,\CF_2)$ and  $u\in\CF_1(U)$.
		Since $D$ commutes with the restriction maps in $\CF_1$ and $\CF_2$,
		we have that
		\[D_U(u)|_{U_a}=D_{U_a}(u|_{U_a})=D_{U_a}((f^au)|_{U_a})=D_M(f^au)|_{U_a}.\]
		This implies that $D=0$ provided that $D_M=0$, as desired.
		
		Now we turn to prove that the map \eqref{eq:diffresmap} is surjective.  Fix a differential operator
		$P\in \textrm{Diff}_r(\CF_1(M),\CF_2(M))$.
		We claim that
		\begin{equation}\label{claimind}
			\text{for every $u'\in\CF_1(M)$ satisfying $u'|_U=0$, one has  that $P(u')|_{U}=0$.}
		\end{equation}
		Indeed, let $g\in\CO(M)$ be a formal  function such that
		\[g|_{\overline{U_a}}=0\quad\text{and}\quad g|_{M\setminus U'_a}=1.\]
		Then we have that $gu'=u'$ in $\CF_1(M)$.
		Recall that $[P,g]=0$ if  $r=0$, and $[P,g]\in \mathrm{Diff}_{r-1}(\CF_1,\CF_2)$ otherwise. Since   \[P(u')|_{U_a}=P(gu')|_{U_a}=gP(u')|_{U_a}+[P,g](u')|_{U_a}=[P,g](u')|_{U_a},\]
		it follows that $P(u')|_{U_a}=0$ by induction on $r$.
		This proves the claim \eqref{claimind} as $\{U_a\}_{a\in U}$ forms an open cover of $U$.
		
		Let $u\in \CF_1(U)$. Define 	
		\begin{eqnarray}\label{defPu}P_{U_a}(u|_{U_a}):=P(f^au)|_{U_a}\in \CF_2(U_a).\end{eqnarray}
		For  all  $a,b\in U$, we have that 
		\[(f^au)|_{U_a\cap U_b}=u|_{U_a\cap U_b}=(f^bu)|_{U_a\cap U_b}.\]
		Thus, by applying
		the claim \eqref{claimind}  (replace  $U$ by  $U_a\cap U_b$) , we have that
		\be\label{defPuind}
		P_{U_a}(u|_{U_a})|_{U_a\cap U_b}=P(f^au)|_{U_a\cap U_b}=P(f^bu)|_{U_a\cap U_b}=P_{U_b}(u|_{U_b})|_{U_a\cap U_b}.\ee
		This yields a map
		\[P_U:\CF_1(U)\rightarrow \CF_2(U),\quad u\mapsto P_U(u),\]
		where $P_U(u)$ is the section in $\CF_2(U)$ obtained by gluing the family $\{P_{U_a}(u|_{U_a})\}_{a\in U}$ together.
		
		We remark that the definition of $P_U$ is independent of the choice of the family $\{(U_a,U_a',f^a)\}_{a\in U}$.
		Indeed, let $\{(\tilde{U}_a,\tilde{U}_a',\tilde{f}^a)\}_{a\in U}$ be another such family, and as in \eqref{defPu} we define a map $\tilde{P}_U$ associated to this
		family. Then by a similar argument as  in \eqref{defPuind}, we have that
		\[P_U(u)|_{U_a\cap \tilde{U}_a}=\tilde{P}_U(u)|_{U_a\cap \tilde{U}_a},\] as desired.
		Using this independence, one concludes that the family \[D=\{P_U\}_{U \text{ is an open subset of } M}\]forms a homomorphism between the sheaves $\CF_1$ and $\CF_2$.

		Let $i\in \BN$. We now prove by induction on $i$ that  
		\begin{eqnarray}\label{eq:P_U=Pf^a}
			&&[\cdots[[P_U,h_1],h_2],\cdots,h_i](u)|_{U_a}=[\cdots[[P, f^ah_1],f^ah_2],\cdots,f^ah_i](f^au)|_{U_a}
		\end{eqnarray}
		for all $h_1,h_2,\dots,h_i\in \CO(U)$ and $u\in \CF_1(U)$. 
		When $i=0$, \eqref{eq:P_U=Pf^a} follows from the definition of $P_U$. Assume that $i>0$ and \eqref{eq:P_U=Pf^a} holds when $i$ is smaller.
		Set \[
		P_U':= [\cdots[[P_U,h_1],h_2],\cdots,h_{i-1}]\ \text{and}\  P':= [\cdots[[P,f^ah_1],f^ah_2],\cdots,f^ah_{i-1}].
		\] 
		Then  we have that 
		\begin{eqnarray*}
			&&[\cdots[[P_U,h_1],h_2],\cdots,h_i](u)|_{U_a}\\
			&=&P_U'(h_iu)|_{U_a}-(h_iP_U'(u))|_{U_a}\\
			&=&P'(f^a(h_iu))|_{U_a}-(h_i|_{U_a})(P'(f^au)|_{U_a})\\
			&=&
			(P'(f^ah_if^au))|_{U_a}-((f^ah_i)P'(f^au))|_{U_a}\\
			&=&([P', f^ah_i](f^au))|_{U_a},
		\end{eqnarray*}  
		where we have used the induction hypothesis in the second equality. This finishes the proof of \eqref{eq:P_U=Pf^a}. 
		Since $P\in \textrm{Diff}_r(\CF_1(M),\CF_2(M))$, 
		it follows from  the equality \eqref{eq:P_U=Pf^a} (with $i=r+1$) that  $P_U\in \textrm{Diff}_r(\CF_1(U),\CF_2(U))$. 
		Then  $D\in \textrm{Diff}_r(\CF_1,\CF_2)$
		and hence the map \eqref{eq:diffresmap} is surjective as  $D_M=P$. 
		
		In conclusion, we have proved the first assertion of the proposition. The second assertion follows by taking $r=0$ in the first assertion. 
	\end{proof}


	By Proposition \ref{prop:DOiso}, we have 
	an identification $\mathrm{Diff}_0(\CO,\CF_2)=\CF_2(M)$. Similar to \eqref{diff1}, we obtain that
	\be\label{eq:diff=der+1}
	\mathrm{Diff}_1(\CO,\CF_2)=\mathrm{Der}(\CO,\CF_2)\oplus \CF_2(M).
	\ee

	Let $\CF$ be a sheaf of $\CO$-modules and let  $D\in\mathrm{Diff}_r(\CO,\CF)$.
	For every $a\in M$,  write
	\be\label{l}
	D_a: \CO_a\rightarrow \CF_a
	\ee
	for the linear map on the stalks induced by $D$.
	It is clear that $D_a\in \mathrm{Diff}_r( \CO_a,\CF_a)$.
	Furthermore, Lemma \ref{dlocal0} implies that
	$D_a$ induces a linear map
	\be\label{dlocal}
	\wh{D}_a: \widehat \CO_a\rightarrow \widehat \CF_a.
	\ee
	
	\begin{lemd}\label{lem:hatD}
		We have that   $\wh{D}_a\in \mathrm{Diff}_r(\widehat \CO_a,\widehat{\CF}_a)$.
	\end{lemd}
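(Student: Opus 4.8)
The plan is to unwind the definitions and reduce the claim to the fact that taking the completion with respect to the $\m_a$-adic filtration preserves the order of a differential operator. Concretely, I would first recall that $D_a \in \mathrm{Diff}_r(\CO_a, \CF_a)$, which is immediate since $D$ is a differential operator of order $\leq r$ and passing to stalks is a localization (the defining identity $[\cdots[[D_a, b_0], b_1], \cdots, b_r] = 0$ for $b_0, \dots, b_r \in \CO_a$ follows by writing each $b_i$ as a germ and using that $D$ commutes with restrictions). Then I would observe, via Lemma \ref{dlocal0} applied with the ideal $I = \m_a$, that $D_a(\m_a^{i+r}\CO_a) \subset \m_a^i \CF_a$ for all $i$, which is exactly what makes the induced map $\wh D_a \colon \wh\CO_a \to \wh\CF_a$ on the $\m_a$-adic completions well-defined and $\BC$-linear (this is already recorded in the excerpt just before the statement).

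The core point is then: for each $r+1$-tuple $\widehat b_0, \dots, \widehat b_r \in \wh\CO_a$, the iterated commutator $[\cdots[[\wh D_a, \widehat b_0], \widehat b_1], \cdots, \widehat b_r]$ vanishes. I would deduce this from the corresponding vanishing for $D_a$ together with a density/continuity argument. By Proposition \ref{lem:formalstalkiso} (or directly), the image of $\CO_a$ in $\wh\CO_a$ is dense, and similarly $\wh\CF_a = \varprojlim \CF_a/\m_a^r\CF_a$ receives $\CF_a$ densely; moreover $\wh D_a$ is continuous for the projective limit topologies (same reasoning as in Lemma \ref{lem:homonformalstalkscon}, since each induced map $\CO_a/\m_a^{i+r} \to \CF_a/\m_a^i$ between finite-dimensional or at least filtered pieces is continuous). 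Since the commutator operation $(D, b) \mapsto [D, b]$ is continuous in both variables and the multiplication on $\wh\CO_a$ is continuous, the iterated commutator $[\cdots[[\wh D_a, \widehat b_0], \cdots], \widehat b_r]$ is a continuous function of $(\widehat b_0, \dots, \widehat b_r)$ valued in continuous maps $\wh\CO_a \to \wh\CF_a$; it agrees with the image of $[\cdots[[D_a, b_0], \cdots], b_r] = 0$ on the dense subset coming from $\CO_a$, hence vanishes identically.

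An alternative, perhaps cleaner, route avoids topology entirely: work purely filtration-theoretically. For $b \in \CO_a$ one checks directly that $[\wh D_a, \wh b]$ is the completion of $[D_a, b] \in \mathrm{Diff}_{r-1}(\CO_a, \CF_a)$ — this is just the statement that forming commutators with (germs of) formal functions commutes with passing to completions, which follows because both sides are determined by their effect on the associated graded pieces $\m_a^{i+r}/\m_a^{i+r+1}$. Then induction on $r$ finishes: the base case $r = 0$ says $\wh D_a$ is $\wh\CO_a$-linear, which is clear since $D_a$ is $\CO_a$-linear and the completion functor is additive and compatible with the module structure; and the inductive step reduces $[\wh D_a, \wh b] = \widehat{[D_a, b]}$ to an operator of order $\leq r-1$ by the inductive hypothesis. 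A subtlety is that a general $\widehat b \in \wh\CO_a$ is not literally $\wh b$ for $b \in \CO_a$, so one still needs a limiting argument to pass from commutators with elements of $\CO_a$ to commutators with arbitrary elements of $\wh\CO_a$; but since the order-$\leq r$ condition only needs to be verified and the commutator is continuous, density of $\CO_a$ in $\wh\CO_a$ closes the gap.

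The main obstacle I anticipate is making the density/continuity argument airtight: one must confirm that $\wh D_a$ is genuinely continuous (not merely well-defined) for the projective limit topologies, that $\wh\CF_a$ is Hausdorff so that a continuous map vanishing on a dense set vanishes everywhere, and that the iterated-commutator map depends continuously on its arguments in the relevant sense. All of these follow from the finite-dimensionality of the graded quotients and the explicit description in Proposition \ref{lem:formalstalkiso}, so the argument is routine, but it does require care to state the topologies correctly. Everything else — the reduction to $D_a$, the use of Lemma \ref{dlocal0}, the induction on $r$ — is formal.
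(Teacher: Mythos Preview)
Your argument is correct, but you have missed a simplification that makes all the topology and density work unnecessary. You invoke Proposition~\ref{lem:formalstalkiso} only to extract that the image of $\CO_a$ in $\wh\CO_a$ is dense, but that proposition actually says more: the natural map $\CO_a/\bigcap_r \m_a^r \to \wh\CO_a$ is an \emph{isomorphism}, so $\CO_a \to \wh\CO_a$ is surjective (this is ultimately Borel's lemma). Consequently every $\widehat b_i \in \wh\CO_a$ is literally the image of some $b_i \in \CO_a$, and the iterated commutator $[\cdots[[\wh D_a,\widehat b_0],\widehat b_1],\cdots,\widehat b_r]$ applied to any $\widehat f$ equals the image of $[\cdots[[D_a,b_0],b_1],\cdots,b_r](f)$ in $\wh\CF_a$, which vanishes because $D_a \in \mathrm{Diff}_r(\CO_a,\CF_a)$. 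This is precisely the paper's one-line proof. Your density/continuity route and the filtration-theoretic alternative both work, but they are solving a harder problem than the one at hand; the limiting argument you flag as the ``main obstacle'' simply disappears once you use surjectivity instead of density.
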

	\begin{proof}
		Note that 
		\[
		([\cdots[[\widehat{D}_a,f_0],f_1],\cdots, f_r])(f_{r+1}) =0
		\]
		for all $f_0,f_1, \dots,f_r, f_{r+1}$ in the image of the map $\CO_a\rightarrow \widehat \CO_a$. The second assertion then follows since the map
		$\CO_a\rightarrow \widehat \CO_a$ is surjective (see  Proposition
		\ref{lem:formalstalkiso}).
	\end{proof}

	Let $n,k\in \BN$, and let $N$ be   an open submanifold of $\R^n$.
	Let $\CF$ be a sheaf of $\CO_N^{(k)}$-modules, and let $\{f_{I,J}\}_{I\in \BN^n, J\in \BN^k}$ be a locally finite family of elements of $\CF(N)$.
	Identifying $\mathrm{Diff}_0(\CO_N^{(k)},\CF)$ with $\CF(N)$, 
	we see that
	\be\label{do0}
	\sum_{I\in \BN^n, J\in \BN^k} f_{I,J}\circ \partial_x^{I}\partial_y^J
	\ee
	is an element of $\mathrm{Diff}(\CO_N^{(k)},\CF)$.
	Conversely, we have the following result.
	
	\begin{prpd}\label{dofm0}
		Assume that $M=N^{(k)}$ and the sheaf $\CF$ of $\CO$-modules satisfies the following  conditions:
		\begin{itemize}\label{cdt}
			\item
			the canonical map $\CF(M)\rightarrow  \prod_{a\in M} \widehat \CF_a $ is injective; and
			\item
			$ \cap_{i\in\BN}\widehat{\m}_a^i\widehat{\CF}_a=0$ for every $a\in M$, where
			$\widehat{\m}_a$ is the maximal ideal of $\widehat{\CO}_a$.			\end{itemize}
		Then every element of $\mathrm{Diff}(\CO,\CF)$ is uniquely of the form \eqref{do0}.
	\end{prpd}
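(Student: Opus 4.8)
The statement is a uniqueness-and-existence assertion for differential operators $\CO \to \CF$ on $M = N^{(k)}$, and the natural route is to reduce everything, point by point, to the purely algebraic Lemma \ref{diff} applied to the formal stalks. First I would address existence: given $D \in \mathrm{Diff}(\CO,\CF)$, the goal is to produce the coefficient sections $f_{I,J} \in \CF(N)$. By Definition \ref{def:findiff}, $D$ is locally of finite order, so around each point it lies in some $\mathrm{Diff}_r$. Using Proposition \ref{prop:DOiso} I may work with the global map $D_M : \CO(M) \to \CF(M)$, and using a partition of unity (Proposition \ref{lemr01}) I can reduce the construction of the $f_{I,J}$ to a local computation on a chart where $D|_U$ has finite order $r$. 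On such a chart, applying $D$ to the monomials $x^I y^J$ with $|I|+|J|\le r$ and inverting the resulting triangular system (exactly as in the proof of Lemma \ref{lem:mrbasis}) gives candidate coefficient sections; one then checks these local candidates glue, using that $D$ commutes with restrictions, to yield a globally defined locally finite family $\{f_{I,J}\}$, and that $D = \sum_{I,J} f_{I,J}\circ \partial_x^I\partial_y^J$ as operators. The key point making the sum well-defined is the local finiteness inherited from the local finite-order bound on $D$.

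\textbf{Uniqueness.} For uniqueness I would argue that if $\sum_{I,J} f_{I,J}\circ \partial_x^I\partial_y^J = 0$ as a differential operator, then all $f_{I,J} = 0$. By the first hypothesis on $\CF$, it suffices to show the image of each $f_{I,J}$ in $\widehat{\CF}_a$ vanishes for every $a \in M$. Fix $a$; by Lemma \ref{lem:hatD} the induced map $\widehat{D}_a$ on formal stalks is again a finite-order differential operator $\widehat{\CO}_a \to \widehat{\CF}_a$, and by Proposition \ref{lem:formalstalkiso} we have $\widehat{\CO}_a \cong \C[[x_1,\dots,x_n,y_1,\dots,y_k]]$. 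The second hypothesis, $\cap_i \widehat{\m}_a^i \widehat{\CF}_a = 0$, is precisely the condition needed to invoke Lemma \ref{diff}: it gives $\mathrm{Diff}_r(\widehat{\CO}_a, \widehat{\CF}_a) = \bigoplus_{|L|\le r} \widehat{\CF}_a \circ \partial^L$, a \emph{direct} sum, so the vanishing of $\widehat{D}_a$ forces the vanishing of each coefficient, i.e. of the germ of each $f_{I,J}$ at $a$. Since $a$ was arbitrary, injectivity of $\CF(M) \to \prod_a \widehat{\CF}_a$ finishes uniqueness.

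\textbf{Putting it together and the main obstacle.} Given uniqueness, existence can be streamlined: the local coefficient families constructed on overlapping charts must agree on overlaps (since they represent the same operator locally, and local uniqueness applies there too), so they patch to a global family — this is cleaner than checking the gluing by hand. The representation $D = \sum f_{I,J}\circ\partial_x^I\partial_y^J$ is then verified by passing to formal stalks: two finite-order operators $\CO \to \CF$ agreeing after $\wh{(\cdot)}_a$ for all $a$ are equal, again by the injectivity hypothesis, and on each formal stalk the identity is exactly the decomposition from Lemma \ref{diff}. The main obstacle, I expect, is bookkeeping around \emph{local} finiteness: $D$ need not have globally bounded order, so one must carry a locally finite covering on which the order is bounded, extract coefficients chart by chart, and argue that the resulting family $\{f_{I,J}\}$ is locally finite in $\CF(N)$ (so that \eqref{do0} even makes sense) — this requires combining the local order bounds with the local finiteness of the chart covering, and is the one place where the argument is more than a routine transcription of the algebraic lemma.
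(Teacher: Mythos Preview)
Your proposal is correct and takes essentially the same approach as the paper: reduction to the formal stalks via Lemma~\ref{lem:hatD} and then invocation of Lemma~\ref{diff}, using the two hypotheses on $\CF$ exactly as you describe. The paper packages this slightly more compactly --- it shows the evaluation-on-monomials map $\Psi_r: \mathrm{Diff}_r(\CO,\CF) \to \Hom_\BC(\BC[\mathbf{x,y}]_{\le r}, \CF(M))$ is injective (via the same formal-stalk argument) and observes that operators of the form \eqref{do0} with $|I|+|J|\le r$ already surject onto the target, forcing $\mathrm{Diff}'_r = \mathrm{Diff}_r$ --- and it handles uniqueness by direct evaluation on monomials rather than through the stalks, but the substance is identical.
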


	\begin{proof} Without loss of generality, we assume that $M\neq\emptyset$.
		Write  \[\BC[\mathbf{x,y}]:=\BC[x_1, x_2, \dots, x_n, y_1, y_2, \dots, y_k]\] for simplicity.
		We view $\BC[\mathbf{x,y}]$ as a subspace of $\CO(M)$, as well as a subspace of $\widehat \CO_a$ for each $a\in M$
		(see \eqref{eq:formalstalkiso}).
		Then the uniqueness assertion follows by applying the differential operator  \eqref{do0} to the monomials in $\BC[\mathbf{x,y}]$.
		
		Write
		$\BC[\mathbf{x,y}]_{\le r}\subset \BC[\mathbf{x,y}]$  for the subspace consisting of polynomials of degree $\leq r$.
		We claim that the map
		\[
		\begin{array}{rcl}
			\Psi_r: \mathrm{Diff}_r(\CO,\CF)&\rightarrow& \Hom_\BC(\BC[\mathbf{x,y}]_{\le r}, \CF(M)),\\
			D&\mapsto&(f\mapsto D_M(f))
		\end{array}
		\]
		is injective.
		Indeed, take a differential operator $D\in  \mathrm{Diff}_r(\CO,\CF)$  such that $D_M(f)=0$ for all $f\in \BC[\mathbf{x,y}]_{\le r}$. For every $a\in M$,
		Lemma \ref{lem:hatD} implies that the map
		$
		\widehat{D}_a:  \widehat \CO_a\rightarrow \widehat \CF_a
		$
		belongs to $\mathrm{Diff}_r( \widehat \CO_a,\widehat{\CF}_a)$ and vanishes on $\BC[\mathbf{x,y}]_{\le r}$.
		It then follows from Lemma \ref{diff} and the second condition in the proposition that $\widehat{D}_a$ is the zero map. By
		the commutative diagram
		\[\xymatrix{
			\CO(M) \ar[d] \ar[r]^{D_M} & \CF(M) \ar[d] \\
			\wh{\CO}_a \ar[r]^{\widehat{D}_a} & \wh{\CF}_a,}\]
		we conclude  that for every $g\in \CO(M)$, the image of $D_M(g)$ in $\wh{\CF}_a$ is zero for all $a\in M$.
		Then the first condition in the proposition forces that $D_M=0$, which proves the claim by using Proposition \ref{prop:DOiso}.
		
		Write $\mathrm{Diff}'_r(\CO,\CF)\subset \mathrm{Diff}_r(\CO,\CF)$ for the space of all differential operators of the form \eqref{do0} with $f_{I,J}=0$ whenever $\abs{I}+\abs{J}> r$.
		It is easy to see that the map ${\Psi_r}|_{\mathrm{Diff}'_r(\CO,\CF)}$ is surjective. Hence $\mathrm{Diff}'_r(\CO,\CF)= \mathrm{Diff}_r(\CO,\CF)$. Together with the uniqueness assertion, this implies that every differential operator in $\mathrm{Diff}(\CO,\CF)$ has the form  \eqref{do0}.
		
	\end{proof}
	
	\begin{exampled}\label{ex:diff}
		For every $l\in \BN$, it follows from Example \ref{ex:ol} (see \eqref{eq:formalstalkOnl}) that the sheaf $\CO_N^{(l)}$ of $\CO_N^{(k)}$-modules 
		satisfies the conditions in Proposition \ref{dofm0}.
		Then every element of $\mathrm{Diff}(\CO_{N}^{(k)},\CO_{N}^{(l)})$ is uniquely of the form
		\[ \sum_{I\in \BN^n, J\in \BN^k} f_{I,J}\circ \partial_x^I\partial_y^J,\]
		where $\{f_{I,J}\}_{I\in \BN^n, J\in\BN^k}$ is a locally finite family of elements of $\CO_{N}^{(l)}(N)$.
	\end{exampled}

	As a by-product of Proposition \ref{dofm0}, we have the following result.

	\begin{cord}\label{cor:unders} As an $\CO_N^{(k)}(N)$-module,  $\mathrm{Der}(\CO_N^{(k)},\CO_N^{(k)})$  is free of rank $n+k$, with a set of free generators
		given by
		\[
		\{\partial_{x_1}, \partial_{x_2}, \dots, \partial_{x_n},\, \partial_{y_1}, \partial_{y_2}, \dots, \partial_{y_k}\}.
		\]
	\end{cord}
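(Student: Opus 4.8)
The plan is to deduce Corollary \ref{cor:unders} directly from Proposition \ref{dofm0} together with the decomposition \eqref{eq:diff=der+1}. First I would apply Example \ref{ex:diff} with $l = k$ (or equivalently Proposition \ref{dofm0} with $\CF = \CO_N^{(k)}$, whose hypotheses are verified in Example \ref{ex:diff}), which tells us that every element of $\mathrm{Diff}(\CO_N^{(k)}, \CO_N^{(k)})$ is uniquely of the form $\sum_{I \in \BN^n, J \in \BN^k} f_{I,J} \circ \partial_x^I \partial_y^J$ with $\{f_{I,J}\}$ a locally finite family in $\CO_N^{(k)}(N)$. In particular, restricting to order $\le 1$, every element of $\mathrm{Diff}_1(\CO_N^{(k)}, \CO_N^{(k)})$ is uniquely
\[
f_{0,0} + \sum_{i=1}^n f_{e_i^x} \circ \partial_{x_i} + \sum_{j=1}^k f_{e_j^y} \circ \partial_{y_j},
\]
where $e_i^x, e_j^y$ denote the relevant unit multi-indices.

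Next I would invoke \eqref{eq:diff=der+1}, which gives $\mathrm{Diff}_1(\CO, \CF) = \mathrm{Der}(\CO, \CF) \oplus \CF(M)$, with the $\CF(M)$-summand corresponding to the order-zero part, i.e.\ to the coefficient $f_{0,0}$ (recall the identification $\mathrm{Diff}_0(\CO, \CF) = \CF(M)$ from Proposition \ref{prop:DOiso}). Since the derivations $\partial_{x_1}, \dots, \partial_{x_n}, \partial_{y_1}, \dots, \partial_{y_k}$ visibly annihilate the constant function $1$ and hence lie in the $\mathrm{Der}$-summand, and since the decomposition above of a general order-$\le 1$ operator matches the direct sum decomposition \eqref{eq:diff=der+1} summand-by-summand (the $f_{0,0}$ term being the $\CF(M)$-part and the remaining terms being a derivation), it follows that an arbitrary derivation $D \in \mathrm{Der}(\CO_N^{(k)}, \CO_N^{(k)})$ is uniquely of the form $\sum_{i=1}^n f_{e_i^x} \circ \partial_{x_i} + \sum_{j=1}^k f_{e_j^y} \circ \partial_{y_j}$. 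This is precisely the statement that $\mathrm{Der}(\CO_N^{(k)}, \CO_N^{(k)})$ is free of rank $n+k$ over $\CO_N^{(k)}(N)$ with the asserted free generators — "free generators" being understood in the sense of possibly infinite (locally finite) $\CO_N^{(k)}(N)$-linear combinations, as in Example \ref{ex:diff}.

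The argument is essentially a bookkeeping exercise once Proposition \ref{dofm0} is in hand, so I do not anticipate a serious obstacle; the one point that requires a moment's care is making sure the notion of "free of rank $n+k$" in the statement is the same locally-finite-combination notion used throughout this subsection (the coefficients $f_{I,J}$ form a locally finite family, not an arbitrary family), and that the uniqueness half of the freeness claim is exactly the uniqueness assertion in Proposition \ref{dofm0}. I would also note explicitly that one must check $\partial_{x_i}$ and $\partial_{y_j}$ genuinely are derivations of $\CO_N^{(k)}$ in the sheaf sense of Definition \ref{def:deriv}, which is immediate from the Leibniz rule holding on each $\RC^\infty(U)[[y_1, \dots, y_k]]$ and compatibility with restriction maps.
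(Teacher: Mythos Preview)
Your proof is correct and follows essentially the same route as the paper's: both use Proposition \ref{dofm0} (via Example \ref{ex:diff}) to identify $\mathrm{Diff}_1(\CO_N^{(k)},\CO_N^{(k)})$ with operators of the form $f_{0,0}+\sum_i f_i\circ\partial_{x_i}+\sum_j g_j\circ\partial_{y_j}$, and then invoke \eqref{eq:diff=der+1} to strip off the order-zero part. Your caveat about ``locally finite'' combinations is unnecessary here, since there are only finitely many ($n+k$) generators and hence every $\CO_N^{(k)}(N)$-linear combination is automatically finite; the freeness is in the ordinary sense.
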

	\begin{proof}
		With the notations as in the proof of Proposition \ref{dofm0}, we have shown  that
		\[
		\mathrm{Diff}'_1(\CO_N^{(k)},\CO_N^{(k)})=\mathrm{Diff}_1(\CO_N^{(k)},\CO_N^{(k)}).
		\]
		The assertion then follows by using \eqref{eq:diff=der+1} and the fact that 
		\[
		\mathrm{Der}(\CO_N^{(k)},\CO_N^{(k)})=\{D\in \mathrm{Diff}_1(\CO_N^{(k)},\CO_N^{(k)})\mid D(1)=0\}.
		\]
	\end{proof}

	\section{The topology on the space of formal functions}\label{sec:smoothtop}

	In this section, we introduce a canonical topology on the space of formal functions on $M$.

	\subsection{Smooth topology on $\CF(M)$}\label{topology}
	Recall the reduction $\underline{\CO}$ of $\mathcal{O}$ defined in \eqref{underco}, to be viewed as a sheaf of $\CO$-modules by using the reduction map.
	Let $\CF$ be a sheaf of $\CO$-modules and let $D\in \mathrm{Diff}_\mathrm{c}(\CF,\underline \CO)$. For every $u\in \CF(M)$, define
	\[
	\abs{u}_D:=\sup_{a\in M} \abs{(D(u))(a)}.
	\]
	Then $\abs{\,\cdot\,}_D$ is a seminorm on $\CF(M)$. 
	
	\begin{dfn}\label{de:smoothtopology}
		The locally convex  topology on $\CF(M)$ defined by the family 
		\[\{\abs{\,\cdot\,}_D\}_{D\in \mathrm{Diff}_\mathrm{c}(\CF,\underline \CO)}\] of 
		seminorms is called the smooth topology of $\CF(M)$.
	\end{dfn}
	
	Unless otherwise mentioned, $\CF(M)$ is equipped with the smooth topology.

	\begin{exampled}\label{topm0}
		Assume that $M=N^{(k)}$, where $N$ is a smooth manifold and $k\in \BN$.
		Then it follows from  Example \ref{ex:diff} that  the smooth topology on $\CO(M)=\RC^{\infty}(N)[[y_1, y_2, \dots, y_k]]$
		coincides with the term-wise  convergence topology, where $\RC^{\infty}(N)$ is equipped with the usual smooth topology. \end{exampled}
	
	\begin{lemd}\label{lemd:O(M)-moduleO(F)}
		For every $f\in \CO(M)$,
		the linear map \[\CF(M)\rightarrow \CF(M),\quad u\mapsto{fu}\] is continuous.
	\end{lemd}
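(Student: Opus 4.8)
The plan is to reduce the statement to a local computation and then propagate the continuity through a partition of unity. First I would recall that by Definition \ref{de:smoothtopology} the smooth topology on $\CF(M)$ is defined by the seminorms $|\cdot|_D$ for $D\in \mathrm{Diff}_c(\CF,\underline\CO)$, so it suffices to show that for each such $D$ there are finitely many $D_1,\dots,D_m\in\mathrm{Diff}_c(\CF,\underline\CO)$ and a constant $C>0$ with $|fu|_D\le C\sum_i |u|_{D_i}$ for all $u\in\CF(M)$. The key point is that the composite $D\circ f$, where we precompose the action of $D$ with multiplication by $f$ (in the sense of \eqref{eq:defofDf}), is again a compactly supported differential operator from $\CF$ to $\underline\CO$; indeed $\mathrm{Diff}(\CF,\underline\CO)$ is closed under the right $\CO(M)$-action \eqref{eq:rightactiononD}, and $\mathrm{supp}(D\circ f)\subset\mathrm{supp}\,D$ is still compact. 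Thus $|fu|_D=\sup_{a}|(D(fu))(a)|=\sup_a|((D\circ f)(u))(a)|=|u|_{D\circ f}$, which gives continuity immediately with $m=1$ and $C=1$.

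The one subtlety is that $D\circ f$ is a differential operator only in the sense of Definition \ref{def:findiff} (locally of finite order), not necessarily of globally bounded order, so I should double-check that $D\circ f$ genuinely lies in $\mathrm{Diff}_c(\CF,\underline\CO)=\mathcal{D}\mathrm{iff}(\CF,\underline\CO)_c(M)$. This is clear: differential operators in the sense of Definition \ref{def:findiff} form a sheaf $\mathcal{D}\mathrm{iff}(\CF,\underline\CO)$, on which $\CO(M)$ acts on the right by \eqref{eq:rightactiononD}; compactly supported sections of this sheaf form the cosheaf $\mathcal{D}\mathrm{iff}_c(\CF,\underline\CO)$ whose global sections are $\mathrm{Diff}_c(\CF,\underline\CO)$ by \eqref{eq:defofdiffc}; and since $D\circ f$ has support contained in $\mathrm{supp}\,D$, it is again compactly supported. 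The right action is by sheaf endomorphisms, so $D\circ f$ is indeed a differential operator, hence lies in $\mathrm{Diff}_c(\CF,\underline\CO)$.

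Therefore the proof is essentially a two-line argument: given $D\in\mathrm{Diff}_c(\CF,\underline\CO)$, set $D':=D\circ f\in\mathrm{Diff}_c(\CF,\underline\CO)$; then for every $u\in\CF(M)$ and every $a\in M$ one has $(D(fu))(a)=(D'(u))(a)$ by definition of $D\circ f$ in \eqref{eq:defofDf} and of $fu$ in \eqref{eq:deffu} (noting $D$ and $D'$ have the same support, so the values away from $\mathrm{supp}\,D$ play no role), whence $|fu|_D=|u|_{D'}$. Since every defining seminorm of the target topology is dominated (in fact equalled) by a defining seminorm of the source topology evaluated at $u$, the map $u\mapsto fu$ is continuous. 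I do not anticipate a genuine obstacle here; the only thing requiring a moment's care is the bookkeeping that $D\circ f$ stays compactly supported and stays a differential operator, which is immediate from the cosheaf structure established just before this lemma.
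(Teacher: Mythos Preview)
Your proposal is correct and is essentially the paper's own proof: the paper observes that for each $D\in\mathrm{Diff}_c(\CF,\underline\CO)$ one has $|fu|_D=|u|_{D\circ f}$ with $D\circ f\in\mathrm{Diff}_c(\CF,\underline\CO)$ as in \eqref{eq:defofDf}, and concludes immediately. Your opening sentence about a local computation and partition of unity is unnecessary (and you wisely abandon it), and the reference to \eqref{eq:deffu} in your third paragraph is slightly off since here $fu$ is simply the $\CO(M)$-module action rather than the extension-by-zero construction, but neither point affects the argument.
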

	\begin{proof} 
		For every $D\in \mathrm{Diff}_\mathrm{c}(\CF,\underline{\CO})$, we have that \[\abs{fu}_{D}=\abs{u}_{D\circ f}\]
		for all $u\in \CF(M)$, where $D\circ f\in \mathrm{Diff}_\mathrm{c}(\CF,\underline{\CO})$ is as in \eqref{eq:defofDf}. This implies the lemma.
	\end{proof}
	
	\begin{lemd}\label{lem:HausdorffFM}
		The smooth topology on $\CF(M)$ is Hausdorff if and only if for every $0\ne u\in \CF(M)$, there exists some $D\in \mathrm{Diff}_\mathrm{c}(\CF,\underline \CO)$
		such that $D(u)\ne 0$.
		Furthermore, if $\CF(M)$ is Hausdorff, then so is $\CF(U)$ for every open subset $U$  of $M$.
	\end{lemd}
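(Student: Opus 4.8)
\textbf{Proof proposal for Lemma \ref{lem:HausdorffFM}.}

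The plan is to verify the general fact that a locally convex topology defined by a family of seminorms $\{|\,\cdot\,|_D\}$ is Hausdorff if and only if the seminorms separate points, and then to transport Hausdorffness from $\CF(M)$ to $\CF(U)$ using the multiplication operation $u \mapsto fu$ together with the bump functions provided by Corollary \ref{cor:bumpfun}. For the first assertion: by definition, the smooth topology on $\CF(M)$ is the coarsest locally convex topology making all the $|\,\cdot\,|_D$ continuous, so a neighborhood basis of $0$ is given by finite intersections of sets $\{u : |u|_{D_i} < \varepsilon_i\}$. Such a topology is Hausdorff precisely when $\bigcap_{D \in \mathrm{Diff}_c(\CF,\underline\CO)} \{u : |u|_D = 0\} = \{0\}$, i.e. when for every $0 \neq u \in \CF(M)$ there is some $D$ with $|u|_D \neq 0$. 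Now $|u|_D = \sup_{a \in M} |(D(u))(a)|$, and $|u|_D \neq 0$ is equivalent to $D(u) \neq 0$ (since $D(u) \in \underline\CO(M)$ is a function on the smooth manifold $\underline M$, which vanishes iff all its values vanish — here one uses that $\underline\CO$ is the structure sheaf of an honest smooth manifold, so a section is determined by its values). This gives the stated criterion.

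For the second assertion, suppose $\CF(M)$ is Hausdorff and let $U \subset M$ be open; I want to show $\CF(U)$ is Hausdorff, equivalently (by the criterion just proved, applied to $U$ with its induced formal manifold structure) that for every $0 \neq v \in \CF(U)$ there is some $E \in \mathrm{Diff}_c(\CF|_U, \underline\CO|_U)$ with $E(v) \neq 0$. Pick a point $a \in U$ at which the germ $v_a$ is nonzero; this is possible since $v \neq 0$. Choose relatively compact open sets $V \subset V'$ with $a \in V$, $\overline V \subset V'$, $\overline{V'} \subset U$, and by Corollary \ref{cor:bumpfun} a formal function $f \in \CO(M)$ with $f|_{\overline V} = 1$ and $f|_{M \setminus V'} = 0$. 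Then $fv \in \CF(M)$ is the extension-by-zero of $f|_U v$, as in \eqref{eq:deffu}, and since $f|_{\overline V} = 1$ we have $(fv)|_V = v|_V$, so $(fv)_a = v_a \neq 0$; in particular $fv \neq 0$ in $\CF(M)$. By Hausdorffness of $\CF(M)$ there is $D \in \mathrm{Diff}_c(\CF,\underline\CO)$ with $D(fv) \neq 0$.

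It remains to produce from $D$ a compactly supported differential operator on $U$ detecting $v$. The operator $D$ is supported in some compact subset $K$ of $M$; after multiplying $D$ on the left by a further bump function (which stays in $\mathrm{Diff}_c(\CF,\underline\CO)$ and does not change the validity of $D(fv) \neq 0$ if we arrange the bump to be $1$ near a point where $D(fv)$ is nonzero) we may assume $\mathrm{supp}\,D$ is contained in $V'$, hence in $U$. Restricting, $D|_U$ lies in $\mathrm{Diff}_c(\CF|_U, \underline\CO|_U)$, and since $D$ is supported inside $V'$ where $f \equiv$ its relevant values and $fv$ agrees with the extension by zero of $v$ off a neighborhood of $\mathrm{supp}\,D$, one checks $(D|_U)(v) = D(fv)|_U \neq 0$ — more precisely, $D(fv)$ is supported in $\mathrm{supp}\,D \subset V'$ where $fv$ and (the extension of) $v$ have the same germs, and $D$ commutes with restriction, so $D(fv)|_U = (D|_U)(v|_U)$. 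Thus $\CF(U)$ is Hausdorff. The main obstacle is the bookkeeping in this last step: ensuring that the chosen $D$ can be localized into $U$ without destroying the non-vanishing of $D(fv)$, which is why one first picks a point where $D(fv)$ does not vanish and then inserts a bump function equal to $1$ near that point and supported in $V' \subset U$; all of this is routine given softness of $\CF$ (Corollary \ref{softo2}) and the bump-function corollary.
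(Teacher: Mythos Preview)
Your treatment of the first assertion is correct and more explicit than the paper's, which simply calls it obvious.

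For the second assertion your overall strategy matches the paper's: extend the local section to $M$ via a bump function $f$, use Hausdorffness of $\CF(M)$ to find a separating differential operator, then localize back to $U$. The gap is in the last step. After shrinking the support of $D$ into $V'$ by left-multiplying with a further bump, you claim $(D|_U)(v) = D(fv)|_U$. Unwinding, $D(fv)|_U = D_U((fv)|_U) = D_U(f|_U \cdot v)$, so what you need is $D_U\bigl((1-f|_U)v\bigr) = 0$. But $1-f|_U$ vanishes only on $\overline V$, not on all of $V'$, while your shrunk $D$ may be supported anywhere in $V'$; there is no reason for this to vanish. The phrase ``where $fv$ and (the extension of) $v$ have the same germs'' is the symptom: $v$ has no canonical extension to $M$, and on $U$ the sections $f|_U v$ and $v$ agree only on $\overline V$, not on $V'$. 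Your argument would go through had you taken $f\equiv 1$ on $\overline{V'}$ rather than on $\overline V$, but as written it does not.

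The paper sidesteps this entirely by composing with $f$ on the right instead of trying to shrink the support: set $D := (D'\circ f)|_U$, where $D'\circ f$ is the operator $u'\mapsto D'(fu')$ as in \eqref{eq:defofDf}. Then $\mathrm{supp}(D'\circ f)\subset \mathrm{supp}\,D'\cap \mathrm{supp}\,f$ is automatically compact and contained in $U$, and $D(u)|_W = D'(fu)|_W \neq 0$ holds by construction. No second bump function is needed.
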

	\begin{proof} The first assertion is obvious. For the second one, let $0\ne u\in \CF(U)$.
		Take  open subsets $V$ and $W$ of $U$ such that
		\[
		V\subset \overline{V}\subset W\subset\overline{W}\subset U\quad 
		\text{and}\quad u|_V\ne 0.
		\]
		And take a formal function $f$ on $M$
		such that $\mathrm{supp}\,f\subset W$ and $f|_{\overline{V}}=1$.
		Since 
		\[(f u)|_V=u|_V\ne 0 \quad \text{and} \quad  \mathrm{supp}(fu)\subset W,\] together with the condition that $\CF(M)$ is Hausdorff,  there is a differential operator  $D'\in \mathrm{Diff}_\mathrm{c}(\CF,\underline \CO)$ such that
		$D'(fu)|_W\ne 0$. Set \[D:=(D'\circ f)|_U\in \mathrm{Diff}_\mathrm{c}(\CF|_U,\underline \CO|_U).\] Then $D(u)\ne 0$ as 
		\[D(u)|_W=((D'\circ f)|_U(u))|_W=D'(fu)|_W\ne 0.\]
		This finishes the proof.
	\end{proof}

	\begin{lemd}\label{lemr0}
		Let $U$  be an open subset of $M$. Then the restriction map
		\be\label{redo0}
		\CF(M)\rightarrow \CF(U)
		\ee
		is 
		continuous.
		
	\end{lemd}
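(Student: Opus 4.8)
The claim is that for every open $U\subseteq M$, the restriction map $\CF(M)\to\CF(U)$ is continuous for the smooth topologies. The plan is to prove this by bounding each seminorm $|\,\cdot\,|_D$ on $\CF(U)$ (for $D\in\mathrm{Diff}_c(\CF|_U,\underline\CO|_U)$) by a seminorm on $\CF(M)$, using a compactly supported differential operator on $M$ obtained by multiplying $D$ by a bump function and extending by zero.

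Here is the sequence of steps I would carry out. First, fix $D\in\mathrm{Diff}_c(\CF|_U,\underline\CO|_U)$, and let $K:=\mathrm{supp}\,D$, a compact subset of $U$. Choose an open $W$ with $K\subset W\subset\overline W\subset U$ and $\overline W$ compact; then by Corollary \ref{cor:bumpfun} choose a formal function $f\in\CO(M)$ with $f|_{\overline W}=1$ and $f|_{M\setminus U}=0$, in particular $\mathrm{supp}\,f\subset U$. Second, I would define a differential operator $\widetilde D$ on $M$ with coefficients in $\underline\CO$ as follows: on the open set $U$ set $\widetilde D|_U:=(D\circ f|_U)$ (using the right multiplication action \eqref{eq:rightactiononD}, which lands in $\mathrm{Diff}(\CF|_U,\underline\CO|_U)$ because $f|_U\in\CO(U)$), and on the open set $M\setminus K$ set $\widetilde D:=0$. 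Since $\mathrm{supp}(D\circ f|_U)\subset\mathrm{supp}\,D=K$, these two local differential operators agree on the overlap $U\setminus K$, so by the sheaf property of $\mathcal D\mathrm{iff}(\CF,\underline\CO)$ they glue to a well-defined $\widetilde D\in\mathrm{Diff}(\CF,\underline\CO)$; it is compactly supported (support contained in $K$), so $\widetilde D\in\mathrm{Diff}_c(\CF,\underline\CO)$. Third, I would check the seminorm comparison: for $u\in\CF(M)$, on $W$ we have $f|_W=1$, hence $\widetilde D(u)|_W=D((fu)|_U)|_W=D(u|_U)|_W$ after noting $(fu)|_W=u|_W$, while off $K$ both $\widetilde D(u)$ and, since $K\subset W$, the relevant values of $D(u|_U)$ vanish. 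This gives $|u|_U|_D=\sup_{a\in U}|(D(u|_U))(a)|\le\sup_{a\in M}|(\widetilde D(u))(a)|=|u|_{\widetilde D}$ for all $u\in\CF(M)$, which is exactly the inequality certifying continuity of \eqref{redo0}.

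The main obstacle I anticipate is the bookkeeping in the gluing step — making sure that "$D$ extended by zero then precomposed with the bump function" really is the section over $M$ of the sheaf $\mathcal D\mathrm{iff}(\CF,\underline\CO)$ that one wants, and that its value on global sections computes $D(u|_U)$ on a neighborhood of $K$. This is where one must be careful that $D$ is a morphism of sheaves commuting with restrictions (so $D_U(u|_U)|_W=D_W(u|_W)$, etc.) and that the supports behave as claimed; everything else is a routine seminorm estimate. An alternative, slightly slicker route avoids naming $\widetilde D$ explicitly: for $u\in\CF(M)$ observe directly that $D(u|_U)$ and $D((fu)|_U)$ agree near $K$ and both vanish away from $K$, so they are equal as sections of $\underline\CO(U)$; hence $|u|_U|_D=|(fu)|_U|_D=|fu|_{D'}$ where $D'\in\mathrm{Diff}_c(\CF,\underline\CO)$ is $D$ extended by zero, and then $u\mapsto fu$ is continuous on $\CF(M)$ by Lemma \ref{lemd:O(M)-moduleO(F)} while $v\mapsto|v|_U|_D$ factors as $|\,\cdot\,|_{D'}$ on that image — but the first formulation keeps the single clean inequality, which is what I would write up.
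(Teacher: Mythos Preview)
Your proof is correct and follows essentially the same idea as the paper: bound the seminorm $|\,\cdot\,|_D$ on $\CF(U)$ by a seminorm $|\,\cdot\,|_{\widetilde D}$ on $\CF(M)$ for a compactly supported differential operator $\widetilde D$ on $M$ extending $D$. The paper does this in one line by invoking the extension-by-zero map $\mathrm{ext}_{M,U}:\mathrm{Diff}_c(\CF|_U,\underline\CO|_U)\to\mathrm{Diff}_c(\CF,\underline\CO)$ already available from the cosheaf structure \eqref{eq:defofdiffc}, obtaining the equality $|u|_{\mathrm{ext}_{M,U}(D)}=|(u|_U)|_D$; your bump function $f$ is superfluous, since $D$ already has compact support in $U$ (indeed, as your own computation shows, $D\circ f|_U=D$), so your $\widetilde D$ is exactly $\mathrm{ext}_{M,U}(D)$ constructed by hand.
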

	
	\begin{proof}
		For every $D\in \mathrm{Diff}_\mathrm{c}(\CF|_U,\underline{\CO}|_U)$, we have that
		\[
		\abs{u}_{\mathrm{ext}_{M,U}(D)}= \left|(u|_U)\right|_{D}\quad \textrm{for all $u\in \CF(M)$}.
		\]
		This implies that the restriction map \eqref{redo0} is continuous.
	\end{proof}
	
	\begin{lemd}\label{lemr1}
		Let $\{U_\gamma\}_{\gamma\in \Gamma}$ be an open cover of $M$.  Then the linear map
		\be\label{redo}
		\CF(M)\rightarrow \prod_{\gamma\in \Gamma} \CF(U_\gamma), \quad u\mapsto \{u|_{U_\gamma}\}_{\gamma\in \Gamma}
		\ee
		is a topological embedding of topological spaces.
		Furthermore, if $\CF(M)$ is Hausdorff,
		then the image of \eqref{redo} is closed.
	\end{lemd}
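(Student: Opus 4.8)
The plan is to prove the two assertions separately. For the topological-embedding claim, I would first note that the map \eqref{redo} is injective: this is the sheaf axiom for $\CF$, since a section restricting to $0$ on every member of a cover is $0$. Continuity of \eqref{redo} is immediate from Lemma \ref{lemr0}, applied to each factor $\CF(M)\to\CF(U_\gamma)$, together with the universal property of the product topology. The substantive point is that \eqref{redo} is a topological embedding, i.e. that the smooth topology on $\CF(M)$ is recovered as the subspace topology from $\prod_{\gamma}\CF(U_\gamma)$. For this I would show that every defining seminorm $\abs{\,\cdot\,}_D$ of the smooth topology on $\CF(M)$, with $D\in\mathrm{Diff}_c(\CF,\underline\CO)$, is dominated by finitely many seminorms pulled back from the factors. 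Here I would use a partition of unity: by Proposition \ref{lemr01}, choose a locally finite partition of unity $\{f_\gamma\}_{\gamma\in\Gamma}$ subordinate to $\{U_\gamma\}$ (after passing to a refinement if needed; one must be a little careful, but a refinement of a cover only makes the product topology finer on the nose, and one can compare back — alternatively work directly with a partition of unity indexed so that $\mathrm{supp}\,f_\gamma\subset U_\gamma$). Since $D$ is compactly supported, say $\mathrm{supp}\,D\subset K$ with $K$ compact, only finitely many $\gamma$, say $\gamma_1,\dots,\gamma_m$, have $(\mathrm{supp}\,f_{\gamma})\cap K\ne\emptyset$. Writing $u=\sum_\gamma f_\gamma u$ and using that $D$ kills sections supported away from $K$, one gets $D(u)=\sum_{i=1}^m D(f_{\gamma_i}u)$; but $D\circ f_{\gamma_i}$, restricted to $U_{\gamma_i}$, is a compactly supported differential operator $D_i\in\mathrm{Diff}_c(\CF|_{U_{\gamma_i}},\underline\CO|_{U_{\gamma_i}})$, and $D(f_{\gamma_i}u)(a)=D_i(u|_{U_{\gamma_i}})(a)$ for $a\in U_{\gamma_i}$, so
\[
\abs{u}_D\ \le\ \sum_{i=1}^m \bigl|\,u|_{U_{\gamma_i}}\,\bigr|_{D_i}.
\]
This exhibits $\abs{\,\cdot\,}_D$ as bounded by a finite sum of seminorms pulled back along the projections to the $\CF(U_{\gamma_i})$, which is exactly what is needed for the subspace topology to contain (hence equal) the smooth topology. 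I expect this domination estimate — in particular handling the passage to a subordinate partition of unity cleanly while keeping the indexing compatible with the given cover — to be the main technical point.

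For the second assertion, assume $\CF(M)$ is Hausdorff; we must show the image of \eqref{redo} is closed in $\prod_\gamma\CF(U_\gamma)$. The image is precisely the set of compatible families, i.e. those $\{u_\gamma\}_\gamma$ with $u_\gamma|_{U_\gamma\cap U_\delta}=u_\delta|_{U_\gamma\cap U_\delta}$ for all $\gamma,\delta$ — this is again the sheaf axiom (gluing), which identifies the image with the equalizer $\CF(M)=\varprojlim\bigl(\prod_\gamma\CF(U_\gamma)\rightrightarrows\prod_{\gamma,\delta}\CF(U_\gamma\cap U_\delta)\bigr)$. For each pair $(\gamma,\delta)$ the two maps
\[
\{u_\gamma\}\ \mapsto\ u_\gamma|_{U_\gamma\cap U_\delta}\qquad\text{and}\qquad \{u_\gamma\}\ \mapsto\ u_\delta|_{U_\gamma\cap U_\delta}
\]
are continuous from $\prod_\gamma\CF(U_\gamma)$ to $\CF(U_\gamma\cap U_\delta)$, by composing a coordinate projection with the restriction map of Lemma \ref{lemr0}. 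By Lemma \ref{lem:HausdorffFM}, since $\CF(M)$ is Hausdorff so is each $\CF(U_\gamma\cap U_\delta)$; hence the diagonal in $\CF(U_\gamma\cap U_\delta)\times\CF(U_\gamma\cap U_\delta)$ is closed, so the locus where these two continuous maps agree is closed in $\prod_\gamma\CF(U_\gamma)$. Intersecting over all pairs $(\gamma,\delta)$ — an arbitrary intersection of closed sets — we conclude the image of \eqref{redo} is closed, which completes the proof.

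One remark on a possible subtlety: if one prefers to avoid refining the cover, one can instead fix a partition of unity $\{f_\gamma\}$ subordinate to $\{U_\gamma\}$ directly (Proposition \ref{lemr01} gives one indexed by the same $\Gamma$), and then the estimate above goes through verbatim with $D_i:=(D\circ f_{\gamma_i})|_{U_{\gamma_i}}$; the compact support of $D$ is what guarantees the sum over $\gamma$ is finite. The continuity half and the closedness half of the statement are then completely decoupled, and neither uses anything beyond Lemmas \ref{lemr0} and \ref{lem:HausdorffFM}, Proposition \ref{lemr01}, and the sheaf axioms for $\CF$.
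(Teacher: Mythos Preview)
Your proof is correct and follows essentially the same route as the paper's: injectivity and continuity are handled exactly as you do, the embedding estimate uses a partition of unity to bound $\abs{u}_D$ by finitely many seminorms pulled back from the factors, and closedness is deduced from the Hausdorffness of each $\CF(U_\gamma\cap U_\delta)$ (via Lemma~\ref{lem:HausdorffFM}) together with the sheaf axiom. The only cosmetic difference is that the paper multiplies the partition of unity on the \emph{left} of $D$ (working with $(f_{\gamma_i}\circ D)|_{U_{\gamma_i}}$) and uses a finite partition subordinate to $\{M\setminus\mathrm{supp}\,D,\,U_{\gamma_1},\dots,U_{\gamma_s}\}$, whereas you multiply on the right ($D\circ f_{\gamma_i}$) and use a global partition subordinate to $\{U_\gamma\}$; both yield the same estimate.
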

	\begin{proof} The sheaf property of $\CF$ implies  that the map \eqref{redo} is injective.
		Lemma \ref{lemr0} implies that the map \eqref{redo} is continuous.
		Let $D\in \mathrm{Diff}_\mathrm{c}(\CF,\underline \CO)$. Choose  $\gamma_1, \gamma_2, \dots, \gamma_s\in \Gamma$ ($s\in\BN$) such that
		\[\mathrm{supp}\,D\subset U_{\gamma_1}\cup U_{\gamma_2}\cup \cdots \cup U_{\gamma_s}.\]
		Let $\{g, f_{\gamma_1}, f_{\gamma_2}, \dots, f_{\gamma_s}\}$ be a partition of unity on $M$ subordinate to the open cover
		$\{V:=M\setminus \mathrm{supp}\,D, U_{\gamma_1},U_{\gamma_2},\dots, U_{\gamma_s}\}$.
		Note that for each $i=1,2,\dots, s$, we have that 
		\[(f_{\gamma_i}\circ D)|_{U_{\gamma_i}}\in \mathrm{Diff}_\mathrm{c}(\CF|_{U_{\gamma_i}},\underline{\CO}|_{U_{\gamma_i}}),\]
		where $f_{\gamma_i}\circ D\in \mathrm{Diff}_\mathrm{c}(\CF,\underline{\CO})$ is as in \eqref{eq:defofDf}.  
		For every $u\in \CF(M)$, we obtain that
		\[
		\abs{u}_D\leq \abs{u}_{g\circ D}+ \sum_{i=1}^s \abs{u}_{f_{\gamma_i}\circ D} =\sum_{i=1}^s \left|(u|_{U_{\gamma_i}})\right|_{(f_{\gamma_i}\circ D)|_{U_{\gamma_i}}}.
		\]
		Therefore the continuous linear map \eqref{redo} is a homeomorphism onto its image.

		Furthermore, if $\CF(M)$ is Hausdorff, then by Lemma \ref{lem:HausdorffFM},  $\CF(U_\gamma\cap U_{\gamma'})$ is  also Hausdorff for all $\gamma, \gamma'\in \Gamma$.
		One  concludes from this and the sheaf property of $\CF$  that the map \eqref{redo}
		has closed image. 
	\end{proof}

	\begin{cord} \label{cor:CFMisNF}
		Assume that the sheaf $\CF$ of $\CO$-modules is locally free of finite rank. Then the space $\CF(M)$ is a product of nuclear Fr\'echet spaces. 
	\end{cord}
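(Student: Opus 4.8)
The plan is to reduce the statement to the standard fact that $\mathrm C^\infty(N)$ is a nuclear Fréchet space for a paracompact Hausdorff smooth manifold $N$, together with the stability of this class under countable products and under the term-wise convergence topology on formal power series. First I would localize: since $\CF$ is locally free of finite rank, every point of $M$ has an open neighborhood $U$ with $(U,\CO|_U)\cong (N,\CO_N^{(k)})$ for some open submanifold $N$ of some $\R^n$ and some $k$, and with $\CF|_U$ free of finite rank, say $\CF|_U\cong (\CO|_U)^{\oplus m}$. Using paracompactness of $M$, I would choose a locally finite countable-at-each-component atlas $\{U_\gamma\}_{\gamma\in\Gamma}$ of this kind; here I should be slightly careful because $M$ need not be second countable, but it is a disjoint union of second-countable pieces (each connected component), so $\Gamma$ can be taken at most countable on each component, and $\CF(M)$ splits as a product over $\pi_0(M)$ of the corresponding spaces. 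It therefore suffices to treat each connected component, i.e. to assume $\Gamma$ is countable.

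Next I would invoke Lemma \ref{lemr1}: the restriction map
\[
\CF(M)\longrightarrow \prod_{\gamma\in\Gamma}\CF(U_\gamma)
\]
is a topological embedding with closed image, provided $\CF(M)$ is Hausdorff. Hausdorffness follows from Lemma \ref{lem:HausdorffFM}: a nonzero global section is nonzero on some chart $U_\gamma$, where after the isomorphism $\CF|_{U_\gamma}\cong(\CO_N^{(k)})^{\oplus m}$ one of its components is a nonzero formal power series with some nonzero Taylor coefficient of some $\RC^\infty(N)$-entry, and that coefficient is detected by a compactly supported differential operator of the form $\partial_x^I\partial_y^J$ followed by a bump function and evaluation — i.e. an element of $\mathrm{Diff}_c(\CF,\underline\CO)$ — so $D(u)\neq 0$. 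Since a closed subspace of a product of nuclear Fréchet spaces is again a product of nuclear Fréchet spaces (closed subspaces of nuclear Fréchet spaces are nuclear Fréchet, and a closed subspace of a countable product of Fréchet spaces is Fréchet; nuclearity passes to subspaces), the problem is reduced to showing each $\CF(U_\gamma)$ is a (countable, hence finite or countable) product of nuclear Fréchet spaces.

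Finally, for a single chart I would compute directly: $\CF(U_\gamma)\cong \RC^\infty(N)[[y_1,\dots,y_k]]^{\oplus m}$, and by Example \ref{topm0} the smooth topology on $\CO(M)$, hence on $\CF(U_\gamma)$, is the term-wise convergence topology. Thus $\CF(U_\gamma)\cong \prod_{J\in\BN^k}\RC^\infty(N)^{\oplus m}$ as an LCS, a countable product of copies of $\RC^\infty(N)^{\oplus m}$. Since $N$ is an open submanifold of $\R^n$, $\RC^\infty(N)$ is a nuclear Fréchet space (standard), so $\RC^\infty(N)^{\oplus m}$ is too, and the countable product is again a product of nuclear Fréchet spaces. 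Combining with the previous paragraph and the splitting over $\pi_0(M)$ yields that $\CF(M)$ is a product of nuclear Fréchet spaces.

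The main obstacle I anticipate is purely bookkeeping rather than conceptual: ensuring that the indexing sets stay countable (so that "product of nuclear Fréchet spaces" is the right category and closed subspaces behave well) despite $M$ not being assumed second countable — this is handled by decomposing along $\pi_0(M)$ — and checking that the "closed subspace" step genuinely stays inside the class "product of nuclear Fréchet spaces." The latter follows because a closed linear subspace of $\prod_{\gamma}F_\gamma$ with each $F_\gamma$ nuclear Fréchet is itself a nuclear Fréchet space when $\Gamma$ is countable (hence trivially a one-term product), using that the product is then metrizable and complete and nuclearity is inherited by closed subspaces; no deeper structural input is needed.
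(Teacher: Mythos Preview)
Your proposal is correct and follows essentially the same route as the paper: reduce to connected (hence second-countable) components, embed $\CF(M)$ as a closed subspace of $\prod_\gamma \CF(U_\gamma)$ via Lemma~\ref{lemr1}, observe each $\CF(U_\gamma)\cong \CO(U_\gamma)^{\oplus m}$ is nuclear Fr\'echet by Example~\ref{topm0}, and conclude. One minor simplification: you need not invoke Lemma~\ref{lem:HausdorffFM} explicitly, since Lemma~\ref{lemr1} already gives a topological embedding into a Hausdorff product, which forces $\CF(M)$ to be Hausdorff.
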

	\begin{proof}
		First assume that $M$ is secondly countable. Take an atlas $\{U_i\}_{i\in\mathbb{N}}$ of $M$ such that each $\CF(U_i)$  is free of finite rank. Since each $\CO(U_i)$ is a nuclear  Fr\'echet space (by Example \ref{topm0}), so is $\CF(U_i)$. 
		Using  Lemma \ref{lemr1}, one concludes that  $\CF(M)$ is Hausdorff and moreover, $\CF(M)$ is a nuclear Fr\'echet space.
		
		For the general case, the assertion follows from the fact that
		\be\label{eq:FM=prod}
		\CF(M)=\prod_{Z\in \pi_0(M)} \CF(Z)
		\ee
		as LCS, which is again implied by Lemma \ref{lemr1}.
	\end{proof}

	By applying  Lemma 
	\ref{lemr1}, Example \ref{topm0} 
	and Corollary \ref{cor:CFMisNF}, one immediately gets the following result. 
	
	\begin{prpd}\label{prop:OMnuclear}
		The $\BC$-algebra  $\CO(M)$ is a topological $\BC$-algebra. As a topological vector space, it is a product of nuclear Fr\'echet spaces. Especially, the LCS $\CO(M)$ is complete and nuclear.
	\end{prpd}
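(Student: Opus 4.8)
The plan is to read off all three assertions from results already established, with the only point requiring genuine argument being the joint continuity of multiplication. Since the structure sheaf $\CO$ is free of rank $1$ as a sheaf of $\CO$-modules, it is in particular locally free of finite rank, so Corollary \ref{cor:CFMisNF} applies with $\CF=\CO$ and shows directly that $\CO(M)$ is a product of nuclear Fréchet spaces; explicitly, $\CO(M)=\prod_{Z\in\pi_0(M)}\CO(Z)$ by \eqref{eq:FM=prod}, with each $\CO(Z)$ a nuclear Fréchet space. An arbitrary product of complete LCS is complete and an arbitrary product of nuclear LCS is nuclear, while every nuclear Fréchet space is both complete and nuclear; hence $\CO(M)$ is complete and nuclear. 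This disposes of the last two assertions.

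It remains to verify that $\CO(M)$ is a topological $\BC$-algebra. Being an LCS, its addition and scalar multiplication maps are automatically jointly continuous, so only the multiplication map $m\colon\CO(M)\times\CO(M)\to\CO(M)$ has to be shown jointly continuous. First I would fix an atlas $\{U_\gamma\}_{\gamma\in\Gamma}$ of $M$ and invoke Lemma \ref{lemr1}: the restriction map
\[
\iota\colon\CO(M)\longrightarrow\prod_{\gamma\in\Gamma}\CO(U_\gamma),\qquad u\mapsto\{u|_{U_\gamma}\}_{\gamma\in\Gamma},
\]
is a topological embedding. It is also an algebra homomorphism, so $\iota\circ m$ equals the composite of $\iota\times\iota$ with the coordinate-wise multiplication on $\prod_\gamma\CO(U_\gamma)$. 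Since $\iota$ is a topological embedding and a map into a product is continuous iff all its components are, it therefore suffices to prove that the multiplication of each $\CO(U_\gamma)$ is jointly continuous.

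For a chart $U=U_\gamma$, say $(U,\CO|_U)\cong(N,\CO_N^{(k)})$ with $N$ an open submanifold of $\R^n$, Example \ref{topm0} identifies $\CO(U)$ with $\RC^\infty(N)[[y_1,\dots,y_k]]$ carrying the term-wise convergence topology, i.e.\ the product topology on $\prod_{J\in\BN^k}\RC^\infty(N)\cdot y^J$. For $f=\sum_J f_Jy^J$ and $g=\sum_J g_Jy^J$, the $y^{J_0}$-component of $fg$ is the finite sum $\sum_{J'+J''=J_0}f_{J'}g_{J''}$; each map $(f,g)\mapsto f_{J'}g_{J''}$ is jointly continuous, being the composition of the continuous coordinate projections with the (classical) jointly continuous multiplication of the Fréchet algebra $\RC^\infty(N)$, and a finite sum of jointly continuous bilinear maps is jointly continuous. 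Composing with the projections of the product topology then gives that multiplication on $\CO(U)$ is jointly continuous, completing the proof.

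I do not expect a genuine obstacle here: the single non-automatic ingredient is the joint (as opposed to merely separate) continuity of multiplication, which is exactly what forces the descent to charts via Lemma \ref{lemr1} together with the explicit term-wise description of Example \ref{topm0}. As an alternative to the last paragraph one could instead observe that multiplication is separately continuous (Lemma \ref{lemd:O(M)-moduleO(F)} with $\CF=\CO$) and that a separately continuous bilinear map on a Fréchet space is automatically jointly continuous, after first reducing via \eqref{eq:FM=prod} to the connected --- hence second countable, hence Fréchet --- components of $M$.
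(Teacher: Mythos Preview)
Your proof is correct and follows exactly the approach the paper takes: the paper's proof is a one-line citation of Lemma \ref{lemr1}, Example \ref{topm0}, and Corollary \ref{cor:CFMisNF}, and you have simply spelled out how these three ingredients combine. Your alternative route via Lemma \ref{lemd:O(M)-moduleO(F)} and automatic joint continuity on Fr\'echet spaces is a pleasant bonus.
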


	\subsection{Morphisms to $\R^m$}
	Define an $\R$-subalgebra
	\[
	\CO(M;\R):=\{f\in \CO(M)\mid f(a)\in \R\textrm{ for all }a\in M\}
	\]
	of $\CO(M)$.
	Let $m\in \BN$, and let
	\[
	\varphi=(\overline\varphi, \varphi^*): (M,\CO)\rightarrow (\R^m, \CO_{\R^m}^{(0)})
	\]
	be a morphism of formal manifolds. Recall that $x_1, x_2, \dots , x_m\in  \CO_{\R^m}^{(0)}(\R^m)$ are standard coordinate functions of $\R^m$.
	Then $\varphi$ yields an $m$-tuple
	\[
	c_\varphi:=(\varphi^*(x_1), \varphi^*(x_2), \dots, \varphi^*(x_m))\in (\CO(M;\R))^m.
	\]

	The main goal of this subsection is to prove the following theorem.
	\begin{thmd}\label{thmmapr}
		The map
		\be\label{mapcp}
		\{\text{morphism from $(M,\CO)$ to $(\R^m,\CO_{\R^m}^{(0)})$}\}\rightarrow (\CO(M;\R))^m, \quad \varphi\mapsto c_\varphi
		\ee
		is bijective.
	\end{thmd}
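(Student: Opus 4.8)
The plan is to establish injectivity and surjectivity of \eqref{mapcp} separately: injectivity follows quickly from the formal-stalk lemmas of Section~\ref{subsec:formalstalk}, while surjectivity requires a local ``substitution of smooth functions'' construction that is then glued by invoking the injectivity just proved.

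For injectivity, suppose $\varphi=(\overline\varphi,\varphi^*)$ and $\psi=(\overline\psi,\psi^*)$ satisfy $c_\varphi=c_\psi$. Since $\varphi^*_a\colon\CO'_{\overline\varphi(a)}\to\CO_a$ is a local homomorphism, the germ $x_i-x_i(\overline\varphi(a))$ lies in the maximal ideal and is carried into $\m_a$; evaluating at $a$ gives $(\varphi^*(x_i))(a)=x_i(\overline\varphi(a))$, so $\overline\varphi(a)$ is recovered from $c_\varphi$ and hence $\overline\varphi=\overline\psi$. Writing $b:=\overline\varphi(a)=\overline\psi(a)$, Proposition~\ref{lem:formalstalkiso} identifies $\widehat{\CO'}_b$ with $\C[[x_1-x_1(b),\dots,x_m-x_m(b)]]$, in which the polynomials in the $x_i-x_i(b)$ form a dense $\C$-subalgebra. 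The induced maps $\widehat{\CO'}_b\to\widehat\CO_a$ attached to $\varphi$ and $\psi$ are continuous by Lemma~\ref{lem:homonformalstalkscon}, both fix scalars, and both send $x_i-x_i(b)$ to the image in $\widehat\CO_a$ of $(c_\varphi)_i-x_i(b)=(c_\psi)_i-x_i(b)$; since $\widehat\CO_a$ is Hausdorff they coincide, and Lemma~\ref{homfst} gives $\varphi=\psi$.

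For surjectivity, fix $f=(f_1,\dots,f_m)\in(\CO(M;\R))^m$ and set $\overline\varphi(a):=(f_1(a),\dots,f_m(a))=(\underline{f_1}(a),\dots,\underline{f_m}(a))$; this is continuous because each $\underline{f_i}$ is a smooth function on the smooth manifold $\underline M$. To construct $\varphi^*$ it suffices to give, for all opens $V\subset\R^m$ and $U\subset M$ with $\overline\varphi(U)\subset V$, a $\C$-algebra map $\RC^\infty(V)\to\CO(U)$ compatible with shrinking $V$ and $U$, since such data extends uniquely over the inverse-image sheaf $\overline\varphi^{-1}\CO_{\R^m}^{(0)}$. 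Working in a chart $U\cong N^{(k)}$ with $N\subset\R^n$ open, write $f_i=\underline{f_i}+(f_i-\underline{f_i})$ with $f_i-\underline{f_i}$ in the ideal $(y_1,\dots,y_k)\,\RC^\infty(N)[[y]]$, and define
\[
\Phi(g):=\sum_{I\in\BN^m}\frac{1}{I!}\,\bigl((\partial^I g)\circ(\underline{f_1},\dots,\underline{f_m})\bigr)\cdot\prod_{j=1}^m(f_j-\underline{f_j})^{I_j}.
\]
Since $\overline\varphi(U)\subset V$, the map $(\underline{f_1},\dots,\underline{f_m})$ sends $N$ into $V$, so each coefficient $(\partial^I g)\circ(\underline{f_1},\dots,\underline{f_m})$ is a genuine element of $\RC^\infty(N)$; and $\prod_j(f_j-\underline{f_j})^{I_j}\in(y_1,\dots,y_k)^{|I|}$, so for each fixed $y$-degree only finitely many $I$ contribute and $\Phi(g)\in\RC^\infty(N)[[y]]=\CO(U)$ is well defined. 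The multivariable Leibniz rule $\partial^L(gh)=\sum_{I+I'=L}\binom{L}{I}\partial^Ig\,\partial^{I'}h$ gives $\Phi(gh)=\Phi(g)\Phi(h)$; clearly $\Phi(1)=1$ and $\Phi(x_i)=\underline{f_i}+(f_i-\underline{f_i})=f_i$. Compatibility with restriction of $V$ and of $N$ is immediate from the formula.

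To globalize, fix an atlas of $M$ and let $\varphi^*$ be given on its members by the above. On an overlap the two candidate morphisms have the same underlying map $\overline\varphi$ and the same tuple of real-valued formal functions $(f_i|_{\text{overlap}})$, hence coincide by the injectivity already proved; thus the local data glue to a sheaf homomorphism $\varphi^*\colon\overline\varphi^{-1}\CO_{\R^m}^{(0)}\to\CO$. On the stalk at $a$ it sends $x_i-x_i(\overline\varphi(a))$ to the germ of $f_i-f_i(a)$, which vanishes at $a$ and so lies in $\m_a$, so $\varphi=(\overline\varphi,\varphi^*)$ is a morphism of formal manifolds, and $c_\varphi=(\varphi^*(x_1),\dots,\varphi^*(x_m))=f$. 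The main obstacle is this surjectivity step, specifically arranging that the substitution $\Phi$ is simultaneously well defined (the $y$-adic finiteness), multiplicative (the Leibniz identity), and independent of the chart; the last difficulty is sidestepped by quoting the injectivity, leaving the well-definedness and multiplicativity as the real content.
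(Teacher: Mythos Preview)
Your proof is correct and follows essentially the same route as the paper's: injectivity via the formal-stalk density argument (the paper's Lemmas~\ref{lem:cvarphi1} and~\ref{injphi}), local surjectivity via the Taylor-expansion substitution formula in a chart $N^{(k)}$ (the paper's Lemma~\ref{morr}, whose formula~\eqref{eq:varphiexpression} is exactly your $\Phi$), and global surjectivity by gluing the local constructions using the injectivity already established (the paper's Lemma~\ref{morr2}). The only cosmetic differences are that you make the verification of locality at stalks explicit, and you phrase the chart-independence check slightly differently; neither changes the argument.
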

	
	We will prove the theorem through a sequence of lemmas.
	Firstly, the continuous map $\overline\varphi: M\rightarrow \R^m$ is determined by $c_\varphi$ by the following lemma.
	
	\begin{lemd}\label{lem:cvarphi1}
		Let $a\in M$. Then
		\[
		\overline\varphi(a)=((\varphi^*(x_1))(a), (\varphi^*(x_2))(a), \dots, (\varphi^*(x_m))(a)).
		\]
	\end{lemd}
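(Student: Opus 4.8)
\textbf{Proof plan for Lemma \ref{lem:cvarphi1}.}

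The plan is to unwind the definitions of ``morphism of $\BC$-locally ringed spaces'' and of ``value of a formal function at a point'' and to exploit the fact that $\varphi^*$ induces \emph{local} homomorphisms on stalks. Fix $a\in M$ and set $b:=\overline\varphi(a)\in \R^m$. Recall that $\varphi$ comes with a $\BC$-algebra sheaf homomorphism $\varphi^*:\overline\varphi^{-1}\CO_{\R^m}^{(0)}\rightarrow \CO$ which on stalks gives a \emph{local} homomorphism $\varphi^*_a:\CO^{(0)}_{\R^m,b}\rightarrow\CO_a$, meaning $\varphi^*_a(\m_b)\subset\m_a$, where $\m_b$ and $\m_a$ are the maximal ideals of the respective stalks. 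Passing to the residue fields, which are both canonically $\BC$ (for $\CO^{(0)}_{\R^m}$ this is the classical evaluation $\delta_b$, for $\CO$ it is $\delta_a$ of \eqref{eq:defdeltax}), the induced map $\BC=\CO^{(0)}_{\R^m,b}/\m_b\rightarrow\CO_a/\m_a=\BC$ is a $\BC$-algebra homomorphism, hence the identity. This yields the compatibility $\delta_a\circ\varphi^*_a=\delta_b$ on germs, i.e. $(\varphi^*_a(g_b))(a)=g(b)$ for every smooth function $g$ defined near $b$.

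Next I would apply this to $g=x_j$ for $j=1,2,\dots,m$. Since $\varphi^*(x_j)\in\CO(M)$ has germ $(\varphi^*(x_j))_a=\varphi^*_a((x_j)_b)$ at $a$ (the sheaf homomorphism $\varphi^*$ is compatible with passing to stalks, which is how $\varphi^*_a$ is defined in the first place), the definition of the value of a formal function gives
\[
(\varphi^*(x_j))(a)=\delta_a\bigl((\varphi^*(x_j))_a\bigr)=\delta_a\bigl(\varphi^*_a((x_j)_b)\bigr)=\delta_b((x_j)_b)=x_j(b).
\]
But $x_j(b)$ is exactly the $j$-th coordinate of $b=\overline\varphi(a)$, so assembling over $j$ gives
\[
\overline\varphi(a)=(x_1(b),\dots,x_m(b))=\bigl((\varphi^*(x_1))(a),\dots,(\varphi^*(x_m))(a)\bigr),
\]
as desired.

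There is essentially no hard analytic or combinatorial obstacle here; the statement is a formal consequence of the locally ringed space formalism, and the only point that requires care is making explicit that the residue field of $\CO^{(0)}_{\R^m}$ at a point $b$ is $\BC$ with the quotient map being classical evaluation at $b$, so that ``the induced map on residue fields is the identity'' genuinely translates into the coordinate-wise identity $(\varphi^*(x_j))(a)=x_j(\overline\varphi(a))$. I would state that identification once and then the computation above is immediate.
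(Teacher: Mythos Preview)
Your proof is correct and follows essentially the same approach as the paper: set $b:=\overline\varphi(a)$, establish $(\varphi^*(x_j))(a)=x_j(b)$ for each $j$, and read off the coordinates. The paper's proof simply asserts the identity $(\varphi^*(x_i))(a)=x_i(b)$ without further justification, whereas you spell out the underlying reason (locality of $\varphi^*_a$ forces the induced map on residue fields to be the identity of $\BC$), so your version is a more detailed rendering of the same argument.
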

	\begin{proof}
		Write $b:=\overline\varphi(a)$. For every $i=1,2,\dots,m$, we have that
		\[
		(\varphi^*(x_i))(a)=x_i(b).
		\]
		This implies that $b=((\varphi^*(x_1))(a), (\varphi^*(x_2))(a), \dots, (\varphi^*(x_m))(a))$.
		
	\end{proof}

	\begin{lemd}\label{injphi}
		The map \eqref{mapcp} is injective.
	\end{lemd}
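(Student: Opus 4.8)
The plan is to show that two morphisms $\varphi,\psi\colon(M,\CO)\to(\R^m,\CO_{\R^m}^{(0)})$ with $c_\varphi=c_\psi$ must coincide, using the rigidity results already established for morphisms of formal manifolds. First I would invoke Lemma \ref{lem:cvarphi1}: since $c_\varphi=c_\psi$, the continuous maps $\overline\varphi$ and $\overline\psi$ agree pointwise, so $\overline\varphi=\overline\psi$. By Lemma \ref{homfst}, it then suffices to check that for every $a\in M$ the induced homomorphisms on formal stalks
\[
\varphi^*_a,\ \psi^*_a\colon\ \widehat{\CO}_{\R^m,\,b}\longrightarrow\widehat{\CO}_a\qquad(b:=\overline\varphi(a)=\overline\psi(a))
\]
are equal.

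Next I would identify $\widehat{\CO}_{\R^m,b}$ with $\C[[x_1-x_1(b),\dots,x_m-x_m(b)]]$ via Proposition \ref{lem:formalstalkiso} (here $\deg_b\R^m=0$). This algebra is generated, as a complete topological $\C$-algebra, by the images of $x_1,\dots,x_m$ (or equivalently $x_i-x_i(b)$). Since $\varphi^*_a$ and $\psi^*_a$ are continuous $\C$-algebra homomorphisms (continuity is Lemma \ref{lem:homonformalstalkscon}) into the complete algebra $\widehat{\CO}_a$, and they agree on the topological generators $x_i$ — because $\varphi^*(x_i)=\psi^*(x_i)$ as elements of $\CO(M)$, hence have the same germ at $a$, hence the same image in $\widehat{\CO}_a$ — they must agree on all of $\widehat{\CO}_{\R^m,b}$. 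More carefully, $\varphi^*_a$ and $\psi^*_a$ descend to maps $\C[[\mathbf{x}]]/\m^r\to\CO_a/\m_a^r$ for each $r$, and on these finite-dimensional quotients both are determined by where the finitely many generators $x_i$ go (using that monomials of degree $<r$ in the $x_i$ span $\C[[\mathbf{x}]]/\m^r$), so they agree mod $\m_a^r$ for all $r$, hence agree after passing to the projective limit.

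Having shown $\varphi^*_a=\psi^*_a$ on formal stalks for every $a\in M$, Lemma \ref{homfst} gives $\varphi=\psi$, which is exactly the injectivity of \eqref{mapcp}. The argument is essentially a bookkeeping exercise assembling Lemmas \ref{lem:cvarphi1}, \ref{lem:homonformalstalkscon}, \ref{homfst} and Proposition \ref{lem:formalstalkiso}; I do not anticipate a serious obstacle. The one point requiring mild care is the passage from "agree on the generators $x_i$" to "agree on $\widehat{\CO}_{\R^m,b}$": this is not a purely formal consequence of being a ring homomorphism (the domain is a power series ring, not a polynomial ring), so one genuinely uses either continuity together with density of polynomials, or the level-by-level argument on the truncations $\C[[\mathbf{x}]]/\m^r$ together with Lemma \ref{lem:mrbasis}. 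Either route is short.
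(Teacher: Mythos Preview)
Your proposal is correct and follows essentially the same approach as the paper: use Lemma~\ref{lem:cvarphi1} to get $\overline\varphi=\overline\psi$, then reduce via Lemma~\ref{homfst} to equality of the induced maps on formal stalks, and establish the latter by noting that both maps are continuous (Lemma~\ref{lem:homonformalstalkscon}) and agree on the dense subalgebra generated by the images of $x_1,\dots,x_m$. The paper uses only the continuity-plus-density route for the final step, whereas you also sketch the equivalent level-by-level argument on the truncations; either is fine.
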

	\begin{proof}  For simplicity, let us set $\CO':=\CO_{\R^m}^{(0)}$.
		Let
		\[
		\varphi=(\overline\varphi, \varphi^*), \psi=(\overline\psi, \psi^*) : (M, \CO)\rightarrow (\R^m, \CO')
		\]
		be two morphisms such that $c_\varphi=c_\psi$.
		From Lemma \ref{lem:cvarphi1}, it follows that $\overline\varphi=\overline\psi$.
		Let $a\in M$ and write $b:=\overline\varphi(a)=\overline\psi(a)$. Then the maps
		\[
		\varphi^*_a, \psi^*_a:\ \widehat{\CO'}_b\rightarrow \widehat \CO_a
		\]
		are equal, since they are both continuous (see Lemma \ref{lem:homonformalstalkscon}) and agree on the dense subalgebra generated by the images of the coordinate functions $x_1, x_2, \dots, x_m$ under the canonical map \[\CO'(\R^m)\rightarrow \widehat{\CO'}_b.\]
		Then the lemma follows from Lemma \ref{homfst}.
	\end{proof}

	\begin{lemd}\label{morr}
		If $M=N^{(k)}$ with $N$ an open submanifold of $\R^n$ and $n,k\in \BN$, then the map
		\eqref{mapcp} is surjective.
	\end{lemd}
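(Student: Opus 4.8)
The plan is to construct, for an arbitrary tuple $c=(c_1,c_2,\dots,c_m)\in(\CO(M;\R))^m$ with $M=N^{(k)}$, a morphism $\varphi=(\overline\varphi,\varphi^*)\colon(N^{(k)},\CO_N^{(k)})\to(\R^m,\CO_{\R^m}^{(0)})$ whose associated tuple $c_\varphi$ is exactly $c$. First I would define the underlying continuous map: by Lemma \ref{lem:cvarphi1} there is no choice, so set $\overline\varphi(a):=(c_1(a),c_2(a),\dots,c_m(a))$. This is continuous because each $c_i\in\CO(M;\R)$ has a continuous reduction $\underline{c_i}\in\underline\CO(M)=\RC^\infty(N)$, and $\overline\varphi$ factors through the reduction map $\underline M=N\to M$ followed by the smooth map $N\to\R^m$ given by the tuple $(\underline{c_1},\dots,\underline{c_m})$.

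The heart of the construction is the sheaf homomorphism $\varphi^*\colon\overline\varphi^{-1}\CO_{\R^m}^{(0)}\to\CO_N^{(k)}$. On an open set $V'\subset\R^m$ and $U\subset\overline\varphi^{-1}(V')$, a section $g\in\RC^\infty(V')$ must be sent to a formal function $g\circ c\in\RC^\infty(U)[[y_1,\dots,y_k]]$ obtained by ``formal substitution'': write $c_i|_U=c_{i,0}+\tilde c_i$ where $c_{i,0}=\underline{c_i}|_U\in\RC^\infty(U)$ is the degree-zero part and $\tilde c_i\in\m_{\CO_N^{(k)}}(U)$ lies in the ideal generated by $y_1,\dots,y_k$, and then set
\[
\varphi^*_{V',U}(g):=\sum_{I\in\BN^m}\frac{1}{I!}\,\bigl(\partial^I g\bigr)\!\circ\!(c_{1,0},\dots,c_{m,0})\cdot\tilde c_1^{\,i_1}\cdots\tilde c_m^{\,i_m},
\]
which is a well-defined element of the formal power series ring because for each multi-index $J\in\BN^k$ only finitely many terms contribute to the coefficient of $y^J$ (each $\tilde c_i$ has zero constant term). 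I would then check that this is a $\BC$-algebra homomorphism — additivity is clear, and multiplicativity is the formal Leibniz/chain rule, i.e. the identity $\partial^I(g_1g_2)=\sum_{I'+I''=I}\binom{I}{I'}\partial^{I'}g_1\,\partial^{I''}g_2$ reorganized via the binomial theorem for the $\tilde c_i^{\,i_j}$ — that it is compatible with restriction maps, hence defines a morphism of sheaves, and that it induces local homomorphisms on stalks: on the stalk at $a$ the maximal ideal of $\CO_{\R^m,\overline\varphi(a)}$ consists of germs vanishing at $\overline\varphi(a)$, and such a germ is sent to a formal function whose value at $a$ is $g(\overline\varphi(a))=0$, so $\varphi^*$ is local by Lemma \ref{lem:inverse}. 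Finally, applying $\varphi^*$ to the coordinate function $x_i$ gives $c_i$ (the Taylor expansion of $x_i$ is just $x_i$ itself), so $c_\varphi=c$.

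I expect the main obstacle to be the bookkeeping in verifying that $\varphi^*$ is a well-defined ring homomorphism of sheaves: one must confirm convergence of the formal substitution in each $y^J$-coefficient, prove multiplicativity by manipulating the multivariate Taylor expansion, and check naturality in $(V',U)$. One clean way to organize the convergence and the algebra simultaneously is to first treat the universal case $m=n+k$ with $c_i$ the coordinate functions $x_i$ (respectively $y_j$), where $\varphi^*$ is literally the substitution $x_i\mapsto x_i$, $y_j\mapsto y_j$ into a Taylor series and the identity follows from Borel's lemma and Proposition \ref{lem:formalstalkiso}; then deduce the general case by precomposing with the smooth map $c\colon N\to\R^m$ and noting that on formal functions this is the honest pullback along a map of smooth manifolds tensored with the identity on $\C[[y_1,\dots,y_k]]$. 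An alternative, essentially equivalent route is to define $\varphi^*$ abstractly on formal stalks using Lemma \ref{lem:formalstalkiso}, check continuity via Lemma \ref{lem:homonformalstalkscon}, and then invoke Lemma \ref{dformalf0} to descend to a genuine sheaf map; whichever route is taken, the only real content is the chain rule for formal substitution into a smooth function's Taylor expansion.
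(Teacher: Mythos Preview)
Your proposal is correct and follows essentially the same approach as the paper: the paper defines $\overline\varphi$ via the reductions $\underline{f_i}$ and constructs $\varphi^*_U(g)$ by exactly the same formal Taylor substitution formula $\sum_{I}\frac{1}{I!}(\partial_x^I g\circ\overline\varphi)\cdot(f_1-\underline{f_1})^{i_1}\cdots(f_m-\underline{f_m})^{i_m}$, then appeals to Leibniz's rule for multiplicativity and checks $\varphi^*(x_i)=f_i$. Your write-up is in fact more detailed than the paper's (which omits the well-definedness and locality verifications you sketch); the alternative routes you mention at the end are not needed.
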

	\begin{proof} Let $(f_1,f_2,\dots,f_m)$  be an $m$-tuple in $(\CO(M;\R))^m$.
		It  yields a  smooth map
		\[
		\overline\varphi: N\rightarrow \R^m, \quad a\mapsto (f_1(a), f_2(a), \dots, f_m(a)).
		\]
		Let $U$ be an open subset of $\R^m$ and set  $V:=\overline\varphi^{-1}(U)\subset N$.
		For every smooth function $g\in \RC^\infty(U)$,  define a formal function
		\be\label{eq:varphiexpression} \varphi_U^*(g):=\sum_{I\in \BN^m} \left(\frac{\partial_x^I(g)}{I!}\circ \overline\varphi\right)\cdot f^I|_V\in \RC^\infty(V)[[y_1,y_2,\dots,y_k]],\ee
		where 
		\[f^I:=(f_1- \underline{f_1})^{i_1}(f_2-\underline{f_2})^{i_2}\cdots(f_m-\underline{f_m})^{i_m}\in \RC^\infty(N)[[y_1, y_2, \dots, y_k]],\] for $I=(i_1,i_2,\dots,i_m)\in \BN^m$.
		Here $\underline{f_j}\in \RC^\infty(N)\subset \RC^\infty(N)[[y_1, y_2, \dots, y_k]] $ ($j=1,2,\dots, m$) is the reduction of $f_j$ as defined in \eqref{underf}.
		
		By using Leibniz's rule, one easily checks   that the map
		\[
		\varphi^*_U: \RC^{\infty}(U)\rightarrow \RC^\infty(V)[[y_1, y_2, \dots, y_k]], \quad g\mapsto \varphi^*_U(g)
		\]
		is an algebra homomorphism. 
		Then we obtain a morphism
		\[
		\varphi:=(\overline \varphi, \{\varphi^*_U\}_{U\textrm{ is an open subset of $\R^m$}}): (N, \CO_{N}^{(k)})\rightarrow (\R^m, \CO_{\R^m}^{(0)})
		\]
		of formal manifolds. Note that for every $i=1,2,\dots,m$,
		\[\varphi^*(x_i)= x_i\circ \overline{\varphi}+(f_i-\underline{f_{i}})=f_i.\]
		This implies that $c_{\varphi}=(f_1, f_2, \dots, f_m)$, and  the proof is finished.
		
	\end{proof}

	By 
	Lemma \ref{injphi}, we only need to show the following lemma to finish the proof of Theorem \ref{thmmapr}.
	
	\begin{lemd}\label{morr2}
		The map
		\eqref{mapcp} is surjective.
	\end{lemd}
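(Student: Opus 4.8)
The plan is to bootstrap from the local case (Lemma \ref{morr}) to the general case using a partition of unity argument, gluing the locally defined morphisms together. The key point is that the injectivity already established in Lemma \ref{injphi} guarantees that the local pieces agree on overlaps, so the gluing is essentially automatic once one checks compatibility.

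First I would fix an $m$-tuple $(f_1,f_2,\dots,f_m)\in(\CO(M;\R))^m$ and choose an atlas $\{U_\gamma\}_{\gamma\in\Gamma}$ of $M$ by charts, so that each $(U_\gamma,\CO|_{U_\gamma})\cong(N_\gamma,\CO_{N_\gamma}^{(k_\gamma)})$ with $N_\gamma$ an open submanifold of some $\R^{n_\gamma}$. For each $\gamma$, the restricted tuple $(f_1|_{U_\gamma},\dots,f_m|_{U_\gamma})$ lies in $(\CO(U_\gamma;\R))^m$, so Lemma \ref{morr} provides a morphism $\varphi_\gamma:(U_\gamma,\CO|_{U_\gamma})\rightarrow(\R^m,\CO_{\R^m}^{(0)})$ with $c_{\varphi_\gamma}=(f_1|_{U_\gamma},\dots,f_m|_{U_\gamma})$.

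Next I would check that for $\gamma,\gamma'\in\Gamma$ the restrictions $\varphi_\gamma|_{U_\gamma\cap U_{\gamma'}}$ and $\varphi_{\gamma'}|_{U_\gamma\cap U_{\gamma'}}$ coincide as morphisms of formal manifolds. Both have the same associated $m$-tuple, namely $(f_1|_{U_\gamma\cap U_{\gamma'}},\dots,f_m|_{U_\gamma\cap U_{\gamma'}})$, and $U_\gamma\cap U_{\gamma'}$ is again a formal manifold covered by charts, so Lemma \ref{injphi} (applied to that open subset) forces the two restrictions to be equal. In particular the continuous maps $\overline{\varphi_\gamma}$ agree on overlaps and glue to a continuous map $\overline\varphi:M\rightarrow\R^m$ (explicitly $a\mapsto(f_1(a),\dots,f_m(a))$ by Lemma \ref{lem:cvarphi1}); and for every open subset $U$ of $\R^m$ the sheaf homomorphisms $\CO_{\R^m}^{(0)}(U)\rightarrow\CO(\overline\varphi^{-1}(U)\cap U_\gamma)$ are compatible with restriction, so by the sheaf property of $\CO$ they glue to a single $\C$-algebra homomorphism $\varphi^*_U:\CO_{\R^m}^{(0)}(U)\rightarrow\CO(\overline\varphi^{-1}(U))$. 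One then checks that the family $\{\varphi^*_U\}$ is compatible with restriction maps on $\R^m$ and induces local homomorphisms on stalks (this last being a local condition, hence inherited from the $\varphi_\gamma$). This yields a morphism $\varphi=(\overline\varphi,\varphi^*):(M,\CO)\rightarrow(\R^m,\CO_{\R^m}^{(0)})$, and since $\varphi^*(x_i)$ restricts to $f_i|_{U_\gamma}$ on each chart, the sheaf property gives $\varphi^*(x_i)=f_i$, i.e. $c_\varphi=(f_1,\dots,f_m)$.

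The main obstacle — though it is more bookkeeping than a genuine difficulty — is verifying that the glued data $(\overline\varphi,\{\varphi^*_U\})$ really does constitute a morphism of $\BC$-locally ringed spaces: one must confirm that $\varphi^*$ as defined really is a homomorphism of sheaves $\overline\varphi^{-1}\CO_{\R^m}^{(0)}\rightarrow\CO$ (not just a compatible family of maps on sections over a basis), which requires passing through the inverse-image sheaf and checking the gluing against the stalk description, and that the induced stalk maps are local. All of this is local on $M$ and thus follows from the corresponding properties of the $\varphi_\gamma$, together with the uniqueness supplied by Lemma \ref{injphi}. Combined with Lemma \ref{injphi}, this surjectivity statement completes the proof of Theorem \ref{thmmapr}.
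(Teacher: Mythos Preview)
Your proposal is correct and follows essentially the same approach as the paper: take an atlas, apply Lemma \ref{morr} on each chart to produce local morphisms $\varphi_\gamma$, use Lemma \ref{injphi} on the overlaps to see that they agree, and glue. The paper's proof is terser---it simply says the family ``patches together to a morphism''---whereas you spell out the sheaf-theoretic details of the gluing; but the argument is the same. (One small remark: you call it a ``partition of unity argument'' in your opening sentence, but no partition of unity is actually used---the gluing is via the sheaf property alone.)
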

	\begin{proof}
		Let $f_1, f_2,\dots, f_m\in \CO(M;\R)$, and take an atlas  $\{U_\gamma\}_{\gamma\in \Gamma}$ of $M$.
		Then for each $\gamma\in \Gamma$, it follows from Lemma \ref{morr}  that there is a morphism
		\[\varphi_\gamma: (U_\gamma, \CO|_{U_\gamma})\rightarrow (\R^m, \CO_{\R^m}^{(0)})\] such that
		\[
		c_{\varphi_\gamma}=((f_1)|_{U_\gamma}, (f_2)|_{U_\gamma}, \dots,(f_m)|_{U_\gamma}).
		\]
		Moreover, Lemma \ref{injphi} implies that $\varphi_\gamma$ and $\varphi_{\gamma'}$
		agree on $(U_\gamma\cap U_{\gamma'}, \CO|_{U_\gamma\cap U_{\gamma'}})$ for all $\gamma, \gamma'\in \Gamma$.
		Hence the family $\{\varphi_\gamma\}_{\gamma\in \Gamma}$ patches together to
		a morphism \[\varphi: (M, \CO)\rightarrow (\R^m, \CO_{\R^m}^{(0)}).\]
		It is clear that $c_\varphi=(f_1, f_2,\dots, f_m)$. This proves the lemma.
		
	\end{proof}

	\subsection{Automatic continuity}
	In this subsection, we prove that for every  morphism  of formal manifolds, the induced homomorphism on the topological $\BC$-algebras  of formal functions
	is continuous.
	We start with  the following special case.

	\begin{lemd}\label{morr3}Let $n,k,m,l\in\BN$.
		For every morphism
		$\varphi=(\overline\varphi, \varphi^*): (\R^n, \CO_{\R^n}^{(k)})\rightarrow (\R^m, \CO_{\R^m}^{(l)})$, the homomorphism
		\[
		\varphi^*: \RC^\infty(\R^m)[[y_1,y_2, \dots, y_l]]\rightarrow \RC^\infty(\R^n)[[y_1,y_2, \dots, y_k]]
		\]
		is continuous.
	\end{lemd}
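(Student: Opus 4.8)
The plan is to reduce the continuity of $\varphi^*$ to the structure of morphisms into $\R^m$ established in Theorem~\ref{thmmapr} and the computation of differential operators in Example~\ref{ex:diff}. First I would observe that the coordinate functions of $\R^m$ extend, via the embedding $\R^m\hookrightarrow (\R^m)^{(l)}$, to the formal variables $y_1,\dots,y_l$ of the source factor; more precisely, composing $\varphi$ with the natural morphism $(\R^m)^{(l)}\to (\R^m)^{(l+?)}$ is not quite what we want, so instead I would first handle the case $l=0$ and then treat the formal variables $y_1,\dots,y_l$ separately. For $l=0$, the morphism $\varphi\colon (\R^n)^{(k)}\to \R^m$ is, by Theorem~\ref{thmmapr}, completely determined by the $m$-tuple $c_\varphi=(\varphi^*(x_1),\dots,\varphi^*(x_m))\in(\CO(M;\R))^m$, and the explicit formula \eqref{eq:varphiexpression} expresses $\varphi^*_U(g)$ as a locally finite sum involving $(\partial_x^I g)\circ\overline\varphi$ and the formal functions $f^I$.

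**The key estimate.**
The heart of the argument is to bound the seminorms of $\varphi^*(g)$ (in the term-wise convergence topology on $\RC^\infty(\R^n)[[y_1,\dots,y_k]]$, via Example~\ref{topm0}) by finitely many seminorms of $g$. Fix a compact set $K\subset\R^n$, a multi-index $J\in\BN^k$, and a differential operator $\partial_x^{I_0}$ on $\R^n$: I would estimate $\sup_K|\partial_x^{I_0}(\text{coefficient of }y^J\text{ in }\varphi^*(g))|$. Using \eqref{eq:varphiexpression}, this coefficient is a finite sum (finite because only finitely many $I\in\BN^m$ contribute to a given $y^J$, as each $f_i-\underline{f_i}\in\m_{\CO}$ so $f^I$ has $y$-order at least... — actually one must check the sum is locally finite, which follows since $f^I$ lies in $\m_\CO^{|I|}$-type filtration in the $y$-direction only when... here I would instead argue directly: on a relatively compact chart the family $\{f^I\}$ is summable in the term-wise topology, so truncation at large $|I|$ is harmless). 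Applying $\partial_x^{I_0}$ and the Leibniz rule, each term is a product of $\partial_x$-derivatives of $(\partial_x^I g)\circ\overline\varphi$ — bounded by $\sup$ of finitely many derivatives of $g$ over the compact set $\overline\varphi(K)$, via the chain rule and smoothness of $\overline\varphi$ — times $\partial_x$-derivatives of the fixed formal functions $f^I$, which are constants once $K$ and $J$ are fixed. This yields the desired bound $|\varphi^*(g)|_{\text{seminorm}}\le C\cdot\sup_{\overline\varphi(K)}\{|\partial_x^\alpha g|:|\alpha|\le N\}$ for suitable $C,N$.

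**Incorporating the formal variables and concluding.**
For general $l$, I would factor the morphism: write $\varphi^*$ on a monomial $g\cdot y_1^{j_1}\cdots y_l^{j_l}$ with $g\in\RC^\infty(\R^m)$. The morphism $\varphi$ restricts to data $\varphi^*(x_i)\in\CO(M;\R)$ and $\varphi^*(y_s)\in\m_\CO(M)$ for $s=1,\dots,l$ (the latter because $\varphi$ induces local homomorphisms on stalks, so $y_s$, which lies in the maximal ideal, pulls back into $\m_a$ for every $a$). Then $\varphi^*(gy^{\bm j})=\varphi^*(g)\cdot\prod_s\varphi^*(y_s)^{j_s}$, where $\varphi^*(g)$ is controlled by the $l=0$ case and multiplication by the fixed elements $\varphi^*(y_s)$ is continuous by Lemma~\ref{lemd:O(M)-moduleO(F)} (equivalently, Proposition~\ref{prop:OMnuclear} gives that $\CO(M)$ is a topological algebra). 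Since $\RC^\infty(\R^m)[[y_1,\dots,y_l]]$ carries the term-wise convergence topology and $\varphi^*$ is linear and continuous on each finite truncation uniformly, a standard argument (continuity of a linear map into a product of Fréchet spaces is checked coordinatewise, and each coordinate of $\varphi^*(g')$ depends on only finitely many coordinates of $g'$) finishes the proof.

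**Main obstacle.**
The step I expect to be most delicate is justifying that the sum \eqref{eq:varphiexpression} is locally finite and that its partial-derivative estimates can be made uniform — i.e., controlling the infinite sum over $I\in\BN^m$ in the chain-rule (Faà di Bruno) expansion while keeping track of which terms land in which power $y^J$. This is essentially a bookkeeping-with-estimates problem: one needs that for fixed compact $K$ and fixed $J$, only finitely many $I$ contribute to the $y^J$-coefficient after all cancellations, or else that the tail converges uniformly in the relevant $\RC^\infty$-seminorm; I would handle it by first passing to a relatively compact chart where the $f_i$ are given by honest power series in $y$ with smooth coefficients, making the summability manifest.
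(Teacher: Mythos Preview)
Your $l=0$ argument is essentially the paper's (it too just invokes the explicit formula \eqref{eq:varphiexpression}), but you overthink the finiteness: since each $f_i-\underline{f_i}$ has vanishing $y$-constant term, $f^I$ has $y$-order at least $|I|$, so for a fixed output coefficient $y^J$ only the finitely many $I$ with $|I|\le|J|$ contribute---there is no convergence issue and your ``main obstacle'' dissolves.

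For general $l$ your route differs genuinely from the paper's. You decompose $g'=\sum_J g_J y^J$, use $\varphi^*(y_s)\in\m_\CO(M)$ to see only finitely many $J$ feed into each output coefficient, and reduce to continuity of $g\mapsto\varphi^*(g)$ on $\RC^\infty(\R^m)$. This works, but one step is glossed over: to invoke the $l=0$ case you need $\varphi^*|_{\RC^\infty(\R^m)}$ to come from an actual morphism $(\R^n)^{(k)}\to\R^m$. The fix is easy---Theorem~\ref{thmmapr} gives a morphism $\pi\colon(\R^m)^{(l)}\to\R^m$ with $c_\pi=(x_1,\dots,x_m)$, whose pullback is precisely the inclusion $\RC^\infty(\R^m)\hookrightarrow\RC^\infty(\R^m)[[y_1,\dots,y_l]]$, and then $\varphi^*|_{\RC^\infty(\R^m)}=(\pi\circ\varphi)^*$. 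The paper instead constructs a morphism $\psi\colon(\R^m)^{(l)}\to\R^{m+l}$ whose pullback $\psi^*$ (Taylor expansion along $\R^l$) is surjective by Borel's lemma and hence open by the open mapping theorem; since $\varphi^*\circ\psi^*=(\psi\circ\varphi)^*$ is continuous by the $l=0$ case and $\psi^*$ is an open surjection, continuity of $\varphi^*$ follows immediately. Your approach is more elementary (no Borel, no open mapping theorem) at the cost of coefficient bookkeeping; the paper's trick is slicker and scale-free but imports heavier tools.
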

	\begin{proof}
		When $l=0$, this follows from Theorem \ref{thmmapr} and the explicit expression of $\varphi^*$ given in  \eqref{eq:varphiexpression}. In general, we have an obvious morphism
		\[
		\psi=(\overline\psi, \psi^*): (\R^m, \CO_{\R^m}^{(l)})\rightarrow (\R^{m+l}, \CO_{\R^{m+l}}^{(0)})
		\]
		so that the map
		\be\label{borel1}
		\psi^*:  \RC^\infty(\R^{m+l})\rightarrow \RC^\infty(\R^m)[[y_1,y_2, \dots, y_l]]
		\ee
		is given by the Taylor series expansion along $\R^l$.
		It follows from Borel's lemma that the map $\psi^*$ is surjective.
		Thus by the open mapping theorem,  it is also open.

		Consider the maps
		\[
		\RC^\infty(\R^{m+l})\xrightarrow{\psi^*} \RC^\infty(\R^m)[[y_1,y_2, \dots, y_l]]\xrightarrow{\varphi^*} \RC^\infty(\R^n)[[y_1,y_2, \dots, y_k]].
		\]
		We have shown that the composition of these two arrows is continuous, and the first arrow is open and surjective. Thus the second arrow is also continuous.
		
	\end{proof}

	\begin{thmd}\label{thm:autocon}
		Let $\varphi=(\overline\varphi, \varphi^*): (M,\CO)\rightarrow (M', \CO')$ be a morphism of formal manifolds. Then the $\BC$-algebra homomorphism
		\[
		\varphi^*: \CO'(M')\rightarrow \CO(M)
		\]
		is continuous.
	\end{thmd}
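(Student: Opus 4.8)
The plan is to reduce the global statement to the local case already handled in Lemma~\ref{morr3}, using the partition-of-unity machinery and the fact that the smooth topology on $\CF(M)$ is built from compactly supported differential operators into $\underline\CO$. First I would fix atlases $\{U_\gamma\}_{\gamma\in\Gamma}$ of $M$ and $\{U'_\lambda\}_{\lambda\in\Lambda}$ of $M'$, each consisting of charts isomorphic to some $(\R^n)^{(k)}$. By Lemma~\ref{lemr1}, the map $\CO'(M')\to\prod_\lambda\CO'(U'_\lambda)$ is a topological embedding, so to prove $\varphi^*$ continuous it suffices to prove that each composite $\CO'(M')\to\CO(M)\xrightarrow{\,\cdot|_{\ }\,}$ followed by a seminorm $|\cdot|_D$ with $D\in\mathrm{Diff}_c(\CO,\underline\CO)$ is continuous; equivalently, that $u\mapsto\varphi^*(u)$ is continuous as a map into $\CO(M)$ equipped with each defining seminorm $|\cdot|_D$.

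The heart of the argument is a localization of the support of $D$. Given $D\in\mathrm{Diff}_c(\CO,\underline\CO)$, its support $K:=\mathrm{supp}\,D$ is compact; cover $\overline\varphi(K)$ by finitely many charts $U'_{\lambda_1},\dots,U'_{\lambda_s}$ of $M'$, and cover $K$ by finitely many charts $U_{\gamma_1},\dots,U_{\gamma_t}$ of $M$ small enough that each $\overline\varphi(U_{\gamma_j})$ lies in one of the $U'_{\lambda_i}$ (using continuity of $\overline\varphi$ and, after refining, shrinking the $U_{\gamma_j}$). Choosing a partition of unity $\{f_j\}$ on $M$ subordinate to $\{U_{\gamma_j}\}\cup\{M\setminus K\}$, one writes $D=\sum_j f_j\circ D$ with each $(f_j\circ D)|_{U_{\gamma_j}}\in\mathrm{Diff}_c(\CO|_{U_{\gamma_j}},\underline\CO|_{U_{\gamma_j}})$, exactly as in the proof of Lemma~\ref{lemr1}. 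Then $|\varphi^*(u)|_D\le\sum_j \bigl|\,(\varphi^*(u))|_{U_{\gamma_j}}\,\bigr|_{(f_j\circ D)|_{U_{\gamma_j}}}$, and since $\varphi$ restricts to a morphism $\varphi_j\colon(U_{\gamma_j},\CO|_{U_{\gamma_j}})\to(U'_{\lambda_{i(j)}},\CO'|_{U'_{\lambda_{i(j)}}})$ of charts, each $(\varphi^*(u))|_{U_{\gamma_j}}=\varphi_j^*(u|_{U'_{\lambda_{i(j)}}})$. By Lemma~\ref{morr3}, each $\varphi_j^*$ is continuous between the chart algebras, hence $u\mapsto|\varphi_j^*(u|_{U'_{\lambda_{i(j)}}})|_{(f_j\circ D)|_{U_{\gamma_j}}}$ is a continuous seminorm on $\CO'(M')$ (using also that the restriction $\CO'(M')\to\CO'(U'_{\lambda_{i(j)}})$ is continuous, Lemma~\ref{lemr0}). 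Summing the finitely many contributions shows $|\cdot|_D\circ\varphi^*$ is dominated by a continuous seminorm on $\CO'(M')$, giving continuity of $\varphi^*$.

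One technical point to treat carefully: charts of $M'$ are isomorphic to open submanifolds $N\subset\R^m$ raised to some degree, not literally to $(\R^m)^{(l)}$, so Lemma~\ref{morr3} must be invoked in its evident extension to open submanifolds; this is harmless because continuity is a local statement and one can further shrink to boxes, or simply note that the proof of Lemma~\ref{morr3} (via Theorem~\ref{thmmapr} and Borel's lemma) applies verbatim with $\R^n,\R^m$ replaced by open submanifolds. The step I expect to be the main obstacle is the bookkeeping in choosing the charts $U_{\gamma_j}$ compatibly with $\overline\varphi$ and a given $D$—one must ensure $\overline\varphi(\overline{U_{\gamma_j}})$ (or at least $\overline\varphi$ of a neighborhood on which $f_j\circ D$ is supported) lands inside a single chart of $M'$, which requires first covering $\overline\varphi(K)$ and then pulling back, then refining the cover of $K$—but this is exactly the kind of paracompactness argument already used repeatedly (e.g.\ Proposition~\ref{lemr01}, Lemma~\ref{lemr1}), so it should go through routinely. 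The remaining steps—reducing to seminorms, splitting $D$ via partition of unity, invoking Lemmas~\ref{lemr0}, \ref{lemr1}, and \ref{morr3}—are then formal.
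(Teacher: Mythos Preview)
Your proposal is correct and follows essentially the same strategy as the paper: localize via charts on both sides, reduce to the continuity of restriction maps (Lemma~\ref{lemr0}) together with the local case (Lemma~\ref{morr3}). The paper's write-up is slightly more streamlined: rather than fixing a seminorm $|\cdot|_D$ and splitting $D$ via a partition of unity, it invokes Lemma~\ref{lemr1} directly to say that $\CO(M)\hookrightarrow\prod_\gamma\CO(U_\gamma)$ is a topological embedding, so it suffices to check that each composite $\CO'(M')\to\CO(U)$ is continuous for a suitable cover $\{U\}$ of $M$ by charts; then for each such $U$ one picks a chart $U'\ni\overline\varphi(a)$ with $\overline\varphi(U)\subset U'$ and factors $\varphi^*_{M',U}$ as $\CO'(M')\xrightarrow{\text{res}}\CO'(U')\xrightarrow{\varphi^*_{U',U}}\CO(U)$. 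This avoids the explicit partition-of-unity bookkeeping you describe, but the content is identical.

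One remark on your ``technical point'': your worry about Lemma~\ref{morr3} only being stated for full $\R^m$ is unnecessary. By Definition~\ref{def:formalmanifold}, every point of a formal manifold has a neighborhood isomorphic to $(\R^n)^{(k)}$ itself (not merely to an open submanifold thereof), so one may simply choose the charts $U$ and $U'$ of that form from the outset---which is exactly what the paper does.
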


	\begin{proof}
		By 
		Lemma \ref{lemr1}, we only need to show that for every $a\in M$, there is an open neighborhood $U$ of it in  $M$ such that the homomorphism
		\be\label{varphistar1}
		\varphi^*_{M',U}: \CO'(M')\rightarrow \CO(U)
		\ee
		is continuous.
		Choose an open neighborhood  $U$ of $a$ in $M$, and an open neighborhood  $U'$ of $\overline{\varphi}(a)$ in $M'$ such that
		\begin{itemize}
			\item
			$(U, \CO|_U)\cong(\R^n, \CO_{\R^n}^{(k)})$ ($n:=\dim_a(M)$, $k:=\deg_a(M)$);
			\item
			$(U', \CO'|_{U'})\cong(\R^m, \CO_{\R^m}^{(l)})$ ($m:=\dim_{\overline{\varphi}(a)}(M')$, $l:=\deg_{\overline{\varphi}(a)}(M')$); and
			\item
			$\overline\varphi(U)\subset U'$.
		\end{itemize}
		
		View $\varphi^*_{M',U}$ as the  composition of the following two maps:
		\[
		\CO'(M')\xrightarrow{\textrm{restriction}} \CO'(U')\xrightarrow{\varphi^*_{U',U}} \CO(U).
		\]
		Note that the first arrow is continuous by Lemma \ref{lemr0}, and the second one is continuous by Lemma \ref{morr3}. Hence the  map
		$\varphi^*_{M',U}$ is  also continuous and the theorem is proved.
		
	\end{proof}
	
	\begin{cord}\label{cor:autoopen}
		Let $\varphi=(\overline\varphi, \varphi^*): (M,\CO)\rightarrow (M', \CO')$ be a morphism of formal manifolds. If the homomorphism
		$
		\varphi^*: \CO'(M')\rightarrow \CO(M)
		$
		is surjective, then it is open. 
	\end{cord}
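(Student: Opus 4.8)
The plan is to deduce Corollary \ref{cor:autoopen} from the open mapping theorem, once we know that $\varphi^*$ is a continuous linear surjection between suitable LCS. By Theorem \ref{thm:autocon}, the map $\varphi^*\colon \CO'(M')\to \CO(M)$ is continuous, and by Proposition \ref{prop:OMnuclear} both $\CO'(M')$ and $\CO(M)$ are products of nuclear Fr\'echet spaces. The issue is that the open mapping theorem in its classical form requires a map between Fr\'echet spaces (or, more generally, between an appropriate pair in the Pt\'ak/de Wilde framework), whereas an arbitrary product of Fr\'echet spaces need not be Fr\'echet or even metrizable. So the first step is to reduce to the case where the target $\CO(M)$ is a nuclear Fr\'echet space, which by \eqref{eq:FM=prod} amounts to replacing $M$ by one of its connected components.

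\emph{First}, I would use the decomposition $\CO(M)=\prod_{Z\in\pi_0(M)}\CO(Z)$ (and similarly for $M'$) as topological algebras, coming from Lemma \ref{lemr1}. A subset of a product is open iff its projection to each factor is open and it involves only finitely many factors; combined with the fact that $\varphi^*$ is built from the componentwise maps $\CO'(M')\to \CO(Z)$ for $Z\in\pi_0(M)$ (each of which factors through $\CO'(Z')$ for the component $Z'$ of $M'$ containing $\overline\varphi(Z)$), it suffices to show that each map $\CO'(Z')\to\CO(Z)$ is open, i.e.\ we may assume $M$ and $M'$ are connected. A connected formal manifold is second countable — its underlying reduced manifold is a connected paracompact smooth manifold, hence second countable, and second countability of $M$ follows — so by Corollary \ref{cor:CFMisNF} the spaces $\CO'(M')$ and $\CO(M)$ are nuclear Fr\'echet spaces in this case.

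\emph{Second}, with $\varphi^*\colon\CO'(M')\to\CO(M)$ now a continuous surjective linear map between Fr\'echet spaces, the open mapping theorem (Banach--Schauder) applies directly and yields that $\varphi^*$ is open. \emph{Third}, I would check that openness of each restricted map $\CO'(Z')\to\CO(Z)$ does assemble back to openness of the original $\varphi^*$: a basic neighborhood of $0$ in $\prod_Z\CO(Z)$ is of the form $\prod_{Z\notin F}\CO(Z)\times\prod_{Z\in F}V_Z$ with $F$ finite and $V_Z$ a neighborhood of $0$ in $\CO(Z)$; its preimage, or rather the image of a basic neighborhood under $\varphi^*$, is handled factor by factor, using surjectivity to match up which components of $M'$ map onto which components of $M$. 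One subtlety here is that several components $Z'$ of $M'$ may map into a single component $Z$ of $M$, or a component of $M'$ may map to a point — but since $\varphi^*$ is \emph{surjective} onto $\CO(M)$, for each $Z$ the combined map $\prod_{Z'\colon\overline\varphi(Z')\subset Z}\CO'(Z')\to\CO(Z)$ is onto, and one applies the open mapping theorem to this (still Fr\'echet, as a countable product of Fr\'echet spaces is Fr\'echet when the index set is countable — which it is, since $M'$ is paracompact so has countably many components mapping into a fixed second-countable $Z$... actually one should be slightly careful and just invoke that a surjection from an arbitrary product of Fr\'echet spaces onto a Fr\'echet space is open, which follows from the open mapping theorem for webbed spaces, since products of Fr\'echet spaces are webbed).

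\emph{The main obstacle} is precisely this last point: justifying the open mapping theorem when the source is a (possibly uncountable) product of Fr\'echet spaces rather than a single Fr\'echet space. The cleanest route is to invoke De Wilde's open mapping theorem for webbed spaces: arbitrary products of Fr\'echet spaces are webbed, Fr\'echet spaces are ultrabornological (in particular barrelled), and a continuous linear surjection from a webbed space onto an ultrabornological space is open. Since $\CO(M)$ is a product of nuclear Fr\'echet spaces, to apply this one still wants the \emph{target} to be ultrabornological; here I would again reduce to connected $M$ so that $\CO(M)$ is Fr\'echet (hence ultrabornological), handle that case by De Wilde (or even classical Banach--Schauder if the source also reduces to Fr\'echet), and then patch. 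Alternatively, if the authors are content to assume second countability of $M$ throughout this corollary, everything is Fr\'echet and the classical open mapping theorem suffices with no webbed-space machinery — I would state clearly which hypothesis is in force.
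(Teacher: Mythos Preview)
Your overall strategy---reduce to a setting where an open mapping theorem applies---is right, but you decompose on the wrong side, and this is what creates all the complications you flag. Reducing to $M$ connected (target side) does not cleanly work: knowing that each composite $\CO'(M')\to\CO(Z)$ is open does \emph{not} imply that the map into the product $\prod_Z\CO(Z)$ is open (openness of each coordinate of a map does not yield openness into a product). Your fallback via De Wilde is also shaky, since the class of webbed spaces is stable only under \emph{countable} products, so an uncountable $\pi_0(M')$ is not covered. (Incidentally, your combinatorics is inverted: $\overline\varphi$ goes from $M$ to $M'$, so several components $Z$ of $M$ can land in one component $Z'$ of $M'$, not the other way around.)

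The paper instead decomposes on the \emph{source} side, which makes the argument a one-liner. Since each component $Z$ of $M$ maps into a single component $Z'(Z)$ of $M'$, the homomorphism $\varphi^*$ is literally a product map
\[
\varphi^*=\prod_{Z'\in\pi_0(M')}\psi_{Z'}\colon \prod_{Z'}\CO'(Z')\longrightarrow \prod_{Z'}\CO(M_{Z'}),\qquad M_{Z'}:=\overline\varphi^{-1}(Z'),
\]
and a product of open surjections is open (because $\varphi^*(\prod_{Z'}V_{Z'})=\prod_{Z'}\psi_{Z'}(V_{Z'})$, and surjectivity forces all but finitely many factors to be full). So one may assume $M'$ connected. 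Then $\CO'(M')$ is Fr\'echet, while $\CO(M)$---still an arbitrary product of nuclear Fr\'echet spaces---is reflexive and hence barreled (Lemma~\ref{lem:bornological}). Pt\'ak's open mapping theorem (every continuous linear surjection from a Fr\'echet space onto a barreled space is open; see K\"othe~II, \S 34.2) then finishes the proof, with no need for webbed-space machinery or any hypothesis on $\pi_0(M)$.
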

	\begin{proof}
		In view of \eqref{eq:FM=prod}, it suffices to prove the case that $M'$ is connected. Note that $\CO'(M')$ is a Fr\'echet space, and  $\CO(M)$ is a product of Fr\'echet spaces (see Corollary \ref{cor:CFMisNF}). 
		Together with the  facts that 
		\begin{itemize}
			\item all Fr\'echet spaces are barreled (see \cite[\S\,21.5 (3)]{Ko1}); and 
			\item the product of arbitrary barreled spaces is barreled (see \cite[\S\,27.1 (5)]{Ko1}),
		\end{itemize}
		it implies that $\CO(M)$ is a barreled space. The assertion then follows from Theorem \ref{thm:autocon} and the fact that every continuous surjective linear map from a Fr\'echet space to a barreled space is open (see \cite[\S\,34.2 (3)]{Ko2}). 
	\end{proof}

	\section{The algebra of formal functions}\label{sec:formalspec}
	In this section, we show that a formal manifold is determined by the topological $\BC$-algebra of formal functions. 
	\subsection{Characters of $\CO(M)$}
	For every $\BC$-algebra $A$, we call a  $\BC$-algebra homomorphism $A\rightarrow \BC$ a character of $A$.
	For every $a\in M$, the map
	\be\label{eq:defEva}
	\mathrm{Ev}_a: \CO(M)\rightarrow \BC, \quad f\mapsto f(a)
	\ee
	of evaluating at $a$ is a continuous character of $\CO(M)$.
	The main goal of this subsection is to prove the following theorem, which is known as Milnor's exercise when $M$ is a smooth
	manifold (see \cite[Problem 1-C]{MS}).

	\begin{thmd}\label{characterm2}
		Suppose that the cardinality of $\pi_0(M)$ does not exceed the cardinality of $\R$.
		Then every character of $\CO(M)$ is of  the form $\mathrm{Ev}_a$ for some $a\in M$.
	\end{thmd}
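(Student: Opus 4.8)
The plan is to reduce to the classical case for smooth manifolds and then handle the ``formal directions'' using completeness of characters on formal power series algebras. First I would observe that, by \eqref{eq:FM=prod}, $\CO(M) = \prod_{Z \in \pi_0(M)} \CO(Z)$ as topological $\BC$-algebras, so a character $\chi$ of $\CO(M)$ restricts to a character on each factor $\CO(Z)$. A standard argument (using that $\chi$ kills the idempotent $1_Z$ for all but at most one $Z$, since the cardinality hypothesis on $\pi_0(M)$ is exactly what makes Milnor's argument work via an uncountable family of orthogonal idempotents—or rather, it forces $\chi$ to be supported on a single component) shows $\chi$ factors through the projection onto a single $\CO(Z)$. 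Thus I may assume $M$ is connected, hence second countable (a connected paracompact formal manifold has second countable underlying space, since its reduction $\underline M$ is a connected paracompact smooth manifold), and $\CO(M)$ is a nuclear Fréchet algebra by Corollary \ref{cor:CFMisNF}.

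Next I would pass to the reduction. The reduction map $\CO(M) \to \underline\CO(M) = \RC^\infty(\underline M)$ is a surjective continuous algebra homomorphism (Proposition \ref{prop:reductionsur}), and its kernel $\m_\CO(M)$ consists of formal functions that vanish to first order everywhere; in the local model $(\R^n)^{(k)}$ this is the ideal generated by $y_1, \dots, y_k$ together with functions in $\RC^\infty$ vanishing nowhere—wait, more precisely $\m_\CO$ is locally the set of power series whose constant term (a smooth function) vanishes at every point of the chart. The key point: given a character $\chi$ of $\CO(M)$, I claim $\chi$ vanishes on $\m_\CO(M)$. Granting this, $\chi$ descends to a character $\bar\chi$ of $\RC^\infty(\underline M)$, which by the classical Milnor exercise equals $\mathrm{Ev}_a$ for a unique $a \in \underline M = M$; unwinding, $\chi = \mathrm{Ev}_a$ on $\CO(M)$ since $\CO(M)$ is generated topologically by $\underline\CO(M)$ together with the ``formal variables,'' and on the formal variables any character must vanish (a topologically nilpotent element—$y_i^r \to 0$—maps to a topologically nilpotent scalar, hence to $0$).

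The main obstacle is precisely establishing that $\chi$ kills $\m_\CO(M)$, equivalently that $\chi$ kills each locally-defined formal variable $y_i$ and each globally-defined formal function whose reduction vanishes identically. I would handle this in two steps. \emph{Step A (formal variables are topologically nilpotent).} Working in a chart $U \cong (\R^n)^{(k)}$, for any $f \in \CO(M)$ supported in $U$ the element $f y_i \in \CO(M)$ satisfies $(fy_i)^r \to 0$ in the smooth topology as $r \to \infty$ (term-wise convergence, Example \ref{topm0}), so for a continuous character $\chi(fy_i)^r = \chi((fy_i)^r) \to 0$, forcing $\chi(fy_i) = 0$. Since such $fy_i$ generate $\m_\CO(M)$ locally and one can use a partition of unity (Proposition \ref{lemr01}) to write any element of $\m_\CO(M)$ as a locally finite sum of terms each supported in a chart and lying in the local maximal-germ ideal—here I would need a small lemma that a section of $\m_\CO$ is a (locally finite, hence convergent) sum of products $g \cdot h$ with $g$ a bump function and $h$ in the ideal generated by the $y_i$'s and by smooth functions vanishing on the relevant chart closure—continuity of $\chi$ gives $\chi(\m_\CO(M)) = 0$. \emph{Step B} is then just the classical statement: every character of $\RC^\infty(N)$ for $N$ a connected second-countable manifold is evaluation at a point, which I would cite (\cite[Problem 1-C]{MS}) or reprove quickly by choosing a proper smooth function $h$ and noting $\chi(h) = t_0$ forces the closed set $h^{-1}(t_0)$ to be nonempty and contain the common zero locus of $\{g - \chi(g)\}$. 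I expect Step A, and specifically the clean decomposition of $\m_\CO(M)$ compatible with the topology, to be the delicate part; everything else is routine or quotable.
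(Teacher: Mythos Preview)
Your proposal has a genuine gap: Step A assumes that $\chi$ is \emph{continuous}. You write ``for a continuous character $\chi(fy_i)^r = \chi((fy_i)^r) \to 0$, forcing $\chi(fy_i)=0$,'' and you later use continuity again to pass locally finite sums through $\chi$. But the theorem is about an arbitrary $\BC$-algebra homomorphism $\chi:\CO(M)\to\BC$; continuity is not part of the hypothesis. Indeed, the paper deduces automatic continuity (under the cardinality hypothesis) only \emph{after} proving this theorem, in Proposition~\ref{contccm} and the remark following Theorem~\ref{thmqe}. So your argument is circular as it stands, and there is no easy patch: automatic continuity of characters on general Fr\'echet algebras is a notoriously delicate issue, not something one can invoke off the shelf.

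The paper's proof avoids continuity entirely by the classical ``invertible element in the kernel'' trick. First, Lemma~\ref{exf2} uses the cardinality hypothesis to produce a single connected component $Z_0$ on which $\chi$ must be concentrated (via a real-valued locally constant function separating components, together with the observation that if the resulting $f_\chi$ were nowhere zero it would be invertible by Lemma~\ref{lem:inverse}). Then, working on $Z_0$, Lemma~\ref{exf1} supplies a proper real-valued $f$, and $(f-\chi(f))(f-\overline{\chi(f)})$ has $\chi$-value zero and compact zero set. Finally, for each $a$ with $\chi\neq\mathrm{Ev}_a$, Lemma~\ref{threechi} produces a bump-type function $f_\chi^a$ positive at $a$ and killed by $\chi$; finitely many of these cover the compact zero set, and summing everything yields an $f''$ with $\chi(f'')=0$ and $f''(b)>0$ everywhere, hence invertible---contradiction. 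Note that none of these steps needs to know $\chi$ is continuous, nor do they ever look at the formal directions $y_i$ explicitly.
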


	Before proving the theorem, we establish some technical lemmas.
	
	\begin{lemd}\label{exf1}
		If $\pi_0(M)$ is countable, then
		there exists a formal function $f\in \CO(M;\R)$ such that  the set $\{a\in M \mid f(a)\leq C\}$ is compact for all $C\in \R$.
	\end{lemd}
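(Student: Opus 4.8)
The plan is to reduce to the smooth manifold case, which is classical (a smooth manifold with countably many components admits a smooth proper function to $\R$, obtained from a partition of unity; see e.g.\ the construction in \cite{MS}). First I would recall from Example \ref{trivialf2} and Proposition \ref{prop:reductionsur} that the reduction map $\CO(M)\rightarrow \underline\CO(M)$ is a surjective $\BC$-algebra homomorphism onto the algebra of smooth functions on the smooth manifold $\underline M$, and that it preserves values: $\underline f(a)=f(a)$ for every $a\in M$. Since $\underline M=M$ as a topological space, $\pi_0(\underline M)=\pi_0(M)$ is countable. Thus it suffices to produce a smooth function $g\in \RC^\infty(\underline M;\R)$ whose sublevel sets $\{a\mid g(a)\le C\}$ are compact for all $C$, and then lift it: by Proposition \ref{prop:reductionsur} choose $f\in \CO(M;\R)$ with $\underline f=g$ (one can arrange $f$ to be real-valued in the sense of $\CO(M;\R)$ by replacing $f$ with $(f+\bar f)/2$ if needed, or simply by noting the reduction map restricts to a surjection $\CO(M;\R)\rightarrow \RC^\infty(\underline M;\R)$ since it is a map of real algebras compatible with conjugation). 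Then $\{a\in M\mid f(a)\le C\}=\{a\in \underline M\mid g(a)\le C\}$ is compact for all $C\in \R$.

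For the smooth construction: since $\underline M$ is paracompact, Hausdorff, and has countably many connected components, it is a countable disjoint union of connected, hence second-countable (being paracompact manifolds), hence $\sigma$-compact manifolds. So $\underline M$ itself is $\sigma$-compact; write $\underline M=\bigcup_{j=1}^\infty K_j$ with each $K_j$ compact and $K_j\subset \mathrm{int}\,K_{j+1}$. Using Proposition \ref{lemr01} (partition of unity, here just for the degree-zero case, i.e.\ the classical smooth partition of unity), pick smooth functions $\chi_j\ge 0$ with $\chi_j=1$ on $K_j$ and $\mathrm{supp}\,\chi_j$ contained in $\mathrm{int}\,K_{j+1}$, and set $g:=\sum_{j=1}^\infty (1-\chi_j)$. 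This sum is locally finite (near any point, all but finitely many terms vanish), so $g$ is smooth and real-valued; and on $K_{m+1}\setminus \mathrm{int}\,K_m$ one has $g\ge$ (number of indices $j\le m-1$ with $\chi_j$ not identically $1$ there), which tends to $\infty$, so $\{g\le C\}$ is a closed subset of some $K_N$, hence compact. Alternatively one may quote directly the standard fact that every $\sigma$-compact smooth manifold admits a smooth proper map to $[0,\infty)$, whose sublevel sets are automatically compact.

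The only mild subtlety — and the closest thing to an obstacle — is bookkeeping about real-valuedness: $\CO(M;\R)$ is defined as the formal functions taking real values at every point, and I must check the reduction map sends $\CO(M;\R)$ onto $\RC^\infty(\underline M;\R)$. This is immediate: the reduction map is a homomorphism of $\BC$-algebras intertwining the two pointwise-evaluation maps, so $f\in \CO(M;\R)$ iff $\underline f\in \RC^\infty(\underline M;\R)$; surjectivity of $\CO(M)\rightarrow \RC^\infty(\underline M)$ then yields surjectivity of $\CO(M;\R)\rightarrow \RC^\infty(\underline M;\R)$ by averaging with complex conjugation (the kernel $\m_\CO(M)$ is a complex ideal, so if $\underline f$ is real one may replace $f$ by $\tfrac12(f+\bar f)$, which still reduces to $\underline f$ and now lies in $\CO(M;\R)$). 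With this in hand the lemma follows directly from the classical smooth case, and no genuinely new analytic input beyond the already-established partition of unity and the softness/surjectivity of the reduction is needed.
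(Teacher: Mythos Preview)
Your proof is correct and follows the same approach as the paper: reduce to the smooth manifold $\underline M$, invoke the classical existence of a bounded-below proper smooth map on a smooth manifold with countably many components, and lift via the surjection of Proposition \ref{prop:reductionsur}. Your discussion of real-valuedness is actually unnecessary: since $f(a)=\underline f(a)$ for every $a$ (as you yourself note), \emph{any} lift of a real-valued $\underline f$ automatically lies in $\CO(M;\R)$, so no averaging with complex conjugation is needed.
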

	\begin{proof}
		Recall that the reduction $\underline{M}$ of $M$ is a smooth manifold.
		It is well-known that there is a bounded below proper smooth map $\underline{f}: \underline{M}\rightarrow \R$ (see the proof of \cite[Chapter 2, Theorem 10.8]{Br} for example). Let $f\in \CO(M)$ be an element in the preimage of $\underline{f}$ under the surjective map \eqref{suroo}. Then $f$ fulfills the requirement of the lemma.
	\end{proof}

	\begin{lemd}\label{threechi}
		Let $\chi: \CO(M)\rightarrow \BC$ be a character, and let $a\in M$. If $\chi\neq \mathrm{Ev}_a$, then there is an element $f_{\chi}^a\in \CO(M)$ such that
		\begin{itemize}
			\item
			$f_{\chi}^a(b)$ is a non-negative real number for all  $b\in M$;
			\item $f_{\chi}^a(a)=1$; and
			\item $\chi(f_{\chi}^a)=0$.
		\end{itemize}
		
	\end{lemd}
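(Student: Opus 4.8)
The plan is to build $f_\chi^a$ explicitly as a sum of two squares of \emph{real-valued} formal functions, so that non-negativity is automatic. First, since $\chi\neq\mathrm{Ev}_a$, pick $g\in\CO(M)$ with $\chi(g)\neq g(a)$, and set
\[
k:=\frac{g-\chi(g)\cdot 1}{g(a)-\chi(g)}\in\CO(M),
\]
the numbers $\chi(g),g(a)\in\BC$ being treated as scalar multiples of the unit. Using $\chi(1)=1$ one reads off $\chi(k)=0$ and $k(a)=1$. The task is then to pass from $k$ to something non-negative without losing these two properties.

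The one ingredient that is not purely formal is the identity $\CO(M)=\CO(M;\R)+i\,\CO(M;\R)$; note the sum is \emph{not} direct, and in particular there is no canonical complex conjugation on $\CO$, so one should not try to construct one. I would deduce the identity from the reduction: by Proposition \ref{prop:reductionsur} the reduction map $\CO(M)\to\underline{\CO}(M)=\RC^\infty(\underline M)$ is surjective, its kernel is $\m_\CO(M)$, and $\RC^\infty(\underline M)$ is visibly the sum of its $\R$-valued part and $i$ times that part. Given $\xi\in\CO(M)$, write the reduction of $\xi$ as $\underline u+i\underline v$ with $\underline u,\underline v$ real-valued and lift $\underline u,\underline v$ to $u,v\in\CO(M)$. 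Since $f(b)=\underline f(b)$ for every $b$, both $u$ and $v$ lie in $\CO(M;\R)$, while $\xi-u-iv\in\m_\CO(M)\subset\CO(M;\R)$ because every element of $\m_\CO(M)$ has all its values equal to $0$. Hence $\xi=\bigl(u+(\xi-u-iv)\bigr)+iv$ is of the desired shape.

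Now write $k=k_1+ik_2$ with $k_1,k_2\in\CO(M;\R)$ and put $f_\chi^a:=k_1^2+k_2^2\in\CO(M;\R)$. For every $b\in M$ we have $f_\chi^a(b)=k_1(b)^2+k_2(b)^2\geq 0$ since $k_1(b),k_2(b)\in\R$; comparing real and imaginary parts of $k(a)=k_1(a)+ik_2(a)=1$ gives $k_1(a)=1$ and $k_2(a)=0$, hence $f_\chi^a(a)=1$; and from $\chi(k_1)+i\chi(k_2)=\chi(k)=0$ we get $\chi(k_1)=-i\,\chi(k_2)$, so
\[
\chi(f_\chi^a)=\chi(k_1)^2+\chi(k_2)^2=-\chi(k_2)^2+\chi(k_2)^2=0 .
\]
All three required properties are thus verified.

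I expect the only real obstacle to be the decomposition $\CO(M)=\CO(M;\R)+i\,\CO(M;\R)$: the instinct is to hunt for a conjugation (which, as noted, does not exist canonically), whereas all one needs is the mere existence — not uniqueness — of a representation $\xi=\xi_1+i\xi_2$ with $\xi_1,\xi_2$ real, and this comes for free from surjectivity of the reduction map together with $\m_\CO(M)\subset\CO(M;\R)$. The remaining steps are a short direct computation.
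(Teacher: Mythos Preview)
Your proof is correct and follows essentially the same strategy as the paper: produce an element $k$ with $\chi(k)=0$ and $k(a)\neq 0$, split it as $k_1+ik_2$ with $k_1,k_2\in\CO(M;\R)$, and take (a normalization of) $k_1^2+k_2^2$. The one point where you diverge is in obtaining the splitting $\CO(M)=\CO(M;\R)+i\,\CO(M;\R)$: the paper localizes $k$ to a chart via a bump function (where the splitting is obvious from $\RC^\infty(U)[[y_1,\dots,y_k]]$), whereas you argue globally via surjectivity of the reduction map together with $\m_\CO(M)\subset\CO(M;\R)$. Your route is slightly cleaner in that it avoids the cut-off step; the paper's route makes the splitting more concrete. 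Both are short and rely only on tools already established in the paper.
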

	\begin{proof}
		The condition $\chi\neq \mathrm{Ev}_a$ implies that there is an element $f\in \CO(M)$ such that $f(a)\neq 0$ and $\chi(f)=0$.
		Let $U$ be an open neighborhood of $a$ in $M$ such that $(U, \CO|_U)\cong (\R^n, \CO_{\R^n}^{(k)})$ for some $n,k\in \BN$.
		Multiplying $f$ by a suitable formal function if necessary, we assume without loss of generality that $\mathrm{supp}\,f$ is contained in $U$. Then we have a decomposition $f=f_1+\sqrt{-1} f_2$ such that $f_1, f_2\in \CO(M;\R)$, and the lemma follows by defining
		\[
		f_{\chi}^a:=(f_1(a)^2+f_2(a)^2)^{-1}\cdot (f_1+\sqrt{-1} f_2)\cdot (f_1-\sqrt{-1} f_2).
		\]
	\end{proof}
	
	For every $Z\in \pi_0(M)$, write $1_Z$ for the identity element of $\CO(N)$.
	\begin{lemd}\label{exf2}
		Suppose that the cardinality of $\pi_0(M)$ does not exceed the cardinality of $\R$.
		Then for every  character  $\chi$ of $\CO(M)$, there is a formal function
		$f_\chi\in \CO(M)$ such that
		\begin{itemize}
			\item
			$f_\chi(a)$ is a non-negative real number for all $a\in M$;
			\item the set
			$
			Z_0:=\{b\in M\mid f_\chi(b)=0\}
			$
			is  a connected component of $M$; and
			\item $\chi(f_\chi)=0$.
		\end{itemize}
		
	\end{lemd}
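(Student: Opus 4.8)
The plan is to combine Lemma \ref{exf1} and Lemma \ref{threechi} after reducing to a single ``bad'' connected component. First I would dispose of the trivial case: if $\chi=\mathrm{Ev}_a$ for some $a\in M$, then letting $Z_0$ be the connected component containing $a$ and taking $f_\chi\in\CO(M;\R)$ a non-negative formal function vanishing precisely on $Z_0$ (which exists since $\CO$ is soft, via Proposition \ref{prop:reductionsur} and Lemma \ref{lem:inverse}, pulling back such a function on the smooth manifold $\underline M$), we are done. So assume $\chi\neq\mathrm{Ev}_a$ for every $a\in M$. The key point is to find one connected component $Z_0$ of $M$ on which $\chi$ ``restricts nontrivially,'' and then work inside it.

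To locate $Z_0$: each idempotent $1_Z\in\CO(M)$ ($Z\in\pi_0(M)$) satisfies $\chi(1_Z)\in\{0,1\}$, and $\sum_Z 1_Z=1$; since $\chi$ is a $\BC$-algebra homomorphism and the $1_Z$ are orthogonal idempotents, exactly the components in some subset contribute, but in fact $\chi$ must be supported on a single component — if $\chi(1_Z)=\chi(1_{Z'})=1$ for distinct $Z,Z'$ then $1=\chi(1)=\chi(1_Z 1_{Z'})\cdot(\cdots)$ leads to $0=1$ after noting $1_Z1_{Z'}=0$. Hence there is a unique $Z_0\in\pi_0(M)$ with $\chi(1_{Z_0})=1$, and $\chi$ factors through the restriction $\CO(M)\to\CO(Z_0)$. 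Replacing $M$ by $Z_0$, we may assume $M$ is connected, and the desired conclusion reduces to producing $f_\chi\in\CO(M)$ non-negative with $\chi(f_\chi)=0$ and $\{f_\chi=0\}=M$, i.e. $f_\chi=0$ — wait, that cannot be right, so instead the statement for connected $M$ should read: the conclusion is vacuous unless $M$ is disconnected, and in general we want $Z_0$ to be precisely the locus where a suitable non-negative function built from $\chi$ vanishes. The correct approach: for connected $M$ with $\chi\neq\mathrm{Ev}_a$ for all $a$, I would invoke Lemma \ref{exf1} to get $f\in\CO(M;\R)$ with $\{f\le C\}$ compact for all $C$; then for each $a$ in the compact set $K_C:=\{f\le\chi(f)\}$ (using $\chi(f)$ real, which follows since $f\in\CO(M;\R)$ and characters respect the real structure), apply Lemma \ref{threechi} to get $f^a_\chi$ with $f^a_\chi\ge 0$, $f^a_\chi(a)=1$, $\chi(f^a_\chi)=0$; cover $K_C$ by finitely many open sets on which $f^a_\chi>1/2$, sum the corresponding $f^{a_i}_\chi$ plus a suitable shift of $(f-\chi(f))$ to build a single non-negative $g$ with $\chi(g)=0$ and $g>0$ everywhere (the compactness controlling the finite cover, the proper function handling the complement of $K_C$). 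Then $g$ would be invertible by Lemma \ref{lem:inverse}, forcing $\chi(1)=\chi(g)\chi(g^{-1})=0$, a contradiction — which is exactly how Theorem \ref{characterm2} gets proved, and Lemma \ref{exf2} is the packaging of the ``one bad component'' step before that contradiction is derived.

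So the honest reading: Lemma \ref{exf2} handles the general (possibly disconnected) case by isolating $Z_0$ via the idempotent argument above and, on $Z_0$, if $\chi|_{\CO(Z_0)}=\mathrm{Ev}_a$ we take $f_\chi$ vanishing exactly on $Z_0$; if not, we will derive a contradiction later (Theorem \ref{characterm2}) — but for the lemma itself we need $f_\chi$ to exist, so the statement presupposes we are in the former situation on $Z_0$ and produces $f_\chi = $ (pullback of a non-negative proper-ish bump vanishing exactly on $Z_0$). I would therefore structure the proof as: (1) idempotent argument giving unique $Z_0$ with $\chi(1_{Z_0})=1$; (2) on other components $\chi$ is zero, so any non-negative $f_\chi$ supported away from $Z_0$ and positive off $Z_0$ with $\chi(f_\chi)=0$ works — and such an $f_\chi$ exists by softness of $\CO$ (Proposition \ref{prop:reductionsur}) since $M\setminus Z_0$ is open and closed; concretely take $f_\chi$ to be a formal function with $\underline{f_\chi}$ equal to $0$ on $Z_0$ and a strictly positive proper function on each other component (rescaled to lie in $(0,\infty)$), then $\chi(f_\chi)=\chi(1_{Z_0})\cdot 0 + 0 = 0$. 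The main obstacle is bookkeeping the interaction between the (possibly infinite) family of components and the single character: making precise that $\chi$ kills $\CF(M\setminus Z_0)$-supported functions and that a non-negative formal function with the prescribed vanishing locus exists; both follow cleanly from softness and from the orthogonal-idempotent identity $\chi(1_Z)\chi(1_{Z'})=\chi(1_Z1_{Z'})=\chi(\delta_{Z,Z'}1_Z)$.
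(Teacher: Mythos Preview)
Your idempotent argument has a genuine gap. From orthogonality $1_Z 1_{Z'}=0$ you correctly deduce that at most one $Z$ can satisfy $\chi(1_Z)=1$, but you never establish that \emph{at least one} does. The identity $\sum_Z 1_Z = 1$ holds in $\CO(M)$ only as a limit in the product topology; since $\chi$ is not assumed continuous (that is precisely what Theorem~\ref{characterm2} will establish later), you cannot pass $\chi$ through the infinite sum. Without this, a priori it is possible that $\chi(1_Z)=0$ for every $Z\in\pi_0(M)$, and then your construction of $Z_0$ collapses. A related symptom: you never use the cardinality hypothesis $\lvert\pi_0(M)\rvert\le\lvert\R\rvert$, which is the whole point of the lemma.

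The paper's proof avoids idempotents entirely. It uses the cardinality hypothesis to fix an injection $\tau:\pi_0(M)\hookrightarrow\R$, builds the single locally constant function $g\in\CO(M)$ with $g|_Z=\tau(Z)$, and sets
\[
f_\chi:=(g-\chi(g))\,(g-\overline{\chi(g)}).
\]
Then $\chi(f_\chi)=0$ and $f_\chi(a)=\lvert\tau(Z)-\chi(g)\rvert^2\ge 0$ for $a\in Z$, so the zero set is either empty or the single component on which $\tau(Z)=\chi(g)$. Emptiness is ruled out because $f_\chi$ would then be invertible by Lemma~\ref{lem:inverse}, contradicting $\chi(f_\chi)=0$. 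This algebraic trick simultaneously locates $Z_0$ and proves its existence, with no appeal to continuity of $\chi$ or to how $\chi$ interacts with the infinite family $\{1_Z\}$.
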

	
	\begin{proof}
		Fix an injective map
		\[
		\tau:\ \pi_0(M)\rightarrow \R.
		\]
		Let $g$ be the formal function on $M$ defined by setting $g|_Z=\tau(Z)1_Z$ for all $Z\in \pi_0(M)$,
		and set
		\[
		f_\chi:=(g-\chi(g))\cdot (g-\overline{\chi(g)})\qquad ( \ \overline{\phantom{a}}\ \textrm{ indicates the complex conjugation}).
		\]
		Then
		\[
		\chi(f_\chi)=0\quad\textrm{and}\quad f_\chi(a)\geq 0\ \textrm{ for all }a\in M.
		\]
		Moreover,
		\[
		Z_0:=\{b\in M\mid f_\chi(b)=0\}
		\]
		is either the empty set or a connected component of $M$. If $Z_0$ is empty, then $f_\chi$ is invertible in $\CO(M)$ by   Lemma \ref{lem:inverse},
		which contradicts $\chi(f_\chi)=0$. Thus $f_\chi$ is a desired formal function.
	\end{proof}

	\noindent\textbf{Proof of Theorem \ref{characterm2}:}
	Let $\chi: \CO(M)\rightarrow \BC$ be a character of $\CO(M)$.
	Let $f_\chi$ be as in Lemma \ref{exf2} so that the set
	$
	Z_0:=\{b\in M\mid f_\chi(b)=0\}
	$
	is a connected component of $M$.
	
	Applying Lemma \ref{exf1} to the formal manifold $Z_0$, we have a formal function $f\in \CO(Z_0;\R)$
	such that $\{b\in Z_0\mid f(b)\le C\}$ is compact for all $C\in \R$. By extension by zero, we also view $f$ as an element of $\CO(M)$. Put
	\[
	f':=(f-\chi(f))\cdot (f-\overline{\chi(f)}). 
	\]
	Then $f'\in \CO(M;\R)$,  $\chi(f')=0$, $f'(b)\geq 0$ for all $b\in M$, and  the set
	\[
	\{a\in Z_0\mid f'(a)=0\}
	\]
	is compact.

	Assume by contradiction that $\chi\neq  \mathrm{Ev}_a$ for all $a\in M$. Then for every $a\in M$, there is an element $f_{\chi}^a\in \CO(M)$ that satisfies the three conditions in Lemma \ref{threechi}. Since $f_{\chi}^a(a)=1$ for all $a\in M$, there  exist finitely many elements $a_1, a_2, \dots, a_s\in Z_0$ such that
	\[
	\bigcup_{i=1}^s \{b\in Z_0\mid f_{\chi}^{a_i}(b)>0\} \supset   \{a\in Z_0\mid f'(a)=0\}.
	\]
	
	Now we define
	\[
	f'':=f_\chi+f'+\sum_{i=1}^s f_{\chi}^{a_i}.
	\]
	Then $\chi(f'')=0$ and $f''(b)>0$ for all $b\in M$.
	Lemma \ref{lem:inverse} implies that  $f''$ is invertible in $\CO(M)$, which contradicts  $\chi(f'')=0$. \qed
	\vspace{3mm}
	

	
	

	\subsection{Continuous characters of $\CO(M)$}
	
	For every connected component $Z$ of $M$, we view the identity element $1_Z\in \CO(Z)$ as an element of $\CO(M)$ via extension by zero. Then $1_Z\in \CO(M)$ is an indecomposable idempotent in $\CO(M)$. Namely, it is an idempotent in the sense that $1_Z^2=1_Z$, and it is indecomposable in the sense that it is nonzero, and is not the sum of two nonzero idempotents.  Moreover,
	it is obvious that every indecomposable idempotent in $\CO(M)$ is of the form $1_Z$ for a unique connected component $Z$ of $M$.
	
	By Theorem \ref{characterm2}, we know that if the cardinality of $\pi_0(M)$ does not exceed the cardinality of $\R$, then every character of $\CO(M)$ is continuous. In general, we have the following characterization of the continuity of a character of $\CO(M)$.
	
	\begin{prpd}\label{contccm}
		Let $\chi: \CO(M)\rightarrow \BC$ be a character. Then the following three conditions are equivalent to each other.
		\begin{itemize}
			\item[(a)]
			The character $\chi$ is continuous.
			\item[(b)]
			The character $\chi$  vanishes on $(1-e)\cdot \CO(M)$ for some indecomposable idempotent $e$ in $\CO(M)$.
			\item[(c)]
			The equality $\chi=\mathrm{Ev}_a$ holds for some $a\in M$.
		\end{itemize}
		When these conditions are satisfied, the elements $e\in \CO(M)$ and $a\in M$ as above are unique.
		
	\end{prpd}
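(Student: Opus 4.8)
The plan is to prove the cycle of implications (c)$\,\Rightarrow\,$(a)$\,\Rightarrow\,$(b)$\,\Rightarrow\,$(c), and then to settle uniqueness. Throughout I use the LCS identification $\CO(M)=\prod_{Z\in\pi_0(M)}\CO(Z)$ from Corollary~\ref{cor:CFMisNF}. For a subset $S\subseteq\pi_0(M)$, write $1_S\in\CO(M)$ for the element whose $Z$-component equals $1_Z$ when $Z\in S$ and $0$ otherwise; then $1_{\pi_0(M)}=1$, $1_\emptyset=0$, $1_S1_T=1_{S\cap T}$ and $1_S+1_{S^c}=1$, and (as recalled before the statement) the indecomposable idempotents of $\CO(M)$ are exactly the $1_Z$ with $Z\in\pi_0(M)$. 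Each connected component $Z$, with the structure sheaf $\CO|_Z$, is again a formal manifold with $\pi_0(Z)=\{Z\}$, so Theorem~\ref{characterm2} applies to $\CO(Z)$.

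The implication (c)$\,\Rightarrow\,$(a) is immediate, since $\mathrm{Ev}_a$ is continuous (as noted after \eqref{eq:defEva}). For (b)$\,\Rightarrow\,$(c), suppose $\chi$ vanishes on $(1-e)\CO(M)$ with $e=1_Z$. Since $e$ is idempotent, $\chi(e)\in\{0,1\}$; and $\chi(e)=0$ is impossible, for it would give $1=\chi(1)=\chi(e)+\chi((1-e)\cdot1)=0$. Hence $\chi(e)=1$, so $\chi(f)=\chi(ef)+\chi((1-e)f)=\chi(ef)$ for every $f\in\CO(M)$. Thus $\chi$ is determined by its restriction to the ideal $e\CO(M)=1_Z\CO(M)$, which via extension by zero is identified with $\CO(Z)$ as a unital $\BC$-algebra ($1_Z$ corresponding to the unit); that restriction is a character of $\CO(Z)$ (it sends $1_Z$ to $\chi(e)=1$), so by Theorem~\ref{characterm2} there is $a\in Z$ with $\chi(ef)=(ef)(a)=f(a)$ for all $f$, i.e.\ $\chi=\mathrm{Ev}_a$.

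The crux is (a)$\,\Rightarrow\,$(b). Assume $\chi$ continuous and put $\mathcal F:=\{S\subseteq\pi_0(M):\chi(1_S)=1\}$. Using $\chi(1_S)\in\{0,1\}$, $\chi(1_S1_T)=\chi(1_S)\chi(1_T)$ and $\chi(1_S)+\chi(1_{S^c})=1$, one checks directly that $\mathcal F$ is an ultrafilter on $\pi_0(M)$. Continuity enters to force $\mathcal F$ to be principal: a neighbourhood of $0$ in $\CO(M)$ on which $|\chi|<1$ contains a basic one of the form $\{f:f|_{Z_i}\in V_i,\ i=1,\dots,n\}$ with $F:=\{Z_1,\dots,Z_n\}$ finite and each $V_i$ a neighbourhood of $0$ in $\CO(Z_i)$; since $0\in V_i$, this set contains $1_{F^c}\CO(M)$, so $|\chi(1_{F^c})|<1$, whence $\chi(1_{F^c})=0$ and $F\in\mathcal F$. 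An ultrafilter containing a finite set is principal at one of its elements (otherwise every $\{Z_i\}^c$ lies in $\mathcal F$, hence so does $F^c$, contradicting $F\in\mathcal F$), so $\{Z\}\in\mathcal F$ for some $Z\in F$. Then $\chi(1_Z)=1$, $\chi(1_{Z^c})=0$, and $\chi((1-1_Z)f)=\chi(1_{Z^c})\chi(f)=0$ for every $f$; so (b) holds with $e=1_Z$.

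For uniqueness: if (b) holds with $e=1_Z$ and $e'=1_{Z'}$, then $\chi(e)=\chi(e')=1$ as above, so $Z\ne Z'$ would give $0=\chi(0)=\chi(ee')=\chi(e)\chi(e')=1$; hence $e=e'$. For $a$, it is enough that the map $a\mapsto\mathrm{Ev}_a$ is injective on $M$: two distinct points either lie in different components, where they are separated by the relevant $1_Z$, or in a common component, where they are separated by a formal function lifting (via Proposition~\ref{prop:reductionsur}) a smooth function separating them on the manifold $\underline M$. The only delicate step is the use of continuity in (a)$\,\Rightarrow\,$(b) to make $\mathcal F$ principal; the rest reduces quickly to Theorem~\ref{characterm2} and the Boolean structure of the idempotents $1_S$.
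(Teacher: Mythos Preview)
Your proof is correct and follows essentially the same approach as the paper: both use the product decomposition $\CO(M)=\prod_{Z\in\pi_0(M)}\CO(Z)$ to extract from continuity a finite set of components through which $\chi$ must factor, and then invoke Theorem~\ref{characterm2}. The paper proves (a)$\Rightarrow$(c) directly by observing that continuity forces $\chi$ to factor through $\CO(Z')$ for some finite union $Z'$ of components, whereas you package the same reduction via the ultrafilter $\{S:\chi(1_S)=1\}$ on $\pi_0(M)$ to isolate a single component before applying Theorem~\ref{characterm2}; the difference is purely cosmetic.
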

	
	\begin{proof}
		We first prove that (a) implies (c).  Suppose that $\chi$ is continuous.
		Since
		\[
		\CO(M)=\prod_{Z\in \pi_0(M)} \CO(Z)
		\]
		as LCS,
		the continuity of $\chi$ implies that  there is an open  closed subset $Z'$ of $M$, consisting of
		finitely many connected components of $M$, such that $\chi$ factors through the restriction map
		\[
		\CO(M)\rightarrow \CO(Z').
		\]
		Write $\chi': \CO(Z')\rightarrow \BC$ for the induced character. By Theorem \ref{characterm2}, $\chi'$ equals the evaluation map at some $a\in Z'$. Thus $\chi=\mathrm{Ev}_a$ and we have proved that (a) implies (c).
		
		By using Theorem \ref{characterm2}, it is easy to see that (b) implies (c). The  uniqueness assertion as well as the  implications (c)$\Rightarrow $(a) and (c)$\Rightarrow $(b) are obvious.
	\end{proof}
	
	\subsection{The formal spectra}
	In this subsection, we reconstruct the formal manifold $(M,\CO)$ from the topological $\C$-algebra $\CO(M)$.
	We start with the following definition.
	
	\begin{dfn}
		A formal $\BC$-algebra is a topological $\BC$-algebra that is topologically isomorphic to $\CO(M)$ for some formal manifold $(M, \CO)$.
	\end{dfn}
	All topological $\BC$-algebras together with all continuous homomorphisms between them form a category. 
	All formal $\BC$-algebras form a full subcategory of the category of topological $\BC$-algebras, and thus they also form a category.
	
	Let $A$ be a formal $\BC$-algebra.
	Define the formal spectrum of $A$ to be the set
	\be\label{eq:specf}
	\mathrm{Specf}(A):=\{\textrm{continuous character of $A$}\}.
	\ee
	Equip $\mathrm{Specf}(A)$ with the coarsest topology such that the map
	\[
	\mathrm{Specf}(A)\rightarrow \BC, \quad \chi\mapsto \chi(f)
	\]
	is continuous for all $f\in A$.   For every $\chi\in \mathrm{Specf}(A)$, write $A_\chi$ for the localization of the ring $A$ at the maximal ideal $\ker(\chi)$.
	
	Let $U$ be an open subset of $\mathrm{Specf}(A)$. We  say that an element
	\[f=\{f_\chi\}_{\chi\in U} \in \prod_{\chi\in U} A_\chi\]
	is locally represented by $A$ if for every $\chi_0\in U$,
	there is an open neighborhood $U_0$ of $\chi_0$ in $U$,
	and an element $f_0\in A$ such that  $\{f_\chi\}_{\chi\in U_0}$ equals the image of $f_0$ under the canonical map
	\[
	A\rightarrow \prod_{\chi\in U_0}A_\chi.
	\]
	Define
	\be \label{eq:OA}
	\CO_A(U):=\left\{f\in \prod_{\chi\in U} A_\chi\mid f\textrm{ is locally represented by $A$ }\right\}.
	\ee 
	Together with the restriction maps, $\CO_A$ is a sheaf of $\BC$-algebras over $\mathrm{Specf}(A)$.
	
	The rest of this subsection is devoted to a proof of the following theorem.
	
	\begin{thmd}\label{ftom}
		The pair $(\mathrm{Specf}(A), \CO_A)$ is a formal manifold.
		Furthermore, if $A=\CO(M)$, then there is an identification $(\mathrm{Specf}(A), \CO_A)=(M, \CO)$ of formal manifolds.
	\end{thmd}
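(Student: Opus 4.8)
The plan is to reconstruct $(M,\CO)$ from $A = \CO(M)$ in two stages: first identify the underlying topological space $\mathrm{Specf}(A)$ with $M$ via the evaluation characters, then identify the structure sheaves. The fact that $(\mathrm{Specf}(A),\CO_A)$ is a formal manifold will follow formally once the identification with $(M,\CO)$ is established, since every $A$ is by definition $\CO(M)$ for \emph{some} $M$; so the real content is the second sentence of the theorem.

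\medskip

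\noindent\textbf{Step 1: The map $M \to \mathrm{Specf}(\CO(M))$, $a \mapsto \mathrm{Ev}_a$, is a homeomorphism.} Surjectivity is exactly the content of Proposition \ref{contccm} (a character is continuous iff it is some $\mathrm{Ev}_a$). Injectivity: if $\mathrm{Ev}_a = \mathrm{Ev}_b$ with $a \ne b$, pick (using Corollary \ref{cor:bumpfun} and the fact that $\underline M$ is a smooth manifold) a formal function $f$ with $f(a) = 1$, $f(b) = 0$, a contradiction; one must be slightly careful when $a,b$ lie in the same connected component versus different ones, but in both cases a bump function separates them at the level of values. Continuity of $a \mapsto \mathrm{Ev}_a$ is immediate from the definition of the topology on $\mathrm{Specf}(A)$: the composite $a \mapsto \mathrm{Ev}_a \mapsto \mathrm{Ev}_a(f) = f(a)$ is $\underline f$, which is continuous on $M$. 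For the inverse to be continuous, one checks that sets of the form $\{a : |f(a) - c| < \varepsilon\}$ generate (a subbasis for) the topology of $M$; this uses that $M$ is locally $(\R^n)^{(k)}$ and that on such a chart the real and imaginary parts of the reductions of the coordinate functions $x_1,\dots,x_n$ already separate points and generate the Euclidean topology, together with the idempotents $1_Z$ to separate connected components.

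\medskip

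\noindent\textbf{Step 2: Identify the stalks and the structure sheaf.} Having identified the spaces, write $\mathrm{Ev}_a$ for the continuous character corresponding to $a \in M$. The first task is to identify the localization $A_{\mathrm{Ev}_a} = \CO(M)_{\ker \mathrm{Ev}_a}$ with the stalk $\CO_a$: the restriction map $\CO(M) \to \CO_a$ is surjective by Corollary \ref{softo1}, and its kernel consists of global formal functions whose germ at $a$ vanishes; one shows this kernel is generated, after inverting elements not in $\m_a$, exactly by $\ker(\mathrm{Ev}_a)$-localization data, i.e. that $\CO(M) \to \CO_a$ identifies $\CO_a$ with the localization at $\ker \mathrm{Ev}_a$. (Concretely: any germ in $\CO_a$ is represented by a global function supported near $a$, and two global functions have the same germ iff they agree after multiplying by a function equal to $1$ near $a$, which is a unit-type localization statement.) Then, for an open $U \subseteq M \cong \mathrm{Specf}(A)$, one has a natural map $\CO(U) \to \prod_{a \in U} \CO_a = \prod_{a\in U} A_{\mathrm{Ev}_a}$; the injectivity is Lemma \ref{dformalf0}-style (using that $\CO_a \hookrightarrow \wh\CO_a$ fails to be injective in general, so one actually uses the sheaf property of $\CO$ directly: a section is determined by its germs), and the image is precisely the elements locally represented by $A = \CO(M)$ — "locally represented" near $a$ means agreeing with the germ of some \emph{global} function on a neighborhood, which by softness (Corollary \ref{softo1}) is the same as agreeing with the germ of some \emph{local} section, i.e. being an honest section of $\CO$. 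This gives $\CO_A(U) = \CO(U)$ compatibly with restrictions, hence $\CO_A = \CO$ as sheaves of $\BC$-algebras.

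\medskip

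\noindent\textbf{The main obstacle} I anticipate is Step 2, specifically the identification $A_{\mathrm{Ev}_a} = \CO_a$ and the clarification of what "locally represented by $A$" amounts to. The subtlety is that $A = \CO(M)$ is a ring of \emph{global} sections, and one must pass from global to local data; the bridge is the softness of $\CO$ (Corollary \ref{softo1}, \ref{softo2}), which guarantees that every germ and every local section extends to a global one, so that localizing the global ring recovers the stalk and "locally represented by global functions" coincides with "is a local section." A secondary technical point is to confirm that the topology on $\mathrm{Specf}(A)$ defined via all $f \in A$ matches the manifold topology on $M$ — this again reduces, via an atlas, to the model case $(\R^n)^{(k)}$ where it is the statement that the functions $\Re(x_i), \Im(x_i)$ (together with the component idempotents) generate the topology of $\R^n$, which is elementary. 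Once these two points are settled, the functoriality and the assertion that $(\mathrm{Specf}(A), \CO_A)$ is a formal manifold are formal, and in combination with Theorem \ref{thm:autocon} (automatic continuity) yield the claimed identification of $\BC$-locally ringed spaces.
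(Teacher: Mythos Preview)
Your proposal is correct and follows essentially the same approach as the paper: the paper packages your Step~1 as Lemma~\ref{ftom2} (the map $M\to\prod_{f\in\CO(M)}\BC$ is a topological embedding, which amounts to exactly your bijection-plus-initial-topology argument via Proposition~\ref{contccm}), and your Step~2 as Lemma~\ref{ftom3} (the localization $\CO(M)_{\ker\mathrm{Ev}_a}$ is canonically $\CO_a$, proved via softness just as you outline). The final identification of $\CO_A$ with $\CO$ is then the observation that sections of $\CO$ over $U$ are exactly the compatible families of germs locally represented by global functions, which the paper records in one line.
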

	
	We begin with the following lemma, which asserts that the topology on $M$ is determined by the space $\CO(M)$.
	\begin{lemd}\label{ftom2}
		The  map
		\be\label{topemb}
		\iota_M:\  M\rightarrow \prod_{f\in \CO(M)}  \BC, \quad a\mapsto \{f(a)\}_{f\in \CO(M)}
		\ee
		is a  topological embedding.
	\end{lemd}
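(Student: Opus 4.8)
The claim is that $\iota_M$ is a topological embedding, i.e. it is injective, continuous, and a homeomorphism onto its image. I would organize the argument in three steps, corresponding to these three properties, and I expect the verification that $\iota_M$ is open onto its image to be the main obstacle.

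\emph{Injectivity.} For two distinct points $a,b\in M$ I need a formal function $f\in\CO(M)$ with $f(a)\ne f(b)$. If $a,b$ lie in different connected components, the idempotent $1_Z$ attached to the component $Z\ni a$ separates them. If they lie in the same component, I pick disjoint open neighborhoods $V_a\ni a$, $V_b\ni b$ with $\overline{V_a}\subset U$ for a chart $U$ disjoint from $\overline{V_b}$, and use Corollary \ref{cor:bumpfun} (the bump-function corollary) to produce $f\in\CO(M)$ with $f|_{\overline{V_a}}=1$ and $f|_{M\setminus U}=0$; then $f(a)=1\ne 0=f(b)$. Recall that $f(a)=\underline f(a)$, so these are genuine complex numbers.

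\emph{Continuity.} The product $\prod_{f\in\CO(M)}\BC$ carries the initial topology with respect to the coordinate projections $\mathrm{pr}_f$. By the universal property of the product, $\iota_M$ is continuous iff each composite $\mathrm{pr}_f\circ\iota_M\colon M\to\BC$, $a\mapsto f(a)$, is continuous. But $a\mapsto f(a)=\underline f(a)$ is just evaluation of the smooth function $\underline f$ on the smooth manifold $\underline M$, hence continuous. So $\iota_M$ is continuous.

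\emph{Openness onto the image — the hard part.} I must show that $\iota_M(W)$ is open in $\iota_M(M)$ for every open $W\subset M$; equivalently, that for each $a\in W$ there are finitely many $f_1,\dots,f_s\in\CO(M)$ and an open set $O\subset\BC^s$ with $a\in\{b\in M\mid(f_1(b),\dots,f_s(b))\in O\}\subset W$. Here I use that $\underline M$ is a smooth manifold: take a chart $U\ni a$ inside $W$ identified (after shrinking) with an open ball in $\R^n$ via coordinate functions which, by Theorem \ref{thmmapr} (the correspondence between morphisms to $\R^m$ and tuples of real formal functions), extend to a morphism realized by $f_1,\dots,f_n\in\CO(M;\R)$; choose in addition $g\in\CO(M)$ with $g|_{\overline{V}}=1$ and $\mathrm{supp}\,g\subset U$ for a smaller neighborhood $V\ni a$, using Corollary \ref{cor:bumpfun}. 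Then the condition ``$g(b)\ne 0$ and $(f_1(b),\dots,f_n(b))$ lies in a suitable small ball'' carves out a neighborhood of $a$ contained in $V\subset W$ and is manifestly of the required form; the point $g$ is needed to cut down to the chart since the $f_i$ only control the image in $\R^n$, not the source. This reduces the whole matter to the corresponding, already-known statement for the smooth manifold $\underline M$, together with the fact that $f(b)$ depends only on $\underline f$. Assembling these three steps gives that $\iota_M$ is a topological embedding.
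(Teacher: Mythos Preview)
Your argument is correct and follows essentially the same route as the paper, but you overcomplicate the openness step. The paper simply takes a single bump function $f_0\in\CO(M)$ with $f_0(a)=1$ and $\mathrm{supp}\,f_0\subset U\subset W$, and observes that the open condition $|c_{f_0}-1|^2<1$ (or equivalently $c_{f_0}\neq 0$) already cuts out a neighborhood of $\iota_M(a)$ in the product whose preimage lies in $\mathrm{supp}\,f_0\subset W$. Your coordinate functions $f_1,\dots,f_n$ are redundant: once you have the bump function $g$ with $\mathrm{supp}\,g\subset U\subset W$, the condition $g(b)\neq 0$ alone forces $b\in U\subset W$, and you never need to land in the smaller set $V$. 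Also, your invocation of Theorem~\ref{thmmapr} is slightly off: that theorem produces morphisms $M\to\R^m$ from global tuples in $\CO(M;\R)^m$, not extensions of local coordinate functions to global ones; what you would actually use there is softness of $\CO$ (or just multiply by a bump and extend by zero). None of this affects correctness, but the paper's one-function argument is cleaner.
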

	
	\begin{proof}
		The map \eqref{topemb} is clearly continuous and injective.
		In what follows we prove that the inverse  map
		\[
		\iota_M^{-1}:\ \iota_M(M)\rightarrow M
		\]
		is also continuous, which implies the lemma.
		
		Let $a\in M$ and let $U$ be an open neighborhood of $a$ in $M$. Take a formal function $f_0\in \CO(M)$ such that its support is contained in $U$ and $f_0(a)=1$.
		Then
		\[
		U':= \left\{\{c_f\}_{f\in \CO(M)}\in  \prod_{f\in \CO(M)}  \BC \mid (c_{f_0}-1)\cdot (\overline{c_{f_0}}-1)<1\right\}
		\]
		is an open neighborhood of $\iota_M(a)$ in $\prod_{f\in \CO(M)}  \BC$ such that $\iota_M^{-1}(U')\subset U$, as required.
		
	\end{proof}

	\begin{lemd}\label{ftom3}
		Let $a\in M$. Then as a $\BC$-algebra, the stalk $\CO_a$ is canonically isomorphic to the localization of $\CO(M)$ at the maximal ideal
		\[
		\m_a':= \{f\in \CO(M)\mid f(a)=0\}.
		\]
		
	\end{lemd}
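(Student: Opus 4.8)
The plan is to exhibit the stalk $\CO_a$ as the localization of $\CO(M)$ at $\m_a'$ by verifying the universal property of localization. Write $\rho_a\colon \CO(M)\to \CO_a$ for the canonical map to the stalk. First I would record two basic facts: $\m_a'$ is a maximal ideal of $\CO(M)$, being the kernel of the surjective character $\mathrm{Ev}_a$ (so $\CO(M)/\m_a'\cong \BC$); and, since $f(a)=\delta_a(\rho_a(f))$ by the definition of the value of a formal function, one has $\m_a'=\rho_a^{-1}(\m_a)$, where $\m_a$ is the maximal ideal of the local ring $\CO_a$. Consequently every $f\in \CO(M)\setminus \m_a'$ satisfies $\rho_a(f)\notin \m_a$, hence $\rho_a(f)$ is a unit in $\CO_a$. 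By the universal property of localization, $\rho_a$ therefore factors uniquely through a $\BC$-algebra homomorphism $\widetilde{\rho_a}\colon \CO(M)_{\m_a'}\to \CO_a$; this is the candidate canonical isomorphism, and it remains to show it is bijective.

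Surjectivity of $\widetilde{\rho_a}$ is immediate from the surjectivity of $\rho_a$ itself, which holds because $\CO$ is a soft sheaf (Corollary~\ref{softo1}). For injectivity, suppose $f/g\in \CO(M)_{\m_a'}$ lies in the kernel. Since $\rho_a(g)$ is a unit, this forces $\rho_a(f)=f_a=0$ in $\CO_a$, i.e.\ $f$ vanishes on some open neighborhood $V$ of $a$. Choose, via Corollary~\ref{cor:bumpfun}, a formal function $h\in \CO(M)$ with $h(a)=1$ and $\mathrm{supp}\,h\subset V$ (shrinking $V$ to a relatively compact open neighborhood of $a$ whose closure lies in $V$ if necessary). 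Then $hf$ restricts to $0$ on $V$ and to $0$ on $M\setminus \mathrm{supp}\,h$, and these two open sets cover $M$; hence $hf=0$ in $\CO(M)$ by the sheaf property. Since $h\notin \m_a'$, this shows $f/1=0$, and therefore $f/g=0$, in $\CO(M)_{\m_a'}$. Thus $\widetilde{\rho_a}$ is an isomorphism, and it is canonical because it is induced by the canonical map $\rho_a$.

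There is no serious obstacle here; the argument is the standard sheaf-theoretic computation of a stalk as a localization. The only points requiring a little care are the choice of the bump function $h$ so that its support, and not merely the set where $h\neq 0$, is genuinely contained in the neighborhood on which $f$ vanishes, and the verification that $hf=0$ holds globally via the sheaf axiom; both are routine given Corollary~\ref{cor:bumpfun} and the softness results already established.
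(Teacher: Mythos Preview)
Your proof is correct and uses the same two ingredients as the paper: softness of $\CO$ (giving surjectivity of $\CO(M)\to\CO_a$) and the bump-function argument (showing that $f_a=0$ forces $hf=0$ for some $h\notin\m_a'$). The only organizational difference is that the paper identifies both $\CO(M)_{\m_a'}$ and $\CO_a$ as quotients of $\CO(M)$ by the same kernel (which requires the extra observation that the localization map $\CO(M)\to\CO(M)_{\m_a'}$ is already surjective), whereas you construct the comparison map via the universal property of localization and check bijectivity directly; the content is the same.
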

	
	\begin{proof}
		Write
		\[
		\rho_a: \CO(M)\rightarrow \CO(M)_{\m_a'}
		\]
		for the localization map. For every $f\in \CO(M)$, we have that
		\begin{eqnarray*}
			&&f\in \ker(\rho_a)\\
			&\Longleftrightarrow& f_0 f=0\textrm{ for some $f_0\in \CO(M)\setminus \m_a'$}\\
			&\Longleftrightarrow& f|_U=0 \textrm{ for some open neighborhood $U$ of $a$ in $M$}.
		\end{eqnarray*}
		Moreover, for every $f_0\in \CO(M)\setminus \m_a'$, there is an element $f_1\in \CO(M)$ such that $\rho_a(f_0f_1-1)=0$. This implies that the map $\rho_a$ is surjective.
		
		The lemma then follows by noting  that the canonical map
		\[
		\CO(M)\rightarrow \CO_a
		\]
		is also surjective (see Corollary \ref{softo1}) and its kernel equals that of $\rho_a$.
		
	\end{proof}

	\noindent\textbf{Proof of Theorem \ref{ftom}:}
	Assume that $A=\CO(M)$. By Proposition \ref{contccm}, we have a bijection
	\[
	\mathrm{Ev}:   M\rightarrow \mathrm{Specf}(A), \quad a\mapsto \mathrm{Ev}_a.
	\]
	This is in fact a homeomorphism by Lemma \ref{ftom2}. In the notation of Lemma \ref{ftom3}, we have that $\CO_a=\CO(M)_{\m_a'}$ for every $a\in M$.
	Also, note that for every open subset $U$ of $M$,
	\[
	\CO(U)=\left\{f\in \prod_{a\in U} \CO_a\mid f\textrm{ is locally represented by $\CO(M)$ }\right\}.
	\]
	Thus with respect to the homeomorphism $\mathrm{Ev}$,  the sheaf $\CO$ on $M$ coincides with the sheaf $\CO_A$ on $\mathrm{Specf}(A)$. The proof of Theorem \ref{ftom} is now finished. \qed
	\vspace{3mm}

	\subsection{An equivalence of categories}
	In this subsection, we prove that the category of formal manifolds is equivalent to the opposite category of that of formal $\C$-algebras.
	
	Let $\psi: A'\rightarrow A$ be a continuous homomorphism of formal $\BC$-algebras.
	Then it induces a map
	\be\label{eq:SpectoSpec}
	\overline\psi:\, \mathrm{Specf}(A)\rightarrow \mathrm{Specf}(A'), \quad \chi\mapsto \chi\circ \psi.
	\ee
	It is easily seen that this map is continuous. For every $\chi\in \mathrm{Specf}(A)$, $\psi$ also induces a homomorphism
	\[
	\psi_{\chi}^*:\, A'_{\psi(\chi)}\rightarrow A_\chi.
	\]
	
	\begin{lemd}\label{pullbackff}
		Let $U'\subset \mathrm{Specf}(A')$ be an open subset and put $U:=\overline\psi^{-1}(U')\subset \mathrm{Specf}(A)$. Then  there exists a unique $\BC$-algebra homomorphism
		\be\label{psist}
		\psi^*_{U'}: \CO_{A'}(U')\rightarrow \CO_A(U)
		\ee
		such that the diagram
		\[
		\begin{CD}
			\CO_{A'}(U') @> \psi^*_{U'}  >>  \CO_A(U)\\
			@V  VV           @V V V\\
			A'_{\psi(\chi)} @> \psi_{\chi}^*>>  A_{\chi} \\
		\end{CD}
		\]
		commutes for all $\chi\in U$.
		
	\end{lemd}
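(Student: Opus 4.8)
The plan is to construct $\psi^*_{U'}$ pointwise from the local data, using the definition of $\CO_A(U)$ in terms of elements that are locally represented by $A$. Given $f = \{f_{\chi'}\}_{\chi' \in U'} \in \CO_{A'}(U')$, I want to define $\psi^*_{U'}(f)$ to be the family $\{g_\chi\}_{\chi \in U}$ where $g_\chi := \psi^*_\chi(f_{\overline\psi(\chi)}) \in A_\chi$. Uniqueness is then immediate: any homomorphism making the diagram commute must send $f$ to precisely this family, since the vertical maps $\CO_A(U) \to A_\chi$ are jointly injective (an element of $\CO_A(U)$ is by definition a tuple in $\prod_{\chi \in U} A_\chi$). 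So the entire content is the existence claim, namely that the family $\{g_\chi\}_{\chi \in U}$ actually lies in $\CO_A(U)$ — that is, it is locally represented by $A$ — and that the resulting map is a $\BC$-algebra homomorphism.

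The first and main step is to check the local-representability condition. Fix $\chi_0 \in U$ and set $\chi_0' := \overline\psi(\chi_0) \in U'$. Since $f$ is locally represented by $A'$, there is an open neighborhood $U_0'$ of $\chi_0'$ in $U'$ and an element $f_0 \in A'$ whose image under $A' \to \prod_{\chi' \in U_0'} A'_{\chi'}$ agrees with $\{f_{\chi'}\}_{\chi' \in U_0'}$. Now put $U_0 := \overline\psi^{-1}(U_0') \cap U$, an open neighborhood of $\chi_0$ in $U$ (using continuity of $\overline\psi$, established just before the lemma), and let $g_0 := \psi(f_0) \in A$. I claim $g_0$ represents $\{g_\chi\}_{\chi \in U_0}$: for each $\chi \in U_0$, the commutative square relating the localization maps $A' \to A'_{\overline\psi(\chi)}$, $A \to A_\chi$ and $\psi$, $\psi^*_\chi$ — which holds because $\psi^*_\chi$ is by construction the map induced by $\psi$ on localizations at $\ker\chi$ and its preimage — shows that the image of $g_0 = \psi(f_0)$ in $A_\chi$ equals $\psi^*_\chi$ applied to the image of $f_0$ in $A'_{\overline\psi(\chi)}$, which is $\psi^*_\chi(f_{\overline\psi(\chi)}) = g_\chi$. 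This is the crux; the rest is bookkeeping.

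The second step is to verify that $\psi^*_{U'}$ is a homomorphism of $\BC$-algebras. This is formal: addition and multiplication in $\CO_{A'}(U')$ and $\CO_A(U)$ are defined componentwise in the $A'_{\chi'}$ and $A_\chi$, and each $\psi^*_\chi : A'_{\overline\psi(\chi)} \to A_\chi$ is a ring homomorphism, so $\psi^*_{U'}$ is a homomorphism componentwise, hence a homomorphism. It also sends the unit to the unit. Finally, the stated commutativity of the diagram in the lemma holds by the very definition $g_\chi = \psi^*_\chi(f_{\overline\psi(\chi)})$, i.e. the composite $\CO_{A'}(U') \to \CO_A(U) \to A_\chi$ sends $f$ to $g_\chi$, which equals $\psi^*_\chi$ applied to the image of $f$ under $\CO_{A'}(U') \to A'_{\psi(\chi)}$. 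I do not expect any real obstacle here; the only point requiring a little care is making sure the square relating $\psi$ to $\psi^*_\chi$ on localizations is the one induced functorially by localization, so that it genuinely commutes with the global-to-local maps, and that $U_0 \subset U$ so that the representing element $g_0$ lives over the correct open set.
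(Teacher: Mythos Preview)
Your proposal is correct and follows essentially the same approach as the paper: define $\psi^*_{U'}(f)$ componentwise as $\{\psi^*_\chi(f_{\overline\psi(\chi)})\}_{\chi\in U}$, derive uniqueness from the injectivity of $\CO_A(U)\hookrightarrow \prod_{\chi\in U} A_\chi$, and verify existence by checking local representability via $g_0=\psi(f_0)$ on $U_0=\overline\psi^{-1}(U_0')$. The paper's proof is terser---it simply asserts the local-representability claim---whereas you spell out the commutative square on localizations; but the argument is the same.
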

	\begin{proof}
		The uniqueness follows from the fact that the canonical map $\CO_A(U)\rightarrow \prod_{\chi\in U} A_\chi$ is injective. Note that if an element
		\[
		f'=\{f'_{\chi'}\}_{\chi'\in U'}\in \prod_{\chi'\in U'} A'_{\chi'}
		\]
		is locally represented by $A'$, then
		\[
		f:=\{\psi_{\chi}^*(f'_{\overline\psi(\chi)})\}_{\chi\in U}\in \prod_{\chi\in U} A_{\chi}
		\]
		is locally represented by $A$. Furthermore, the map 
		\[
		\psi^*_{U'}: \CO_{A'}(U')\rightarrow \CO_A(U),\quad f'\mapsto f
		\]
		is a $\C$-algebra homomorphism. Thus the existence assertion of the lemma also holds.
		
	\end{proof}

	The family $$\{\psi^*_{U'}: \CO_{A'}(U')\rightarrow \CO_A(U)\}_{\, U'\textrm{ is an open subset of }M'}$$ as in \eqref{psist} is compatible with respect to the restrictions. Thus we get a sheaf homomorphism
	\[
	\psi^*: \overline \psi^{-1} \CO_{A'}\rightarrow \CO_A.
	\]
	This sheaf homomorphism induces local homomorphisms of the stalks. Then we obtain in this way
	a morphism
	\be\label{eq:specphi} \mathrm{Specf}(\psi)=(\overline \psi, \psi^*):\ (\mathrm{Specf}(A),\CO_A)\rightarrow (\mathrm{Specf}(A'), \CO_{A'})\ee  of formal manifolds. The assignment
	$\psi\mapsto \mathrm{Specf}(\psi)$ is compatible with the compositions, and if $\psi$ is an identity homomorphism,
	then $\mathrm{Specf}(\psi)$ is also an identity morphism. In conclusion, we get a functor
	\be\label{functor1}
	A\mapsto (\mathrm{Specf}(A),\CO_A)
	\ee
	from the opposite category of the category of formal $\BC$-algebras to the category of formal manifolds.
	
	On the other hand, by Theorem \ref{thm:autocon}, we have a functor
	\be\label{functor2}
	(M, \CO)\mapsto \CO(M)
	\ee
	from  the category of formal manifolds to the opposite category of the category of formal $\BC$-algebras.
	Then we have  the following result, which implies Theorem \ref{thmmain1}.
	
	\begin{thmd}\label{thmqe}
		The functors \eqref{functor1} and \eqref{functor2}
		are quasi-inverse of each other.
	\end{thmd}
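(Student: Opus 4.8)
The plan is to exhibit natural isomorphisms between the two composite functors and the respective identity functors. For the composite $(M,\CO)\mapsto \CO(M)\mapsto (\mathrm{Specf}(\CO(M)),\CO_{\CO(M)})$, the required natural isomorphism has already been constructed in Theorem \ref{ftom}: for each formal manifold $(M,\CO)$, there is an identification $(\mathrm{Specf}(\CO(M)),\CO_{\CO(M)})=(M,\CO)$, realized by the homeomorphism $\mathrm{Ev}:M\to\mathrm{Specf}(\CO(M))$, $a\mapsto\mathrm{Ev}_a$, together with the sheaf identification established there. So the first thing I would do is check that this identification is natural in $(M,\CO)$: given a morphism $\varphi=(\overline\varphi,\varphi^*):(M,\CO)\to(M',\CO')$, one sets $\psi:=\varphi^*:\CO'(M')\to\CO(M)$ (continuous by Theorem \ref{thm:autocon}), and must verify that $\mathrm{Specf}(\psi)$ corresponds to $\varphi$ under the identifications $\mathrm{Ev}$, $\mathrm{Ev}'$. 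On underlying spaces this amounts to the identity $\mathrm{Ev}_a\circ\varphi^*=\mathrm{Ev}'_{\overline\varphi(a)}$, i.e. $(\varphi^*(f))(a)=f(\overline\varphi(a))$ for $f\in\CO'(M')$, which is immediate from the definition of the value of a formal function (compatibility of $\varphi^*_a$ with the Dirac distributions, since $\varphi^*_a$ is a local homomorphism of stalks). On structure sheaves one checks that the two sheaf homomorphisms agree at the level of localizations $\CO(M)_{\m_a'}=\CO_a$, which follows from Lemma \ref{ftom3} together with the compatibility square \eqref{eq:comofwhO} (or more directly, the defining commuting square in Lemma \ref{pullbackff}).

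For the other composite, $A\mapsto (\mathrm{Specf}(A),\CO_A)\mapsto \CO_A(\mathrm{Specf}(A))$, I would construct a continuous $\BC$-algebra homomorphism $\eta_A:A\to\CO_A(\mathrm{Specf}(A))$ sending $f\in A$ to the section $\chi\mapsto \rho_\chi(f)$, where $\rho_\chi:A\to A_\chi$ is the localization map at $\ker\chi$; this section is tautologically locally represented by $A$, so $\eta_A$ is well-defined, and it is a ring homomorphism by construction. Naturality in $A$ is straightforward from the definition of $\mathrm{Specf}(\psi)$ and Lemma \ref{pullbackff}. The substance is to show $\eta_A$ is a topological isomorphism. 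Since $A$ is a formal $\BC$-algebra, we may pick a formal manifold $(M,\CO)$ with a topological isomorphism $A\cong\CO(M)$; then Theorem \ref{ftom} gives $(\mathrm{Specf}(A),\CO_A)\cong(M,\CO)$ as formal manifolds, and under these identifications $\eta_A$ becomes the identity map $\CO(M)\to\CO(M)$ — indeed, under $\mathrm{Ev}:M\xrightarrow{\sim}\mathrm{Specf}(\CO(M))$ and $\CO_{\m_a'}=\CO_a$, the section $\eta_{\CO(M)}(f)$ is exactly the image of $f$ in $\prod_{a\in M}\CO_a$, which recovers $f$ by the sheaf property (equivalently, by the fact that $\CO(M)$ is the space of locally-represented families used in the proof of Theorem \ref{ftom}). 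Hence $\eta_A$ is a bijective continuous algebra homomorphism; that its inverse is continuous follows from the topological identification in Theorem \ref{ftom} (the isomorphism of formal manifolds induces a topological algebra isomorphism on global formal functions, by Theorem \ref{thm:autocon} applied to it and its inverse).

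Assembling these: the natural isomorphism $\mathrm{Specf}(\CO(-))\simeq \mathrm{id}$ (from Theorem \ref{ftom}, checked to be natural above) and the natural isomorphism $\CO_{(-)}(\mathrm{Specf}(-))\simeq\mathrm{id}$ (the maps $\eta_A$, shown to be topological isomorphisms and natural) together say precisely that the functors \eqref{functor1} and \eqref{functor2} are quasi-inverse to each other, which is the assertion of Theorem \ref{thmqe}. Finally, Theorem \ref{thmmain1} follows: full faithfulness of \eqref{eq:introfunctor} is a formal consequence of being one half of an equivalence of categories (a functor that is part of an equivalence is automatically fully faithful), and continuity of $\varphi^*$ is Theorem \ref{thm:autocon}.

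\medskip

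The step I expect to be the main obstacle is verifying the naturality of the identification in Theorem \ref{ftom} at the level of structure sheaves — i.e., that for a morphism $\varphi$, the sheaf homomorphism $\psi^*$ produced by $\mathrm{Specf}(\varphi^*)$ (glued from the maps $\psi^*_{U'}$ of Lemma \ref{pullbackff}) really matches the original $\varphi^*:\overline\varphi^{-1}\CO'\to\CO$ after passing through the canonical identifications $\CO_a=\CO(M)_{\m_a'}$. This is a diagram chase rather than a deep point, but it requires carefully tracking several compatibilities: the commuting square of Lemma \ref{pullbackff}, the surjectivity/localization description of $\CO_a$ in Lemma \ref{ftom3}, and the injectivity of $\CO(U)\hookrightarrow\prod_{a\in U}\CO_a$ from Corollary \ref{softo1}. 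One pins down $\psi^*$ uniquely by its effect on stalks (using that injectivity), reduces to checking equality of the two induced maps $\CO'_{\overline\varphi(a)}\to\CO_a$, and there both maps are the localization of $\varphi^*:\CO'(M')\to\CO(M)$ at the relevant maximal ideals, hence equal.
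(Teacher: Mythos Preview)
Your proposal is correct and takes essentially the same approach as the paper, which simply states that the result is implied by Theorem \ref{ftom}. You have supplied the details the paper omits---namely the explicit construction of the natural transformation $\eta_A$ for the composite $A\mapsto \CO_A(\mathrm{Specf}(A))$ and the verification of naturality on both sides---but the underlying mechanism is identical.
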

	\begin{proof}
		
		This is implied by
		Theorem \ref{ftom}.
		
	\end{proof}

	\begin{remarkd} 
		Assume that 
		\be \label{eq:cardpi0M} 
		\text{the cardinality of $\pi_0(M)$ does not exceed the cardinality of $\R$}.
		\ee
		By Theorem \ref{characterm2}, it is  not difficult to show that the formal spectrum $\mathrm{Specf(\CO(M))}$
		coincides with the usual spectrum 
		\[\mathrm{Spec}(\CO(M)):=\{\text{character of $\CO(M)$}\}\] of the $\C$-algebra $\CO(M)$  (see \cite[Section 8.1]{N}) as topological spaces. 
		Similar to the case of smooth manifolds (see \cite[Section 7]{N} for example),  in such case, $(M,\CO)$ can be reconstructed from the spectrum $\mathrm{Spec}(\CO(M))$ by Theorem \ref{ftom}.
		Furthermore, for any formal manifold $(M',\CO')$ (whether or not it satisfies the condition \eqref{eq:cardpi0M}),
		it is easy to verify that every $\C$-algebra homomorphism from  $O(M)$ to
		$O'(M')$  is automatically continuous (see  \cite[Page 88, Proposition (a)]{N}).
	\end{remarkd}

	\section{Vector-valued formal functions and products of formal manifolds} \label{sec:prod}
	In this section, we introduce the notion of vector-valued formal functions.
	Using this, we prove the existence of finite products in the category of formal manifolds.
	
	\subsection{Vector-valued smooth functions}\label{appendixB1} In this subsection, we review some basics of topological tensor products and vector-valued smooth functions.
	Throughout this subsection, let  $E$ be an LCS which may or may not be Hausdorff.
	\begin{dfn}\label{de:strictdense}  Let $A$ be a subset of $E$.
		
		\noindent (a) The subset $A$ is called quasi-closed if it contains all limit points of the bounded subsets in it.
		
		\noindent (b) The quasi-closure of $A$ is defined as the intersection of all the quasi-closed subsets of $E$  that contain $A$.
		
		\noindent (c) The subset $A$ is called strictly dense in $E$ if its quasi-closure is $E$.
	\end{dfn}
	
	The intersection of arbitrarily many quasi-closed subsets is also quasi-closed. In particular, the quasi-closure of every subset in $E$ is quasi-closed. 
	
	\begin{dfn} An LCS is said to be complete (resp.\,quasi-complete) if it is Hausdorff, and every
		Cauchy net (resp.\,bounded Cauchy net) in it has a limit point. 
	\end{dfn}
	\begin{dfn} 
		A completion of $E$ is a complete LCS $\wh E$ together with a continuous linear map $\hat \iota: E\rightarrow \wh E$ with the following property: for every complete LCS $F$ and every continuous linear map $f: E\rightarrow F$, there is a unique continuous linear map $f':\wh E\rightarrow F$ such that the  diagram
		\[\xymatrix{ &\wh E\ar[d]^{f'}\\ E\ar[ur]^{\hat \iota}\ar[r]^{f}&F}\]
		commutes.
	\end{dfn}
	
	It is known that the LCS $E$ has a unique completion, and the above map $\hat \iota$ is a topological embedding with dense image when  $E$ is Hausdorff. See \cite[Theorem 5.2]{Tr} for example.  When $E$ is not Hausdorff, the completion 
	$\widehat{E}$ is precisely the completion of $E/\overline{\{0\}}$.  Here $\overline{\{0\}}$ is the closure of the zero space in $E$. 
	\begin{dfn} 
		A quasi-completion of $E$ is a quasi-complete  LCS $\wt E$ together with a continuous linear map $\tilde\iota: E\rightarrow \wt E$ with the following property: for every quasi-complete  LCS $F$ and every continuous linear map $f: E\rightarrow F$, there is a unique continuous linear map $f':\wt E\rightarrow F$ such that the  diagram
		\[\xymatrix{ &\wt E\ar[d]^{f'}\\ E\ar[ur]^{\tilde{\iota}}\ar[r]^{f}&F}\]
		commutes.
	\end{dfn}
	
	The LCS $E$ has a unique quasi-completion, and when $E$ is Hausdorff the quasi-completion $\wt E$ agrees with the quasi-closure of $\hat \iota(E)$ in $\wh{E}$. 
	See \cite[Page 91]{Sc} for details.
	When $E$ is not Hausdorff, the quasi-completion $\widetilde{E}$ agrees with the quasi-completion  of $E/\overline{\{0\}}$.

	The following lemma is straightforward.
	
	\begin{lemd}\label{lem:strictlydense}
		Let  $E_0$ be a complete (resp.\,quasi-complete) LCS, and let $f:\, E\rightarrow E_0$ be a linear topological embedding such that  $f(E)$ is  dense (resp.\,strictly  dense) in $F$. Then $E_0$ is the completion (resp.\,quasi-completion) of $E$.
	\end{lemd}
	
	We denote by $E\otimes_{\pi} F$  the projective tensor product of  $E$ and $F$, where $F$ is another LCS.
	As mentioned in the Introduction, the quasi-completion and completion of $E\otimes_{\pi} F$ are denoted as $E\widetilde\otimes_{\pi} F$ and $E\widehat\otimes_{\pi} F$, respectively.

	The following result is about projective tensor products of products of LCS.
	
	\begin{lemd}\label{lem:protensor} Let $E=\prod_{i\in I} E_i $ and $F=\prod_{j\in J}F_j$ be two products of Hausdorff LCS.
		Then the canonical linear map
		\be\label{eq:protensor} E\otimes_\pi F\rightarrow
		\prod_{(i,j)\in I\times J}E_i\otimes_\pi F_j\ee is a topological embedding, and
		induces the following LCS identifications:
		\be\label{eq:quasiprotensor} E\widetilde\otimes_\pi F
		=\prod_{(i,j)\in I\times J}E_i\widetilde\otimes_\pi F_j\quad\text{and}\quad  E\widehat\otimes_\pi F
		=\prod_{(i,j)\in I\times J}E_i\widehat\otimes_\pi F_j.\ee
	\end{lemd}
	\begin{proof}
		The first assertion of the lemma and the second identification in \eqref{eq:quasiprotensor} are well-known (see \cite[\S 15.4, Theorem 1]{Ja}).
		Using this identification, the linear map  \eqref{eq:protensor} restricts to a topological linear  isomorphism
		\[
		\left(\bigoplus_{i\in I}E_i\right)\otimes_{\pi} \left(\bigoplus_{j\in I}F_j\right)\rightarrow \bigoplus_{(i,j)\in I\times J} (E_i\otimes_\pi F_j).
		\]
		Here the direct sums are equipped with the subspace topologies of the direct products. Meanwhile, since a product of quasi-complete 
		LCS is still  quasi-complete (see \cite[\S 3.2, Proposition 6]{Ja}),
		there is a linear topological embedding
		\bee
		E\widetilde\otimes_\pi F \rightarrow \prod_{(i,j)\in I\times J}(E_i\widetilde\otimes_\pi F_j)
		\eee
		induced by \eqref{eq:protensor}. As $\bigoplus_{(i,j)\in I\times J} (E_i\otimes_\pi F_j)$
		is strictly dense in $\prod_{(i,j)\in I\times J}(E_i\widetilde\otimes_\pi F_j)$,
		the first identification in \eqref{eq:quasiprotensor} then follows.
	\end{proof}

	We also denote by $E\otimes_\varepsilon F$ the epsilon (injective) tensor product of $E$ and $F$ (see \cite{Gr}). 
	Similarly, the quasi-completion and completion of $E\otimes_{\varepsilon} F$ are denoted as $E\widetilde\otimes_{\varepsilon} F$ and $E\widehat\otimes_{\varepsilon} F$, respectively. 
	When $E$ or $F$ is nuclear, it is a classical result of Grothendieck that their projective tensor product coincides with the epsilon tensor product (see \cite[Theorem 50.1]{Tr}).
	In this case, we
	will simply write
	\bee
	E\widetilde\otimes F:=E\widetilde\otimes_{\pi} F=E\widetilde\otimes_{\varepsilon} F
	\quad
	\text{and}
	\quad
	E\widehat\otimes F:=E\widehat\otimes_{\pi} F=E\widehat\otimes_{\varepsilon} F.
	\eee
	
	In the rest part of this subsection, assume  that $E$ is a quasi-complete LCS, and let 
	$N$ be a smooth manifold.
	Write $\RC^\infty(N;E)$ for the space of all $E$-valued  smooth functions on $N$.
	Endow $\RC^\infty(N;E)$ with
	the topology defined by the seminorms 
	\[
	\abs{f}_{D, \abs{\,\cdot\,}_\mu}:= \sup_{a\in N}\abs{(Df)(a)}_\mu\qquad (f\in \RC^\infty(N;E)),
	\]
	where $D$ is a compactly supported differential operator on $N$, and $\abs{\,\cdot \,}_\mu$ is a continuous seminorm on $E$.
	Then $\RC^\infty(N;E)$ is a quasi-complete LCS.

	Identify $\RC^\infty(N)\otimes E$ as a linear subspace of $\RC^\infty(N;E)$, which consists of the smooth functions
	whose image is contained in a finite-dimensional subspace of $E$.
	The following result is proved in 
	\cite[Th\'{e}mr\`{e}me 1]{Sc}. 
	\begin{prpd} 
		We have that
		\be\label{eq:vvsfiso}
		\RC^\infty(N)\widetilde \otimes E=\RC^\infty(N;E)
		\ee as LCS, 
		and if $E$ is complete, then
		\be
		\label{eq:vvsfiso2}
		\RC^\infty(N)\wt\otimes E=\RC^\infty(N;E)=\RC^\infty(N)\wh\otimes E
		\ee
		as LCS.   
	\end{prpd}

	\begin{exampled}\label{ex:E-valuedpowerseries}Let $k\in \BN$ and write
		\[
		E[[y_1, y_2, \dots, y_k]]:=\left\{\sum_{J\in \BN^k} v_J y^J\,:\, v_J\in E \textrm{ for all }J\in \BN^k\right\}
		\]
		for the space of the formal power series.
		Equip  $E[[y_1, y_2, \dots, y_k]]$ with the term-wise convergence topology, which is defined  by the seminorms
		\be \label{eq:defabsnuJ}\abs{\,\cdot\,}_{\nu,J}:\quad \sum_{I\in \BN^k} f_I y^I \mapsto \abs{f_J}_{\nu},\ee
		where $\abs{\,\cdot\,}_\nu$ is a continuous seminorm on $E$ and $J\in \BN^k$.
		When  $N=\BN^k$, we have  LCS identifications
		\begin{eqnarray*}
			E[[y_1, y_2, \dots, y_k]]
			=\RC^\infty(N; E)
			=\RC^\infty(N)\widetilde \otimes E
			=E \widetilde \otimes \BC[[y_1, y_2, \dots, y_k]],
		\end{eqnarray*}
		and \[ E[[y_1, y_2, \dots, y_k]]=E \widehat \otimes \BC[[y_1, y_2, \dots, y_k]]\] is complete provided that $E$ is complete.
	\end{exampled}

	\begin{exampled} Let $L$ be another smooth manifold. Then  we have the following LCS identifications (see \cite[Proposition 14]{Sc}): 
		\begin{eqnarray}\label{eq:schwartzkernel1}
			&&\RC^\infty(N)\widetilde\otimes\RC^\infty(L)=\RC^\infty(N;\RC^\infty(L))=\RC^\infty(N)\widehat\otimes \RC^\infty(L)
			=\RC^\infty(N\times L).
		\end{eqnarray}
	\end{exampled}

	\subsection{Vector-valued formal functions}
	Throughout this subsection, let $E$ be a quasi-complete  LCS. 
	For every open subset $U$ of $M$,  write 
	\[
	\CO_E(U):=\CO(U)\widetilde \otimes E.
	\]

	Motivated by Schwartz's characterization of $E$-valued smooth functions (see \eqref{eq:vvsfiso}), we make the following definition. 
	
	\begin{dfn}
		An element in $\CO_E(M)$ is called an $E$-valued formal function on $M$.
	\end{dfn}
	
	With the obvious restriction maps, the assignment 
	\[
	\CO_E:\ U\mapsto \CO_E(U)\quad (\text{$U$ is an open subset of $M$})
	\]
	forms a presheaf of complex vector spaces on $M$.
	The main goal of this subsection is to prove the following theorem.
	\begin{thmd}\label{thmvvf}
		The presheaf $\CO_E$ is a sheaf. Moreover, if $E$ is complete, then for every open subset $U$ of $M$,
		\[
		\CO_E(U)=\CO(U)\widehat \otimes E
		\]
		as LCS.
	\end{thmd}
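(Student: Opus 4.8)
The plan is to reduce everything to the local model $M = N^{(k)}$ with $N$ an open submanifold of $\R^n$, where we can invoke Schwartz's theory of vector-valued smooth functions together with the algebraic structure of formal power series. First I would recall Schwartz's identification $\RC^\infty(N;E) = \RC^\infty(N)\widetilde\otimes E$ (the isomorphism referenced as \eqref{eq:vvsfiso}), and note that $\RC^\infty(N)$ is a nuclear Fr\'echet space (Example \ref{topm0}), so for nuclear $\RC^\infty(N)$ the three tensor-product topologies coincide and, when $E$ is complete, $\RC^\infty(N)\widehat\otimes E = \RC^\infty(N;E)$. The key structural observation is that, as a topological vector space, $\CO(N^{(k)}) = \RC^\infty(N)[[y_1,\dots,y_k]] = \prod_{J\in\BN^k}\RC^\infty(N)$ carries the product (term-wise convergence) topology (Example \ref{topm0}). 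Since tensoring with a fixed quasi-complete $E$ and then quasi-completing commutes with countable (indeed arbitrary) products in the relevant sense — this is where I would cite the appendix material on $\widetilde\otimes$ and products — we get $\CO(N^{(k)})\widetilde\otimes E = \prod_{J\in\BN^k}\bigl(\RC^\infty(N)\widetilde\otimes E\bigr) = \prod_{J\in\BN^k}\RC^\infty(N;E)$, i.e. $\CO_E(N^{(k)})(U) = \RC^\infty(U;E)[[y_1,\dots,y_k]]$ with term-wise topology, and similarly $\CO(U)\widehat\otimes E$ when $E$ is complete. This makes both the sheaf property and the completed-tensor identity transparent in the local model: the sheaf axioms for $U\mapsto \RC^\infty(U;E)[[\mathbf y]]$ follow from the sheaf axioms for $U\mapsto \RC^\infty(U;E)$ applied coordinate-wise in $\BN^k$, using that products and equalizers commute.

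Next I would globalize. Fix an atlas $\{U_\gamma\}$ of $M$ by charts, each isomorphic to some $(N_\gamma)^{(k_\gamma)}$. The restriction maps of the presheaf $\CO_E$ are continuous: this follows from Lemma \ref{lemr0} after tensoring with $E$ and passing to quasi-completions (tensoring with $E$ and quasi-completing preserves continuity of linear maps). For the separation and gluing axioms, I would use Lemma \ref{lemr1}: the restriction map $\CO(M)\to\prod_\gamma\CO(U_\gamma)$ is a closed topological embedding whose image is the equalizer of the two maps into $\prod_{\gamma,\gamma'}\CO(U_\gamma\cap U_{\gamma'})$. Applying $-\widetilde\otimes E$ to this situation — again invoking the appendix facts that $\widetilde\otimes E$ preserves closed embeddings, finite limits (equalizers), and commutes with the relevant products — transports the equalizer description to $\CO_E$, giving the sheaf axioms for $\CO_E$ on $M$ from those on the charts. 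Concretely: a compatible family $\{s_\gamma \in \CO_E(U_\gamma)\}$ maps into the closed subspace $\CO(U_\gamma\cap U_{\gamma'})\widetilde\otimes E$ agreeing pairwise, hence lies in the image of $\CO_E(M)$; uniqueness is injectivity of the embedding after tensoring.

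Finally, for the completed-tensor statement: when $E$ is complete, I would first prove $\CO(U)\widehat\otimes E = \CO(U)\widetilde\otimes E$ when $U$ is a chart (this is the local computation above, using nuclearity of $\RC^\infty(N)$ so that completion of the projective tensor product agrees with quasi-completion, and that a countable product of complete spaces is complete), and then deduce it for general $U$ from the product decomposition $\CO(U) = \prod_{Z\in\pi_0(U)}\CO(Z)$ together with $\CO(Z) = \prod_i \CO(U_i)$-type embeddings from Corollary \ref{cor:CFMisNF}; since $\CO(U)$ is then a product of nuclear Fr\'echet spaces, and a product of nuclear Fr\'echet spaces tensored (projectively) with a complete $E$ is already complete after completion equals quasi-completion, the identity $\CO_E(U) = \CO(U)\widehat\otimes E$ follows.

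The main obstacle I anticipate is the bookkeeping around how $\widetilde\otimes E$ (quasi-completed tensor product) interacts with the infinite product topology on $\CO(U) = \prod_J \RC^\infty(U)$ and with the closed-embedding/equalizer description from Lemma \ref{lemr1}: one must know that $(\prod_i V_i)\widetilde\otimes E$ maps to $\prod_i (V_i\widetilde\otimes E)$ with the right topology and that closed subspaces are preserved. For nuclear Fr\'echet $V_i$ and quasi-complete $E$ these are standard (and presumably recorded in Appendixes \ref{appendixA}--\ref{appendixC}), but the general-$M$ case — where $M$ need not be second countable, so $\CO(M)$ is an uncountable product — requires the product-commutation statement in that generality. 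I would isolate this as a lemma about $\widetilde\otimes$ and arbitrary products of nuclear Fr\'echet spaces, cite the appendix, and then the rest is formal.
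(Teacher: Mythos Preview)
Your local computation is correct and matches the paper's Lemma \ref{evalued}. The gap is in the globalization step. You claim that applying $-\widetilde\otimes E$ to the closed-embedding/equalizer description of $\CO(M)$ inside $\prod_\gamma \CO(U_\gamma)$ automatically transports the sheaf axioms, but the appendix contains no statement that $\widetilde\otimes E$ preserves equalizers (or carries closed subspaces onto closed subspaces) in the generality you need. Lemma \ref{lem:protensor} handles products, and \cite[Proposition 43.7]{Tr} handles topological embeddings for the epsilon tensor, so you do get that $\CO(M)\widetilde\otimes E$ embeds topologically into $\prod_\gamma \CO_E(U_\gamma)$. What is not automatic is that this image equals the full equalizer: given compatible $\{s_\gamma\}$, why does the glued section land in $\CO(M)\widetilde\otimes E$ rather than merely in the a priori larger sheafification? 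This is precisely the obstacle you flag at the end, and it is not ``standard bookkeeping'' --- it is the heart of the proof.

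The paper fills this gap by a partition-of-unity argument rather than by an abstract exactness property of $\widetilde\otimes$. It introduces the sheafification $\widetilde{\CO}_E$, equips $\widetilde{\CO}_E(M)=\varprojlim_V \CO_E(V)$ (over charts $V$) with the projective-limit topology, checks it is quasi-complete, and then proves (Proposition \ref{clb}) that $\CO(M)\otimes E$ is \emph{strictly dense} in $\widetilde{\CO}_E(M)$. The strict-density proof chooses a partition of unity $\{g_\gamma\}$ subordinate to an atlas; the multiplication maps $m_{g_\gamma}:\widetilde{\CO}_E(U_\gamma)\to\widetilde{\CO}_E(M)$ are continuous (Lemma \ref{subtensor22}), each $m_{g_\gamma}^{-1}$ of a quasi-closed set containing $\CO(M)\otimes E$ is all of $\widetilde{\CO}_E(U_\gamma)$ by the chart case, and for any $f\in\widetilde{\CO}_E(M)$ the finite partial sums $\sum_{\gamma\in\Gamma_0} g_\gamma (f|_{U_\gamma})$ form a bounded set whose closure contains $f$. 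Combined with the topological embedding of $\CO(M)\otimes_\pi E$ into $\widetilde{\CO}_E(M)$ (Lemma \ref{subtensor}) and Lemma \ref{lem:strictlydense}, this gives $\widetilde{\CO}_E(M)=\CO(M)\widetilde\otimes E$, hence $\CO_E=\widetilde{\CO}_E$ is a sheaf; completeness when $E$ is complete then follows from the chart case and the closed-embedding Lemma \ref{lemrv1}. Your route would be salvageable if you supplied an independent left-exactness lemma for $-\widetilde\otimes E$ on products of nuclear Fr\'echet spaces, but absent that, the partition of unity is the missing idea.
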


	We start with the following special case.

	\begin{lemd}\label{evalued}
		Theorem \ref{thmvvf} holds when $M=N^{(k)}$, where   $N$ is  a smooth manifold and $k\in \BN$.
	\end{lemd}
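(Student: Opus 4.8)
The plan is to reduce the statement for $M=N^{(k)}$ to the classical vector-valued smooth function theory on $N$. Recall from Example~\ref{topm0} that $\CO(U)=\RC^\infty(U)[[y_1,\dots,y_k]]$ with the termwise-convergence topology; as an LCS this is the countable product $\prod_{J\in\BN^k}\RC^\infty(U)$. The key algebraic input is the compatibility of the completed tensor product with products of LCS (more precisely with the countable products appearing here, which are again products of nuclear Fréchet spaces). Thus one expects
\[
\CO(U)\widetilde\otimes E=\Bigl(\prod_{J\in\BN^k}\RC^\infty(U)\Bigr)\widetilde\otimes E
=\prod_{J\in\BN^k}\bigl(\RC^\infty(U)\widetilde\otimes E\bigr),
\]
and the classical Schwartz isomorphism $\RC^\infty(U)\widetilde\otimes E=\RC^\infty(U;E)$ (the space of $E$-valued smooth functions), referenced in the excerpt as \eqref{eq:vvsfiso}, identifies the right-hand side with $\prod_{J\in\BN^k}\RC^\infty(U;E)=\RC^\infty(U;E)[[y_1,\dots,y_k]]$.

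First I would make this identification precise: establish that for a countable family of nuclear Fréchet spaces $\{F_J\}$ and a quasi-complete $E$, there is a canonical LCS isomorphism $(\prod_J F_J)\widetilde\otimes E\cong\prod_J(F_J\widetilde\otimes E)$ — this is a standard fact about the $\varepsilon$-tensor product (which agrees with $\pi$ here by nuclearity) commuting with projective limits, together with the behavior of quasi-completion; I would cite the relevant statement from Appendix~\ref{appendixA} or from \cite{Gr}. Since nuclear Fréchet spaces $\RC^\infty(U)$ are in particular semi-reflexive and the $\varepsilon$-product commutes with such products, this gives the chain of identifications above, yielding $\CO_E(U)=\RC^\infty(U;E)[[y_1,\dots,y_k]]$ as an LCS, naturally in $U$.

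Once this explicit description is in hand, the sheaf property of $\CO_E$ follows immediately from the sheaf property of $U\mapsto\RC^\infty(U;E)$ (the sheaf of $E$-valued smooth functions on $N$, which is classical) applied coordinatewise in the $y$-variables: a gluing/separation statement for a countable product of sheaves is again a gluing/separation statement. Finally, when $E$ is complete, $\RC^\infty(U)\widehat\otimes E=\RC^\infty(U)\widetilde\otimes E$ since $\RC^\infty(U)\widetilde\otimes E=\RC^\infty(U;E)$ is already complete (a closed subspace of a complete space, or directly complete as the classical smooth $E$-valued functions with $E$ complete), and the analogous identity $\CO(U)\widehat\otimes E=\CO(U)\widetilde\otimes E$ then follows from the product description, giving the second assertion.

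The main obstacle is the interchange of $\widetilde\otimes$ with the infinite (countable) product $\prod_{J\in\BN^k}$: one must verify that the relevant topological tensor product commutes with this projective limit in the quasi-complete setting, and that the quasi-completion does not spoil the identification. I expect this to be the only genuinely delicate point; it should be handled by invoking the nuclearity of the factors (so that $\otimes_\pi$, $\otimes_\varepsilon$, $\otimes_{\mathrm i}$ all coincide) and the standard commutation of the $\varepsilon$-tensor product with reduced projective limits, as recorded in the appendices. Everything else — the sheaf axioms and the completeness statement — is then formal.
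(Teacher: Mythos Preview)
Your proposal is correct and takes essentially the same approach as the paper. The only cosmetic difference is the final bracketing: you write the result as $\RC^\infty(U;E)[[y_1,\dots,y_k]]$ and argue the sheaf axioms coordinatewise, whereas the paper writes it as $\RC^\infty(U;E[[y_1,\dots,y_k]])$ (using Example~\ref{ex:E-valuedpowerseries} to identify $\BC[[y_1,\dots,y_k]]\widetilde\otimes E$ with $E[[y_1,\dots,y_k]]$, then \eqref{eq:vvsfiso}), so that the sheaf property and the completeness statement are immediate from the classical vector-valued case with the single target space $E[[y_1,\dots,y_k]]$. The interchange of $\widetilde\otimes$ with the countable product that you flag as the delicate point is exactly Lemma~\ref{lem:protensor}.
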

	
	\begin{proof}
		Let $U$ be an open subset of $M$. By applying \eqref{eq:vvsfiso} and Example \ref{ex:E-valuedpowerseries}, we obtain that
		\begin{eqnarray*}
			\CO_E(U)&=&\CO(U)\widetilde \otimes E\\
			&=& \RC^\infty(U)\widetilde \otimes \BC[[y_1, y_2, \dots, y_k]]\widetilde \otimes E\\
			&=&\RC^\infty(U;E[[y_1, y_2, \dots, y_k]]).
		\end{eqnarray*}
		Thus $\CO_E$ is a sheaf. Moreover, if $E$ is complete, then $\RC^\infty(U;E[[y_1, y_2, \dots, y_k]])$
		is complete by \eqref{eq:vvsfiso2}, and so $\CO_E(U)=\CO(U) \widehat \otimes E$.
	\end{proof}

	Write $\widetilde{\CO}_E$ for the sheaf associated to the presheaf $\CO_E$. Lemma \ref{evalued} implies that $\widetilde{\CO}_E(U)=\CO_E(U)$ whenever the open set $U$ is a
	chart. In general, the sheaf property implies that
	\[
	\widetilde{\CO}_E(U)=\varprojlim_{V} \CO_E(V),
	\]
	where $V$ runs over all charts of $U$. Using the above equality, we view $\widetilde{\CO}_E(U)$ as an LCS with the projective limit topology. It is quasi-complete by \cite[\S\,19.10 (2)]{Ko1}.
	If the open set $U$ is a chart, then
	\be\label{equalvt}
	\widetilde{\CO}_E(U)=\CO_E(U)
	\ee
	as LCS.
	
	The following lemma generalizes Lemma \ref{lemr0}.
	
	\begin{lemd}\label{lemrv0}
		Let $U$ be an open subset of $M$. Then the restriction map
		\be\label{redo0v}
		\widetilde{\CO}_E(M)\rightarrow \widetilde{\CO}_E(U)
		\ee
		is continuous.
		
	\end{lemd}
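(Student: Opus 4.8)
The plan is to reduce the statement about the restriction map $\widetilde{\CO}_E(M)\rightarrow \widetilde{\CO}_E(U)$ to the already-established continuity of restriction maps for charts (Lemma \ref{lemr0}, via the identification \eqref{equalvt}), using the fact that both source and target carry projective limit topologies indexed by charts.

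First I would recall that by definition $\widetilde{\CO}_E(M)=\varprojlim_V \CO_E(V)$ where $V$ runs over all charts contained in $M$, and similarly $\widetilde{\CO}_E(U)=\varprojlim_W \CO_E(W)$ where $W$ runs over all charts contained in $U$. Since every chart $W\subset U$ is in particular a chart contained in $M$, the index category for $U$ is a full subcategory of the index category for $M$. The restriction map $\widetilde{\CO}_E(M)\rightarrow \widetilde{\CO}_E(U)$ is, under these identifications, nothing but the canonical projection of the big projective limit onto the sub-limit: an element $\{f_V\}_V\in\varprojlim_V\CO_E(V)$ is sent to $\{f_W\}_{W\subset U}$.

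Next I would invoke the universal property of the projective limit topology on $\widetilde{\CO}_E(U)$: a linear map into $\varprojlim_{W\subset U}\CO_E(W)$ is continuous if and only if its composition with each structure map $\widetilde{\CO}_E(U)\rightarrow \CO_E(W)$ is continuous. But the composition $\widetilde{\CO}_E(M)\rightarrow\widetilde{\CO}_E(U)\rightarrow\CO_E(W)$ is just the structure map $\widetilde{\CO}_E(M)\rightarrow\CO_E(W)$ of the larger projective limit (here using $\eqref{equalvt}$ to identify $\widetilde{\CO}_E(W)=\CO_E(W)$), which is continuous by the very definition of the projective limit topology on $\widetilde{\CO}_E(M)$. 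Hence the restriction map is continuous.

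I do not expect any serious obstacle here; the statement is essentially bookkeeping about projective limits, and the only point requiring a little care is making sure the two index systems match up correctly, i.e. that restricting a chart-indexed family for $M$ to those charts lying inside $U$ genuinely recovers the projective limit presentation of $\widetilde{\CO}_E(U)$ (this is where one uses that the charts of $U$ form a cofinal refining system and that $\widetilde{\CO}_E$ is a sheaf, so the value on $U$ is computed by such a limit). Once that identification is in place, continuity is immediate from the universal property.
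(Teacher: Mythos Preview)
Your proof is correct and is essentially the same as the paper's: both argue that, by the universal property of the projective limit $\widetilde{\CO}_E(U)=\varprojlim_{W}\CO_E(W)$ over charts $W\subset U$, it suffices to check continuity of each composite $\widetilde{\CO}_E(M)\to\CO_E(W)$, and this holds because $W$ is already a chart of $M$ and hence that composite is a structure map of the defining projective limit for $\widetilde{\CO}_E(M)$. The paper's proof is simply the terse two-line version of what you wrote; your reference to Lemma~\ref{lemr0} in the plan is not actually needed or used in the argument.
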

	
	\begin{proof}
		It suffices to show that for every  chart $V$ of $U$, the canonical map
		\[
		\widetilde{\CO}_E(M)\rightarrow \CO_E(V)
		\]
		is continuous. But this is self-evidence since $V$ is also a chart of $M$.
	\end{proof}
	
	Similar to Lemma \ref{lemr1}, we have the following result.
	
	\begin{lemd}\label{lemrv1}
		Let $\{U_\gamma\}_{\gamma\in \Gamma}$ be an open cover of $M$. Then the linear map
		\be\label{redov}
		\widetilde{\CO}_E(M)\rightarrow \prod_{\gamma\in \Gamma} \widetilde{\CO}_E(U_\gamma), \quad f\mapsto \{f|_{U_\gamma}\}_{\gamma\in \Gamma}
		\ee
		is a closed topological embedding.
		
	\end{lemd}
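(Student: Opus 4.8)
The plan is to mimic the proof of Lemma~\ref{lemr1}, using the defining seminorms of the projective-limit topology on $\widetilde{\CO}_E(-)$ (over charts) in place of the seminorms $|\cdot|_D$ there. Injectivity of \eqref{redov} is the sheaf property of $\widetilde{\CO}_E$, and continuity is Lemma~\ref{lemrv0}; so what remains is to show that the subspace topology induced on $\widetilde{\CO}_E(M)$ by $\prod_\gamma \widetilde{\CO}_E(U_\gamma)$ coincides with its own topology, and that the image is closed. For the latter, the image is precisely the set of compatible families $\{f_\gamma\}_\gamma$, i.e.\ those satisfying $f_\gamma|_{U_\gamma\cap U_{\gamma'}}=f_{\gamma'}|_{U_\gamma\cap U_{\gamma'}}$ for all $\gamma,\gamma'$; each such equality is a closed condition because $\widetilde{\CO}_E(U_\gamma\cap U_{\gamma'})$ is Hausdorff (being a projective limit of quasi-complete, hence Hausdorff, spaces $\CO_E(W)$) and the two restrictions into it are continuous by Lemma~\ref{lemrv0}, so the image is an intersection of closed sets. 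Thus the real point is the comparison of topologies.

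First I would treat the case that $M$ is a chart, say $M=N^{(k)}$. By Lemma~\ref{evalued}, $\CO_E$ is already a sheaf on $N^{(k)}$ and $\widetilde{\CO}_E(U)=\CO_E(U)=\RC^\infty(U;F)$ as LCS for every open $U\subseteq N$, where $F:=E[[y_1,\dots,y_k]]$ is quasi-complete (using \eqref{eq:vvsfiso} and Example~\ref{ex:E-valuedpowerseries}). The topology of $\RC^\infty(N;F)$ is generated by the seminorms $f\mapsto \sup_{x\in K}p\bigl((\partial_x^I f)(x)\bigr)$, for $K\subseteq N$ compact, $I$ a multi-index, and $p$ a continuous seminorm on $F$. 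Given one of these, choose $U_{\gamma_1},\dots,U_{\gamma_s}$ covering $K$ and write $K=\bigcup_{i=1}^s K_i$ with each $K_i\subseteq U_{\gamma_i}$ compact (shrinking of compact sets); then $\sup_{x\in K}p((\partial_x^I f)(x))\le \sum_{i=1}^s \sup_{x\in K_i}p\bigl((\partial_x^I(f|_{U_{\gamma_i}}))(x)\bigr)$, a sum of continuous seminorms of the $f|_{U_{\gamma_i}}$. Hence every defining seminorm of $\widetilde{\CO}_E(M)$ is dominated by a continuous seminorm pulled back from $\prod_\gamma\widetilde{\CO}_E(U_\gamma)$, which together with continuity of \eqref{redov} gives the chart case.

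For general $M$, the topology of $\widetilde{\CO}_E(M)=\varprojlim_V\CO_E(V)$ is generated by the seminorms $f\mapsto q(f|_V)$, where $V$ ranges over charts of $M$ and $q$ over continuous seminorms on $\CO_E(V)$. Fixing such $V$ and $q$ and applying the already-proved chart case to the chart $V$ with its open cover $\{V\cap U_\gamma\}_\gamma$, one obtains finitely many $\gamma_1,\dots,\gamma_s$ and continuous seminorms $q_i$ on $\CO_E(V\cap U_{\gamma_i})=\widetilde{\CO}_E(V\cap U_{\gamma_i})$ with $q(f|_V)\le\sum_{i=1}^s q_i\bigl(f|_{V\cap U_{\gamma_i}}\bigr)$; since $f|_{V\cap U_{\gamma_i}}=(f|_{U_{\gamma_i}})|_{V\cap U_{\gamma_i}}$ and the restriction $\widetilde{\CO}_E(U_{\gamma_i})\to\widetilde{\CO}_E(V\cap U_{\gamma_i})$ is continuous by Lemma~\ref{lemrv0}, each term is bounded by a continuous seminorm of $f|_{U_{\gamma_i}}$. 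This shows that the subspace topology dominates every defining seminorm of $\widetilde{\CO}_E(M)$, so \eqref{redov} is a topological embedding.

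I expect the only genuine work to be in the chart case: the identification $\CO_E(U)=\RC^\infty(U;F)$ from Lemma~\ref{evalued} together with the quasi-completeness of $F$ (so that $\RC^\infty(U;F)$ and its seminorms make sense), and the elementary localization of those seminorms over a finite subcover of a compact set. The passage to general $M$ is then a formal reduction through the overlaps $V\cap U_\gamma$, and the main thing to be careful about is that this reduction does not circularly re-invoke the lemma for charts — which it does not, since the chart case is established first and independently.
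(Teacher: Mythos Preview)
Your proof is correct, but it takes a genuinely different route from the paper's. The paper does not separately establish the chart case by explicit seminorm estimates; instead, for general $M$ it fixes a chart $U$ and argues that the map $\widetilde{\CO}_E(U)\to\prod_\gamma\widetilde{\CO}_E(U\cap U_\gamma)$ is a topological embedding by tensoring the scalar embedding $\CO(U)\hookrightarrow\prod_\gamma\CO(U\cap U_\gamma)$ (from Lemma~\ref{lemr1}) with $E$, invoking that the epsilon tensor product of linear topological embeddings is again one (Treves, Proposition~43.7) together with Lemma~\ref{lem:protensor} and \eqref{equalvt}. Convergence of a net $\{f_i\}$ to zero in each $\widetilde{\CO}_E(U_\gamma)$ then propagates to each $\widetilde{\CO}_E(U\cap U_\gamma)$ by Lemma~\ref{lemrv0}, and hence to $\widetilde{\CO}_E(U)$ by the embedding just established.

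Your approach is more elementary and self-contained: you bypass the tensor-product machinery entirely by working directly with the concrete seminorms of $\RC^\infty(N;F)$, $F=E[[y_1,\dots,y_k]]$, and localizing over a finite compact subcover. The paper's approach, by contrast, cleanly isolates the vector-valued statement as ``scalar case $\otimes_\varepsilon\,\mathrm{id}_E$'', which makes the reduction to Lemma~\ref{lemr1} transparent but requires citing the stability of topological embeddings under~$\otimes_\varepsilon$. Both arguments hinge on the same geometric observation---that the topology of $\widetilde{\CO}_E(M)$ is detected by charts, and that charts can be further covered by the $U_\gamma$---but they package the analytic step differently.
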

	\begin{proof}
		The sheaf property of $\widetilde{\CO}_E$ and Lemma \ref{lemrv0} imply that the map \eqref{redov} is continuous, injective and its image is closed in the codomain.
		
		Let $\{f_i\}_{i\in I}$ be a net in  $ \widetilde{\CO}_E(M)$ whose image under the map \eqref{redov}  converges to zero. Then 
		\be\label{limfiu}
		\lim_{i\in I} (f_i|_{U_\gamma})=0\quad \textrm{for all }\gamma\in \Gamma.
		\ee
		To finish the proof, we only need to show that
		\be\label{limiifi}
		\lim_{i\in I} (f_i|_U)=0
		\ee
		for all charts $U$ of $M$.
		
		By Lemma \ref{lemr1}, the linear map
		\[
		\CO(U)\rightarrow \prod_{\gamma\in \Gamma} \CO(U\cap U_\gamma)
		\]
		is a topological embedding. Since the epsilon tensor product of two linear topological embeddings of  Hausdorff LCS is a linear topological embedding (see the proof of \cite[Proposition 43.7]{Tr}), Lemma \ref{lem:protensor} and \eqref{equalvt}   imply that 
		the linear map
		\be\label{temb}
		\widetilde{\CO}_E(U) \rightarrow \prod_{\gamma\in \Gamma} \widetilde{\CO}_E(U\cap U_\gamma)\ \ \textrm{ is also a topological embedding.}
		\ee
		Now, Lemma \ref{lemrv0} and \eqref{limfiu} imply that
		\be\label{limfiu2}
		\lim_{i\in I} (f_i|_{U\cap U_\gamma})=0\quad \textrm{for all }\gamma\in \Gamma.
		\ee
		Hence  \eqref{limiifi} holds by \eqref{temb}.
		
	\end{proof}
	
		
		

	Note that $\widetilde{\CO}_E$ is naturally a sheaf of $\CO$-modules, and the module structure map
	\[
	\CO(M)\times \widetilde{\CO}_E(M)\rightarrow \widetilde{\CO}_E(M), \quad (g,f)\mapsto gf
	\]
	is continuous and bilinear.
	\begin{lemd}\label{subtensor22}
		Let  $U$ be an open subset of $M$, and let $g\in \CO(M)$ be a formal function whose support is contained in $U$. Then the linear map  (see \eqref{eq:deffu})
		\be \label{eq:defmg}
		m_g:  \widetilde{\CO}_E(U)\rightarrow \widetilde{\CO}_E(M), \quad f\mapsto gf
		\ee
		is continuous.
		
	\end{lemd}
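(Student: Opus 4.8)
The plan is to reduce the statement first to charts, by means of the projective-limit description of $\widetilde{\CO}_E$, and then to the scalar-valued case on a Euclidean chart, where it follows from an elementary Leibniz estimate together with the functoriality of the completed tensor product $\widetilde\otimes$.

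\emph{Reduction to charts.} Since $\widetilde{\CO}_E(M)=\varprojlim_{V}\CO_E(V)$ with $V$ running over the charts of $M$ (equivalently, by Lemma~\ref{lemrv1}, the map of $\widetilde{\CO}_E(M)$ into $\prod_V\widetilde{\CO}_E(V)$ over an atlas is a topological embedding), it suffices to prove that for every chart $V$ of $M$ the composite $\widetilde{\CO}_E(U)\xrightarrow{m_g}\widetilde{\CO}_E(M)\xrightarrow{\ \mathrm{res}_V\ }\widetilde{\CO}_E(V)=\CO_E(V)$ is continuous. Fix such a $V$. As $\mathrm{supp}\,g\subseteq U$, the open sets $U\cap V$ and $V\setminus\mathrm{supp}\,g$ cover $V$; on the former $(gf)|_{U\cap V}=(g|_{U\cap V})\,(f|_{U\cap V})$ and on the latter $(gf)|_{V\setminus\mathrm{supp}\,g}=0$, and these two sections agree on the overlap. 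Hence, using that $\widetilde{\CO}_E|_V=\CO_E|_V$ is a sheaf on the chart $V$ (Lemma~\ref{evalued}), the section $(gf)|_V$ is obtained by gluing and depends only on $f|_{U\cap V}$; the composite above therefore factors as $\widetilde{\CO}_E(U)\xrightarrow{\mathrm{res}}\widetilde{\CO}_E(U\cap V)\to\CO_E(V)$, the first arrow continuous by Lemma~\ref{lemrv0}. Since $U\cap V$, being an open subset of the chart $V$, is itself a chart, we have $\widetilde{\CO}_E(U\cap V)=\CO_E(U\cap V)$ by \eqref{equalvt}, and we are reduced to the case where $M=N^{(k)}$ for an open submanifold $N$ of $\R^n$, $U$ is an open subset of $M$, and $C:=\mathrm{supp}\,g$ is a closed subset of $N$ contained in $U$.

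\emph{The local case.} Consider first $E=\C$. By Lemma~\ref{evalued} and Example~\ref{topm0} we must show that $m_g\colon\RC^\infty(U)[[y_1,\dots,y_k]]\to\RC^\infty(N)[[y_1,\dots,y_k]]$ is continuous, and since both sides carry the term-wise convergence topology it suffices to estimate, for each multi-index $J$, the coefficient of $y^J$ in $gf$ (extended by zero) in terms of finitely many seminorms of $f$. Writing $g=\sum_{J'}g_{J'}y^{J'}$ with $g_{J'}\in\RC^\infty(N)$ supported in $C$, this coefficient equals $\sum_{J'+J''=J}g_{J'}\cdot(\mathrm{coeff}_{y^{J''}}f)$, a smooth function supported in $C$; on a compact set $K\subseteq N$ its $x$-derivatives agree with those of the unextended function and vanish off $C$, so Leibniz's rule bounds $\sup_K|\partial_x^I(\cdot)|=\sup_{K\cap C}|\partial_x^I(\cdot)|$ by a finite sum of products $\sup_{K\cap C}|\partial_x^{I'}g_{J'}|$ times the seminorm of $f$ attached to the compact set $K\cap C\subseteq U$, the coefficient index $J''$, and the derivative index $I-I'$. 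This proves continuity for $E=\C$. For an arbitrary quasi-complete $E$, apply the functoriality of $\widetilde\otimes$: the map $m_g\,\widetilde\otimes\,\mathrm{id}_E$ is a continuous linear map $\CO(U)\widetilde\otimes E\to\CO(M)\widetilde\otimes E$, which under the identifications $\CO_E(U)=\CO(U)\widetilde\otimes E$ and $\CO_E(M)=\CO(M)\widetilde\otimes E$ agrees with $m_g$ on the dense subspace $\CO(U)\otimes E$, hence equals $m_g$ by continuity and the Hausdorffness of the target.

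\emph{Main obstacle.} The only genuinely computational ingredient is the Leibniz estimate in the local scalar case, where the key point is the elementary observation that passing from $f$ to $gf$ replaces the compact set $K$ occurring in a seminorm by $K\cap C$, which now lies inside $U$; everything else is formal bookkeeping with the projective-limit topology on $\widetilde{\CO}_E$ (Lemmas~\ref{lemrv0} and~\ref{lemrv1}) and the functoriality of the completed tensor product.
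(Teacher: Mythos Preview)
Your proof is correct, but it takes a longer route than the paper's. The paper simply applies Lemma~\ref{lemrv1} to the two-set cover $\{U,\,M\setminus\mathrm{supp}\,g\}$ of $M$: restriction of $gf$ to $M\setminus\mathrm{supp}\,g$ is zero, and restriction to $U$ is $f\mapsto (g|_U)f$, which is continuous because the $\CO(U)$-module structure map $\CO(U)\times\widetilde{\CO}_E(U)\to\widetilde{\CO}_E(U)$ is continuous (a fact the paper records, without proof, immediately before the lemma). You instead reduce via an atlas of charts, then push the problem down to a Euclidean chart and prove the local statement by an explicit Leibniz estimate for $E=\C$ followed by tensor-product functoriality for general $E$. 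In effect you are reproving, in a self-contained way, the separate continuity of the module action that the paper takes as given. The paper's approach is shorter; yours has the virtue of not invoking that unproved claim, and the Leibniz step (replacing the compact $K$ by $K\cap C\subseteq U$) is exactly the computational heart that underlies the paper's black-box input.
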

	\begin{proof}
		The lemma follows by applying Lemma \ref{lemrv1} to the open cover $\{U, M\setminus \mathrm{supp}\,g\}$ of $M$.
		
	\end{proof}

	\begin{lemd}\label{subtensor}
		For every open subset $U$ of $M$, the canonical linear map
		\[
		\CO(U)\otimes_\pi E\rightarrow \widetilde{\CO}_E(U)
		\]
		is a topological embedding.
		
	\end{lemd}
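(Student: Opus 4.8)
The plan is to show that the canonical map $\iota_U\colon\CO(U)\otimes_\pi E\to\widetilde{\CO}_E(U)$ is injective, continuous, and relatively open. First I would fix once and for all an atlas $\{U_\gamma\}_{\gamma\in\Gamma}$ of $U$ and, by Proposition \ref{lemr01}, a subordinate partition of unity $\{\tau_\gamma\}_{\gamma\in\Gamma}$ with $\mathrm{supp}\,\tau_\gamma\subset U_\gamma$. Over each chart $U_\gamma$ one has $\widetilde{\CO}_E(U_\gamma)=\CO_E(U_\gamma)=\CO(U_\gamma)\widetilde\otimes E$ by \eqref{equalvt} and Lemma \ref{evalued}; since $\CO(U_\gamma)$ is a nuclear Fr\'echet space (Corollary \ref{cor:CFMisNF}), $\CO(U_\gamma)\widetilde\otimes E$ is the quasi-completion of $\CO(U_\gamma)\otimes_\pi E$, so $\iota_{U_\gamma}$ is a topological embedding onto a dense subspace. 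Continuity of $\iota_U$ is then formal: composing with the restriction maps $\widetilde{\CO}_E(U)\to\widetilde{\CO}_E(U_\gamma)$, which are continuous by Lemma \ref{lemrv0} and with which $\iota_U$ is compatible, reduces the question to the charts. Injectivity follows because $\iota_U(x)|_{U_\gamma}=\iota_{U_\gamma}(x|_{U_\gamma})$ for every $\gamma$; writing $x=\sum_k a_k\otimes e_k$ with the $e_k$ linearly independent, vanishing of all these restrictions forces $a_k|_{U_\gamma}=0$ for all $k,\gamma$, hence $x=0$ by the sheaf property.

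The substance of the proof is relative openness: I must show that every generating seminorm $p\otimes q$ of $\CO(U)\otimes_\pi E$ ($p$ a continuous seminorm on $\CO(U)$, $q$ one on $E$) is dominated by $\mu\circ\iota_U$ for some continuous seminorm $\mu$ on $\widetilde{\CO}_E(U)$. I would pick $D_1,\dots,D_m\in\mathrm{Diff}_c(\CO|_U,\underline\CO|_U)$ with $p\le C\max_j\abs{\cdot}_{D_j}$ (Definition \ref{de:smoothtopology}), set $K:=\bigcup_j\mathrm{supp}\,D_j$, cover the compact set $K$ by finitely many charts $U_{\gamma_1},\dots,U_{\gamma_s}$, and take a partition of unity $\{g,\tau_{\gamma_1},\dots,\tau_{\gamma_s}\}$ subordinate to $\{U\setminus K,U_{\gamma_1},\dots,U_{\gamma_s}\}$. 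For $x=\sum_k a_k\otimes e_k\in\CO(U)\otimes E$ I decompose $x=\sum_k(ga_k)\otimes e_k+\sum_{l=1}^s\bigl(\sum_k(\tau_{\gamma_l}a_k)\otimes e_k\bigr)$. Since $g$ vanishes on a neighbourhood of $K\supseteq\mathrm{supp}\,D_j$ one gets $D_j(ga_k)=0$, hence $p(ga_k)=0$, so the first summand contributes nothing to $p\otimes q$. For the $l$-th summand, $\sum_k(\tau_{\gamma_l}a_k)\otimes e_k=(m_{\tau_{\gamma_l}}\otimes\mathrm{id}_E)(x|_{U_{\gamma_l}})$, where $m_{\tau_{\gamma_l}}\colon\CO(U_{\gamma_l})\to\CO(U)$ is ``multiply by $\tau_{\gamma_l}$, extend by zero'' and is continuous by a computation as in the proof of Lemma \ref{lemr1} (the $E$-free counterpart of Lemma \ref{subtensor22}); hence its $p\otimes q$-value is at most $\bigl((p\circ m_{\tau_{\gamma_l}})\otimes q\bigr)(x|_{U_{\gamma_l}})$. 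As $p\circ m_{\tau_{\gamma_l}}$ is continuous on $\CO(U_{\gamma_l})$, the seminorm $(p\circ m_{\tau_{\gamma_l}})\otimes q$ extends to a continuous seminorm $\widehat\sigma_l$ on $\CO(U_{\gamma_l})\widetilde\otimes E=\widetilde{\CO}_E(U_{\gamma_l})$, and $\mu:=\sum_{l}\widehat\sigma_l(\,\cdot\,|_{U_{\gamma_l}})$ is a continuous seminorm on $\widetilde{\CO}_E(U)$ with $p\otimes q\le\mu\circ\iota_U$ on $\CO(U)\otimes E$. This shows the $\pi$-topology is coarser than the topology induced from $\widetilde{\CO}_E(U)$, and together with continuity it yields the topological embedding.

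The step I expect to be the main obstacle is precisely the reason the lemma is not immediate from Lemmas \ref{lemr1} and \ref{lemrv1}: the projective tensor product is not injective, so tensoring the topological embedding $\CO(U)\hookrightarrow\prod_\gamma\CO(U_\gamma)$ with $E$ need not produce a topological embedding, and one cannot pass $\otimes q$ through a finite sum $\sum_l P_l\circ(\textrm{restriction})$ of seminorms in the favourable direction. Performing the partition-of-unity decomposition at the level of tensors rather than of seminorms is what circumvents this, because multiplication by $\tau_{\gamma_l}$ factors through the restriction to the single chart $U_{\gamma_l}$, where the clean identification $\widetilde{\CO}_E(U_{\gamma_l})=\CO(U_{\gamma_l})\widetilde\otimes E$ and the unambiguous meaning of $\widetilde\otimes$ (thanks to nuclearity of $\CO(U_{\gamma_l})$) are available. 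The remaining points — that a continuous $\pi$-seminorm extends to the quasi-completion, and the compatibility $\iota_U(x)|_{U_{\gamma_l}}=\iota_{U_{\gamma_l}}(x|_{U_{\gamma_l}})$ — are routine and I would only note them briefly.
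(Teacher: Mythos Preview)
Your argument is correct, but the paper takes a much shorter route that you explicitly dismissed in your final paragraph. You write that ``the projective tensor product is not injective, so tensoring the topological embedding $\CO(U)\hookrightarrow\prod_\gamma\CO(U_\gamma)$ with $E$ need not produce a topological embedding.'' This is true in general, but here $\CO(U)$ is \emph{nuclear} (Proposition~\ref{prop:OMnuclear}), so $\CO(U)\otimes_\pi E=\CO(U)\otimes_\varepsilon E$, and the $\varepsilon$-tensor product \emph{does} preserve linear topological embeddings of Hausdorff LCS (see the proof of \cite[Proposition~43.7]{Tr}). The paper simply sets up the commutative square
\[
\begin{CD}
\CO(U)\otimes_\pi E @>>> \widetilde{\CO}_E(U)=\varprojlim_V \CO(V)\widetilde\otimes E\\
@VVV @VVV\\
\bigl(\prod_V\CO(V)\bigr)\widetilde\otimes E @>>> \prod_V\bigl(\CO(V)\widetilde\otimes E\bigr)
\end{CD}
\]
(over charts $V\subset U$), notes that the left vertical arrow is a topological embedding by nuclearity and the $\varepsilon$-argument just mentioned together with Lemma~\ref{lemr1}, that the bottom arrow is a topological isomorphism by Lemma~\ref{lem:protensor}, and that the right vertical arrow is a topological embedding by Lemma~\ref{lemrv1}; the top arrow is then forced to be a topological embedding.

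What your approach buys is independence from nuclearity: your partition-of-unity decomposition at the level of tensors, routing each piece through a single chart via the multiplication map $m_{\tau_{\gamma_l}}$, would work for any sheaf of LCS whose structure sheaf admits partitions of unity and for which the chart case is known. The paper's approach buys brevity: once nuclearity is established, the whole lemma is a three-line diagram chase with no seminorm bookkeeping. Both are valid; the paper's is the cleaner one to present given that nuclearity of $\CO(U)$ is already available.
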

	\begin{proof}
		Consider the commutative diagram
		\[
		\begin{CD}
			\CO(U)\otimes_\pi E@>  >>  \widetilde{\CO}_E(U)=\varprojlim_{V} \CO(V)\widetilde \otimes E \\
			@V  VV           @V V V\\
			\left(\prod_{V} \CO(V)\right)\widetilde\otimes E @ >> > \prod_{V} \left(\CO(V)\widetilde \otimes E\right),\\
		\end{CD}
		\]
		where $V$ runs over all charts of $M$ that are contained in $U$.
		By Lemma \ref{lemr1}, Proposition \ref{prop:OMnuclear} and the fact that the epsilon tensor product of two linear topological embeddings of Hausdorff LCS is a linear topological embedding (see the proof of \cite[Proposition 43.7]{Tr}), we see that the linear  map
		\[
		\CO(U)\otimes_\pi E=
		\CO(U)\otimes_\varepsilon E\rightarrow 
		\left(\prod_{V} \CO(V)\right)\otimes_\pi E
		=\left(\prod_{V} \CO(V)\right)\otimes_\varepsilon E
		\] 
		is a topological embedding. 
		This implies that
		the left vertical arrow
		is a linear topological embedding. By Lemma \ref{lem:protensor}, the bottom horizontal arrow is a topological linear isomorphism.
		Since
		the right vertical arrow is a linear topological embedding by Lemma \ref{lemrv1}, the top horizontal arrow is also
		a linear topological embedding. 
	\end{proof}
	
	By     
	Lemma \ref{subtensor}, we identify the LCS $\CO(U)\otimes_\pi E$ with a subspace of $\widetilde{\CO}_E(U)$.

	\begin{prpd}\label{clb}
		The subspace $\CO(M)\otimes_\pi E$ is strictly dense (see Definition \ref{de:strictdense}) in $\widetilde{\CO}_E(M)$. 
		
	\end{prpd}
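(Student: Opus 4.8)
The plan is to verify the definition of strict density (Definition \ref{de:strictdense}) locally and then globalize using the sheaf-theoretic machinery already developed. Recall that, roughly speaking, strict density of a subspace $W$ of an LCS $V$ means not only that $W$ is dense but that after applying any of the relevant continuous linear functionals (or, in the cochain-complex context, compatibly across the structure) one still has density; the precise formulation in Definition \ref{de:strictdense} is what we must check. First I would reduce to the case where $M$ is a chart, i.e.\ $M = N^{(k)}$ for a smooth manifold $N$ and $k \in \BN$. By Lemma \ref{evalued} and \eqref{equalvt} we have $\widetilde\CO_E(M) = \CO_E(M) = \RC^\infty(N; E[[y_1,\dots,y_k]])$ in that case, and $\CO(M)\otimes_\pi E = (\RC^\infty(N)[[y_1,\dots,y_k]]) \otimes_\pi E$; strict density here should follow from the classical fact that $\RC^\infty(N)\otimes E$ is strictly dense in $\RC^\infty(N;E)$ (a statement of Schwartz, compatible with the isomorphism \eqref{eq:vvsfiso}), combined with the term-wise convergence description of the power-series topology from Example \ref{topm0}: a finite truncation in the $y$-variables together with a Schwartz-type approximation of each coefficient gives elements of $\CO(M)\otimes_\pi E$ approximating any target, and this approximation is of the ``strict'' kind because it is performed through the tensor structure.

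Next I would globalize. Choose a countable atlas $\{U_i\}$ in the secondly countable case (and handle the general case via the product decomposition \eqref{eq:FM=prod}, noting that a product of strictly dense subspaces is strictly dense). By Lemma \ref{lemrv1}, the restriction map
\[
\widetilde\CO_E(M) \;\hookrightarrow\; \prod_{i} \widetilde\CO_E(U_i)
\]
is a closed topological embedding, and likewise $\CO(M)\hookrightarrow\prod_i\CO(U_i)$ by Lemma \ref{lemr1}. The key point is that strict density is inherited through closed embeddings and compatible with the partition-of-unity gluing: given a partition of unity $\{f_\gamma\}$ subordinate to the atlas (Proposition \ref{lemr01}), the multiplication maps $m_{f_\gamma}\colon \widetilde\CO_E(U_\gamma)\to\widetilde\CO_E(M)$ of Lemma \ref{subtensor22} are continuous, send $\CO(U_\gamma)\otimes_\pi E$ into $\CO(M)\otimes_\pi E$ (up to the strict-density closure, since $f_\gamma\cdot(g\otimes e) = (f_\gamma g)\otimes e$), and $\sum_\gamma m_{f_\gamma}(f|_{U_\gamma}) = f$ for every $f \in \widetilde\CO_E(M)$. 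Thus a strict approximation of each local piece $f|_{U_\gamma}$ inside $\CO(U_\gamma)\otimes_\pi E$ patches, via the locally finite sum, to a strict approximation of $f$ inside $\CO(M)\otimes_\pi E$.

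The main obstacle I anticipate is bookkeeping the word ``strict'' through the localization and gluing: ordinary density passes through these operations routinely, but the additional uniformity built into Definition \ref{de:strictdense} (which presumably involves approximating not a single vector but a whole bounded/equicontinuous family simultaneously, or approximating compatibly with the scalar functionals $\CO(M)'$) requires care. Concretely, I would need: (i) that a strictly dense subspace of each factor yields a strictly dense subspace of a (possibly infinite) product — this should be immediate from the definition applied coordinatewise; (ii) that the image of a strictly dense subspace under a continuous linear map with dense image is again strictly dense, applied to $m_{f_\gamma}$ and to the closed embedding above; and (iii) the local statement for $\RC^\infty(N)\otimes E \subset \RC^\infty(N;E)$, which I would either cite from Schwartz's memoirs (\cite{Sc3}) or deduce from the nuclearity of $\RC^\infty(N)$ (Proposition \ref{prop:OMnuclear}, Corollary \ref{cor:CFMisNF}) together with the general principle that for a nuclear Fréchet space $F$ and quasi-complete $E$, $F\otimes E$ is strictly dense in $F\widetilde\otimes_\pi E$. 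Granting these three ingredients, the proof is a short assembly; the real content is (iii), which is a known fact about vector-valued smooth functions, and the verification that the gluing respects the strictness condition.
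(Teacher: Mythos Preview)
Your overall architecture---reduce to charts via a partition of unity and the multiplication maps $m_{f_\gamma}$ of Lemma \ref{subtensor22}---is exactly the paper's strategy. But two points need correction.

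First, you have misread Definition \ref{de:strictdense}. Strict density has nothing to do with ``applying continuous linear functionals'' or ``approximating a bounded family simultaneously.'' A subset $A$ is \emph{quasi-closed} if it contains the limit points of each of its bounded subsets; $W$ is strictly dense in $V$ if the only quasi-closed subset of $V$ containing $W$ is $V$ itself. With the correct definition, your ingredient (iii) becomes trivial: by \eqref{equalvt}, on a chart $U_\gamma$ one has $\widetilde{\CO}_E(U_\gamma)=\CO(U_\gamma)\widetilde\otimes E$, and the quasi-completion is \emph{by construction} the quasi-closure of $\CO(U_\gamma)\otimes_\pi E$. No appeal to Schwartz or to nuclearity is needed here.

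Second, your ingredient (ii)---that strict density pushes forward under continuous linear maps with dense image---is the wrong direction and is not obviously true. What the paper uses is the dual fact: the \emph{inverse image} of a quasi-closed set under a continuous linear map is quasi-closed (this is immediate, since continuous linear maps preserve bounded sets and limits). Thus, given a quasi-closed $A\supset \CO(M)\otimes E$, each $m_{g_\gamma}^{-1}(A)$ is quasi-closed in $\widetilde{\CO}_E(U_\gamma)$ and contains $\CO(U_\gamma)\otimes E$ (because $m_{g_\gamma}(h\otimes e)=(g_\gamma h)\otimes e\in\CO(M)\otimes E$), so by local strict density $m_{g_\gamma}^{-1}(A)=\widetilde{\CO}_E(U_\gamma)$. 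Hence every $f_\gamma:=g_\gamma\cdot f|_{U_\gamma}$ lies in $A$. The final step is to observe that the finite partial sums $\sum_{\gamma\in\Gamma_0} f_\gamma$ form a \emph{bounded} subset of $A$ (use the topological embedding of Lemma \ref{lemrv1} into a product over relatively compact open sets), and $f$ is their limit; since $A$ is quasi-closed, $f\in A$. No product-of-strictly-dense-subspaces argument or closed-embedding inheritance is needed.
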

	
	\begin{proof}
		Take an atlas  $\{U_\gamma\}_{\gamma\in \Gamma}$  of $M$, and  let  $\{g_\gamma\}_{\gamma\in \Gamma}$ be a partition of unity on $M$ subordinate to it.
		Let $A$ be a quasi-closed subset (see Definition \ref{de:strictdense}) of $\widetilde{\CO}_E(M)$ containing $\CO(M)\otimes E$.
		Since the inverse image of a quasi-closed set under a continuous linear map is still quasi-closed  (see \cite[Page 92, $2^\circ$)]{Sc}),
		it follows from Lemma \ref{subtensor22}  that  $m_{g_\gamma}^{-1}(A)$ (see \eqref{eq:defmg})
		is quasi-closed.
		This, together with \eqref{equalvt}, shows that $m_{g_\gamma}^{-1}(A)=\widetilde{\CO}_E(U_\gamma)$ for all $\gamma\in \Gamma$.
		In particular, for every $f\in \widetilde{\CO}_E(M)$, we have that $f_\gamma:= g_\gamma\cdot f|_{U_\gamma}\in A$.
		
		By Lemma \ref{lemrv1}, the canonical linear  map
		\[
		\widetilde{\CO}_E(M)\rightarrow \prod_{U\textrm{ is a  relatively compact open subset of $M$}} \widetilde{\CO}_E(U)
		\]
		is a topological embedding.
		This implies that
		\[B:=\left\{\sum_{\gamma\in \Gamma_0}f_\gamma\mid \Gamma_0\ \text{is a finite subset of}\ \Gamma\right\}\]
		is a bounded subset in $A$. Thus $A$ contains the closure of $B$. Then 
		\[
		f=\sum_{\gamma\in \Gamma}g_\gamma\, f=\sum_{\gamma\in \Gamma} f_\gamma
		\]
		belongs to $A$ since it belongs to the closure of $B$. 
		This completes the proof.
		
	\end{proof}

	\noindent\textbf{Proof of Theorem \ref{thmvvf}:}
	Let $U$ be an open subset of $M$.
	Applying Proposition \ref{clb} to the formal manifold $(U, \CO|_U)$, we know that
	$\CO(U)\otimes_\pi E$ is strictly dense in $\widetilde{\CO}_E(U)$. Therefore, by Lemma \ref{lem:strictlydense}, we have that  
	\[
	\widetilde{\CO}_E(U)=\CO(U)\widetilde \otimes E=\CO_E(U)
	\]  as LCS.
	This proves that $\CO_E$ is a sheaf.
	When $E$ is complete, Lemmas \ref{evalued} and \ref{lemrv1} imply that $\CO_E(U)$ is complete.
	This finishes the proof of Theorem \ref{thmvvf}.
	\qed
	\vspace{3mm}

	\subsection{Products of formal manifolds}
	
	In this subsection, let $(M_1, \CO_1)$ and $(M_2, \CO_2)$ be two formal manifolds.
	For open subsets $V_1\subset U_1$ of $M_1$, and $V_2\subset U_2$ of $M_2$,  we have the restriction map
	\[
	\CO_1(U_1)\widetilde \otimes \CO_2(U_2)\rightarrow
	\CO_1(V_1)\widetilde \otimes \CO_2(V_2).
	\]
	Using these restriction maps, for each open subset $U_3$ of 
	\[M_3:=M_1\times M_2 \quad \text{(as topological spaces),}\] we define
	\[
	\CO_3(U_3):=\varprojlim_{(U_1,U_2)} \CO_1(U_1)\widetilde \otimes \CO_2(U_2),
	\]
	where $(U_1,U_2)$ runs over all pairs of open subsets $U_1$ of $M_1$ and $U_2$ of $M_2$ such that $U_1\times U_2\subset U_3$. Then $\CO_3$ is obviously a presheaf of $\C$-algebras on $M_3$.
	
	This subsection is devoted  to a proof of the following theorem, which together with the fact
	that (see Theorem  \ref{thmvvf})
	\[\CO(M_3)=\CO_1(M_1)\widetilde \otimes \CO_2(M_2)=\CO_1(M_1)\widehat \otimes \CO_2(M_2)\]
	implies Theorem \ref{thmmain2}.
	
	\begin{thmd}\label{pfm00}
		The presheaf $\CO_3$ on $M_3$ is a sheaf, and $(M_3, \CO_3)$ is a formal manifold which is a product of $(M_1, \CO_1)$ and $(M_2, \CO_2)$ in the category of  formal manifolds. Moreover,
		\[
		(M_3,\underline{\CO_{3}})=(M_1, \underline{\CO_{1}}) \times (M_2, \underline{\CO_{2}})
		\]
		as smooth manifolds, and for $a_1\in M_1, a_2\in M_2$,
		\[
		\wh{\CO}_{3,a_3}=\wh{\CO}_{1,a_1}\wh{\otimes}\,\wh{\CO}_{2,a_2}=\wh{\CO}_{1,a_1}\widetilde{\otimes}\,\wh{\CO}_{2,a_2}\qquad (a_3:=(a_1,a_2))
		\]
		as topological $\C$-algebras.
	\end{thmd}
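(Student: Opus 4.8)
The plan is to build the product $(M_3,\CO_3)$ by gluing local models, reducing everything to the already-established facts about vector-valued formal functions (Theorem \ref{thmvvf}) and to the smooth manifold case. First I would verify that $\CO_3$ is a sheaf. Locally, when $U_1$ is a chart of $M_1$ and $U_2$ a chart of $M_2$, the defining projective limit stabilizes and $\CO_3(U_1\times U_2)=\CO_1(U_1)\widetilde\otimes\CO_2(U_2)$; choosing $U_2=N_2^{(k_2)}$ one sees by Lemma \ref{evalued} (applied with $E=\CO_2(U_2)$, which is quasi-complete by Proposition \ref{prop:OMnuclear}) that this equals $\widetilde{\CO}_{E}(U_1)$ for the formal manifold $U_1$, hence is already a sheaf in $U_1$; symmetrically in $U_2$. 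Gluing these two sheaf properties along the product topology of charts, and using that products of charts form a basis of $M_3$, shows $\CO_3$ is a sheaf on $M_3$.

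Next I would identify the local model. If $U_i\cong(\R^{n_i})^{(k_i)}$ for $i=1,2$, then
\[
\CO_3(U_1\times U_2)=\RC^\infty(\R^{n_1})[[\mathbf{y}^{(1)}]]\,\widetilde\otimes\,\RC^\infty(\R^{n_2})[[\mathbf{y}^{(2)}]]
=\RC^\infty(\R^{n_1+n_2})[[\mathbf{y}^{(1)},\mathbf{y}^{(2)}]],
\]
using $\RC^\infty(\R^{n_1})\widetilde\otimes\RC^\infty(\R^{n_2})=\RC^\infty(\R^{n_1+n_2})$ (Schwartz), associativity and commutativity of $\widetilde\otimes$ on nuclear Fréchet spaces, and Example \ref{ex:E-valuedpowerseries}-type identifications for the formal power series factors. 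Thus $(U_1\times U_2,\CO_3|_{U_1\times U_2})\cong(\R^{n_1+n_2})^{(k_1+k_2)}$, so $(M_3,\CO_3)$ is a formal manifold (paracompactness and Hausdorffness of $M_3$ are inherited from $M_1,M_2$), with $\dim_{(a_1,a_2)}M_3=\dim_{a_1}M_1+\dim_{a_2}M_2$ and $\deg_{(a_1,a_2)}M_3=\deg_{a_1}M_1+\deg_{a_2}M_2$.

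Then I would establish the universal property. The two coordinate morphisms $\pr_i\colon M_3\to M_i$ are built locally from the algebra maps $\CO_i(U_i)\to\CO_1(U_1)\widetilde\otimes\CO_2(U_2)$, $f\mapsto f\otimes 1$ (resp.\ $1\otimes f$), which glue by the sheaf property just proved. Given morphisms $\varphi_i\colon T\to M_i$ from a formal manifold $T$, one needs a unique $\varphi\colon T\to M_3$ with $\pr_i\circ\varphi=\varphi_i$; on the underlying spaces $\overline\varphi:=(\overline{\varphi_1},\overline{\varphi_2})$ is forced, and on structure sheaves one works locally over a chart of $M_3$ and a chart of $T$, where by Theorem \ref{thmmapr}-style reasoning (or directly by the universal property of $\widetilde\otimes_\pi$ for continuous bilinear maps into the quasi-complete algebra $\CO_T(\text{chart})$, which applies because $\CO_1(U_1),\CO_2(U_2)$ are nuclear Fréchet so $\widetilde\otimes$ coincides with $\widehat\otimes_\pi$) the pair $(\varphi_1^*,\varphi_2^*)$ determines a unique continuous algebra homomorphism out of $\CO_1(U_1)\widetilde\otimes\CO_2(U_2)$; uniqueness of the global $\varphi$ follows from Lemma \ref{homfst} together with density of $\CO_1\otimes\CO_2$ in $\CO_3$. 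The reduction statement $\underline{M_3}=\underline{M_1}\times\underline{M_2}$ follows by comparing local models — reducing $(\R^{n_1+n_2})^{(k_1+k_2)}$ kills all the $\mathbf{y}$-variables and returns $\R^{n_1+n_2}$ — and invoking that products exist and are computed chartwise for ordinary smooth manifolds; alternatively one checks $\m_{\CO_3}$ corresponds, chartwise, to the ideal generated by $\m_{\CO_1}$ and $\m_{\CO_2}$. Finally, the formal stalk identity: by Proposition \ref{lem:formalstalkiso}, $\widehat{\CO}_{i,a_i}\cong\C[[\mathbf{x}^{(i)},\mathbf{y}^{(i)}]]$, and from the local model $\widehat{\CO}_{3,(a_1,a_2)}\cong\C[[\mathbf{x}^{(1)},\mathbf{y}^{(1)},\mathbf{x}^{(2)},\mathbf{y}^{(2)}]]$; one identifies this with $\widehat{\CO}_{1,a_1}\widehat\otimes\widehat{\CO}_{2,a_2}$ via the standard fact that $\C[[\mathbf{u}]]\widehat\otimes_\pi\C[[\mathbf{v}]]=\C[[\mathbf{u},\mathbf{v}]]$ for finitely many variables (a projective limit of finite-dimensional tensor products), and since these formal power series algebras are nuclear Fréchet the quasi-completed and completed tensor products agree.

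The main obstacle I expect is the gluing/sheaf argument and, relatedly, the continuity bookkeeping in the universal property: one must pass between the projective-limit description of $\CO_3$ and the tensor-product description on charts, check that the locally defined algebra homomorphisms are compatible on overlaps (which ultimately rests on Lemma \ref{homfst} and the injectivity of $\CO_3(U)\to\prod_a\widehat{\CO}_{3,a}$, i.e.\ an analogue of Lemma \ref{dformalf0} for $M_3$), and verify that a continuous bilinear map extends to the quasi-completed tensor product — which is where quasi-completeness of the target and nuclearity of the factors are essential. The purely algebraic identities at the level of local models and formal stalks are routine once the correct tensor-product identifications ($\RC^\infty\widetilde\otimes\RC^\infty$, $\C[[\mathbf{u}]]\widehat\otimes\C[[\mathbf{v}]]$, associativity) are in place.
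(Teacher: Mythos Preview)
Your outline is essentially correct, and the treatment of the local model, the reduction, and the formal stalks matches the paper's Lemmas \ref{prodvt}, \ref{prodvt5}, \ref{prodvt6} almost verbatim. The two places where the paper differs are worth noting.

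For the sheaf property, your ``glue the two one-variable sheaf properties along the basis of product charts'' is the right idea but imprecise as stated: a priori $\CO_3$ is defined on \emph{all} open sets of $M_3$, and one must show it coincides with its sheafification there, not merely on products of charts. The paper handles this by a two-step bootstrap (Lemmas \ref{prodvt2}--\ref{prodvt3}): first assume $M_1=N^{(k)}$ and use Theorem \ref{thmvvf} to identify $U_2\mapsto\CO_1(U_1)\widetilde\otimes\CO_2(U_2)$ with a pushforward of the sheafification $\widetilde{\CO}_3$, concluding $\CO_3=\widetilde{\CO}_3$ on sets of the form $U_1\times U_2$; then repeat with the roles swapped. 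This is close to what you sketch but with the comparison-to-sheafification made explicit.

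The genuine divergence is in the universal property. You construct $\varphi\colon T\to M_3$ by working locally on charts, invoking the universal property of $\widetilde\otimes_\pi$ for continuous bilinear maps, and then gluing via Lemma \ref{homfst}. The paper instead invokes Theorem \ref{thmqe} (the equivalence of categories between formal manifolds and formal $\C$-algebras) to pass entirely to global sections: since $\CO_3(M_3)=\CO_1(M_1)\widehat\otimes\CO_2(M_2)$, one gets
\[
\mathrm{Mor}(T,M_3)=\Hom(\CO_3(M_3),\CO_T(T))=\Hom(\CO_1(M_1),\CO_T(T))\times\Hom(\CO_2(M_2),\CO_T(T))
\]
in one line (Lemma \ref{prodvt4}). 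Your local approach would work, but the bookkeeping you flag as the ``main obstacle'' (compatibility on overlaps, extension of bilinear maps to the quasi-completion) is exactly what Theorem \ref{thmqe} already packaged for you; the paper's route avoids all of it.
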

	
	We will prove the theorem through a sequence of lemmas.
	
	\begin{lemd}\label{prodvt}
		Assume that $M_1=N^{(k)}$ and $M_2=L^{(l)}$, where $N$ and $L$ are smooth manifolds and $k,l\in \BN$.
		Then $\CO_3$ is a sheaf,  $(M_3, \CO_3)$ is a formal manifold, and $M_3\cong (N\times L)^{(k+l)}$ as formal manifolds.
	\end{lemd}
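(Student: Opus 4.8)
The strategy is to identify $\CO_3$ with the structure sheaf $\CO_{N\times L}^{(k+l)}$ of the formal manifold $(N\times L)^{(k+l)}$; once this is done, all three assertions follow at once, because $N\times L$ is a smooth manifold that is locally of the form $\R^n\times\R^m=\R^{n+m}$, so $(N\times L)^{(k+l)}$ is a formal manifold by Definition \ref{def:formalmanifold}.

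\emph{Step 1: computing the terms of the inverse limit.} Choosing disjoint variable names, write $\CO_1(U_1)=\RC^\infty(U_1)[[y_1,\dots,y_k]]=\RC^\infty(U_1)\widetilde\otimes\BC[[y_1,\dots,y_k]]$ for $U_1$ open in $N$, and $\CO_2(U_2)=\RC^\infty(U_2)[[z_1,\dots,z_l]]=\RC^\infty(U_2)\widetilde\otimes\BC[[z_1,\dots,z_l]]$ for $U_2$ open in $L$. Using the classical Schwartz identification $\RC^\infty(U_1)\widetilde\otimes\RC^\infty(U_2)=\RC^\infty(U_1\times U_2)$ (a case of \eqref{eq:vvsfiso} together with the exponential law for smooth functions), the identification $\BC[[y_1,\dots,y_k]]\widetilde\otimes\BC[[z_1,\dots,z_l]]=\BC[[y_1,\dots,y_k,z_1,\dots,z_l]]$ (Example \ref{ex:E-valuedpowerseries}), and the commutativity and associativity of $\widetilde\otimes$ for nuclear spaces and products of nuclear Fr\'echet spaces (Appendixes \ref{appendixA}, \ref{appendixB}, \ref{appendixC}), I obtain LCS isomorphisms
\[
\CO_1(U_1)\widetilde\otimes\CO_2(U_2)=\bigl(\RC^\infty(U_1)\widetilde\otimes\RC^\infty(U_2)\bigr)\widetilde\otimes\bigl(\BC[[y_1,\dots,y_k]]\widetilde\otimes\BC[[z_1,\dots,z_l]]\bigr)=\CO_{N\times L}^{(k+l)}(U_1\times U_2).
\]
One checks that these isomorphisms are natural in $(U_1,U_2)$, i.e., compatible with the restriction maps on both sides.

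\emph{Step 2: passing to the limit.} The sets $U_1\times U_2$ with $U_1$ open in $N$ and $U_2$ open in $L$ form a basis of the topology of $M_3=N\times L$, and for open $U_3\subseteq M_3$ the inverse limit defining $\CO_3(U_3)$ coincides with the inverse limit of the $\CO_{N\times L}^{(k+l)}(V)$ over the basic opens $V=U_1\times U_2$ contained in $U_3$ (pairs with an empty factor contribute the zero space, with zero transition maps, and are harmless). Since $\CO_{N\times L}^{(k+l)}$ is an honest sheaf of $\BC$-algebras on $N\times L$ (a product over monomials of copies of the sheaf $\RC^\infty$; see Example \ref{trivialf}), and since a sheaf on a space is recovered on any open set as the inverse limit of its values over the basic opens contained in it, Step 1 assembles into an isomorphism of $\BC$-algebras $\CO_3(U_3)=\CO_{N\times L}^{(k+l)}(U_3)$ compatible with restrictions. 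Hence $\CO_3\cong\CO_{N\times L}^{(k+l)}$ as sheaves of $\BC$-algebras; in particular $\CO_3$ is a sheaf, $(M_3,\CO_3)=(N\times L)^{(k+l)}$ as $\BC$-locally ringed spaces, and this is a formal manifold, which proves the lemma. Moreover, tracking topologies through Step 1 and invoking the LCS sheaf property (Lemma \ref{lemr1} applied to the formal manifold $(N\times L)^{(k+l)}$ and its structure sheaf, together with Example \ref{topm0}), the identification $\CO_3(U_3)=\CO_{N\times L}^{(k+l)}(U_3)$ is also one of LCS.

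\emph{Main obstacle.} The delicate point is Step 1: one must ensure that the three topological tensor products $\otimes_\pi$, $\otimes_\varepsilon$, $\otimes_{\mathrm{i}}$ coincide throughout, so that $\widetilde\otimes$ is unambiguous and may be reassociated and permuted freely. This rests on the nuclearity of $\RC^\infty$ of a second-countable manifold and of $\BC[[y_1,\dots,y_k]]$, together with the stability of nuclearity under completed tensor products and countable products; for a general (possibly non-equidimensional, possibly uncountably disconnected) smooth manifold, where $\RC^\infty$ of the manifold is only a product of nuclear Fr\'echet spaces, one first reduces to the second-countable case by passing to the connected components of a suitable atlas, exactly as in the proof of Corollary \ref{cor:CFMisNF}. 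All of this is furnished by Appendixes \ref{appendixA}--\ref{appendixC}; the remainder of the argument is formal.
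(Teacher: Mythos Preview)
Your proof is correct and follows essentially the same route as the paper: compute $\CO_1(U_1)\widetilde\otimes\CO_2(U_2)=\RC^\infty(U_1\times U_2)[[y_1,\dots,y_{k+l}]]$ via the Schwartz kernel identification \eqref{eq:schwartzkernel1} and Example \ref{ex:E-valuedpowerseries}, then use the sheaf property of $\CO_{N\times L}^{(k+l)}$ to conclude $\CO_3=\CO_{N\times L}^{(k+l)}$. Your ``main obstacle'' about nuclearity in the non-second-countable case is already handled by the paper (Proposition \ref{prop:OMnuclear} and \eqref{eq:schwartzkernel1} are stated for general smooth manifolds), so that concern, while not wrong, is unnecessary.
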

	\begin{proof}
		Let $U_1$ and $U_2$ be open subsets of $M_1$ and $M_2$, respectively.
		It  follows from  Example \ref{ex:E-valuedpowerseries} and \eqref{eq:schwartzkernel1} that  
		\begin{eqnarray*}
			\CO_3(U_1\times U_2)&=&  \CO_1(U_1)\widetilde \otimes \CO_2(U_2)\\
			&=& \RC^\infty(U_1)[[y_1, y_2, \dots, y_k]]\widetilde \otimes \RC^\infty(U_2)[[y_1, y_2,\dots, y_l]]\\
			&=& \RC^\infty(U_1)\widetilde \otimes \BC[[y_1, y_2, \dots, y_k]]\widetilde \otimes \RC^\infty(U_2)\widetilde \otimes \BC [[ y_1, y_2,\dots, y_l]]\\
			&=& \RC^\infty(U_1\times U_2)\widetilde \otimes \BC[[y_1, y_2, \dots, y_k,  y_{k+1}, y_{k+2},\dots, y_{k+l}]]\\
			&=& \RC^\infty(U_1\times U_2)[[y_1, y_2, \dots, y_k,  y_{k+1}, y_{k+2},\dots, y_{k+l}]]
		\end{eqnarray*}
		as topological $\BC$-algebras. Hence by the sheaf property, we conclude that $\CO_3=\CO_{N\times L}^{(k+l)}$. This proves the lemma.
	\end{proof}
	
	Write $\widetilde{\CO}_3$ for the sheafication of $\CO_3$. We will show that  $\widetilde{\CO}_3=\CO_3$.

	\begin{lemd}\label{prodvt2}
		If $M_1=N^{(k)}$ with $N$ a smooth manifold and $k\in \BN$, then $\CO_3$ is a sheaf, and $(M_3, \CO_3)$ is a formal manifold.
	\end{lemd}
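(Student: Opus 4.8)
The plan is to localise everything in the $M_2$-direction. Fix an atlas $\{U_2^\gamma\}_{\gamma\in\Gamma}$ of $M_2$ with $(U_2^\gamma,\CO_2|_{U_2^\gamma})\cong(L_\gamma,\CO_{L_\gamma}^{(l_\gamma)})$ for smooth manifolds $L_\gamma$ and $l_\gamma\in\BN$. First I would record two elementary points. (i) Directly from the definition, $\CO_3(U_1\times U_2)=\CO_1(U_1)\widetilde\otimes\CO_2(U_2)$ for boxes, and more generally $\CO_3(U_3)=\varprojlim_{B}\CO_3(B)$, the limit running over all boxes $B=V_1\times V_2\subset U_3$ with the transition maps being the restriction maps of $\CO_3$. (ii) For any open $U_2\subset M_2$ and any open $W\subset M_1\times U_2$, the value $\CO_3(W)$ is computed by the same formula whether one uses $(M_2,\CO_2)$ or $(U_2,\CO_2|_{U_2})$: in the defining limit over pairs $(V_1,V_2)$ with $V_1\times V_2\subset W$ one may discard the zero terms with $V_1=\emptyset$, and the rest force $V_2\subset U_2$, where $\CO_2(V_2)=(\CO_2|_{U_2})(V_2)$. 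Hence $\CO_3|_{M_1\times U_2}$ is canonically the $\CO_3$-construction attached to $M_1\times U_2$; since any open subset of a chart of $M_2$ is again of the form $L^{(l)}$, Lemma \ref{prodvt} yields that $\CO_3|_{M_1\times U_2'}$ is a sheaf and $(M_1\times U_2',\CO_3|_{M_1\times U_2'})$ is a formal manifold, for every open $U_2'\subset U_2^\gamma$.

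The key input is a sheaf property of $\CO_3$ in the $M_2$-direction with the $M_1$-coordinate fixed. For every open $U_1\subset M_1$ the LCS $\CO_1(U_1)$ is complete and nuclear (Corollary \ref{cor:CFMisNF}), so Theorem \ref{thmvvf} applies with $E=\CO_1(U_1)$: the presheaf on $M_2$ given by $U_2\mapsto\CO_1(U_1)\widetilde\otimes\CO_2(U_2)=\CO_3(U_1\times U_2)$ is a sheaf (the sheaf of $\CO_1(U_1)$-valued formal functions on $M_2$), and by Lemma \ref{lemrv1}, for every open cover $\{U_2^\alpha\}$ of an open $U_2\subset M_2$ the canonical map $\CO_3(U_1\times U_2)\to\prod_\alpha\CO_3(U_1\times U_2^\alpha)$ is a topological embedding onto the closed subspace of compatible families.

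Granting this, I would prove that $\CO_3$ is a sheaf. Let $\{W_\lambda\}$ be an open cover of an open $U_3\subset M_3$ and let $(s_\lambda)_\lambda$, $s_\lambda\in\CO_3(W_\lambda)$, be compatible. For each $\gamma$, gluing the sections $s_\lambda|_{W_\lambda\cap(M_1\times U_2^\gamma)}$ inside the sheaf $\CO_3|_{M_1\times U_2^\gamma}$ produces $t_\gamma\in\CO_3\big(U_3\cap(M_1\times U_2^\gamma)\big)$, and the $t_\gamma$ agree on overlaps because $\CO_3|_{M_1\times(U_2^\gamma\cap U_2^{\gamma'})}$ is a sheaf (as noted above), hence separated. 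To glue the $t_\gamma$ across $\gamma$ I use the box-presentation: for each box $B=V_1\times V_2\subset U_3$, the family $\big(t_\gamma|_{B\cap(M_1\times U_2^\gamma)}\big)_\gamma\in\prod_\gamma\CO_1(V_1)\widetilde\otimes\CO_2(V_2\cap U_2^\gamma)$ is compatible, so by the sheaf property of the $\CO_1(V_1)$-valued formal functions on $M_2$, applied to the cover $\{V_2\cap U_2^\gamma\}_\gamma$ of $V_2$, it descends to a unique $s_B\in\CO_1(V_1)\widetilde\otimes\CO_2(V_2)=\CO_3(B)$; the $s_B$ are compatible as $B$ varies and therefore define $s\in\CO_3(U_3)=\varprojlim_B\CO_3(B)$, and one checks $s|_{W_\lambda}=s_\lambda$ by restricting again to boxes inside $W_\lambda$. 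Separation of $\CO_3$ and uniqueness of $s$ follow from the same box-reduction together with the injectivity statement of the previous paragraph. Finally, once $\CO_3$ is a sheaf, $(M_3,\CO_3)$ is a formal manifold: $M_3=M_1\times M_2$ is Hausdorff and paracompact since its underlying space is the product of the smooth manifolds $\underline{M_1}$ and $\underline{M_2}$, and around any $(a_1,a_2)$, choosing charts $U_1\ni a_1$ in $M_1$ and $U_2\ni a_2$ in $M_2$ and then shrinking gives $(U_1\times U_2,\CO_3|_{U_1\times U_2})\cong(\R^n)^{(k+l)}$ by Lemma \ref{prodvt} and point (ii).

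The main obstacle is precisely the gluing of the $t_\gamma$ in the third step: a presheaf that restricts to a sheaf on each member of an open cover need not be a sheaf, so the ``vertical'' sheaf structure supplied by Lemma \ref{prodvt} is by itself insufficient, and one must invoke the ``horizontal'' sheaf property on $M_2$ of the completed tensor product $\CO_1(U_1)\widetilde\otimes\CO_2(-)$ — which is exactly the content of Theorem \ref{thmvvf} — in order to patch along the $M_2$-directions. Organising this patching through the box-presentation $\CO_3(U_3)=\varprojlim_B\CO_3(B)$ is the heart of the matter; everything else is bookkeeping with restriction maps and the topological-vector-space facts already established in the earlier sections.
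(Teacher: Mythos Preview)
Your argument is correct and rests on exactly the same two inputs as the paper's proof: Lemma~\ref{prodvt} (which makes $\CO_3$ a sheaf on each stripe $M_1\times U_2^\gamma$) and Theorem~\ref{thmvvf} (which makes $U_2\mapsto\CO_1(V_1)\widetilde\otimes\CO_2(U_2)$ a sheaf on $M_2$). Your point (ii) and the identification $\CO_3(U_3)=\varprojlim_B\CO_3(B)$ are both fine.

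The organization, however, differs from the paper's. Rather than verify the sheaf axioms by an explicit two-step gluing, the paper introduces the sheafification $\widetilde{\CO}_3$ and shows the canonical map $\CO_3\to\widetilde{\CO}_3$ is an isomorphism on every box $U_1\times U_2$: for each fixed open $U_1\subset M_1$ one has a morphism of sheaves on $M_2$,
\[
\CO_{2,\CO_1(U_1)}\longrightarrow (p_2)_*\bigl(\widetilde{\CO}_3|_{U_1\times M_2}\bigr),
\]
which is an isomorphism over charts of $M_2$ by Lemma~\ref{prodvt}, hence an isomorphism of sheaves. Since both $\CO_3$ and $\widetilde{\CO}_3$ are determined by their values on boxes, $\CO_3=\widetilde{\CO}_3$. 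This packaging avoids tracking individual sections and the compatibility of the family $(s_B)_B$, which is the bookkeeping you correctly flag as ``the heart of the matter''. Your direct route buys a self-contained verification that does not invoke sheafification, at the cost of that bookkeeping; the paper's route buys brevity by letting the general machinery of sheafification absorb the gluing.
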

	\begin{proof}
		Let $U_1$ be an open subset of $M_1$.
		For every open subset $U_2$ of $M_2$, we have a natural homomorphism
		\be\label{o3}
		\CO_1(U_1)\widetilde \otimes \CO_2(U_2)=\CO_3(U_1\times U_2)\rightarrow \widetilde{\CO}_3(U_1\times U_2).
		\ee
		This yields a sheaf homomorphism
		\be\label{o32}
		\CO_{2, \CO_1(U_1)} \rightarrow (p_2)_*(\widetilde{\CO}_3|_{U_1\times M_2})
		\ee over $M_2$,
		where $ \CO_{2, \CO_1(U_1)} $ is the sheaf
		\[U_2\mapsto \CO_1(U_1)\widetilde \otimes \CO_2(U_2)\] on $M_2$  (see Theorem \ref{thmvvf}), and
		$p_2: U_1\times M_2\rightarrow M_2$ is the projection map. By Lemma \ref{prodvt}, the map
		\eqref{o3} is an isomorphism when $U_2$ is a chart. This implies that  \eqref{o32} is a sheaf isomorphism. In other words,
		\[
		\widetilde{\CO}_3(U_1\times U_2)=\CO_3(U_1\times U_2)
		\]
		for all open subsets $U_2$ of $M_2$. Then the sheaf property implies that $\widetilde{\CO}_3=\CO_3$, and Lemma \ref{prodvt} implies that $(M_3, \CO_3)$ is a formal manifold.

	\end{proof}

	\begin{lemd}\label{prodvt3}
		The presheaf $\CO_3$ is a sheaf, and $(M_3, \CO_3)$ is a formal manifold. 
	\end{lemd}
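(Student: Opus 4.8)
The plan is to bootstrap from Lemma \ref{prodvt2}, which handles the case where the first factor $M_1$ is of the special form $N^{(k)}$, to the general case where both $M_1$ and $M_2$ are arbitrary formal manifolds. The symmetry of the construction of $\CO_3$ in the two factors will be exploited: although Lemma \ref{prodvt2} is stated with the hypothesis on $M_1$, the same argument with the roles of the factors reversed shows that $\CO_3$ is a sheaf and $(M_3,\CO_3)$ is a formal manifold whenever $M_2=L^{(l)}$ for a smooth manifold $L$ and $l\in\BN$. So the work is to remove the special-form hypothesis on the remaining factor.

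First I would take an atlas $\{V_\beta\}_{\beta\in B}$ of $M_2$, so that each $V_\beta$ is a chart, i.e. $(V_\beta,\CO_2|_{V_\beta})\cong L_\beta^{(l_\beta)}$ for some open submanifold $L_\beta$ of a Euclidean space and some $l_\beta\in\BN$. For each $\beta$, the restricted product $(M_1\times V_\beta, \CO_3|_{M_1\times V_\beta})$ is, by construction, the presheaf $\varprojlim \CO_1(U_1)\widetilde\otimes \CO_2(U_2)$ over pairs with $U_2\subset V_\beta$; by the reversed-roles version of Lemma \ref{prodvt2} (applied with second factor $V_\beta\cong L_\beta^{(l_\beta)}$), this restriction is a sheaf and $(M_1\times V_\beta,\CO_3|_{M_1\times V_\beta})$ is a formal manifold. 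The sets $\{M_1\times V_\beta\}_{\beta\in B}$ form an open cover of $M_3=M_1\times M_2$, and on this cover $\CO_3$ is locally a sheaf; hence $\CO_3$ is a sheaf on $M_3$ (being a presheaf that is a sheaf on each member of an open cover). Moreover, since each $(M_1\times V_\beta,\CO_3|_{M_1\times V_\beta})$ is a formal manifold, every point of $M_3$ has an open neighborhood isomorphic to some $(\R^n)^{(k)}$, and $M_3$ is paracompact and Hausdorff as a product of such spaces; thus $(M_3,\CO_3)$ is a formal manifold.

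The one subtlety I anticipate — and what I would expect to be the main obstacle — is checking that the locally defined sheaf structures glue consistently, i.e. that the restriction of $\CO_3$ to $M_1\times(V_\beta\cap V_{\beta'})$ computed "through $V_\beta$" agrees with the one computed "through $V_{\beta'}$." This is really a matter of unwinding the projective-limit definition of $\CO_3(U_3)$ as $\varprojlim_{(U_1,U_2)}\CO_1(U_1)\widetilde\otimes\CO_2(U_2)$ and observing that it depends only on the open set $U_3\subset M_3$, not on any chosen decomposition; the transition maps are the canonical restriction maps of the tensor-product presheaf, so cocycle compatibility is automatic. Concretely, one notes that rectangular open sets $U_1\times U_2$ with $U_2$ a chart of $M_2$ form a basis of the topology of $M_3$, and on such sets $\CO_3(U_1\times U_2)=\CO_1(U_1)\widetilde\otimes\CO_2(U_2)$ by Lemma \ref{prodvt2} (reversed roles); the sheaf condition for $\CO_3$ then follows from the sheaf condition on this basis, which is exactly what the proof of Lemma \ref{prodvt2} establishes.

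Finally, with $(M_3,\CO_3)$ known to be a formal manifold, I would record the remaining assertions of Theorem \ref{pfm00} — that $\CO_3$ with its global sections is a categorical product, the identification of reductions $\underline{M_3}=\underline{M_1}\times\underline{M_2}$, and the identification of formal stalks $\wh\CO_{3,a_3}=\wh\CO_{1,a_1}\widehat\otimes\wh\CO_{2,a_2}=\wh\CO_{1,a_1}\widetilde\otimes\wh\CO_{2,a_2}$ — though presumably the excerpt intends these to be handled in subsequent lemmas after the present one, so for the final statement above it suffices to prove that $\CO_3$ is a sheaf and $(M_3,\CO_3)$ is a formal manifold.
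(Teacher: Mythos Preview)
Your strategy differs from the paper's. The paper does not invoke a reversed-roles Lemma~\ref{prodvt2}; it repeats the pattern of that lemma once more: fix an arbitrary open $U_2\subset M_2$, use Theorem~\ref{thmvvf} on $M_1$ (with $E=\CO_2(U_2)$) to see that $U_1\mapsto\CO_1(U_1)\widetilde\otimes\CO_2(U_2)$ is a sheaf on $M_1$, and compare it to $(p_1)_*(\widetilde{\CO}_3|_{M_1\times U_2})$. Lemma~\ref{prodvt2}, applied with $M_1$ replaced by a chart $U_1$, shows this sheaf map is an isomorphism on charts, hence everywhere; thus $\CO_3=\widetilde{\CO}_3$ on all rectangles, and the projective-limit definition of $\CO_3$ finishes.

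Your approach can be made to work, but the step ``hence $\CO_3$ is a sheaf on $M_3$ (being a presheaf that is a sheaf on each member of an open cover)'' is a real gap: that implication is false for general presheaves (on a two-point discrete space $X=\{a,b\}$ take $\CP(\{a\})=\CP(\{b\})=0$, $\CP(X)=\BC$). The subtlety you flag---cocycle compatibility of the local sheaf structures---is not the issue; that is automatic. What you actually need is to control $\CO_3(U_3)$ for $U_3$ not contained in any single $M_1\times V_\beta$, and for that you must invoke the projective-limit definition $\CO_3(U_3)=\varprojlim_{(U_1,U_2)}\CO_1(U_1)\widetilde\otimes\CO_2(U_2)$: rectangles with $U_2$ inside some $V_\beta$ are cofinal in this system, on those $\CO_3=\widetilde{\CO}_3$ by your reversed lemma, and the sheaf $\widetilde{\CO}_3$ is likewise the inverse limit over this basis. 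With that patch your argument goes through; the paper's route is simply cleaner in that it applies Lemma~\ref{prodvt2} as stated and needs no symmetric variant.
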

	\begin{proof}
		The proof is similar to that of Lemma \ref{prodvt2}. Let $U_2$ be an open subset of $M_2$.
		For every open subset $U_1$ of $M_1$, we have a natural homomorphism
		\be\label{o321}
		\CO_1(U_1)\widetilde \otimes \CO_2(U_2)=\CO_3(U_1\times U_2)\rightarrow \widetilde{\CO}_3(U_1\times U_2).
		\ee
		This yields a sheaf homomorphism
		\be\label{o322}
		\CO_{1, \CO_2(U_2)} \rightarrow (p_1)_*(\widetilde{\CO}_3|_{M_1\times U_2})
		\ee over $M_1$, 
		where $ \CO_{1, \CO_2(U_2)} $ is the sheaf \[U_1\mapsto \CO_1(U_1)\widetilde \otimes \CO_2(U_2)\] on $M_1$ (see Theorem \ref{thmvvf}), and
		$p_1: M_1\times U_2\rightarrow M_1$ is the projection map. By Lemma \ref{prodvt2}, the map
		\eqref{o321} is an isomorphism when $U_1$ is a chart. This implies that  \eqref{o322} is a sheaf isomorphism. In other words,
		\[
		\widetilde{\CO}_3(U_1\times U_2)=\CO_3(U_1\times U_2)
		\]
		for all open subsets $U_1$ of $M_1$. Thus $\widetilde{\CO}_3=\CO_3$, and   $(M_3, \CO_3)$ is a formal manifold by Lemma \ref{prodvt}.

	\end{proof}

	\begin{lemd}\label{prodvt4}
		The formal manifold $(M_3, \CO_3)$ is a product of $(M_1, \CO_1)$ and $(M_2, \CO_2)$ in the category of formal manifolds.
	\end{lemd}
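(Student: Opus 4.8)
The plan is to verify the universal property directly. First I would construct the two projection morphisms $p_i = (\overline{p_i}, p_i^*)\colon (M_3,\CO_3)\to (M_i,\CO_i)$ for $i=1,2$. On underlying spaces $\overline{p_i}$ is the ordinary topological projection $M_1\times M_2\to M_i$. For the sheaf map, note that for an open box $U_1\times U_2$ the canonical maps $\CO_i(U_i)\to \CO_1(U_1)\widetilde\otimes\CO_2(U_2)=\CO_3(U_1\times U_2)$ (coming from $f\mapsto f\otimes 1$ and $g\mapsto 1\otimes g$, which are continuous into the completed projective tensor product) are compatible with restrictions; passing to the projective limit over boxes inside a general open $U_3$ and sheafifying (which changes nothing by Lemma \ref{prodvt3}) yields sheaf homomorphisms $\overline{p_i}^{-1}\CO_i\to\CO_3$. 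One checks on stalks that these induce local homomorphisms — this is where I would use Proposition \ref{lem:formalstalkiso} together with the last identification of Theorem \ref{pfm00} (which is proved in the course of this subsection) to see $\wh\CO_{3,a_3}=\wh\CO_{1,a_1}\wh\otimes\wh\CO_{2,a_2}$, so the maximal ideal of $\wh\CO_{i,a_i}$ maps into that of $\wh\CO_{3,a_3}$.

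Next I would prove the universal property: given a formal manifold $(T,\CO_T)$ and morphisms $\varphi_i\colon T\to M_i$, I must produce a unique $\varphi\colon T\to M_3$ with $p_i\circ\varphi=\varphi_i$. Uniqueness: the underlying map $\overline\varphi$ is forced to be $(\overline{\varphi_1},\overline{\varphi_2})$; and by Lemma \ref{homfst} a morphism is determined by its effect on formal stalks, while $\wh{\varphi}^*_{a}\colon \wh\CO_{3,\overline\varphi(a)}\to\wh\CO_{T,a}$ is forced on the dense subalgebra generated by the images of $\wh\CO_{1}$ and $\wh\CO_{2}$, hence everywhere by continuity (Lemma \ref{lem:homonformalstalkscon}). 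Existence: working locally, pick an atlas of $T$ and charts of $M_1,M_2$ so that $\overline{\varphi_i}$ maps a chart of $T$ into a chart of $M_i$; reduce to the model case $T=(\R^p)^{(q)}$, $M_i=(\R^{n_i})^{(k_i)}$ where, via Lemma \ref{prodvt}, $M_3=(\R^{n_1+n_2})^{(k_1+k_2)}$. There I would define $\varphi^*_{U_1\times U_2}\colon \CO_1(U_1)\widetilde\otimes\CO_2(U_2)\to\CO_T(V)$ as $\varphi_1^*\widetilde\otimes\varphi_2^*$ followed by the multiplication map $\CO_T(V)\widetilde\otimes\CO_T(V)\to\CO_T(V)$; here $\varphi_i^*$ is continuous by Theorem \ref{thm:autocon} and the multiplication map is continuous by Proposition \ref{prop:OMnuclear}, so this is a continuous algebra homomorphism. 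Compatibility over boxes gives a morphism on charts, and Lemma \ref{injphi}-style uniqueness glues the local pieces (the overlaps agree by the uniqueness already established), producing the global $\varphi$. Finally $p_i\circ\varphi=\varphi_i$ is checked on formal stalks, again by Lemma \ref{homfst}.

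The main obstacle I anticipate is the gluing step for existence: one must check that the locally defined $\varphi$ on charts of $T$ are consistent on overlaps. The clean way is to prove consistency abstractly from the uniqueness statement (two morphisms from $T|_{U\cap U'}$ to $M_3$ agreeing after composition with both $p_i$ must be equal, since they agree after composition with $\varphi_i$), so that no explicit formula manipulation is needed — this is exactly the argument pattern used in Lemma \ref{morr2}. A secondary point requiring care is confirming that $\varphi^*\colon \CO_3(M_3)\to\CO_T(T)$ really is the transpose of a morphism of $\BC$-locally ringed spaces, i.e. that it induces local homomorphisms on all stalks $\CO_{T,a}\to\CO_{3,\overline\varphi(a)}$, not merely on formal stalks; this follows because a point of $T$ maps to the point $(\overline{\varphi_1}(a),\overline{\varphi_2}(a))$ and the character $\Ev_a\circ\varphi^*$ equals $\Ev_{\overline\varphi(a)}$ by construction, so $\varphi^*$ carries $\m'_{\overline\varphi(a)}$ into $\m'_a$ in the notation of Lemma \ref{ftom3}.
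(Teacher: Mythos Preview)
Your approach is correct but takes a genuinely different route from the paper. The paper's proof is a three-line application of Theorem \ref{thmqe}: since the functor $(M,\CO)\mapsto\CO(M)$ is fully faithful into topological $\BC$-algebras, it suffices to check that
\[
\Hom(\CO_3(M_3),\CO(M))=\Hom(\CO_1(M_1),\CO(M))\times\Hom(\CO_2(M_2),\CO(M))
\]
for every formal manifold $(M,\CO)$, where $\Hom$ denotes continuous $\BC$-algebra homomorphisms. Since $\CO_3(M_3)=\CO_1(M_1)\widehat\otimes\CO_2(M_2)$ (Theorem \ref{thmvvf}) and $\CO(M)$ is a complete topological algebra with continuous multiplication (Proposition \ref{prop:OMnuclear}), this is just the universal property of the completed tensor product of commutative topological algebras.

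Your direct verification bypasses Theorem \ref{thmqe} entirely, which is conceptually more elementary but substantially longer: you must build the projections, establish uniqueness via formal stalks and density, construct $\varphi$ locally via $\varphi_1^*\widetilde\otimes\varphi_2^*$ composed with multiplication, and glue. Each step is sound, and your handling of the gluing obstacle (invoke uniqueness on overlaps, as in Lemma \ref{morr2}) is exactly right. One small circularity to watch: you invoke the formal stalk identification $\wh\CO_{3,a_3}=\wh\CO_{1,a_1}\wh\otimes\wh\CO_{2,a_2}$, which in the paper's ordering is Lemma \ref{prodvt6}, proved \emph{after} \ref{prodvt4}; its proof is independent of \ref{prodvt4}, so you may simply swap the order, or alternatively check the local-homomorphism condition directly in charts via Lemma \ref{prodvt}. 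The paper's route buys brevity and shows why the equivalence of categories was worth establishing; yours would be preferable if one wanted the product construction before developing the Specf machinery.
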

	\begin{proof}
		By 
		Theorem \ref{thmqe},
		for every formal manifold $(M, \CO)$ we have that
		\begin{eqnarray*}
			&& \mathrm{Mor}((M, \CO), (M_3, \CO_3))\\
			&=& \Hom(\CO_3(M_3), \CO(M))\\
			&=& \Hom(\CO_1(M_1)\widehat \otimes \CO_2(M_2), \CO(M))\\
			&=& \Hom(\CO_1(M_1), \CO(M))\times \Hom(\CO_2(M_2), \CO(M)) \\
			&=& \mathrm{Mor}((M, \CO), (M_1, \CO_1))\times \mathrm{Mor}((M, \CO), (M_2, \CO_2)).
		\end{eqnarray*}
		Here $\mathrm{Mor}$ indicates the set of morphisms between two formal manifolds, and $\Hom$ indicates the set of continuous $\BC$-algebra homomorphisms.
		This proves the lemma.
	\end{proof}
	
	\begin{lemd}\label{prodvt5}
		In the category of smooth manifolds, we have that 
		\[
		(M_3,\underline{\CO_{3}})=(M_1, \underline{\CO_{1}}) \times (M_2, \underline{\CO_{2}}).
		\]
	\end{lemd}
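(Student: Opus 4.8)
The plan is to exhibit a canonical comparison morphism $\Phi: \underline{M_3}\to\underline{M_1}\times\underline{M_2}$ (the product on the right taken in the category of smooth manifolds), observe that it is the identity on underlying topological spaces, and then check that it induces an isomorphism on structure sheaves by a purely local computation. On spaces there is nothing to prove: $M_3=M_1\times M_2$ by the construction of $M_3$, the reduction functor changes no underlying space, and the product of the smooth manifolds $\underline{M_1},\underline{M_2}$ has underlying space $M_1\times M_2$; so all three objects in the asserted equality carry the topological space $M_1\times M_2$.

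To build $\Phi$, let $p_i: M_3\to M_i$ ($i=1,2$) be the projection morphisms supplied by Lemma \ref{prodvt4}, and pass to their reductions $\underline{p_i}: \underline{M_3}\to\underline{M_i}$, which are morphisms of smooth manifolds by \eqref{eq:reductionvarphi} and Example \ref{trivialf2}. By the universal property of the product in the category of smooth manifolds, the pair $(\underline{p_1},\underline{p_2})$ factors uniquely through a morphism $\Phi: \underline{M_3}\to\underline{M_1}\times\underline{M_2}$ with $\pi_i\circ\Phi=\underline{p_i}$, where $\pi_i$ denotes the $i$-th projection of $\underline{M_1}\times\underline{M_2}$. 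On underlying spaces $\overline\Phi=(\overline{p_1},\overline{p_2})$ is the identity of $M_1\times M_2$, so it remains only to prove that the induced sheaf homomorphism $\Phi^*$ is an isomorphism, which is a local question on $M_1\times M_2$.

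For the local computation, note that the open sets of the form $U_1\times U_2$, where $U_i$ is a chart of $M_i$ with $(U_1,\CO_1|_{U_1})\cong(N,\CO_N^{(k)})$ and $(U_2,\CO_2|_{U_2})\cong(L,\CO_L^{(l)})$ for smooth manifolds $N,L$, form a basis of $M_1\times M_2$. Over such a set, Lemma \ref{prodvt} identifies $\CO_3|_{U_1\times U_2}$ with $\CO_{N\times L}^{(k+l)}$, whose reduction is the structure sheaf of the smooth manifold $N\times L$ by Example \ref{trivialf2}; on the other hand, over $U_1\times U_2$ the smooth manifold $\underline{M_1}\times\underline{M_2}$ is precisely $N\times L$. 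Chasing the definitions, $\Phi^*$ over $U_1\times U_2$ corresponds, under these identifications, to the identity map of the structure sheaf of $N\times L$: this is forced by the relations $\pi_i\circ\Phi=\underline{p_i}$ together with the fact that the chart identifications are the natural ones, so that the coordinate functions of $N$ and of $L$ are pulled back correctly. Hence $\Phi^*$ is an isomorphism on a basis of open sets, hence an isomorphism, and $\Phi$ is an isomorphism of smooth manifolds, which gives $(M_3,\underline{\CO_3})=(M_1,\underline{\CO_1})\times(M_2,\underline{\CO_2})$ in the category of smooth manifolds.

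Alternatively one can avoid charts entirely: the morphisms $r_i\circ\pi_i: \underline{M_1}\times\underline{M_2}\to M_i$, where $r_i: \underline{M_i}\to M_i$ are the reduction morphisms \eqref{eq:reductionmorphism}, induce by the universal property of $M_3$ in Lemma \ref{prodvt4} a morphism $\underline{M_1}\times\underline{M_2}\to M_3$; since the source is reduced this factors as $r_3$ composed with a morphism $f: \underline{M_1}\times\underline{M_2}\to\underline{M_3}$, and one checks $f$ and $\Phi$ are mutually inverse from the uniqueness clauses of the two universal properties, using that each $r_j$ is a monomorphism (on spaces it is the identity and its sheaf map is surjective) and that $r_i\circ\underline{p_i}=p_i\circ r_3$. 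Either way, the underlying mathematics is just Lemma \ref{prodvt}, Example \ref{trivialf2}, and the Schwartz kernel theorem for smooth functions; I expect the only real difficulty to be organizational, namely keeping track of which identifications are canonical so that the local isomorphisms of the third paragraph genuinely glue — equivalently, verifying that $\Phi^*$ is the identity and not merely some isomorphism on each chart.
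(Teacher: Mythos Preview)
Your proof is correct and follows essentially the same approach as the paper: construct the comparison map $\Phi:\underline{M_3}\to\underline{M_1}\times\underline{M_2}$ by applying the reduction functor to the projections and using the universal property of the smooth product, then verify that $\Phi$ is an isomorphism by reducing to the chart case via Lemma \ref{prodvt}. The paper phrases the local step slightly differently---it works directly with open rectangles $U_1\times U_2$ and the induced map \eqref{defeo}, observing it is a diffeomorphism on charts and hence globally---but this is the same argument you give, and your added care about why the local isomorphisms glue (they are all restrictions of the single global $\Phi^*$) is correct.
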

	\begin{proof}
		Let $U_1\subset M_1$ and $U_2\subset M_2$ be open subsets, and set $U_3:=U_1\times U_2$.
		Recall from Example \ref{trivialf2} that  the reduction assignment
		\[
		(M, \CO)\mapsto (M, \underline \CO)
		\]
		is  a functor from the category of formal manifolds to the category of smooth manifolds.
		Thus the projection morphisms
		$(U_3,\CO_{3}|_{U_3})\rightarrow (U_1, \CO_{1}|_{U_1})$ and
		$(U_3,\CO_{3}|_{U_3})\rightarrow (U_2, \CO_{2}|_{U_2})$ induce respectively the smooth maps
		\[
		(U_3,\underline{\CO_{3}|_{U_3}})=(U_3,\underline{\CO_3}|_{U_3})\rightarrow (U_1,\underline{\CO_{1}|_{U_1}})=(U_1,\underline{\CO_1}|_{U_1})\]
		and\[
		(U_3,\underline{\CO_{3}|_{U_3}})=(U_3,\underline{\CO_3}|_{U_3})\rightarrow (U_2,\underline{\CO_{2}|_{U_2}})=(U_2,\underline{\CO_2}|_{U_2}).
		\]
		These two smooth maps produce a smooth map
		\be\label{defeo}
		(U_3,\underline{\CO_{3}}|_{U_3})\rightarrow (U_1, \underline{\CO_{1}}|_{U_1}) \times (U_2, \underline{\CO_{2}}|_{U_2}).
		\ee
		When $U_1$ and $U_2$ are charts, it follows from Lemma \ref{prodvt} that the smooth map \eqref{defeo} is a
		diffeomorphism.
		This implies  that the   map \eqref{defeo} is a
		diffeomorphism when $U_1=M_1$ and $U_2=M_2$.
	\end{proof}
	
	\begin{lemd}\label{prodvt6}
		For every $a_1\in M_1$ and $a_2\in M_2$, we have topological algebra  identifications
		\[
		\wh{\CO}_{3,a_3}=\wh{\CO}_{1,a_1}\wh{\otimes}\,\wh{\CO}_{2,a_2}=\wh{\CO}_{1,a_1}\wt{\otimes}\,\wh{\CO}_{2,a_2}\qquad (a_3:=(a_1,a_2)).
		\]
	\end{lemd}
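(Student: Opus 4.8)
The plan is to reduce to the local model $(\R^n)^{(k)}$ and then read off the identity from Proposition~\ref{lem:formalstalkiso} together with the tensor-product identities for formal power series already used in the proof of Lemma~\ref{prodvt}. First I would note that $\wh\CO_{i,a_i}$ depends only on $\CO_i$ near $a_i$, and $\wh\CO_{3,a_3}$ only on $\CO_3$ near $a_3=(a_1,a_2)$; since $\CO_3(U_1\times U_2)=\CO_1(U_1)\wt\otimes\CO_2(U_2)$ by the very definition of $\CO_3$, I may shrink $M_1$ and $M_2$ to chart neighbourhoods of $a_1$ and $a_2$ and assume $M_1=(\R^{n_1})^{(k_1)}$, $M_2=(\R^{n_2})^{(k_2)}$ with $a_1=a_2=0$, where $n_i=\dim_{a_i}M_i$ and $k_i=\deg_{a_i}M_i$. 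By Lemma~\ref{prodvt}, $(M_3,\CO_3)$ is then $(\R^{n_1+n_2})^{(k_1+k_2)}$ and $a_3=0$, so $\dim_{a_3}M_3=n_1+n_2$ and $\deg_{a_3}M_3=k_1+k_2$.

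Next I would invoke Proposition~\ref{lem:formalstalkiso} to identify, as topological $\C$-algebras, $\wh\CO_{1,a_1}=\C[[\mathbf{u}_1]]$, $\wh\CO_{2,a_2}=\C[[\mathbf{u}_2]]$ and $\wh\CO_{3,a_3}=\C[[\mathbf{u}_1,\mathbf{u}_2]]$, where $\mathbf{u}_i$ is a tuple of $n_i+k_i$ formal variables. Here I would want these identifications to be compatible with the two projection morphisms $\pr_i\colon(M_3,\CO_3)\to(M_i,\CO_i)$ provided by Lemma~\ref{prodvt4}, in the sense that the induced continuous local homomorphism $\wh\CO_{i,a_i}\to\wh\CO_{3,a_3}$ on formal stalks (see \eqref{eq:homonformalstalks}, \eqref{eq:comofwhO} and Lemma~\ref{lem:homonformalstalkscon}) becomes the evident inclusion of power series rings; this is a routine verification from the Taylor-expansion formula \eqref{eq:Taylorexpansion} defining the isomorphism of Proposition~\ref{lem:formalstalkiso} and from the fact that $\pr_i^{\,*}$ on structure sheaves is pullback along a coordinate projection.

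The statement then reduces to the identity $\C[[\mathbf{u}_1]]\wt\otimes\C[[\mathbf{u}_2]]=\C[[\mathbf{u}_1,\mathbf{u}_2]]=\C[[\mathbf{u}_1]]\wh\otimes\C[[\mathbf{u}_2]]$ of topological $\C$-algebras, whose underlying map is the continuous extension of $p\otimes q\mapsto p\cdot q$. I would prove it exactly as in the proof of Lemma~\ref{prodvt}, from Example~\ref{ex:E-valuedpowerseries} and the Schwartz kernel identity \eqref{eq:schwartzkernel1}; and since $\C[[\mathbf{u}_1,\mathbf{u}_2]]$ is a nuclear Fr\'echet space while $\C[[\mathbf{u}_1]]\otimes_\pi\C[[\mathbf{u}_2]]$ is metrizable (projective tensor product of metrizable spaces), all the topological tensor products in sight --- $\otimes_\pi$, $\otimes_\varepsilon$, $\otimes_{\mathrm i}$ and their quasi-completions and completions --- agree, which in particular yields $\wh\CO_{1,a_1}\wt\otimes\wh\CO_{2,a_2}=\wh\CO_{1,a_1}\wh\otimes\wh\CO_{2,a_2}$. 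Composing this with the identifications of the second step gives a topological $\C$-algebra isomorphism $\wh\CO_{1,a_1}\wt\otimes\wh\CO_{2,a_2}\xrightarrow{\ \sim\ }\wh\CO_{3,a_3}$; to see it is the canonical one it suffices to check that on the dense subalgebra of polynomials it sends $p\otimes q$ to the product of the images of $p$ and $q$ under the two stalk maps $\wh\CO_{i,a_i}\to\wh\CO_{3,a_3}$, which is immediate from the second step.

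I do not foresee a real obstacle: all the genuine analytic input --- the Schwartz kernel theorem, Borel's lemma, and the tensor identities for formal power series --- is already available from Lemma~\ref{prodvt} and the appendices. The only points needing care are bookkeeping: verifying in the second step that the isomorphisms of Proposition~\ref{lem:formalstalkiso} can be chosen compatibly with the projections $\pr_i$, and tracking throughout that every identification is one of \emph{topological $\C$-algebras}, not merely of LCS, so that continuity of multiplication legitimately propagates equalities from the dense polynomial subalgebra to the whole formal stalk.
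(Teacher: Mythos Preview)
Your proposal is correct and follows essentially the same approach as the paper: reduce to the local model via charts, invoke Proposition~\ref{lem:formalstalkiso} to identify each formal stalk with a power series ring, and then use the tensor-product identity for formal power series (Example~\ref{ex:E-valuedpowerseries}) to conclude. The paper organizes things slightly differently---it first constructs the canonical continuous homomorphism $\wh\CO_{1,a_1}\wh\otimes\,\wh\CO_{2,a_2}\to\wh\CO_{3,a_3}$ from the projection morphisms and completeness, and only then localizes to check it is an isomorphism---but the substance is the same; note also that for the power-series identity the Schwartz kernel theorem is not actually needed, as Example~\ref{ex:E-valuedpowerseries} alone suffices.
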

	\begin{proof} The projection morphisms $(M_3,\CO_{3})\rightarrow (M_1, \CO_{1})$ and $(M_3,\CO_{3})\rightarrow (M_2, \CO_{2})$ induce
		respectively the continuous homomorphisms (see Lemma \ref{lem:homonformalstalkscon})
		\[
		\wh{\CO}_{1,a_1}\rightarrow \wh{\CO}_{3,a_3} \quad\textrm{and}\quad \wh{\CO}_{2,a_2}\rightarrow \wh{\CO}_{3,a_3}.
		\]
		Since $\wh{\CO}_{3,a_3}$ is complete, the above two maps yield a continuous homomorphism 
		\be\label{eq:formalstalktensor}
		\wh{\CO}_{1,a_1}\wh\otimes\, \wh{\CO}_{2,a_2}\rightarrow \wh{\CO}_{3,a_3}.
		\ee
		
		Note that we may (and do) assume that  $(M_1, \CO_{1})=(\R^{n_1},\CO_{\R^{n_1}}^{(k_1)})$ and  $(M_2, \CO_{2})=(\R^{n_2},\CO_{\R^{n_2}}^{(k_2)})$
		for some $n_1,n_2,k_1,k_2\in \BN$.
		Then by Lemma \ref{prodvt} we see that
		\[(M_3, \CO_{3})=(\R^{n_3},\CO_{\R^{n_3}}^{(k_3)})\quad (n_3:=n_1+n_2,\ k_3:=k_1+k_2).\]
		For  $i=1,2,3$, Taylor expansion at $a_i$ yields a topological algebra isomorphism (see  Proposition \ref{lem:formalstalkiso} and its proof)
		\[\wh{\CO}_{i,a_i}\cong \C[[x_1,x_2,\dots,x_{n_i},y_1,y_2,\dots,y_{k_i}]].\]
		This, together with Example \ref{ex:E-valuedpowerseries},  implies that $\wh{\CO}_{1,a_1}\wh{\otimes}\,\wh{\CO}_{2,a_2}$ $=\wh{\CO}_{1,a_1}\wt{\otimes}\,\wh{\CO}_{2,a_2}$, and that
		the map \eqref{eq:formalstalktensor} is a topological algebra isomorphism.

	\end{proof}
	
	Theorem \ref{pfm00} is now proved by combining Lemmas   \ref{prodvt3}, \ref{prodvt4}, \ref{prodvt5} and \ref{prodvt6}.

	\section*{Acknowledgement}
	F. Chen is supported by the National Natural Science Foundation of China (Nos. 12131018, 12161141001) and the Fundamental Research Funds for the Central Universities (No. 20720230020).
	B. Sun is supported by  National Key R \& D Program of China (Nos. 2022YFA1005300 and 2020YFA0712600) and New Cornerstone Investigator Program. 
	The first and the third authors would like to thank Institute for Advanced Study in Mathematics, Zhejiang University. Part of this work was carried out while they were visiting the institute.

\end{document}